\theoremstyle{plain}
\newtheorem{theorem}{Theorem}[section]
\newtheorem{conjecture}[theorem]{Conjecture}
\newtheorem{lemma}[theorem]{Lemma}
\newtheorem{proposition}[theorem]{Proposition}
\newtheorem{corollary}[theorem]{Corollary}
\newtheorem{theoremx}{Theorem}
\theoremstyle{definition}
\newtheorem{definition}[theorem]{Definition}
\newtheorem{remark}[theorem]{Remark}
\numberwithin{equation}{section}
\newcommand\fantome[1]{}
\def\bN{\mathbb N}
\def\bR{\mathbb R}
\def\bZ{\mathbb Z}
\def\bQ{\mathbb Q}
\def\fa{\mathfrak{a}}
\def\fb{\mathfrak{b}} 
\def\fc{\mathfrak{c}} 
\def\fu{\mathfrak{u}} 
\def\fv{\mathfrak{v}} 
\def\Fq{\mathbb F_q}
\DeclareMathOperator{\Si}{Si}
\DeclareMathOperator{\Li}{Li}
\DeclareMathOperator{\Id}{Id}
\newcommand{\F}{\mathbb{F}}
\newcommand{\C}{\mathbb{C}}
\newcommand{\fs}{\mathfrak{s}}
\newcommand{\N}{\ensuremath \mathbb{N}}
\DeclareMathOperator{\depth}{depth}
\author[B.-H. Im]{Bo-Hae Im}
\address{
Dept. of Mathematical Sciences, KAIST,
291 Daehak-ro, Yuseong-gu,
Daejeon 34141, South Korea
}
\email{bhim@kaist.ac.kr}
\author[H. Kim]{Hojin Kim}
\address{
Dept. of Mathematical Sciences, KAIST,
291 Daehak-ro, Yuseong-gu,
Daejeon 34141, South Korea
}
\email{hojinkim@kaist.ac.kr}
\author[K. N. Le]{Khac Nhuan Le}
\address{
Normandie Université,
Université de Caen Normandie - CNRS,
Laboratoire de Mathématiques Nicolas Oresme (LMNO), UMR 6139,
14000 Caen, France.
}
\email{khac-nhuan.le@unicaen.fr}
\author[T. Ngo Dac]{Tuan Ngo Dac}
\address{
Normandie Université,
Université de Caen Normandie - CNRS,
Laboratoire de Mathématiques Nicolas Oresme (LMNO), UMR 6139,
14000 Caen, France.
}
\email{tuan.ngodac@unicaen.fr}
\author[L. H. Pham]{Lan Huong Pham}
\address{
Institute of Mathematics, Vietnam Academy of Science and Technology, 18 Hoang Quoc Viet, 10307 Hanoi, Viet Nam
}
\email{plhuong@math.ac.vn}
\title[Hopf algebras and MZV's]{Hopf algebras and multiple zeta values in positive characteristic}
\subjclass[2010]{Primary 11M32; Secondary 11M38, 16S10, 16T30, 11R58}
\keywords{multiple zeta values, zeta and $L$-functions in characteristic $p$, Hopf algebras, stuffle algebra, shuffle algebras, arithmetic theory of algebraic function fields}
\date{\today}
\begin{document}

\begin{abstract}
Multiples zeta values (MZV's for short) in positive characteristic were introduced by Thakur as analogues of classical multiple zeta values of Euler. In this paper we give a systematic study of algebraic structures of MZV's in positive characteristic. We construct both the stuffle algebra and the shuffle algebra of these MZV's and equip them with algebra and Hopf algebra structures. In particular, we completely solve a problem suggested by Deligne and Thakur \cite{Del17} in 2017 and establish Shi's conjectures \cite{Shi18}. The construction of the stuffle algebra is based on our recent work \cite{IKLNDP22}. 
\end{abstract}

\maketitle

\tableofcontents


\section{Introduction} \label{sec: Introduction}

\subsection{Classical multiple zeta values} ${}$\par

Let $\mathbb N=\{1,2,\dots\}$ be the set of positive integers and $\mathbb Z^{\geq 0}=\{0,1,2,\dots\}$ be the set of non-negative integers. The multiple zeta values (MZV's for short) are real positive numbers that were studied notably by Euler in the eighteenth century. They are given by the following convergent series
	\[ \zeta(n_1,\dots,n_r)=\sum_{0<k_1<\dots<k_r} \frac{1}{k_1^{n_1} \dots k_r^{n_r}} \]
where $n_i$ are positive integers with $n_i \geq 1$ and $n_r \geq 2$. Here $r$ is called the depth and $w := n_1+\dots+n_r$ is called the weight of the presentation $\zeta(n_1,\dots,n_r)$. 
These numbers generalize the zeta values 
	\[ \zeta(n)=\sum_{k>0} \frac{1}{k^n}, \quad \text{where } n \in \mathbb{N} \text{ and }  n \geq 2, \]
which were studied well before Riemann studied them as a function $\zeta(s)$ of a complex variable $s$, and its links with the distribution of primes. The simple zeta values, and more generally the multiple zeta values, still keep many secrets and play the role of fundamental constants. They are ubiquitous in many fields of mathematics and physics, in particular through the Feynman integrals which govern the interactions between elementary particles or through the Drinfeld associators coming from quantum groups and knot theory. 

The even zeta values are well understood. In fact, Euler proved in 1735 that, when $n$ is even, $\zeta(n)$ is a rational multiple of $\pi^n$. Thanks to Lindemann’s proof of the transcendence of $\pi$, it follows that all these numbers are transcendental. However, the odd zeta values are much more mysterious. Indeed, a folklore conjecture states
\begin{conjecture}
The numbers $\pi, \zeta(3), \zeta(5), \dots$ are all algebraically independent over $\bQ$.
\end{conjecture}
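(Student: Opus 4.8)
The plan is to place this conjecture in the framework of motivic periods and Grothendieck's period conjecture, since no direct transcendence attack is currently available. First I would recall that $\pi$ and the odd zeta values $\zeta(2k+1)$ for $k\geq 1$ are periods of mixed Tate motives over $\bZ$; by the work of Deligne--Goncharov and Brown, the Tannakian category $\mathrm{MT}(\bZ)$ has motivic Galois group an extension of $\Gm$ by the pro-unipotent group whose Lie algebra is free on one generator in each odd degree $3,5,7,\dots$. On the motivic side one then has canonical lifts $\zeta^{\mathfrak{m}}(2k+1)$ and $\pi^{\mathfrak{m}}$, and Brown's basis theorem for motivic MZV's, together with the freeness of the fundamental Lie algebra, shows that the ring of motivic periods of $\mathrm{MT}(\bZ)$ is (non-canonically) a polynomial algebra in $\pi^{\mathfrak{m}},\zeta^{\mathfrak{m}}(3),\zeta^{\mathfrak{m}}(5),\dots$, so these are algebraically independent over $\bQ$ as motivic periods.

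The remaining step — and it is the entire difficulty — is to transport algebraic independence across the period homomorphism $\mathrm{per}\colon\{\text{motivic periods}\}\to\bC$, i.e.\ to show that no polynomial relation over $\bQ$ among the complex numbers $\pi,\zeta(3),\zeta(5),\dots$ can appear without already being visible motivically. This is precisely an instance of Grothendieck's period conjecture, and here lies the main obstacle: not a single case of that conjecture in positive weight is known. Indeed, the vastly weaker assertion that $\zeta(5)$ is irrational is open, and even the transcendence of a single odd zeta value is unknown; the available tools (Baker's theory, Wüstholz's analytic subgroup theorem, the Siegel--Shidlovskii method, Nesterenko's theorem) simply do not produce algebraic independence results for periods of mixed Tate motives over $\bZ$.

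Consequently I would not propose an unconditional proof: the conjecture is wide open, and it is recorded in the introduction for contrast. The point of the present paper is that the analogous statements in positive characteristic — algebraic independence of the Carlitz zeta values and, more generally, of Thakur's multiple zeta values — \emph{are} within reach, because in the function-field setting one has the theory of $t$-motives and dual $t$-motives, the Anderson--Brownawell--Papanikolas linear independence criterion and its algebraic-independence refinement, and Papanikolas's Tannakian formalism with an effectively computable motivic Galois group; these are exactly the instruments that have no characteristic-zero counterpart. Thus the honest "proof plan" for the classical conjecture reads: establish Grothendieck's period conjecture for $\mathrm{MT}(\bZ)$ — which no one currently knows how to do — and then read off the statement from the motivic picture above.
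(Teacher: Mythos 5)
This statement is recorded in the paper only as a folklore conjecture for motivation; the paper offers no proof and explicitly notes that nothing is known even about the transcendence of a single odd zeta value. Your assessment is therefore correct and consistent with the paper: the motivic picture you describe reduces the conjecture to Grothendieck's period conjecture for $\mathrm{MT}(\bZ)$, which remains entirely open, and no unconditional argument is available.
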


To our knowledge, we know nothing about the transcendence of odd zeta values. Concerning the irrationality of these numbers, Apéry \cite{Ape79} showed that $\zeta(3)$ is irrational, and Ball-Rivoal \cite{BR01} proved that there are infinitely many irrational numbers among the remaining odd zeta values (see \cite{Riv00,Riv02,Zud01} for related works). 

The product of two multiple zeta values is a linear combination, with integral coefficients, of multiple zeta values. For instance, Euler proved the following identity 
	\[ \zeta(m) \zeta(n)=\zeta(m,n)+\zeta(n,m)+\zeta(m+n) \]
for all integers $m,n \geq 2$. It follows that the $\bQ$-vector space $\mathcal Z$ spanned by all MZV's has an algebra structure. One can argue that the main goal of this theory is to understand all $\mathbb Q$-linear relations among MZV's. Unlike the algebra generated by simple zeta values, there are lots of linear relations among MZV's that endow $\mathcal Z$ with a rich combinatorial structure. One systematic way to produce linear relations among MZV's is to use the so-called extended double shuffle relations introduced by Ihara-Kaneko-Zagier \cite{IKZ06}. To do so, one first defines Hoffman's algebra~$\frak h$ and its subalgebras $\frak h^0 \subset \frak h^1 \subset \frak h$ with respect to the following algebra structures. Next, one constructs two algebra structures as particular cases of quasi-product algebras introduced by Hoffman \cite{Hof00}: the stuffle algebra $(\frak h^1,*)$ and the shuffle algebra $(\frak h,\shuffle)$. By regularization \cite[\S 2]{IKZ06}, there exist zeta maps which are $\bQ$-algebra homomorphisms
	\[ \zeta_*:(\frak h^1,*) \to \mathcal Z, \]
and 
	\[ \zeta_\shuffle:(\frak h,\shuffle) \to \mathcal Z, \]
which give rise to a generalization of the stuffle product and the shuffle product. The extended double shuffle relations are obtained by ``comparing'' the stuffle and shuffle products on $\frak h^1$ (see \cite[Theorem 2]{IKZ06} for a precise statement). Further, Ihara, Kaneko and Zagier formulated the following influential conjecture (see \cite[Conjecture 1]{IKZ06}): 
\begin{conjecture}[Ihara-Kaneko-Zagier's conjecture] \label{conj: IKZ}
The extended double shuffle relations exhaust all $\bQ$-linear relations among MZV's. 
\end{conjecture}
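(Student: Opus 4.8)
Since this is a major open conjecture, what follows is a strategy together with the point at which it currently stalls rather than a proof. The plan is to convert the conjecture into an equality of weight-graded dimensions. Let $\mathcal{Z}^{\mathrm{ds}}=\bigoplus_w \mathcal{Z}^{\mathrm{ds}}_w$ denote the formal double shuffle algebra, i.e. $\mathfrak{h}^0$ (equivalently $\mathfrak{h}^1$) modulo the regularized double shuffle relations with its graded $\bQ$-algebra structure. By the very construction of $\zeta_*$ and $\zeta_\shuffle$ these relations map to $0$ in $\mathcal{Z}$, so one gets a surjective morphism of graded $\bQ$-algebras $\mathcal{Z}^{\mathrm{ds}}_w \twoheadrightarrow \mathcal{Z}_w$ for every $w\ge 0$ (here $\mathcal{Z}_w$ is the weight-$w$ graded piece). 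The conjecture is exactly the assertion that this map is injective, i.e. the reverse inequality $\dim_\bQ \mathcal{Z}^{\mathrm{ds}}_w \le \dim_\bQ \mathcal{Z}_w$. It therefore suffices to exhibit one common value for both sides, and the candidate is $d_w$, where $d_0=1$, $d_1=0$, $d_2=1$ and $d_w=d_{w-2}+d_{w-3}$ for $w\ge 3$ (equivalently $\sum_w d_w t^w = 1/(1-t^2-t^3)$).

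For the bound $\dim_\bQ \mathcal{Z}_w \le d_w$ I would pass through the motivic picture. Realizing each $\zeta(n_1,\dots,n_r)$ as a period of the pro-unipotent fundamental groupoid of $\mathbb{P}^1\setminus\{0,1,\infty\}$ with tangential base points, $\mathcal{Z}$ becomes a quotient, via the de Rham–Betti period map, of the graded Hopf algebra of motivic MZV's attached to the category of mixed Tate motives over $\bZ$ (Deligne–Goncharov). That Hopf algebra is, non-canonically, $\bQ\langle f_3,f_5,f_7,\dots\rangle\otimes\bQ[f_2]$ with $\deg f_k=k$, whose weight-$w$ dimension is $d_w$; moreover Brown's theorem that the motivic values $\zeta^{\mathfrak{m}}(n_1,\dots,n_r)$ with all $n_i\in\{2,3\}$ already span each graded piece makes the bound explicit. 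Pushing forward along the period map gives $\dim_\bQ \mathcal{Z}_w \le d_w$.

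For the matching bound $\dim_\bQ \mathcal{Z}^{\mathrm{ds}}_w \le d_w$ one argues purely combinatorially. Following Racinet, the extended double shuffle relations assemble into a pro-unipotent affine group scheme, equivalently a graded Lie algebra $\mathfrak{dmr}_0$, and $\mathcal{Z}^{\mathrm{ds}}$ is the associated graded commutative Hopf algebra; the goal is to prove that $\mathcal{Z}^{\mathrm{ds}}$ is a free polynomial algebra on generators in the ``expected'' weights — equivalently that $\dim(\mathfrak{dmr}_0)_w$ realizes the recursion governing the coefficients of $t^3/(1-t^2)$ — which would yield $\dim_\bQ\mathcal{Z}^{\mathrm{ds}}_w \le d_w$. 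Combined with the previous paragraph and the surjection, all inequalities would collapse to equalities and the conjecture would follow.

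The hard part — and the reason the conjecture is open — is the lower bound $\dim_\bQ \mathcal{Z}_w \ge d_w$, i.e. that MZV's satisfy \emph{no} $\bQ$-linear relation beyond those dictated by mixed Tate motives over $\bZ$. This is a transcendence statement: it contains, among much else, the $\bQ$-linear independence of $1,\zeta(3),\zeta(5),\dots$, so it lies well beyond the reach of current methods, and it is an instance of Grothendieck's period conjecture for the mixed Tate motivic Galois group, for which no unconditional approach is known. Even granting this, the combinatorial inequality $\dim_\bQ\mathcal{Z}^{\mathrm{ds}}_w\le d_w$ is itself unproved in general — only partial results on the structure of $\mathfrak{dmr}_0$ are available — so a complete proof needs genuinely new input on both the transcendental side (to close the gap between periods and the motivic coalgebra) and the algebraic side (to compute the graded dimensions of the double shuffle Lie algebra).
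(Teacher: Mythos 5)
The statement you were asked about is labelled a \emph{conjecture} in the paper, and the paper offers no proof of it: it is the Ihara--Kaneko--Zagier conjecture, stated as a famous open problem motivating the construction of the stuffle and shuffle Hopf algebras. You have correctly recognized this, so there is no proof to compare yours against; what I can do is check your outline against the paper's own discussion of the state of the art, and it is consistent. Your reduction to the two inequalities $\dim_\bQ \mathcal{Z}^{\mathrm{ds}}_w \le d_w$ and $\dim_\bQ \mathcal{Z}_w \ge d_w$ (given the surjection $\mathcal{Z}^{\mathrm{ds}}_w \twoheadrightarrow \mathcal{Z}_w$ coming from the regularized double shuffle relations) is the standard strategy; the motivic upper bound $\dim_\bQ \mathcal{Z}_w \le d_w$ you cite is exactly the content of the paper's Theorems \ref{thm: Zagier} and \ref{thm: Hoffman} (Terasoma, Deligne--Goncharov, Brown), and the paper itself states that ``the transcendental part which concerns lower bounds for $\dim_{\mathbb Q} \mathcal Z_k$ is completely open.'' Your identification of the two genuine obstructions --- the period-conjecture-level lower bound on $\dim_\bQ\mathcal{Z}_w$, and the unproved upper bound on the graded dimensions of the double shuffle Lie algebra $\mathfrak{dmr}_0$ --- is accurate. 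The only caution I would add is presentational: since the paper states this as an unproved conjecture, your text should not be mistaken for a proof sketch that merely has gaps to fill; both missing ingredients are, as you say, beyond current methods, and the honest conclusion is that the statement cannot be established here.
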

In particular, it implies Goncharov's conjecture which states that all $\mathbb Q$-linear relations among MZV's can be derived from those among MZV's of the same weight. 

Surprisingly, if $\mathcal{Z}_k$ denotes the $\mathbb Q$-vector space spanned by MZV's of weight $k$ for $k \in \mathbb{N}$, Zagier \cite{Zag94} and Hoffman \cite{Hof97} were able to predict the dimension and an explicit basis for  $\mathcal{Z}_k$.

\begin{conjecture}[Zagier's conjecture] \label{conj: Zagier}
We define a Fibonacci-like sequence of integers $d_k$ as follows. Letting $d_0=1, d_1=0$ and $d_2=1$ we define $d_k=d_{k-2}+d_{k-3}$ for $k \geq 3$. Then for $k \in \N$ we have 
	\[ \dim_{\mathbb Q} \mathcal Z_k = d_k. \]
\end{conjecture}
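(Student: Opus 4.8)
The plan is to separate Zagier's conjecture into the two inequalities $\dim_{\bQ}\mathcal Z_k \leq d_k$ and $\dim_{\bQ}\mathcal Z_k \geq d_k$, which are of an entirely different nature: only the first is within the reach of current methods, and I will indicate precisely where the second one collapses.

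For the upper bound, the first step is the elementary observation that the number of tuples $(n_1,\dots,n_r)$ with $n_i \in \{2,3\}$ for all $i$ and $n_1+\dots+n_r = k$ is exactly $d_k$: splitting such a tuple according to whether $n_r = 2$ or $n_r = 3$ gives the recursion $d_k = d_{k-2}+d_{k-3}$, and the initial values $d_0 = 1$, $d_1 = 0$, $d_2 = 1$ match the empty tuple, the absence of a tuple summing to $1$, and the tuple $(2)$. Every such tuple has $n_r \geq 2$, so the associated $\zeta(n_1,\dots,n_r)$ is a genuine convergent MZV of weight $k$. The second step is to show that these \emph{Hoffman MZV's} already span $\mathcal Z_k$ over $\bQ$. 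I would carry this out in the motivic framework: lift to motivic multiple zeta values $\zeta^{\mathfrak m}(n_1,\dots,n_r)$ in the motivic Hopf algebra attached to mixed Tate motives over $\bZ$, use Goncharov's explicit formula for the coproduct together with Brown's refinement recording the pieces modulo products, and run an induction on the weight in which the infinitesimal coactions $D_{2s+1}$ detect exactly when a motivic MZV fails to be a $\bQ$-combination of Hoffman ones, the induction being closed by a $2$-adic non-vanishing coming from the arithmetic of period polynomials. Applying the period realization, which sends $\zeta^{\mathfrak m}$ to $\zeta$, transports the spanning statement down to $\mathcal Z_k$, whence $\dim_{\bQ}\mathcal Z_k \leq d_k$.

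For the lower bound one must upgrade this spanning set to a basis, i.e.\ prove that the Hoffman MZV's of weight $k$ are $\bQ$-linearly independent. The only available route is to prove the corresponding statement motivically — where the graded dimension of the ring of motivic MZV's is \emph{known} to be $d_k$, its Hilbert series being $1/(1-t^2-t^3)$ by the Deligne–Goncharov description of the unipotent motivic Galois group of $\mathrm{MT}(\bZ)$ via Borel's computation of $K_*(\bZ)\otimes\bQ$ — and then to descend linear independence to the complex periods. The obstruction, and this is the heart of the matter, is that this descent requires the period map from motivic to complex multiple zeta values to be injective; this is a case of the Grothendieck period conjecture and is wide open, and without it a single invisible $\bQ$-linear relation among complex MZV's (say one tying $\pi^2$ to $\zeta(3)$) would immediately destroy the dimension count. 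Thus the main obstacle is concentrated entirely in the lower bound, which is equivalent to a transcendence statement far beyond present technology, and the conjecture remains open. By contrast, the $p$-adic and function-field analogues treated in this paper are accessible precisely because the Anderson–Brownawell–Papanikolas linear independence criterion supplies exactly the input that is unavailable over $\bQ$.
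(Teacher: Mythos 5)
This statement is labeled as a conjecture in the paper and is given no proof there; the paper only records that the upper bound $\dim_{\bQ}\mathcal Z_k\le d_k$ is a theorem of Terasoma, Deligne--Goncharov and Brown via mixed Tate motives, while the lower bound is completely open. Your proposal says exactly this --- correctly counting the Hoffman tuples, correctly attributing the upper bound to the motivic coaction argument, and correctly locating the obstruction to the lower bound in the injectivity of the period map --- so it matches the paper's own treatment and is an accurate account of the status of the conjecture.
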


\begin{conjecture}[Hoffman's conjecture] \label{conj: Hoffman}
The $\mathbb Q$-vector space $\mathcal Z_k$ is generated by the basis consisting of MZV's of weight $k$ of the form $\zeta(n_1,\dots,n_r)$ with $n_i \in \{2,3\}$.
\end{conjecture}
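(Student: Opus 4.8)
The plan is to pass to the motivic setting, where the structure governing $\mathbb{Q}$-linear relations among MZV's — a graded Hopf algebra equipped with an explicit coaction — becomes visible, and to follow Brown's strategy. First I would introduce the graded Hopf algebra $\mathcal{H}=\bigoplus_{k\geq 0}\mathcal{H}_k$ of motivic periods of the Tannakian category $MT(\mathbb{Z})$ of mixed Tate motives over $\mathbb{Z}$, the motivic multiple zeta values $\zeta^{\mathfrak{m}}(n_1,\dots,n_r)\in\mathcal{H}_{n_1+\cdots+n_r}$ of Goncharov and Deligne, and the weight-graded period homomorphism $\mathrm{per}\colon\mathcal{H}\to\mathbb{R}$ sending $\zeta^{\mathfrak{m}}(n_1,\dots,n_r)$ to $\zeta(n_1,\dots,n_r)$, whose image is $\mathcal{Z}$. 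By the theorem of Deligne and Goncharov (which rests on Borel's computation of the $K$-theory of $\mathbb{Z}$) one has $\dim_{\mathbb{Q}}\mathcal{H}_k=d_k$ for all $k$, with $(d_k)$ the sequence defined above; and an elementary generating-function computation gives $\sum_k\bigl(\sum_{2a+3b=k}\binom{a+b}{a}\bigr)t^k=(1-t^2-t^3)^{-1}$, so the number of $\{2,3\}$-words of weight $k$ is exactly $d_k$. Since $\mathrm{per}$ is graded and surjective onto $\mathcal{Z}$, it therefore suffices to prove that the motivic MZV's $\zeta^{\mathfrak{m}}(n_1,\dots,n_r)$ of weight $k$ with all $n_i\in\{2,3\}$ are $\mathbb{Q}$-linearly independent in $\mathcal{H}_k$: being $d_k=\dim_{\mathbb{Q}}\mathcal{H}_k$ in number, they then form a basis of $\mathcal{H}_k$, and their periods span $\mathcal{Z}_k$.

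The engine of the proof is the infinitesimal coaction. Let $\mathcal{A}$ be the de Rham Hopf algebra obtained from $\mathcal{H}$ by killing $\zeta^{\mathfrak{m}}(2)$, and $\mathcal{L}=\mathcal{A}_{>0}/\mathcal{A}_{>0}^2$ the associated Lie coalgebra, so that $\mathcal{L}_r$ is one-dimensional for each odd $r\geq 3$ and zero in the other positive degrees. The reduced coaction decomposes into derivations $D_r\colon\mathcal{H}_N\to\mathcal{L}_r\otimes\mathcal{H}_{N-r}$ ($r$ odd, $3\leq r<N$), and Goncharov's combinatorial formula writes $D_r\bigl(\zeta^{\mathfrak m}(n_1,\dots,n_s)\bigr)$ explicitly as a sum of deconcatenation terms built from strictly lower-weight motivic MZV's. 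One then uses the structural fact — a consequence of the freeness of the Lie algebra of the motivic Galois group — that for each $N$ the joint map $\bigoplus_{r}D_r$ on $\mathcal{H}_N$ has kernel at most one-dimensional, spanned by $\zeta^{\mathfrak m}(N)$ when $N$ is odd and by the all-twos motivic MZV when $N$ is even.

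Now I would run an induction on the weight $N$. Assume the $\{2,3\}$-words of every weight $<N$ form a basis of the corresponding $\mathcal{H}$; equivalently $H^{2,3}_{<N}=\mathcal{H}_{<N}$. Given a vanishing linear combination $\sum_w c_w\,\zeta^{\mathfrak m}(w)=0$ over $\{2,3\}$-words $w$ of weight $N$, apply each $D_r$: Goncharov's formula expresses every $D_r(\zeta^{\mathfrak m}(w))$ inside $\mathcal{L}_r\otimes\mathcal{H}_{N-r}=\mathcal{L}_r\otimes H^{2,3}_{N-r}$, so one may read off coordinates in the lower-weight $\{2,3\}$-basis. The combinatorial heart of the matter is to identify, for each $w$ of positive level (number of $3$'s), a ``leading term'' of $\bigoplus_r D_r(\zeta^{\mathfrak m}(w))$ and to show that these leading terms form a unitriangular system with respect to a suitable ordering of the level filtration; the explicit coefficients here are supplied by Brown's motivic refinement of Zagier's evaluation of $\zeta(\{2\}^a,3,\{2\}^b)$, whose relevant binomial factors are nonzero by a $2$-adic valuation estimate. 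Triangularity forces $c_w=0$ for every $w$ of positive level; for $N$ odd there are no level-zero $\{2,3\}$-words and we are done, while for $N$ even the only remaining word is the all-twos word, whose motivic value is nonzero, so its coefficient vanishes too. This gives linear independence in weight $N$, closing the induction; by the dimension match the $\{2,3\}$-words then form a basis of $\mathcal{H}_k$, and applying $\mathrm{per}$ yields the spanning of $\mathcal{Z}_k$.

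I expect two obstacles of very different natures. The first, hard but known to be tractable, is the motivic Zagier identity together with the extraction of unitriangularity: the \emph{real} identity alone does not suffice — one must prove it already at the level of $\mathcal{H}$ by a coaction induction — and the $2$-adic combinatorics of the binomial coefficients is essential. The second obstacle is fundamental: deducing that the $\{2,3\}$-MZV's are not merely a spanning set but a \emph{basis} of $\mathcal{Z}_k$ amounts to the injectivity of $\mathrm{per}\colon\mathcal{H}_k\to\mathcal{Z}_k$, i.e.\ to $\dim_{\mathbb{Q}}\mathcal{Z}_k=d_k$, which is an instance of the Grothendieck period conjecture and is not known. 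Consequently the honest target of this plan is the spanning half of Hoffman's conjecture, together with a reduction of the linear-independence half to the period conjecture; the conjecture exactly as stated (with the word ``basis'') appears to lie beyond present techniques.
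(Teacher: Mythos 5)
This statement is presented in the paper as an open \emph{conjecture}, not a theorem: the paper explicitly records that only the ``algebraic part'' is known (Theorem~\ref{thm: Hoffman}, due to Brown, which asserts that the $\{2,3\}$-MZV's \emph{generate} $\mathcal Z_k$), while the transcendental part --- the lower bound $\dim_{\mathbb Q}\mathcal Z_k\geq d_k$ needed to upgrade ``generating set'' to ``basis'' --- is completely open. There is therefore no proof in the paper to compare against. Your proposal is, in substance, a faithful outline of Brown's argument for the spanning half: the motivic Hopf algebra $\mathcal H$ with $\dim_{\mathbb Q}\mathcal H_k=d_k$ from Deligne--Goncharov, the infinitesimal coaction operators $D_r$, Goncharov's formula, the level filtration, and the $2$-adic nonvanishing of the binomial coefficients coming from the motivic version of Zagier's evaluation of $\zeta(2,\dots,2,3,2,\dots,2)$. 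Your closing caveat is exactly the correct diagnosis: linear independence of the real numbers $\zeta(n_1,\dots,n_r)$, $n_i\in\{2,3\}$, is equivalent to injectivity of the period map in each weight, i.e.\ to Conjecture~\ref{conj: Zagier}, an instance of the period conjecture. So your proposal establishes (modulo the standard but substantial motivic machinery you invoke) precisely Theorem~\ref{thm: Hoffman} and correctly concedes that the conjecture as stated, with the word ``basis,'' cannot be proved by these or any currently known techniques. The one point worth flagging is that you should not describe this as a ``proof proposal'' for the conjecture itself; it is a proof of its known algebraic half together with an accurate reduction of the remaining half to an open problem.
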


The algebraic part of these conjectures which concerns upper bounds for $\dim_{\mathbb Q} \mathcal Z_k$ was solved by Terasoma \cite{Ter02}, Deligne-Goncharov \cite{DG05} and Brown \cite{Bro12} using the theory of mixed Tate motives. 

\begin{theorem}[Deligne-Goncharov, Terasoma] \label{thm: Zagier}
For $k \in \N$ we have $\dim_{\mathbb Q} \mathcal Z_k \leq d_k$.
\end{theorem}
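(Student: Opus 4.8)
The plan is to realize every multiple zeta value of weight $k$ as a period of a mixed Tate motive over $\bZ$ and then transfer the problem into a dimension count inside the Tannakian category $\mathrm{MT}(\bZ)$, where everything is controlled by $K$-theory. Following Deligne--Goncharov, I would attach to the pro-unipotent motivic fundamental groupoid of $\mathbb{P}^1_{\bQ}\setminus\{0,1,\infty\}$, equipped with the tangential base points $\vec{1}_0$ and $-\vec{1}_1$, a (pro-)object of $\mathrm{MT}(\bZ)$; the entries of its canonical de Rham--Betti comparison, expanded against iterated-integral words in $\frac{dt}{t}$ and $\frac{dt}{1-t}$, produce \emph{motivic multiple zeta values} $\zeta^{\mathfrak m}(n_1,\dots,n_r)$ in a weight-graded $\bQ$-algebra $\mathcal H=\bigoplus_{k}\mathcal H_k$, together with a weight-preserving period homomorphism $\mathrm{per}\colon\mathcal H\to\bR$ sending $\zeta^{\mathfrak m}(n_1,\dots,n_r)$ to $\zeta(n_1,\dots,n_r)$. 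Since $\mathcal Z_k$ is the image under $\mathrm{per}$ of the span of the $\zeta^{\mathfrak m}$ of weight $k$, which is a subspace of $\mathcal H_k$, it suffices to prove $\dim_{\bQ}\mathcal H_k\le d_k$.

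For that bound I would exploit the Tannakian structure of $\mathrm{MT}(\bZ)$: its motivic Galois group is a semidirect product $\Gm\ltimes\mathcal U$ with $\mathcal U$ pro-unipotent, and $\mathcal H$ is identified with the ring of functions on a $\Gm\ltimes\mathcal U$-torsor (the de Rham--Betti torsor of the fundamental groupoid above), the weight grading being the one induced by the $\Gm$-action. The decisive input is the computation of extension groups in $\mathrm{MT}(\bZ)$: from the construction of $\mathrm{MT}(\bZ)$ out of motivic cohomology together with Borel's theorem on the ranks of $K_{\bullet}(\bZ)$, one gets $\mathrm{Ext}^1_{\mathrm{MT}(\bZ)}(\bQ(0),\bQ(n))\cong\bigl(K_{2n-1}(\bZ)\otimes\bQ\bigr)^{\vee}$, which is one-dimensional for odd $n\ge 3$ and zero otherwise, while $\mathrm{Ext}^2_{\mathrm{MT}(\bZ)}(\bQ(0),\bQ(n))=0$ for all $n$. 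Hence $\Lie(\mathcal U)$ is the \emph{free} graded Lie algebra on a single generator in each weight $3,5,7,\dots$; its universal enveloping algebra is then a free associative algebra on those generators, so the Hilbert series of $\mathcal O(\mathcal U)$ is $\bigl(1-\sum_{i\ge1}t^{2i+1}\bigr)^{-1}=\frac{1-t^{2}}{1-t^{2}-t^{3}}$.

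It remains to account for the constant $\zeta^{\mathfrak m}(2)$ — essentially the square of the motivic period of $\bQ(1)$ — which is not seen by $\mathcal U$ and contributes an independent polynomial generator of weight $2$, multiplying the Hilbert series by $(1-t^{2})^{-1}$. This gives
\[ \sum_{k\ge 0}\dim_{\bQ}\mathcal H_k\,t^{k}\;\le\;\frac{1-t^{2}}{1-t^{2}-t^{3}}\cdot\frac{1}{1-t^{2}}\;=\;\frac{1}{1-t^{2}-t^{3}}\;=\;\sum_{k\ge 0}d_k\,t^{k}, \]
where the last equality is exactly the defining recursion of $d_k$ read off from $(1-t^{2}-t^{3})\sum_{k}d_k t^{k}=1$. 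Comparing coefficients yields $\dim_{\bQ}\mathcal H_k\le d_k$, hence $\dim_{\bQ}\mathcal Z_k\le d_k$.

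The main obstacle is twofold and genuinely deep. First, one must prove that the motivic fundamental groupoid of $\mathbb{P}^1\setminus\{0,1,\infty\}$ is \emph{unramified}, i.e. an object of $\mathrm{MT}(\bZ)$ rather than merely of $\mathrm{MT}(\bZ[1/N])$; this is the heart of Deligne--Goncharov and is established through a careful analysis of the motivic coaction on iterated integrals showing that no extension beyond those coming from $K_{\bullet}(\bZ)$ is required (Terasoma's alternative route instead constructs the relevant mixed Tate motives and the governing weight filtration more explicitly, bypassing part of the Tannakian formalism). Second, one needs Borel's computation of $K_{2n-1}(\bZ)\otimes\bQ$ together with Beilinson--Soulé vanishing even to set up $\mathrm{MT}(\bZ)$ as a Tannakian category with the stated Ext groups. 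I expect the unramifiedness step to be the real crux; once the Ext computation is in hand, the passage to the Hilbert series and then to the recursion for $d_k$ is formal.
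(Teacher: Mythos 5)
Your outline is the standard Deligne--Goncharov argument via mixed Tate motives over $\bZ$, which is precisely the route the paper points to: Theorem \ref{thm: Zagier} is stated there as background with no proof given, only references to \cite{Ter02}, \cite{DG05} and \cite{Bro12}. Your sketch agrees with those sources --- the reduction of $\dim_{\bQ}\mathcal Z_k$ to the weight-$k$ piece of $\mathcal O(\mathcal U)\otimes\bQ[\zeta^{\mathfrak m}(2)]$, the Ext computation from Borel's theorem and Beilinson--Soul\'e vanishing, and the identity $(1-t^2-t^3)\sum_k d_k t^k=1$ are all correct --- and you rightly isolate the unramifiedness of the motivic fundamental groupoid of $\mathbb{P}^1\setminus\{0,1,\infty\}$ as the genuinely hard step.
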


\begin{theorem}[Brown] \label{thm: Hoffman}
The $\mathbb Q$-vector space $\mathcal Z_k$ is generated by MZV's of weight $k$ of the form $\zeta(n_1,\dots,n_r)$ with $n_i \in \{2,3\}$.
\end{theorem}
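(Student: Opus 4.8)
The plan is to follow Brown's motivic strategy, since no elementary proof (purely in terms of explicit families of MZV identities) is known. First I would pass from the real MZV's to \emph{motivic} multiple zeta values $\zeta^{\mathfrak{m}}(n_1,\dots,n_r)$, which live in a weight-graded $\mathbb{Q}$-Hopf algebra $\mathcal{H}=\bigoplus_k \mathcal{H}_k$ attached to the category $\mathrm{MT}(\mathbb{Z})$ of mixed Tate motives over $\mathbb{Z}$ (equivalently, coming from the de Rham/Betti realization of the motivic fundamental groupoid of $\mathbb{P}^1\setminus\{0,1,\infty\}$), together with a surjective, weight-graded period homomorphism $\mathrm{per}\colon \mathcal{H}\to\mathcal{Z}$ sending $\zeta^{\mathfrak{m}}(n_1,\dots,n_r)\mapsto\zeta(n_1,\dots,n_r)$. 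Because $\mathrm{per}$ is surjective and respects the weight, it suffices to prove the motivic statement: $\mathcal{H}_k$ is spanned by the $\zeta^{\mathfrak{m}}(n_1,\dots,n_r)$ of weight $k$ with all $n_i\in\{2,3\}$. Since there are exactly $d_k$ such admissible tuples (a Fibonacci-type count) and $\dim_{\mathbb{Q}}\mathcal{H}_k\le d_k$ by the motivic input underlying Theorem~\ref{thm: Zagier}, establishing the spanning statement simultaneously shows these elements form a basis of $\mathcal{H}_k$, and hence that their periods span $\mathcal{Z}_k$.

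The engine is the infinitesimal coaction. Set $\mathcal{A}=\mathcal{H}/\zeta^{\mathfrak{m}}(2)\mathcal{H}$; then $\mathcal{H}$ is a graded $\mathcal{A}$-comodule, and the reduced coaction, composed with projection onto the weight-$(2r+1)$ part of the Lie coalgebra $\mathcal{L}=\mathcal{A}_{>0}/\mathcal{A}_{>0}^{2}$, produces for every $r\ge 1$ an explicit derivation $D_{2r+1}\colon\mathcal{H}_k\to\mathcal{L}_{2r+1}\otimes_{\mathbb{Q}}\mathcal{H}_{k-2r-1}$, computable combinatorially on iterated-integral words. The structural heart is \emph{Brown's kernel lemma}: an element of $\mathcal{H}_k$ annihilated by $D_{2r+1}$ for all $r$ with $3\le 2r+1<k$ is a rational multiple of $\zeta^{\mathfrak{m}}(2)^{k/2}$ (so $0$ when $k$ is odd). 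This is where the deep facts about $\mathrm{MT}(\mathbb{Z})$ enter: it follows from the freeness of the motivic Lie algebra on generators in degrees $3,5,7,\dots$ together with the computation of $\mathcal{H}$ in small weight (Deligne--Goncharov).

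With these in hand, the proof is a double induction: on the weight $k$, and, within a fixed weight, on a \emph{level} filtration given by the number of $3$'s occurring in a $\{2,3\}$-tuple. Let $\mathcal{H}^{2,3}_k\subseteq\mathcal{H}_k$ denote the span of the relevant motivic MZV's. Given an arbitrary motivic MZV $\xi$ of weight $k$, one computes each $D_{2r+1}(\xi)$; the target is $\mathcal{L}_{2r+1}\otimes\mathcal{H}_{k-2r-1}$, and by the weight induction the second tensor factor already lies in $\mathcal{H}^{2,3}_{k-2r-1}$. One then seeks $\eta\in\mathcal{H}^{2,3}_k$ with $D_{2r+1}(\eta)=D_{2r+1}(\xi)$ for all $r$; the kernel lemma forces $\xi-\eta\in\mathbb{Q}\,\zeta^{\mathfrak{m}}(2)^{k/2}\subseteq\mathcal{H}^{2,3}_k$, completing the step. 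Finding $\eta$ is a linear-algebra problem whose coefficients are governed by the explicit evaluation, due to Zagier, of the block values $\zeta(\{2\}^a,3,\{2\}^b)$ as $\mathbb{Q}$-linear combinations of $\pi^{2s}\zeta(2t+1)$ with coefficients built from binomial numbers such as $\binom{2r}{2a+2}$ and $(1-2^{-2r})\binom{2r}{2b+1}$; lifted to the motivic level (so that the $D_{2r+1}$ apply to them), this turns the existence of $\eta$ into the non-degeneracy of certain matrices whose entries are these coefficients.

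The genuine obstacle is exactly that non-degeneracy. Over $\mathbb{Q}$ there is no a priori reason for these matrices of binomial expressions to be invertible, and the resolution is arithmetic: one pins down the $2$-adic valuations of the coefficients in Zagier's formula precisely enough to see that the matrices are invertible already over $\mathbb{Z}_2$. I expect reproving that $2$-adic non-vanishing — rather than the (by now fairly standard) construction of the motivic coaction and the kernel lemma — to be the hard part, together with the combinatorial bookkeeping needed to make the induction on weight and level actually close.
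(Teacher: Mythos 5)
The paper does not prove this statement: it is quoted as Theorem~\ref{thm: Hoffman} with a citation to Brown \cite{Bro12}, and serves only as background motivation for the Hopf-algebra constructions carried out later in positive characteristic. Your sketch is a faithful account of Brown's actual argument --- motivic lift with surjective period map, the derivations $D_{2r+1}$ and the kernel lemma, induction on weight and on the level (number of $3$'s), Zagier's evaluation of $\zeta(\{2\}^a,3,\{2\}^b)$ and the $2$-adic invertibility of the resulting matrices --- and it correctly identifies the $2$-adic non-vanishing as the genuinely hard step, so there is nothing to compare against within this paper.
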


The proofs of this theorem use by a crucial manner different Hopf algebra structures of Hoffman's algebra $\frak h$ as described above. We mention that the transcendental part which concerns lower bounds for $\dim_{\mathbb Q} \mathcal Z_k$ is completely open. We refer the reader to \cite{BGF,Del13,Zag94} for more details and more exhaustive references.

\subsection{Multiple zeta values in positive characteristic} ${}$\par

There is a well-known analogy between number fields and function fields (see \cite{Iwa69,MW83,Wei39}). Inspired by Euler’s work on multiple zeta values and that of Carlitz \cite{Car35} on zeta values in positive characteristic, Thakur \cite{Tha17} introduced multiple zeta values attached to the affine line over a finite field. We now need to introduce some notations. Let $A=\Fq[\theta]$ be the polynomial ring in the variable $\theta$ over a finite field $\Fq$ of $q$ elements of characteristic $p>0$. We denote by $A_+$ the set of monic polynomials in $A$.  Let $K=\Fq(\theta)$ be the fraction field of $A$ equipped with the rational point $\infty$. Let $K_\infty$ be the completion of $K$ at $\infty$. We denote by $v_\infty$ the discrete valuation on $K$ corresponding to the place $\infty$ normalized such that $v_\infty(\theta)=-1$, and by $\lvert\cdot\rvert_\infty= q^{-v_\infty}$ the associated absolute value on $K$. 

In \cite{Car35} Carlitz introduced the Carlitz zeta values $\zeta_A(n)$ for $n \in \N$ given by
	\[ \zeta_A(n) := \sum_{a \in A_+} \frac{1}{a^n} \in K_\infty \]
which are analogues of classical special zeta values in the function field setting.  For any tuple of positive integers $\mathfrak s=(s_1,\ldots,s_r) \in \N^r$, Thakur \cite{Tha04} defined the characteristic $p$ multiple zeta value (MZV for short) $\zeta_A(\fs)$ or $\zeta_A(s_1,\ldots,s_r)$ by
\begin{equation*}
\zeta_A(\fs):=\sum \frac{1}{a_1^{s_1} \ldots a_r^{s_r}} \in K_\infty
\end{equation*}
where the sum runs through the set of tuples $(a_1,\ldots,a_r) \in A_+^r$ with $\deg a_1>\cdots>\deg a_r$. We call $r$ the depth of $\zeta_A(\fs)$ and $w(\fs): 
=s_1+\dots+s_r$ the weight of $\zeta_A(\fs)$. We note that Carlitz zeta values are exactly depth one MZV's. Thakur \cite{Tha09a} showed that all the MZV's do not vanish. We refer the reader to \cite{AT90,AT09,GP21,Gos96,LRT14,LRT21,Pel12,Tha04,Tha09,Tha10,Tha17,Tha20,Yu91} for more details about these objects.

Thakur proved that the product of two MZV's is a $K$-linear combination of MZV's and we call it the shuffle product in positive characteristic. As in the classical setting, the main goal of the theory is to understand all linear relations over $K$ among MZV's. Analogues of Zagier-Hoffman's conjectures in positive characteristic were formulated by Thakur in \cite[\S 8]{Tha17} and by Todd in \cite{Tod18}. In 2021, the fourth author \cite{ND21} solved these conjectures in the case of small weights. While the algebraic part uses tools introduced by Chen \cite{Che15}, Thakur \cite{Tha10,Tha17} and Todd \cite{Tod18}, the transcendental part uses the theory of $t$-motives and dual motives of Anderson \cite{And86,BP20,HJ20} and a powerful transcendental tool called the Anderson-Brownawell-Papanikolas criterion in \cite{ABP04} (see \cite{Pap08,Cha14,CPY19} for further development). Then the authors \cite{IKLNDP22} have developed a completely new approach and been able to solve these conjectures for all weights. More precisely, we prove (see \cite[Theorem B]{IKLNDP22}):

\begin{theorem}[Zagier's conjecture in positive characteristic]
For $w \in \N$ we denote by $\mathcal Z_w$ the $K$-vector space spanned by the MZV's of weight $w$. Letting 
\begin{align*}
d(w)=\begin{cases}
1 & \text{ if } w=0, \\
2^{w-1} & \text{ if } 1 \leq w \leq q-1, \\
2^{w-1}-1 & \text{ if } w=q,
\end{cases}
\end{align*}
we put $d(w)=\sum_{i=1}^q d(w-i)$ for $w>q$. Then for any $w \in \N$, we have
	\[ \dim_K \mathcal Z_w = d(w). \]
\end{theorem}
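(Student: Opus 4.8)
The plan is to prove the two inequalities separately: the \emph{algebraic} upper bound $\dim_K\mathcal Z_w\leq d(w)$, by producing for each $w$ a spanning family of $\mathcal Z_w$ of cardinality $d(w)$, and the \emph{transcendental} lower bound $\dim_K\mathcal Z_w\geq d(w)$, by proving that this family is linearly independent over $K$. Throughout, one uses Thakur's theorem that a product of MZV's is a $K$-linear combination of MZV's --- the shuffle product, under which $\mathcal Z_m\cdot\mathcal Z_n\subseteq\mathcal Z_{m+n}$ --- together with the fact that all the relations exploited below are homogeneous for the weight, so that the argument can be carried out inside a fixed $\mathcal Z_w$.

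For the upper bound, the first step is to show, by induction on the weight, that $\mathcal Z_w$ is already spanned by the ``reduced'' MZV's $\zeta_A(s_1,\dots,s_r)$ of weight $w$ with all $s_i\leq q$: using the shuffle identities together with the relations of Chen, Thakur and Todd \cite{Che15,Tha10,Tha17,Tod18}, one rewrites any MZV having an argument $>q$ as a $K$-combination of reduced ones of the same weight. A direct count gives $2^{w-1}$ reduced MZV's for $1\leq w\leq q-1$ and $2^{q-1}$ at $w=q$, whereas $d(w)$ equals $2^{w-1}$ only for $w\leq q-1$ and equals $2^{q-1}-1$ at $w=q$; the missing relation at weight $q$ is furnished by the Frobenius twist $\zeta_A(q\fs)=\zeta_A(\fs)^q$, in particular $\zeta_A(q)=\zeta_A(1)^q$, which after expanding the right-hand side by the shuffle product is a nontrivial identity among weight-$q$ MZV's. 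The heart of the matter is then to control how this relation propagates: multiplying it by MZV's of smaller weight and re-expanding yields, in each weight $w>q$, exactly the relations needed to bring the reduced family down to $d(w)=\sum_{i=1}^{q}d(w-i)$ elements; equivalently, one builds this recursion directly on a spanning family by peeling off the first argument $s_1\in\{1,\dots,q\}$ and invoking the inductive hypothesis in weight $w-s_1$. Organising this bookkeeping uniformly in $w$ is where the Hopf-algebra structure on the shuffle algebra constructed in the present paper enters, in the spirit of the use of the Hopf structure on Hoffman's algebra in Brown's proof of the classical statement (Theorem~\ref{thm: Hoffman}): the coproduct and antipode give a systematic device for generating and reducing the required relations. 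The outcome is $\dim_K\mathcal Z_w\leq d(w)$.

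For the lower bound, one attaches to each MZV $\zeta_A(\fs)$ in a chosen $d(w)$-element subfamily a dual $t$-motive of Anderson, built as in Anderson--Thakur \cite{AT90,And86,BP20,HJ20}, whose periods realise $\zeta_A(\fs)$ (up to a nonzero algebraic factor and a power of the Carlitz period $\widetilde\pi$) together with the periods of its sub-objects, assembles these into one $t$-motive whose period matrix records all the relevant weight-$w$ MZV's, and applies the Anderson--Brownawell--Papanikolas criterion \cite{ABP04} (see also \cite{Pap08,Cha14,CPY19}): any $\overline K$-linear dependence among the entries of that period matrix descends to a $\overline K[t]$-linear dependence among the defining rigid analytic functions. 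It then remains to analyse these functions and their orders of vanishing at the relevant points and to show that the only such polynomial dependences are the trivial ones already accounted for on the algebraic side; one deduces that the $d(w)$ chosen MZV's are linearly independent over $\overline K$, hence over $K$, and, with the upper bound, that $\dim_K\mathcal Z_w=d(w)$.

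I expect the main obstacle to be the algebraic upper bound \emph{uniformly in $w$}: the relations of Chen, Thakur and Todd handle small weights --- this is as far as \cite{ND21} goes --- but showing that the reduced family collapses to precisely $d(w)$ elements for \emph{every} $w$ needs a mechanism producing exactly the right number of relations in all weights simultaneously, which is the genuinely new ingredient supplied by the regularised relations and the shuffle Hopf algebra developed here and in \cite{IKLNDP22}. Once the $t$-motivic setup is in place, the transcendental half is a comparatively standard application of the ABP criterion, although verifying the absence of unexpected polynomial relations is itself delicate.
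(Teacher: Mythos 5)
Your two–part architecture --- an algebraic upper bound $\dim_K\mathcal Z_w\leq d(w)$ by cutting a spanning family down to $d(w)$ elements, and a transcendental lower bound via dual $t$-motives and the Anderson--Brownawell--Papanikolas criterion --- does match the strategy behind this theorem. Be aware, however, that the present paper does not prove the statement at all: it is quoted from \cite[Theorem B]{IKLNDP22} (building on \cite{ND21}), so the ``paper's own proof'' here is a citation. Measured against what the paper reports about those proofs, your sketch contains one substantive misconception and leaves the genuinely hard step unexecuted.

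The misconception is your claim that the Hopf algebra structure constructed in this paper is ``the genuinely new ingredient'' that organises the relations for the upper bound, in the spirit of Brown's use of the Hopf structure on Hoffman's algebra. The paper states the opposite: ``unlike the classical setting, the proofs of the above theorems are based on new ingredients: some operations introduced by Todd \cite{Tod18} and the fourth author \cite{ND21} as well as a transcendence criterion of Anderson-Brownawell-Papanikolas \cite{ABP04}'' --- indeed the entire motivation of the present paper is that \emph{no} algebra (let alone Hopf algebra) structure on the MZV's was known when the conjectures were solved. The logical order is the reverse of the one you assume: \cite{ND21,IKLNDP22} first prove $\dim_K\mathcal Z_w=d(w)$ using combinatorial operations on the power sums $S_d$, $S_{<d}$ together with the identification of $\mathcal Z_w$ with the space $\mathcal L_w$ of Carlitz multiple polylogarithms (Theorem \ref{thm:bridge}, quoted here from \cite[Theorem 4.3]{IKLNDP22}), and only afterwards does this paper construct the stuffle and shuffle Hopf algebras on top of those results. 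Consequently the step you defer --- ``the heart of the matter is then to control how this relation propagates'' so that the reduced family collapses to exactly $d(w)$ elements in every weight --- cannot be discharged by appeal to the coproduct and antipode built here; it is precisely the content of the cited works and is carried out by different means. Your transcendental half is closer to the mark (ABP is indeed the decisive tool), but even there you omit the bridge to the Carlitz multiple polylogarithms, which this paper singles out as the key ingredient of the approach of \cite{IKLNDP22}.
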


\begin{theorem}[Hoffman's conjecture in positive characteristic] 
We keep the above notation. A $K$-basis for $\mathcal Z_w$ is given by  $\mathcal T_w$ consisting of $\zeta_A(s_1,\ldots,s_r)$ of weight $w$ with  $s_i \leq q$ for $1 \leq i <r$, and $s_r<q$.
\end{theorem}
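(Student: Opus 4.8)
The plan is to deduce the statement from the dimension formula $\dim_K\mathcal Z_w=d(w)$ established above: I will show that $\mathcal T_w$ spans $\mathcal Z_w$ over $K$ and that $\#\mathcal T_w=d(w)$, and then a spanning family whose cardinality equals the dimension is automatically a basis. So there are two things to do: a combinatorial count, which is routine, and the spanning statement, which is the real content (it is the ``algebraic part'' of the theorem, proved for small weights by the fourth author in \cite{ND21} and here needed for all $w$).

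For the count, split a tuple $(s_1,\dots,s_r)$ contributing to $\mathcal T_w$ according to its depth. Depth one contributes $\zeta_A(w)$, which is counted precisely when $1\le w\le q-1$; for depth $\ge 2$ the tail $(s_2,\dots,s_r)$ is itself counted by $\mathcal T_{w-s_1}$ while $s_1$ ranges over $\{1,\dots,q\}$. This gives the recursion
	\[ \#\mathcal T_w=[\,1\le w\le q-1\,]+\sum_{i=1}^{q}\#\mathcal T_{w-i}, \qquad \#\mathcal T_0:=0, \]
and a short induction identifies $\#\mathcal T_w$ with $d(w)$: one recovers $2^{w-1}$ for $1\le w\le q-1$, then $2^{q-1}-1$ for $w=q$, and then the Fibonacci-type recursion $d(w)=\sum_{i=1}^q d(w-i)$ for $w>q$.

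For the spanning statement I would argue that every MZV $\zeta_A(s_1,\dots,s_r)$ of weight $w$ lies in $\spn_K\mathcal T_w$ by a double induction: on the weight $w$, and, within a fixed weight, on a well-founded invariant of the tuple such as the depth or a lexicographic order on $(s_r,s_{r-1},\dots,s_1)$. The mechanism is the shuffle product: Thakur proved the $K$-span of all MZV's is closed under it, and Chen's explicit product formula \cite{Che15} supplies, in each weight, a concrete and abundant family of $K$-linear relations among MZV's. The key lemma to establish is a reduction step: if $\zeta_A(s_1,\dots,s_r)\notin\mathcal T_w$, i.e. $s_i>q$ for some $i<r$ or $s_r\ge q$, then some shuffle-type relation --- if necessary combined with the characteristic-$p$ identities $\zeta_A(ps_1,\dots,ps_r)=\zeta_A(s_1,\dots,s_r)^p$ and Carlitz's evaluation of $\zeta_A(n)$ when $(q-1)\mid n$ --- expresses $\zeta_A(s_1,\dots,s_r)$ as a $K$-linear combination of MZV's strictly smaller for the chosen order; by the inductive hypothesis these lie in $\spn_K\mathcal T_w$, hence so does $\zeta_A(s_1,\dots,s_r)$. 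Combining this with the count and the dimension formula, $\mathcal T_w$ is a $K$-basis of $\mathcal Z_w$.

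I expect the reduction lemma to be the main obstacle: one must orchestrate Chen's product formula together with the special relations of Carlitz and Thakur into a single, uniform-in-$w$ procedure that simultaneously removes every interior entry exceeding $q$ and every trailing entry equal to $q$ while never increasing the secondary invariant, and the delicate cases are lowering an interior $s_i>q$ without manufacturing a new large entry, and the boundary case $s_r=q$, which generally must be absorbed into a drop in depth. Finally, a remark on logical dependence: the spanning statement alone already gives $\dim_K\mathcal Z_w\le d(w)$, so if one does not wish to invoke the dimension formula, the missing reverse inequality --- equivalently, the $K$-linear independence of $\mathcal T_w$ --- must be supplied by transcendence theory, realizing each $\zeta_A(\fs)$ as an entry of the period matrix of an explicit Anderson dual $t$-motive and applying the Anderson--Brownawell--Papanikolas criterion \cite{ABP04}, which converts a putative $K$-linear relation into a morphism of the associated $t$-motives that one then shows cannot exist.
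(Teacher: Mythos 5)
You should first note that the paper does not prove this theorem in the present text at all: it is quoted verbatim from \cite[Theorem B]{IKLNDP22} (see also \S 4.1, where the authors say the conjectures ``have been completely solved by the works of \cite{ND21} and \cite{IKLNDP22}''), so there is no in-paper argument to compare against. Your overall architecture --- a cardinality count $\#\mathcal T_w=d(w)$, a spanning statement for $\mathcal T_w$ (the ``algebraic part''), and $K$-linear independence supplied by the Anderson--Brownawell--Papanikolas criterion applied to dual $t$-motives (the ``transcendental part'') --- is exactly the architecture of the cited works, and your recursive count of $\#\mathcal T_w$ is correct.

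The genuine gap is that the two ingredients carrying all the mathematical content are left unestablished. The ``reduction lemma'' you describe is not a routine orchestration of Chen's formula: applying the shuffle product to lower an interior entry $s_i>q$ generically produces terms of \emph{higher} depth whose entries are not controlled, so neither depth nor your lexicographic order on $(s_r,\dots,s_1)$ obviously decreases, and a naive double induction does not terminate. This is precisely why the spanning statement was open beyond small weights and why \cite{IKLNDP22} had to introduce new operators (building on Todd and \cite{ND21}) rather than iterate Chen's relations; asserting that ``some shuffle-type relation'' does the job is the theorem, not a proof of it. Moreover, your main line of argument leans on the dimension formula $\dim_K\mathcal Z_w=d(w)$, but as you yourself observe in the final paragraph, the lower bound in that formula is equivalent to the linear independence of $\mathcal T_w$, which rests on the full ABP/$t$-motive machinery; so invoking the dimension formula does not actually discharge the transcendental part, it merely relocates it. As written, the proposal is a correct roadmap of the known proof with both of its load-bearing steps missing.
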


We mention that in {\it loc. cit.} we also extended these results to the setting of alternating multiple zeta values introduced by Harada \cite{Har21}. We note that the classical alternating multiple zeta values have been studied by Broadhurst, Deligne–Goncharov, Hoffman, Kaneko–Tsumura and many others due to many connections in different contexts. We refer the reader to\cite{Cha21,Del10,Har21,Hof19,Zha16} for some references. 

As pointed out by one of the referees of \cite{ND21}, we do not know any algebraic structures of MZV's in positive characteristic (see \cite[Remark 2.2, Part 1]{ND21}). As mentioned above, unlike the classical setting, the proofs of the above theorems are based on new ingredients: some operations introduced by Todd \cite{Tod18} and the fourth author \cite{ND21} as well as a transcendence criterion of Anderson-Brownawell-Papanikolas \cite{ABP04}. 

\subsection{Main results} ${}$\par

In this manuscript we present a systematic study of algebraic structures of MZV's in positive characteristic. This paper grew from an attempt to answer the question in \cite[Remark 2.2, Part 1]{ND21} raised by one of the referees of {\it loc. cit.}. It turned out that in a private letter to Thakur in 2017, Deligne \cite{Del17} went further and suggested the existence of a Hopf algebra structure of MZV's in positive characteristic. Subsequently, the composition space which plays the role of Hoffman's algebra in our context was suggested by Shuji Yamamoto \cite{Tha17}, and Shi in \cite{Shi18} formulated a conjectural Hopf algebra structure for this composition space (see \cite[Conjectures 3.2.2 and 3.2.11]{Shi18}).

In this paper we succeed in constructing both the Hopf stuffle algebra and the Hopf shuffle algebra in positive characteristic. In particular, we completely solve all the aforementioned questions and conjectures of Deligne, Thakur and Shi in the previous paragraph. Our approach is based on various tools of analytic, algebraic and combinatorial nature.

Let us give now more precise statements of our results. 

\subsubsection{Composition space} \label{section: Composition space} ${}$\par

We introduce the composition space $\frak C$ suggested by Shuji Yamamoto (see \cite[\S 5.2]{Tha17}) which plays the role of the Hoffman algebra $\frak h$ in our context. Let $\Sigma = \{x_n\}_{n \in \mathbb{N}}$ be a countable set equipped with the weight $w(x_n)=n$. The set $\Sigma$ will be called an alphabet and its elements will be called letters. A word over the alphabet $\Sigma$ is a finite string of letters. In particular, the empty word will be denoted by $1$. The depth $\depth(\fa)$ of a word $\fa$ is the number of letters in the string of $\fa$, so that $\depth(1) = 0$. The weight of a word is the sum of the weights of its letter and we put $w(1)=0$. Let $\langle \Sigma \rangle$ denote the set of all words over $\Sigma$. We endow $\langle \Sigma \rangle$ with the concatenation product defined by the following formula: 
\begin{equation*}
    x_{i_1} \dotsc x_{i_n} \cdot x_{j_1} \dotsc x_{j_m} = x_{i_1} \dotsc x_{i_n} x_{j_1} \dotsc x_{j_m}.
\end{equation*} 
Let $\mathfrak{C}=\Fq \langle \Sigma \rangle$ be the free $\Fq$-vector space with basis $\langle \Sigma \rangle$. The concatenation product extends to $\mathfrak{C}$ by linearity. For a letter $x_a \in \Sigma$ and an element $\fa \in \mathfrak{C}$, we write simply $x_a\fa$ instead of $x_a \cdot \fa$. For each nonempty word $\fa \in \langle \Sigma \rangle$, we can write $\fa = x_a \fa_-$ where $x_a$ is the first letter of $\fa$ and $\fa_-$ is the word obtained from $\fa$ by removing $x_a$.

\subsubsection{Shuffle algebra and shuffle map} ${}$\par

We define the unit $u:\Fq \to \frak C$ by sending $1$ to the empty word $1$. Next we define recursively two products on $\mathfrak{C}$ as $\Fq$-bilinear maps
\begin{align*}
   \diamond \colon \mathfrak{C} \times \mathfrak{C} \longrightarrow \mathfrak{C} \quad \text{and} \quad 
   \shuffle \colon \mathfrak{C} \times \mathfrak{C} \longrightarrow \mathfrak{C}
\end{align*}
by setting $1 \diamond \mathfrak{a} = \mathfrak{a} \diamond 1 = \mathfrak{a}, 1 \shuffle  \mathfrak{a} = \mathfrak{a} \shuffle  1 = \mathfrak{a}$ and
\begin{align*}
    \fa \diamond \fb &= \ x_{a + b}(\fa_- \shuffle  \fb_-) + \sum\limits_{i+j = a + b} \Delta^j_{a,b} x_i(x_j \shuffle  (\fa_- \shuffle  \fb_-)),\\
    \fa \shuffle  \fb &= \ x_{a}(\fa_- \shuffle  \fb) + x_{b}(\fa \shuffle  \fb_-) + \fa \diamond \fb,
\end{align*}
for any words $\fa,\fb \in \langle \Sigma \rangle$, 
Here the coefficients $\Delta^i_{a,b}$ are given by
\begin{align*}
\Delta^i_{a,b}=\begin{cases} (-1)^{a-1} {i-1 \choose a-1}+(-1)^{b-1} {i-1 \choose b-1} & \quad \text{if } (q-1) \mid i \text{ and } 0<i<a+b, \\
0 & \quad \text{otherwise}.
\end{cases}
\end{align*}
We call $\diamond$ the diamond product and $\shuffle $ the shuffle product.

Our first result gives an affirmative answer to both questions in \cite[Remark 2.2, Part 1]{ND21} and  \cite[Conjectures 3.2.2 and 3.2.11]{Shi18}. It reads as follows (see Theorems \ref{theorem: algebras} and \ref{thm: shuffle map}):
\begin{theoremx} \label{thm: A}
The spaces $(\mathfrak{C}, \diamond)$ and $(\mathfrak{C}, \shuffle )$ are commutative $\Fq$-algebras.  Further, for all words $\fa, \fb \in \frak C$ we have
\begin{align*}
\zeta_A(\fa\shuffle \fb) =\zeta_A(\fa) \, \zeta_A(\fb).
\end{align*}

If we denote by $\mathcal Z$ the $K$-vector space spanned by MZV's, then the homomorphism of $K$-algebras
\begin{align*}
Z_\shuffle:\frak C \otimes_{\Fq} K &\to \mathcal Z \\
\fa &\mapsto \zeta_A(\fa)
\end{align*}
is called the shuffle map in positive characteristic.
\end{theoremx}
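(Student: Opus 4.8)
The plan is to establish Theorem~\ref{thm: A} in three stages: first the purely combinatorial claim that $(\mathfrak C,\diamond)$ and $(\mathfrak C,\shuffle)$ are commutative $\Fq$-algebras, then the analytic identity $\zeta_A(\fa\shuffle\fb)=\zeta_A(\fa)\,\zeta_A(\fb)$, and finally the formal consequence that $Z_\shuffle$ is a well-defined $K$-algebra homomorphism.

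\emph{Algebra structure.} Commutativity of $\diamond$ and $\shuffle$ is immediate from the defining recursions by induction on depth, using the symmetry of the coefficients $\Delta^i_{a,b}$ in $a$ and $b$ and the commutativity of the inner shuffles that appear. The real content is associativity. I would prove simultaneously, by induction on the total depth $\depth(\fa)+\depth(\fb)+\depth(\fc)$, that $(\fa\shuffle\fb)\shuffle\fc=\fa\shuffle(\fb\shuffle\fc)$ and that the two products are ``compatible'' in the sense needed to run the recursion, i.e.\ that $\diamond$ distributes appropriately over $\shuffle$. Expanding $\fa\shuffle\fb$ via $\fa\shuffle\fb=x_a(\fa_-\shuffle\fb)+x_b(\fa\shuffle\fb_-)+\fa\diamond\fb$ and iterating, both sides of the associativity identity become sums indexed by ways of interleaving the three words, with the $\diamond$-terms contributing the ``contracted'' interleavings weighted by products of $\Delta$'s; one checks the two expansions agree term by term. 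The combinatorial identity that makes this work is a Vandermonde-type relation among the binomial coefficients $\binom{i-1}{a-1}$ modulo $p$, restricted to the arithmetic progression $(q-1)\mid i$; verifying this identity on the $\Delta$'s is, I expect, the main obstacle, since one must control how two successive contractions (an $i+j=a+b$ split followed by a further split) compare with a single three-fold contraction.

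\emph{Analytic identity.} For $\zeta_A(\fa\shuffle\fb)=\zeta_A(\fa)\zeta_A(\fb)$ I would first extend $\zeta_A$ linearly to $\mathfrak C$ so that $\zeta_A(x_{s_1}\dotsm x_{s_r})=\zeta_A(s_1,\dots,s_r)$, with $\zeta_A(1)=1$. The proof is by induction on $\depth(\fa)+\depth(\fb)$. Writing a product of the two defining nested sums over $\deg a_1>\dots$ and $\deg b_1>\dots$, one partitions according to whether the top degrees are unequal (giving the two concatenation terms $x_a(\fa_-\shuffle\fb)$ and $x_b(\fa\shuffle\fb_-)$) or equal. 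The equal-degree case produces, after summing $\sum_{\deg a=\deg b=d}\frac1{a^{s}b^{t}}$ over all monic $a,b$ of degree $d$, a power-sum identity of Carlitz type: for fixed degree $d$, $\bigl(\sum_{\deg a=d}a^{-s}\bigr)\bigl(\sum_{\deg b=d}b^{-t}\bigr)$ equals $\sum_{\deg c=d}c^{-(s+t)}$ plus correction terms $\sum_i\Delta^i_{s,t}\sum_{\deg c=d}c^{-i}\cdot(\text{lower-degree contributions})$, which is exactly the mechanism encoded in the definition of $\diamond$. This reduces the whole identity to the depth-one power-sum formula of Thakur--Carlitz, whose proof I would either cite from \cite{Tha17,Tha04} or reprove using the explicit evaluation of $\sum_{\deg a=d, \, a\in A_+} a^{-s}$ in terms of the Carlitz factorials.

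\emph{Conclusion.} Once the analytic identity holds on words, it extends to all of $\mathfrak C$ by $\Fq$-bilinearity, and then tensoring up to $K$ gives that $Z_\shuffle\colon(\mathfrak C\otimes_{\Fq}K,\shuffle)\to\mathcal Z$ is a homomorphism of $K$-algebras, with image $\mathcal Z$ by definition since every MZV is $\zeta_A$ of a word. The only subtlety here is that $Z_\shuffle$ must be checked to land in $\mathcal Z$ and respect the $K$-algebra structure, but both are formal given Thakur's theorem that products of MZV's are $K$-linear combinations of MZV's together with the word-level identity just proved; well-definedness requires no further input because $\mathfrak C$ is free on $\langle\Sigma\rangle$.
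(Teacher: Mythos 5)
Your overall architecture matches the paper's: commutativity by induction on depth, associativity by induction on the total depth with the whole difficulty concentrated in the depth-one identity among the coefficients, the product formula for $\zeta_A$ by splitting the double sum according to whether the top degrees coincide (the paper simply cites Shi's thesis for this step), and the formal passage to the $K$-algebra homomorphism $Z_\shuffle$. The inductive scaffolding you describe is essentially the paper's Proposition on associativity, which organizes the recursion through the triangle product $\triangleright$ and the identities $\fa\shuffle\fb=\fa\triangleright\fb+\fb\triangleright\fa+\fa\diamond\fb$ and $\fa\diamond\fb=(x_a\diamond x_b)\triangleright(\fa_-\shuffle\fb_-)$.

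However, there is a genuine gap at exactly the point you flag as ``the main obstacle'': the base case $(x_a\diamond x_b)\diamond x_c=x_a\diamond(x_b\diamond x_c)$, i.e.\ the identity comparing two successive Chen contractions with the opposite bracketing. You propose to verify this as a Vandermonde-type congruence among the $\binom{i-1}{a-1}$ restricted to $(q-1)\mid i$, but you do not prove it, and this direct combinatorial route is precisely what the paper (following Shi) reports as intractable. The paper's actual contribution here is to sidestep the binomial computation entirely: it stratifies the triple sum $\sum_{(a,b,c)}a^{-r}b^{-s}c^{-t}$ over monic polynomials of degree $d$ by the coincidence pattern of $(a,b,c)$ (the sets $M_0,\dots,M_3,N_0,\dots,N_4$), expands each stratum in two ways using partial fraction decompositions of $P(A)=A^{-r}(A+u)^{-s}(A+v)^{-t}$, and invokes the \emph{uniqueness} of partial fraction decomposition to conclude that the two bracketings of $(S_d(r)S_d(s))S_d(t)$ yield identical formal expressions in power sums; this is then transported back to the symbols $x_i$. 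Without either carrying out this analytic argument or actually establishing the binomial identity you posit, your proposal does not prove associativity, and hence does not prove that $(\mathfrak C,\diamond)$ and $(\mathfrak C,\shuffle)$ are algebras. The remaining two stages of your plan (the product formula for $\zeta_A$ and the construction of $Z_\shuffle$) are sound as sketched.
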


To prove Theorem \ref{thm: A}, we are reduced to prove that the diamond and shuffle products are associative. It turns out that this claim is very hard to prove since a direct check involves complicated combinatorics as already noticed by Shi \cite[\S 3.1]{Shi18}. Our method uses analytic tools and consists of unpacking the coefficients $\Delta^i_{a,b}$ involved in the definition of the diamond product. Then we use the uniqueness of partial fraction decomposition to prove the desired associativity.  

\subsubsection{Shuffle Hopf algebra} ${}$\par

We also define recursively a product on $\mathfrak{C}$ as a $\Fq$-bilinear map
\begin{align*}
   \triangleright \colon \mathfrak{C} \times \mathfrak{C} \longrightarrow \mathfrak{C}
\end{align*}
by setting $1 \triangleright \mathfrak{a} = \mathfrak{a} \triangleright 1 = \mathfrak{a}$ and
\begin{align*}
    \fa \triangleright \fb &= \ x_{a}(\fa_- \shuffle  \fb)
\end{align*}
for any words $\fa,\fb \in \langle \Sigma \rangle$. We call $\triangleright$ the triangle product. We stress that the triangle product is neither commutative nor associative. Inspired by the work of Shi \cite[\S 3.2.3]{Shi18} we define a coproduct
	\[ \Delta: \frak C \to \frak C \otimes \frak C. \]
using $\triangleright$ rather than the concatenation on recursive steps for words with depth $>1$ (see \S \ref{sec: coproduct}). The counit $\epsilon:\frak C \to \Fq$ is defined as follows: $\epsilon(1)=1$ and $\epsilon(\fu)=0$ otherwise. 

We note that for quasi-shuffle algebras introduced by Hoffman \cite{Hof00} and their generalization, the coproduct is roughly speaking the deconcatenation. The coproduct~$\Delta$ defined as above is completely different from the deconcatenation and involves complicated combinatorics. We refer the reader to Proposition \ref{proposition: Hopf deltas depth 1 <= q^2} and Appendix~\ref{sec: numerical experiments} for numerical calculations of $\Delta$.

Our second result shows that this construction gives rise to a Hopf algebra structure of the shuffle algebra (see Theorem \ref{thm: Hopf algebra for shuffle product}). 
\begin{theoremx}  \label{thm: B}
The connected graded bialgebra $(\frak C,\shuffle,u,\Delta,\epsilon)$ is a connected graded Hopf algebra of finite type over $\Fq$.
\end{theoremx}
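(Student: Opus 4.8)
The plan is to split the statement into three parts that can be treated separately, and then to invoke the general structure theory of connected graded bialgebras. First I would check that $\frak C$, graded by weight as $\frak C=\bigoplus_{w\ge 0}\frak C_w$, is connected and of finite type: every letter $x_n$ has weight $n\ge 1$, so a word of weight $w$ has depth at most $w$ and uses only the letters $x_1,\dots,x_w$, whence $\dim_{\Fq}\frak C_w<\infty$, while $\frak C_0=\Fq\cdot 1$. A straightforward induction on the recursive definitions shows that $\shuffle$ is homogeneous of degree $0$ — here one uses that $\Delta^j_{a,b}$ is supported on $i+j=a+b$, so every term of $\fa\diamond\fb$ and of $\fa\shuffle\fb$ has weight $w(\fa)+w(\fb)$ — and, similarly, that $\Delta(\frak C_w)\subseteq\bigoplus_{i+j=w}\frak C_i\otimes\frak C_j$ and that $u,\epsilon$ are homogeneous. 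Granting this, it remains to verify the coalgebra axioms and the bialgebra compatibility; then $(\frak C,\shuffle,u,\Delta,\epsilon)$ is a connected graded bialgebra over $\Fq$.

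Next I would establish the coalgebra structure. That $\epsilon$ is a counit for $\Delta$ is immediate from the base cases of the recursion: on $1$ and on letters $\Delta$ visibly has the shape $\fa\otimes 1+1\otimes\fa+(\text{middle terms of strictly smaller positive weight in each factor})$, and this shape is preserved by the recursion. The substantive point is coassociativity, $(\Delta\otimes\Id)\circ\Delta=(\Id\otimes\Delta)\circ\Delta$. Since $\Delta$ is built from the triangle product $\triangleright$ rather than from deconcatenation, this is not formal; I would prove it by a double induction on weight and depth, first isolating auxiliary identities expressing $\Delta(x_a\fb)$ and $\Delta(\fa\triangleright\fb)$, $\Delta(\fa\diamond\fb)$ in terms of $\Delta(\fa),\Delta(\fb)$ and the products $\triangleright,\diamond,\shuffle$, and then matching the two iterated coproducts term by term. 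The commutativity and associativity of $\shuffle$ and $\diamond$ supplied by Theorem~\ref{thm: A}, together with the numerology of the $\Delta^i_{a,b}$, will be used throughout.

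The bialgebra compatibility is the remaining step and, I expect, the main obstacle: one must show $\Delta\colon(\frak C,\shuffle)\to(\frak C\otimes\frak C,\shuffle\otimes\shuffle)$ and $\epsilon\colon(\frak C,\shuffle)\to\Fq$ are homomorphisms of $\Fq$-algebras, i.e. $\Delta(\fa\shuffle\fb)=\Delta(\fa)\shuffle\Delta(\fb)$ and $\epsilon(\fa\shuffle\fb)=\epsilon(\fa)\epsilon(\fb)$. The second is trivial. For the first I would induct on weight, expanding $\fa\shuffle\fb=x_a(\fa_-\shuffle\fb)+x_b(\fa\shuffle\fb_-)+\fa\diamond\fb$ on the left and applying the recursion for $\Delta$; the delicate point is that the contributions of these three summands and of the coefficients $\Delta^i_{a,b}$ must reorganize — via the auxiliary identities of the previous step and the associativity of $\shuffle$ and $\diamond$ — into the component-wise shuffle of $\Delta(\fa)$ and $\Delta(\fb)$. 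This is precisely the combinatorial heart of Shi's conjecture; it is parallel in spirit to, but heavier than, the associativity argument behind Theorem~\ref{thm: A}, and I would expect to reuse there the analytic "unpacking" of the $\Delta^i_{a,b}$ and the partial-fraction bookkeeping.

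Finally, once $(\frak C,\shuffle,u,\Delta,\epsilon)$ is known to be a connected graded bialgebra over the field $\Fq$, the antipode is automatic. Set $S(1)=1$ and, for homogeneous $x\in\frak C_w$ with $w\ge 1$ and reduced coproduct $\Delta(x)=x\otimes 1+1\otimes x+\sum x'\otimes x''$ (the $x',x''$ homogeneous of strictly smaller positive weight), define recursively $S(x)=-x-\sum S(x')\shuffle x''$. Induction on $w$ shows that $S$ is a well-defined $\Fq$-linear map with $\shuffle\circ(S\otimes\Id)\circ\Delta=u\circ\epsilon=\shuffle\circ(\Id\otimes S)\circ\Delta$; uniqueness of convolution inverses in $\End_{\Fq}(\frak C)$ then shows $S$ is a two-sided antipode, and it is graded by construction. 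As each $\frak C_w$ is finite-dimensional, $(\frak C,\shuffle,u,\Delta,\epsilon,S)$ is a connected graded Hopf algebra of finite type over $\Fq$, which is the assertion of the theorem.
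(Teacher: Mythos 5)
Your overall architecture coincides with the paper's: grading, connectedness and finite type are checked directly; the counit axiom is read off from the shape $\Delta(\fu)=1\otimes\fu+\fu\otimes 1+(\text{lower terms})$; coassociativity and the compatibility $\Delta(\fa\shuffle\fb)=\Delta(\fa)\shuffle\Delta(\fb)$ are proved by induction on weight using identities for $\Delta(\fa\triangleright\fb)$ and $\Delta(\fa\diamond\fb)$; and the antipode comes for free from the connected graded bialgebra structure (Proposition~\ref{prop: graded Hopf algebras}). The one structural problem is your ordering: you propose to establish coassociativity before compatibility, whereas the paper proves compatibility first (Theorem~\ref{thm: compatibility}) and uses it essentially, at the \emph{same} total weight, throughout the coassociativity argument (Theorem~\ref{thm: coassociativity}). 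Concretely, $\Delta(x_w)$ is \emph{defined} by \eqref{eq: coproduct} as $\Delta(x_1)\shuffle\Delta(x_{w-1})$ minus correction terms, so to compute $(\Id\otimes\Delta)\Delta(x_w)$ one must already know how $\Delta$ interacts with $\shuffle$ on the tensor factors; likewise your auxiliary identity for $\Delta(\fa\triangleright\fb)$ (the paper's Lemma~\ref{lem: delta without factor 1}) is itself derived from compatibility. As written, your plan would stall at the depth-one case of coassociativity; the repair is simply to run the compatibility induction to completion first and then feed it into coassociativity, which is exactly what the paper does.

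A smaller point of method: you anticipate reusing the partial-fraction ``unpacking'' of the coefficients $\Delta^i_{a,b}$ for the compatibility step. In fact the paper's proof of Theorem~\ref{thm: compatibility} is purely algebraic: the engine is the list of interchange laws among $\triangleright$, $\diamond$ and $\shuffle$ in Lemma~\ref{lem: triangle formulas} and Proposition~\ref{prop: key properties} (for instance $(\fa\triangleright\fb)\triangleright\fc=\fa\triangleright(\fb\shuffle\fc)$ and $(\fa\diamond\fb)\triangleright\fc=\fa\diamond(\fb\triangleright\fc)$), together with the identity obtained by applying the induction hypothesis to $\Delta(x_u\shuffle x_v)=\Delta(x_u)\shuffle\Delta(x_v)$. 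The analytic partial-fraction bookkeeping is confined to the associativity of $\shuffle$ in Theorem~\ref{thm: A} and is not revisited here.
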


The proof of Theorem \ref{thm: B} is of algebraic nature by exploiting key properties among the diamond, shuffle and triangle products.

\subsubsection{Comparison with Shi's construction} ${}$\par

Next we study the coproduct $\Delta$ for letters in detail and prove some key properties in Proposition \ref{prop: formula delta xn}. As an immediate consequence, we deduce that the coproduct~$\Delta$ coincides with the coproduct introduced by Shi in \cite[\S 3.2.3]{Shi18}. Thus we settle Conjecture 3.2.11 of \cite{Shi18} (see Theorem \ref{thm: comparison with Shi's coproduct}).

\begin{theoremx} \label{thm: C}
Conjecture 3.2.11 in \cite{Shi18} holds.
\end{theoremx}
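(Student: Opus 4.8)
The plan is to obtain Theorem~\ref{thm: C} as a short consequence of the analysis of $\Delta$ on letters together with the Hopf algebra structure already established, rather than by a direct attack on the combinatorics of Shi's construction.

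First, I would recall from \cite[\S 3.2.3]{Shi18} the recursive definition of Shi's candidate coproduct, which I denote $\Delta_{\mathrm{Shi}} \colon \mathfrak{C} \to \mathfrak{C} \otimes \mathfrak{C}$, together with the precise statement of Conjecture 3.2.11. The key structural observation is that $\Delta_{\mathrm{Shi}}$ is built by the same recursive mechanism as our coproduct $\Delta$ from \S\ref{sec: coproduct}: both coproducts are extended from the letters $x_n \in \Sigma$ to words of depth $>1$ by the rule phrased in terms of the triangle product $\triangleright$ (as opposed to deconcatenation). If the bookkeeping in \cite[\S 3.2.3]{Shi18} is phrased slightly differently, a first small step is to verify that the recursion there is literally the $\triangleright$-recursion used to define $\Delta$; this is purely formal. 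Granting this, $\Delta$ and $\Delta_{\mathrm{Shi}}$ agree on all of $\mathfrak{C}$ as soon as they agree on every letter $x_n$.

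That agreement on letters is exactly what Proposition~\ref{prop: formula delta xn} supplies: the closed form it gives for $\Delta(x_n)$ coincides, term by term, with the expression predicted by Shi in \cite[\S 3.2.3]{Shi18}. Plugging this identity into the common recursion and inducting on $\depth(\fa)$ for $\fa \in \langle \Sigma \rangle$ yields $\Delta(\fa) = \Delta_{\mathrm{Shi}}(\fa)$ for every word $\fa$, hence $\Delta = \Delta_{\mathrm{Shi}}$ on $\mathfrak{C}$ by $\Fq$-linearity. Finally, Theorem~\ref{thm: B} asserts that $(\mathfrak{C}, \shuffle, u, \Delta, \epsilon)$ is a connected graded Hopf algebra of finite type over $\Fq$; transporting this conclusion through the identification $\Delta = \Delta_{\mathrm{Shi}}$ gives precisely the Hopf algebra structure on the composition space predicted by Shi, which is the content of Conjecture 3.2.11.

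The one substantive ingredient is Proposition~\ref{prop: formula delta xn}, i.e.\ identifying the explicit value of $\Delta(x_n)$; its proof rests on understanding how the coefficients $\Delta^i_{a,b}$ propagate through the diamond and shuffle products and interact with $\triangleright$. Since that proposition is available to us here, the proof of Theorem~\ref{thm: C} itself reduces to matching two recursive definitions and invoking Theorem~\ref{thm: B}, so I do not anticipate any real obstacle at this final stage.
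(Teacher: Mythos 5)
There is a genuine gap at the very first step, and it is precisely the point on which the whole comparison turns. You assert that Shi's recursion for words of depth $>1$ ``is literally the $\triangleright$-recursion used to define $\Delta$'' and that checking this is purely formal. It is not: Shi's coproduct $\Delta_1$ extends from letters to $x_u\fv$ by the rule $\Delta_1(x_u\fv)=1\otimes x_u\fv+\sum (a_u\, a_\fv)\otimes(b_u\shuffle b_\fv)$, i.e.\ with the \emph{concatenation} $a_u a_\fv$ in the left tensor slot, whereas $\Delta$ uses $a_u\triangleright a_\fv$. These two operations genuinely differ as soon as $\depth(a_u)\geq 2$ and $a_\fv\neq 1$ (for instance $x_2x_3\triangleright x_5=x_2(x_3\shuffle x_5)\neq x_2x_3x_5$), so the reduction ``the recursions coincide, hence agreement on letters suffices'' collapses. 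The substantive input from Proposition~\ref{prop: formula delta xn} is not that $\Delta(x_n)$ matches some closed form of Shi's, but that every left tensor factor $a_u$ occurring in $\Delta(x_u)$ has depth at most~$1$; on such elements $a\triangleright\fb=a\fb$, and \emph{this} is what reconciles the two recursions (see Proposition~\ref{prop: comparison with Shi's coproduct}). Without that observation your argument has no way to bridge $\triangleright$ and concatenation.

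A secondary issue is the shape of the induction. Even after the recursions are reconciled, an induction on $\depth(\fa)$ does not close: by \eqref{eq: coproduct}, $\Delta(x_w)$ (a depth-one word) is defined through $\Delta$ of the depth-two words $x_{w-j}x_j$ of the \emph{same} weight $w$, and the same is true of $\Delta_1(x_w)$. So one cannot compare the two coproducts ``on letters first''; the correct induction is on the weight $w$, treating at each weight first the words of depth $\geq 2$ (where the depth-$\leq 1$ property of the left factors is used) and only then the letter $x_w$. Your final step --- transporting Theorem~\ref{thm: B} through the identification $\Delta=\Delta_1$ to conclude Conjecture 3.2.11 --- is fine once the identification is actually established.
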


We note that to convince ourselves its validity, we made intensive numerical calculations (see Proposition \ref{proposition: Hopf deltas depth 1 <= q^2} and the Appendix \ref{sec: numerical experiments}). The proof of Theorem~\ref{thm: C} is based on combinatorial techniques.

\subsubsection{Stuffle algebra and stuffle Hopf algebra} ${}$\par

The stuffle algebra is easier to define. We introduce the stuffle product in the same way as that of $(\frak h^1,*)$ as above. The $*$ product 
\begin{align*} 
   * \colon \mathfrak{C} \times \mathfrak{C} \longrightarrow \mathfrak{C}
\end{align*}
is given by setting $1 *  \mathfrak{a} = \mathfrak{a} * 1 = \mathfrak{a}$ and
\begin{align*}
    \fa *  \fb &= \ x_{a}(\fa_- *  \fb) + x_{b}(\fa * \fb_-) + x_{a+b} (\fa_- * \fb_-)
\end{align*}
for any words $\fa,\fb \in \langle \Sigma \rangle$. We call $*$ the \textit{stuffle product} and see that $(\frak C,*)$ is a commutative $\Fq$-algebra.

We now define a coproduct $\Delta_*:\frak C  \to \frak C \otimes \frak C$ and a counit $\epsilon:\frak C \to \Fq$ by
	\[ \Delta_*(w)= \sum_{uv=w} u \otimes v \]
and 
\begin{align*}
\epsilon(w)=\begin{cases} 1 \quad \text{if } w=0, \\ 0 \quad \text{otherwise}, \end{cases}
\end{align*}
for any words $w \in \langle \Sigma \rangle$. 

We deduce from the work of Hoffman \cite{Hof00} that
\begin{theoremx}  \label{thm: D}
The stuffle algebra $(\frak C,*,u,\Delta_*,\epsilon)$ is a connected graded Hopf algebra of finite type over $\Fq$.
\end{theoremx}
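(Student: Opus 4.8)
The plan is to recognize $(\mathfrak{C},*)$ as a quasi-shuffle algebra in the sense of Hoffman \cite{Hof00} and then invoke the general structure theorem for such algebras. First I would verify that the stuffle product $*$ is exactly the quasi-shuffle product associated to the commutative and associative operation $\diamond_{\Sigma}$ on the $\Fq$-span of the alphabet $\Sigma$ given by $x_a \diamond_{\Sigma} x_b = x_{a+b}$. Since $(a,b) \mapsto a+b$ on $\mathbb{N}$ is commutative and associative, Hoffman's theorem applies and tells us that $(\mathfrak{C},*)$ is a commutative $\Fq$-algebra — this is the assertion already recorded in the paragraph preceding the statement — and moreover that it carries the structure of a Hopf algebra when equipped with the deconcatenation coproduct $\Delta_*$ and the counit $\epsilon$. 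The compatibility of $*$ with $\Delta_*$ (i.e. that $\Delta_*$ is an algebra homomorphism $(\mathfrak{C},*) \to (\mathfrak{C}\otimes\mathfrak{C},*\otimes *)$) and the counit axioms are precisely the content of \cite[Theorem 3.1 and its surroundings]{Hof00}, so this step is essentially a citation once the identification of the product is in place.

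Next I would check the remaining adjectives in the statement: \emph{graded}, \emph{connected}, and \emph{of finite type}. Grading: assign to each word $\fa$ its weight $w(\fa)$; the recursive defining formula for $*$ shows that $\fa * \fb$ is a sum of words each of weight $w(\fa)+w(\fb)$ (the terms $x_a(\fa_-*\fb)$, $x_b(\fa*\fb_-)$, $x_{a+b}(\fa_-*\fb_-)$ all have the right weight by induction on depth), so $*$ is graded; and $\Delta_*(w)=\sum_{uv=w} u\otimes v$ splits a word into two subwords whose weights add up to $w(\fw)$, hence $\Delta_*$ is graded as well. Connected: the degree-zero part is $\Fq\cdot 1$, one-dimensional, because the only word of weight $0$ is the empty word $1$. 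Finite type: for each $w\in\mathbb{Z}^{\geq 0}$ there are only finitely many words $x_{i_1}\cdots x_{i_r}$ with $i_1+\cdots+i_r=w$ (a composition of $w$ into positive parts), so each graded piece $\mathfrak{C}_w$ is finite-dimensional over $\Fq$.

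Finally, I would note that a connected graded bialgebra of finite type over a field is automatically a Hopf algebra: the antipode is constructed degree by degree by the standard recursion $S(\fw)=-\fw-\sum S(\fw')\,\fw''$ over the nontrivial part of $\Delta_*(\fw)=\fw\otimes 1+1\otimes\fw+\sum \fw'\otimes\fw''$, which terminates because every $\fw'$ appearing has strictly smaller weight than $\fw$ and the degree-zero part is spanned by the unit. Assembling these observations gives the theorem. I do not expect any genuine obstacle here: the only point requiring care is the bookkeeping identification of $*$ with Hoffman's quasi-shuffle product and the verification that $\Delta_*$ is multiplicative for $*$, which is a routine induction on the total depth $\depth(\fa)+\depth(\fb)$ using the recursive definitions of $*$ and $\Delta_*$ — the same computation that appears, mutatis mutandis, in the classical setting of $(\mathfrak{h}^1,*)$.
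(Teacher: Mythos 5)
Your proposal is correct and follows essentially the same route as the paper: the paper also deduces Theorem D directly from Hoffman's quasi-shuffle machinery (its Theorem \ref{theorem: Hopf algebras for qp}, itself a consequence of \cite[Theorems 2.1 and 3.1]{Hof00} together with Proposition \ref{prop: graded Hopf algebras} on antipodes for connected graded bialgebras), after identifying $*$ as the quasi-shuffle product attached to $x_a \diamond x_b = x_{a+b}$. The grading, connectedness, and finite-type checks you give are exactly the ones implicit in the paper's one-line derivation.
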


\subsubsection{Stuffle map} ${}$\par

Finally, using our previous works \cite{IKLNDP22,ND21} we know that there is a connection between MZV's of Thakur and Carlitz's multiple polylogarithms. Thus we are able to construct a homomorphism of $K$-algebras called the stuffle map (see \S \ref{sec: stuffle map}).
\begin{theoremx}  \label{thm: E}
Recall that $\mathcal Z$ is the $K$-vector space spanned by MZV's. Then there exists a  homomorphism of $K$-algebras
	\[ Z_*:\frak C \otimes_{\Fq} K \to \mathcal Z \] 
called the stuffle map in positive characteristic.
\end{theoremx}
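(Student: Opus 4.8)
The plan is to construct $Z_*$ by composing the known connection between Thakur's MZV's and Carlitz multiple polylogarithms with a suitable "stuffle-type" series identity, in analogy with the classical regularization of Ihara--Kaneko--Zagier but adapted to the positive characteristic setting. Recall from \cite{IKLNDP22,ND21} that each MZV $\zeta_A(\fs)$ can be expressed as a $K$-linear combination of values of Carlitz multiple polylogarithms at suitable algebraic points, and conversely the relevant polylogarithm values lie in $\mathcal Z$. The essential point is that the multiple polylogarithm series, unlike the MZV series, are indexed by ordinary iterated sums over $k_1 > \dots > k_r \geq 0$ (or $\geq 0$ with non-strict inequalities in the auxiliary variables), and products of such series naturally decompose according to the stuffle rule: when two summation indices coincide one lands on the "diagonal" term $x_{a+b}(\fa_- * \fb_-)$, exactly mirroring the recursion defining $*$. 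First I would make precise, following \S\ref{sec: stuffle map} and the cited works, the $K$-linear map $\frak C \otimes_{\Fq} K \to \mathcal Z$ on the level of words, sending $\fa = x_{s_1}\cdots x_{s_r}$ to the MZV $\zeta_A(s_1,\dots,s_r)$ (or rather to the corresponding polylogarithm-type value that agrees with it), and extend by $K$-linearity.

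Next I would verify that this map respects the stuffle product, i.e. $Z_*(\fa * \fb) = Z_*(\fa) Z_*(\fb)$. The strategy here is to run the recursion: writing $\fa = x_a \fa_-$ and $\fb = x_b \fb_-$, the product of the two associated series splits into three pieces according to whether the largest summation index comes from the first factor, the second factor, or is shared, which produces precisely the three terms $x_a(\fa_- * \fb) + x_b(\fa * \fb_-) + x_{a+b}(\fa_- * \fb_-)$. Since $(\frak C, *)$ is already known to be a commutative $\Fq$-algebra (stated in the excerpt) and $\Delta_*$-structure makes it a Hopf algebra (Theorem \ref{thm: D}), it suffices to check the multiplicativity on the generating family of words and then invoke bilinearity over $K$; associativity of $*$ guarantees the induction is well posed. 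One must be careful that the "stuffle series" used are the polylogarithm incarnations, and that the dictionary of \cite{IKLNDP22,ND21} is compatible with taking products termwise; I would isolate this compatibility as a lemma, quoting the precise statements from {\it loc. cit.}

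The main obstacle I anticipate is \emph{convergence and regularization at the boundary}: in positive characteristic the naive stuffle identity for the polylogarithm series may involve terms that are individually divergent (corresponding to $s_r$ not in the convergent range, analogous to the classical $\zeta(1)$ issue), so one needs a regularization procedure to make sense of $Z_*$ on all of $\frak C$, not merely on the convergent sub-object. I would handle this either by working with the formal series in an auxiliary variable and specializing, or by using the $t$-motivic / Anderson generating function framework already invoked in \cite{IKLNDP22}, where the polylogarithms extend analytically; the key is that the target is $\mathcal Z$ itself, so once the identity is established on a spanning set of convergent words and shown to be consistent under the defining relations of $(\frak C,*)$, it propagates. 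A secondary technical point is checking that the map is well defined independently of how a given element of $\mathcal Z$ is written — but since $Z_*$ is defined on $\frak C \otimes_{\Fq} K$ (a free object) and lands in $\mathcal Z$, this is automatic, and the content is entirely in the multiplicativity. In summary: define $Z_*$ word-by-word via the polylogarithm dictionary, prove multiplicativity by the three-term stuffle recursion matching the definition of $*$, and address the regularization of non-convergent words using the analytic tools of \cite{IKLNDP22}.
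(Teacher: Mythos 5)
Your overall route is the paper's route: realize the map through Carlitz multiple polylogarithms, prove stuffle-multiplicativity by the three-term splitting of products of iterated sums over integer indices, and use the identification of the span of CMPL's with $\mathcal Z$ from \cite{IKLNDP22,ND21} to land in the correct target. However, there is one genuine confusion that, resolved the wrong way, sinks the proof. You propose to send a word $\fa = x_{s_1}\cdots x_{s_r}$ to $\zeta_A(s_1,\dots,s_r)$ ``or rather to the corresponding polylogarithm-type value that agrees with it.'' These two candidates do \emph{not} agree in general: $\Li(\fs)=\zeta_A(\fs)$ only when all $s_i\le q$ (this is the lemma quoted in \S\ref{sec: stuffle map}). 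The map $Z_*$ must be $\fa\mapsto\Li(\fa)$ and not $\fa\mapsto\zeta_A(\fa)$: the latter is multiplicative for Thakur's shuffle product $\shuffle$ (Theorem \ref{thm: A}), not for $*$, and the two products on $\frak C$ genuinely differ. The reason $\Li(\fa)$ lies in $\mathcal Z$ at all is not a termwise identity with an MZV but the nontrivial Theorem \ref{thm:bridge} ($\mathcal Z_w=\mathcal L_w$), which is exactly where the cited works enter. Once this is fixed, your verification of multiplicativity by splitting on the largest summation index (carried out at the level of the truncated sums $\Si_{<d}$ and then letting $d\to\infty$) is precisely Proposition \ref{lastProp}.

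Your paragraph on regularization addresses a problem that does not exist in this setting. Unlike the classical case, every series in sight converges: $|\ell_d|_\infty$ grows rapidly with $d$, so $\Li(\fs)$ converges for every tuple of positive integers, and likewise $\zeta_A(\fs)$ converges even when $s_r=1$. There are no non-admissible words, hence no analogue of $\zeta(1)$, and no need for an auxiliary-variable or $t$-motivic regularization; the map is defined on all of $\frak C\otimes_{\Fq}K$ directly, and the only nontrivial input beyond the elementary stuffle identity for the $\Si$-sums is Theorem \ref{thm:bridge}.
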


\subsection{Organization of the paper} ${}$\par

We briefly explain the plan of the manuscript. 
\begin{itemize}
\item In \S \ref{sec: Hopf algebras} we recall the definition and basic facts of different notions of algebras (algebras, coalgebras, bialgebras and Hopf algebras) that will be used in this paper. 

\item In \S \ref{sec: classical MZV's} we present the stuffle algebra and the shuffle algebra of MZV's in the classical setting. We recall different zeta maps and Hopf algebra structures associated to these algebras as well as important conjectures and theorems concerning these objects.

\item In \S \ref{sec: Thakur MZV's} we introduce the notion of MZV's in positive characteristic and define the composition space that is an analogue of Hoffman's algebra in this context.

\item In \S \ref{sec: shuffle algebra 0} and \S \ref{sec: algebra structure} we define the shuffle product $\shuffle$ and the shuffle map (see Theorem \ref{thm: shuffle map}). Using partial fraction decompositions we prove that the composition space equipped with the shuffle map is a commutative $\Fq$-algebra (see Theorem \ref{theorem: algebras}).

\item In \S \ref{sec: Hopf algebra structure} we define the coproduct $\Delta$ and the counit on the shuffle algebra. We prove that these give a Hopf algebra structure of the shuffle algebra (see Theorem \ref{thm: Hopf algebra for shuffle product}). In \S \ref{sec: depth one} we study the coproduct for words of depth one in detail and deduce that the coproduct $\Delta$ coincide with that introduced by Shi (see Theorem \ref{thm: comparison with Shi's coproduct}). Explicit formulas for the coproduct of such words are given in many cases in \S \ref{sec: explicit formula for small weights} and the Appendix \ref{sec: numerical experiments}.

\item Finally, in \S \ref{sec: stuffle algebra} we introduce the stuffle product $*$ and get the algebra and the Hopf algebra structures of the stuffle product as well as the stuffle map in our context (see Theorem  \ref{thm: Hopf algebra for stuffle product}). The key ingredient is a connection between MZV's and Carlitz multiple polylogarithms as explained in \cite{ND21} and \cite{IKLNDP22}.
\end{itemize}

\subsection*{Acknowledgments}  

The fourth author (T. ND.) would like to thank Dinesh Thakur for sharing a private letter of Deligne and for helpful discussions. 

The first named author (B.-H. Im) was supported by the National Research Foundation of Korea (NRF) grant no.~2020R1A2B5B01001835 funded by the Korea government (MSIT). Two of the authors (KN. L. and T. ND.) were partially supported by the Excellence Research Chair ``$L$-functions in positive characteristic and applications'' financed by the Normandy Region.  T. ND. and LH. P. were par-
tially supported by the Vietnam Academy of Science and Technology (VAST) under
grant no.~CTTH00.02/23-24 ``Arithmetic and Geometry of schemes over function fields and applications''.


\section{Review of Hopf algebras} \label{sec: Hopf algebras}

We briefly review the notion of Hopf algebras and follow closely the presentation of \cite[\S 3.2]{BGF}.  Throughout this section, we let $k$ denote a ground field. Unless otherwise specified, all tensor products will be assumed to be over $k$. 

\subsection{Hopf algebras} ${}$\par

For each $k$-vector space $H$, we denote by $\iota \colon H \otimes H \rightarrow H \otimes H $ the transposition map given by $x \otimes y \mapsto y \otimes x$.
\begin{definition}
    An \textit{algebra} over $k$ is a triple $(H, m,u)$ consisting of a $k$-vector space $H$ together with $k$-linear maps  $m \colon H \otimes H \rightarrow H$ called the \textit{multiplication} and $u \colon k \rightarrow H$ called the \textit{unit} such that the following diagrams are commutative:
   \begin{enumerate}[$(1)$]
       \item  associativity
    \begin{center}
        \begin{tikzcd}
H\otimes H \otimes H \arrow[r, "m \otimes \text{id}"] \arrow[d, " \text{id} \otimes m "'] & H \otimes H \arrow[d, "m"] \\
H \otimes H \arrow[r, "m"]                                                                     & H                              
\end{tikzcd}
    \end{center}
\item unitary
    \begin{center}
    \begin{tikzcd}
H \otimes k \arrow[rd] \arrow[r, "\text{id} \otimes u"] & H \otimes H \arrow[d, "m"] & k \otimes H \arrow[ld] \arrow[l, "u \otimes \text{id}"'] \\
                                                           & H                               &                                                 \end{tikzcd}
    \end{center}
    where the diagonal arrows are canonical isomorphisms. 
   \end{enumerate}  
    The algebra is said to be \textit{commutative} if the following diagram is commutative:
    \begin{center}
        \begin{tikzcd}
H \otimes H \arrow[rd, "m"'] \arrow[rr, "\iota"] &   & H \otimes H \arrow[ld, "m"] \\
                                                  & H &                                 
\end{tikzcd}
    \end{center}
\end{definition}
Turning all arrows around, one obtains the definition of coalgebras over $k$.
\begin{definition}
    A \textit{coalgebra} over $k$ is a triple $(H, \Delta,\epsilon)$ consisting of a $k$-vector space $H$ together with $k$-linear maps  $\Delta \colon H  \rightarrow H \otimes H$ called the \textit{coproduct} and $\epsilon \colon H \rightarrow k$ called the \textit{counit} such that the following diagrams are commutative:
\begin{enumerate}[$(1)$]
    \item  coassociativity
    \begin{center}
       \begin{tikzcd}
H \otimes H\otimes H                              & \otimes H \arrow[l, "\Delta\otimes \text{id} "'] \\
H \otimes H \arrow[u, "\text{id} \otimes \Delta"] & {H} \arrow[u, "\Delta"'] \arrow[l, "\Delta"']  
\end{tikzcd}
    \end{center}
 \item counitary
    \begin{center}
    \begin{tikzcd}
H \otimes k & H \otimes H \arrow[l, "\text{id} \otimes \epsilon "'] \arrow[r, "\epsilon \otimes \text{id}"] & k \otimes H \\
            & H \arrow[u, "\Delta"'] \arrow[ru] \arrow[lu]                                                  &            
\end{tikzcd}
    \end{center}
    where the diagonal arrows are canonical isomorphisms. 
    \end{enumerate}
    The coalgebra is said to be \textit{cocommutative} if the following diagram is commutative:
    \begin{center}
        \begin{tikzcd}
H \otimes H \arrow[rr, "\iota"] &                                              & H \otimes H \\
                            & H \arrow[ru, "\Delta"'] \arrow[lu, "\Delta"] &            
\end{tikzcd}
    \end{center}
\end{definition}

\begin{definition}
    A \textit{bialgebra} over $k$ is a tuple $(H,m, u, \Delta,\epsilon)$ consisting of an algebra $(H,m, u)$ over $k$  and a coalgebra $(H,\Delta,\epsilon)$ over $k$   which are compatible, i.e., the following diagrams are commutative:
\begin{enumerate}[$(1)$]
    \item product and coproduct
    \begin{center}
      \begin{tikzcd}
H \otimes H \arrow[d, "\Delta \otimes \Delta"'] \arrow[r, "m"]                  & H \arrow[r, "\Delta"] & H \otimes H                                                          \\
H \otimes H \otimes H \otimes H  \arrow[rr, "\text{id} \otimes \iota \otimes \text{id}"] &                       & H \otimes H \otimes H \otimes H  \arrow[u, "m \otimes m"']
\end{tikzcd}
    \end{center}
 \item unit and coproduct
    \begin{center}
    \begin{tikzcd}
H \arrow[r, "\Delta"]         & H \otimes H                                 \\
k \arrow[u, "u"] \arrow[r] & k \otimes k \arrow[u, "u \otimes u"']
\end{tikzcd}
    \end{center}
\item counit and product
    \begin{center}
    \begin{tikzcd}
H \arrow[d, "\epsilon"'] & H \otimes H \arrow[d, "\epsilon \otimes \epsilon"] \arrow[l, "m"'] \\
k                        & k \otimes k \arrow[l]                                                  
\end{tikzcd}
    \end{center}
  \item unit and counit
\begin{center}
    \begin{tikzcd}
k \arrow[rr, "\text{id}"] \arrow[rd, "u"'] &                           & k \\
                                              & H \arrow[ru, "\epsilon"'] &  
\end{tikzcd}
\end{center}
where the bottom arrows in the second diagram and the third diagram are canonical isomorphisms. 
\end{enumerate} 
\end{definition}

\begin{definition}
    A \textit{Hopf algebra} over $k$ is a bialgebra $(H,m, u, \Delta,\epsilon)$ over $k$ together with a $k$-linear map $S\colon H \rightarrow H$ called \textit{antipode} such that the following diagram is commutative:
    \begin{center}
        \begin{tikzcd}
                                                                    & H \otimes H \arrow[rr, "S \otimes \text{id}"] &                      & H \otimes H \arrow[rd, "m"]   &   \\
H \arrow[rr, "\epsilon"] \arrow[ru, "\Delta"] \arrow[rd, "\Delta"'] &                                               & k \arrow[rr, "u"] &                                    & H \\
                                                                    & H \otimes H \arrow[rr, "\text{id}\otimes S "] &                      & H \otimes H \arrow[ru, "m"'] &  
\end{tikzcd}
    \end{center}
\end{definition}

We note that a bialgebra does not always admit an antipode (see \cite[Exercise 3.83]{BGF} for an example).

\subsection{Graded Hopf algebras} ${}$\par

In this section we introduce the notion of connected graded bialgebras which will be useful. We will see later that every connected graded bialgebra has an antipode, and hence a Hopf algebra structure. 

\begin{definition} \label{defn: graded Hopf algebra}
\text{ }
\begin{enumerate}[$(1)$]
    \item   A bialgebra $(H,m, u, \Delta,\epsilon)$ over $k$ is said to be \textit{graded} if one can write $H$ as a direct sum of $k$-vectors subspaces
    \begin{equation*}
        H = \bigoplus \limits_{n = 0}^{\infty}H_n,
    \end{equation*}
    such that for all integers $r,s\geq 0$, we have
    \begin{equation*}
        m(H_r \otimes H_s) \subseteq H_{r + s} \quad \text{and} \quad \Delta(H_r) \subseteq \bigoplus \limits_{i + j = r} H_i \otimes H_j.
    \end{equation*}
    A graded bialgebra is said to be \textit{connected} if $H_0 = k$.
\item A \textit{graded Hopf algebra} is a Hopf algebra $H$ whose the underlying bialgebra is graded and the antipode $S$ satisfies $S(H_n) \subseteq H_n$.

\item A graded Hopf algebra is said to be \textit{connected} if $H_0 = k$.

\item A graded Hopf algebra is said to be {\it of finite type} if $H_n$ is a $k$-vector space of finite dimension.
\end{enumerate} 
\end{definition}

 The following proposition shows that a connected graded bialgebra automatically admits an antipode, thus it is always a Hopf algebra. It is given as an exercise in \cite[\S 11.2]{Swe69} (see also \cite[Exercise 3.84]{BGF}).
\begin{proposition} \label{prop: graded Hopf algebras}
    Let $(H,m, u, \Delta,\epsilon)$ be a connected graded bialgebra over $k$.
\begin{enumerate}[$(1)$]
\item For each element $x \in H_n$ with $n \geq 1$, we have
    \begin{equation*}
         \Delta(x) = 1 \otimes x + x \otimes 1 + \sum x_{(1)} \otimes x_{(2)},
    \end{equation*}
    where $\sum   x_{(1)} \otimes x_{(2)} \in \bigoplus \limits_{\substack{i,j > 0 \\ i + j = n}} H_i \otimes H_j$. Moreover, the counit $\epsilon$ vanishes on~$H_n$ for all $n \geq 1$.
\item  We continue the notation as in (1) and define recursively a $k$-linear map $S \colon H \rightarrow H$ given by
    \begin{align*}
S(x) = \begin{cases}  x & \quad \text{if } x \in H_0, \\
- x - \sum  m(S(x_{(1)}) \otimes x_{(2)}) & \quad \text{if } x \in H_n \text{ with } n \geq 1.
\end{cases}
\end{align*}
Then $H$ is a graded Hopf algebra whose antipode is $S$.
\end{enumerate}
\end{proposition}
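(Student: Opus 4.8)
The plan is to prove (1) and (2) simultaneously by induction on the grading, organizing the antipode condition of (2) as the assertion that $\mathrm{id}_H$ is invertible in the convolution monoid $\bigl(\mathrm{End}_k(H),\, *,\, u\epsilon\bigr)$, where $f * g := m \circ (f \otimes g) \circ \Delta$. Recall that this monoid is associative by associativity of $m$ together with coassociativity of $\Delta$, and that $u\epsilon$ is a two-sided identity for $*$ by the counit axioms; I would use these two facts at the very end.

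For part (1), fix $x \in H_n$ with $n \geq 1$. Since $\Delta(H_n) \subseteq \bigoplus_{i+j=n} H_i \otimes H_j$ and $H_0 = k$, we may write $\Delta(x) = 1 \otimes a_x + b_x \otimes 1 + \sum x_{(1)} \otimes x_{(2)}$ with $a_x, b_x \in H_n$ and $\sum x_{(1)} \otimes x_{(2)} \in \bigoplus_{i,j > 0,\, i+j=n} H_i \otimes H_j$. The unit--counit compatibility in the bialgebra axioms gives $\epsilon(1) = 1$. I would then prove, by induction on $n$, that $\epsilon$ kills $H_m$ for $1 \leq m < n$: granting this, every factor $x_{(1)}$ above lies in some $H_i$ with $0 < i < n$, so applying the counit identity $(\epsilon \otimes \mathrm{id}) \circ \Delta = \mathrm{id}$ annihilates the middle sum and leaves $a_x + \epsilon(b_x)\,1 = x$; as $H_0 \cap H_n = 0$ this forces $a_x = x$ (and $\epsilon(b_x) = 0$). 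Applying the mirror identity $(\mathrm{id} \otimes \epsilon) \circ \Delta = \mathrm{id}$ likewise yields $b_x = x$ and $\epsilon(x)\,1 = 0$, i.e. $\epsilon(x) = 0$. This closes the induction and establishes the displayed form of $\Delta(x)$ together with $\epsilon|_{H_n} = 0$.

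For part (2), write $\overline{\Delta}(x) := \Delta(x) - 1 \otimes x - x \otimes 1 = \sum x_{(1)} \otimes x_{(2)}$. By part (1), $\overline{\Delta}(H_n) \subseteq \bigoplus_{i+j=n,\, i,j \geq 1} H_i \otimes H_j$, so each $x_{(1)}$ has degree strictly below $n$; hence the recursion defining $S$ is well founded, and an immediate induction using $m(H_i \otimes H_j) \subseteq H_{i+j}$ shows $S(H_n) \subseteq H_n$. By construction $S(x) + x + \sum m\bigl(S(x_{(1)}) \otimes x_{(2)}\bigr) = 0$ for all $x \in H_n$ with $n \geq 1$, while for $x \in H_0$ one checks directly $m \circ (S \otimes \mathrm{id}) \circ \Delta(x) = x = u\epsilon(x)$; together with $\epsilon|_{H_n} = 0$ from part (1), this says precisely $S * \mathrm{id} = u\epsilon$. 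Introducing the mirror recursion $S'(x) = x$ on $H_0$ and $S'(x) = -x - \sum m\bigl(x_{(1)} \otimes S'(x_{(2)})\bigr)$ on $H_n$ for $n \geq 1$ gives, by the symmetric computation, $\mathrm{id} * S' = u\epsilon$. Now the associativity of $*$ and the identity property of $u\epsilon$ give $S = S * (u\epsilon) = S * (\mathrm{id} * S') = (S * \mathrm{id}) * S' = (u\epsilon) * S' = S'$, so $S$ is a two-sided convolution inverse of $\mathrm{id}_H$; this is exactly the antipode axiom, and combined with $S(H_n) \subseteq H_n$ it exhibits $(H, m, u, \Delta, \epsilon, S)$ as a graded Hopf algebra. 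The one point requiring genuine care is the bookkeeping in part (1): the induction must be arranged so that $\epsilon|_{H_m} = 0$ is available for every degree $m$ strictly below $n$ before it is used to discard the reduced part of $\Delta(x)$. Once that is in place, everything else is a routine verification in the convolution monoid.
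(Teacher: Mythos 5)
Your proof is correct, and it is the standard convolution-monoid argument (Takeuchi-style inverse of $\mathrm{id}_H$ in $(\mathrm{End}_k(H),*,u\epsilon)$) that the cited references contain; the paper itself does not prove the proposition but simply refers to \cite[Lemma 2.1]{Ehr96} (it is also an exercise in Sweedler). The two points that need care --- arranging the induction so that $\epsilon|_{H_m}=0$ is available for $0<m<n$ before discarding the reduced part of $\Delta(x)$, and producing the mirror recursion $S'$ so that associativity of convolution forces $S=S'$ --- are both handled correctly.
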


\begin{proof}
See \cite[Lemma 2.1]{Ehr96}.
\end{proof}


\section{Classical multiple zeta values} \label{sec: classical MZV's}

In this section we review classical multiple zeta values studied by Euler in the late eighteenth century. In \S \ref{sec: quasi-shuffle algebras} we recall the theory of quasi-shuffle algebras introduced by Hoffman in 2000 and give basic facts such as the associated Hopf algebra structure. The stuffle algebra and the shuffle one defined in \S \ref{sec: Hoffman algebra} and \S \ref{sec: stuffle and shuffle algebras} are examples of this class. We explain their connection with MZV's and the regularization of Ihara-Kaneko-Zagier in \S \ref{sec: MZV's} and \S \ref{sec: regularization}.

\subsection{Quasi-shuffle algebras}  ${}$\par \label{sec: quasi-shuffle algebras}

We review the notion of quasi-shuffle product introduced by Hoffman \cite{Hof00}. 
Let $\Sigma = \{x_i\}_{i \in \mathbb{N}}$ be a countable set. To each letter $x_i$ we associate a weight $w(x_i) \in \bN$ and we suppose that for any $n \in \bN$ the set $\Sigma_n$ of letters of weight $n$ is finite.  In this context, we follow the notations in Section \ref{section: Composition space}.

Let $\langle \Sigma \rangle$ denote the set of all words over $\Sigma$. 
We denote by $\bQ \langle \Sigma \rangle$ (resp. $\bQ \Sigma$) the $\bQ$-vector space with $\langle \Sigma \rangle$ (resp. $\Sigma$) as a basis. The concatenation product extends to $\bQ \langle \Sigma \rangle$ by linearity so that $\bQ \langle \Sigma \rangle$ is a graded algebra with respect to weight.  

We set $\bar \Sigma=\Sigma \cup \{0\}$. Let $\diamond:\bar \Sigma \times \bar \Sigma \to \bar \Sigma$ be a commutative and associative product which preserves the grading. It means that this map satisfies the following properties: for all $a,b,c \in \Sigma$,
\begin{itemize}
\item $a \diamond 0=0$.
\item $a \diamond b=b \diamond a$.
\item $(a \diamond b) \diamond c=a \diamond (b \diamond c)$.
\item Either $a \diamond b=0$ or $w(a \diamond b)=w(a)+w(b)$.
\end{itemize}

We define a new product $*_\diamond$ on $\bQ \langle \Sigma \rangle$ recursively by setting $1 *_\diamond \fu=\fu *_\diamond 1=\fu$, and
	\[ a\fu *_\diamond b\fv=a(\fu *_\diamond b\fv)+b(a\fu *_\diamond \fv)+(a \diamond b)(\fu *_\diamond \fv)\]
for all letter $a,b \in \Sigma$ and all words $\fu,\fv \in \bQ \langle \Sigma \rangle$. This product is called the quasi-shuffle product associated to $\diamond$. Hoffman \cite[Theorem 2.1]{Hof00} showed that the vector space $\bQ \langle \Sigma \rangle$ equipped with the product $*_\diamond$ is a commutative $\bQ$-algebra.

We now define a coproduct $\Delta:\bQ \langle \Sigma \rangle  \to \bQ \langle \Sigma \rangle \otimes \bQ \langle \Sigma \rangle$ and a counit $\epsilon:\bQ \langle \Sigma \rangle \to \bQ$ by 
	\[ \Delta(\fu)= \sum_{\fa \fb=\fu} \fa \otimes \fb \]
and 
\begin{align*}
\epsilon(\fu)=\begin{cases} 1 \quad \text{if } \fu=1, \\ 0 \quad \text{otherwise}, \end{cases}
\end{align*}
for all words $\fu \in \langle \Sigma \rangle$. Hoffman \cite[Theorem 3.1]{Hof00} showed that $\bQ \langle \Sigma \rangle$ equipped with the $*_{\diamond}$-multiplication and $\Delta$-comultiplication is a bialgebra. Since both $*_{\diamond}$ and $\Delta$ respect the grading, Proposition \ref{prop: graded Hopf algebras} implies

\begin{theorem} \label{theorem: Hopf algebras for qp}
The algebra $\bQ \langle \Sigma \rangle$ with the $*_{\diamond}$-multiplication and $\Delta$-comultiplication is a graded Hopf algebra. Further, it is connected and of finite type.
\end{theorem}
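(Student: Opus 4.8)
The plan is to deduce the statement from Hoffman's results together with Proposition \ref{prop: graded Hopf algebras}. By \cite[Theorem 2.1]{Hof00} the pair $(\bQ\langle\Sigma\rangle,*_\diamond)$ is a commutative $\bQ$-algebra with unit $u$, and by \cite[Theorem 3.1]{Hof00} the tuple $(\bQ\langle\Sigma\rangle,*_\diamond,u,\Delta,\epsilon)$ is a bialgebra. Hence the only thing left to check is that this bialgebra is connected, graded and of finite type; once this is done, the existence of an antipode, and thus the Hopf algebra structure, is automatic.

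First I would fix the grading $\bQ\langle\Sigma\rangle=\bigoplus_{n\geq 0}H_n$, where $H_n$ denotes the $\bQ$-span of the words of weight $n$. Since each $\Sigma_n$ is finite, there are only finitely many words of a given weight, so $\dim_\bQ H_n<\infty$, i.e. the bialgebra is of finite type; moreover $H_0=\bQ\cdot 1=\bQ$, so it is connected. It then remains to verify the two compatibilities in Definition \ref{defn: graded Hopf algebra}(1). The inclusion $\Delta(H_r)\subseteq\bigoplus_{i+j=r}H_i\otimes H_j$ is immediate: if $\fa\fb=\fu$ with $\fu$ of weight $r$, then $w(\fa)+w(\fb)=r$ by additivity of the weight under concatenation. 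For the product, one argues by induction on $\depth(\fu)+\depth(\fv)$ using the defining recursion $a\fu *_\diamond b\fv=a(\fu *_\diamond b\fv)+b(a\fu *_\diamond \fv)+(a\diamond b)(\fu *_\diamond \fv)$: the first two terms land in the correct homogeneous component by the inductive hypothesis and additivity of the weight under prepending a letter, while the third term either vanishes or has weight $w(a)+w(b)+w(\fu)+w(\fv)$ because $\diamond$ preserves the grading on $\bar\Sigma$. Hence $H_r *_\diamond H_s\subseteq H_{r+s}$.

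Having established that $(\bQ\langle\Sigma\rangle,*_\diamond,u,\Delta,\epsilon)$ is a connected graded bialgebra of finite type, Proposition \ref{prop: graded Hopf algebras} applies directly: it produces a $\bQ$-linear antipode $S$ defined recursively by $S(1)=1$ and $S(x)=-x-\sum m(S(x_{(1)})\otimes x_{(2)})$ for $x\in H_n$ with $n\geq 1$, and it guarantees $S(H_n)\subseteq H_n$. This upgrades the bialgebra to a connected graded Hopf algebra of finite type, which is the assertion.

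There is essentially no serious obstacle here: the substantial combinatorial content, namely associativity of $*_\diamond$, coassociativity of $\Delta$, and their compatibility, is precisely what Hoffman proved, and Proposition \ref{prop: graded Hopf algebras} supplies the antipode for free. The only point requiring an argument of one's own is the grading compatibility of $*_\diamond$, and even that is a routine induction on the total depth.
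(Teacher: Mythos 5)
Your proposal is correct and follows essentially the same route as the paper: cite Hoffman's Theorems 2.1 and 3.1 for the bialgebra structure, observe that $*_{\diamond}$ and $\Delta$ respect the grading (with connectedness and finite type coming from $H_0=\bQ$ and the finiteness of each $\Sigma_n$), and then invoke Proposition \ref{prop: graded Hopf algebras} to obtain the antipode. The paper states this even more tersely, so your explicit verification of the grading compatibility is a harmless elaboration rather than a divergence.
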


Moreover, the antipode $S:\bQ \langle \Sigma \rangle \to \bQ \langle \Sigma \rangle$ is given explicitly in \cite[Theorem~3.2]{Hof00}: for any word $\fu=x_{i_1} \dots x_{i_n}$ we have 
	\[ S(\fu)= \sum_{(j_1,\dots,j_k)}  (-1)^k x_{i_1} \dots x_{i_{j_1}}*_{\diamond}x_{i_{j_1+1}} \dots x_{i_{j_1+j_2}}*_{\diamond} \dots *_{\diamond} x_{i_{j_1+\dots+j_{k-1}+1}} \dots x_{i_{j_1+\dots+j_k}}\]
where the sum runs through the set of all partitions $(j_1,\dots,j_k)$ of $n$.

For recent developments on quasi-shuffle products, we refer the reader to \cite{Hof15,Hof20,HI17,IKOO11}.

\subsection{Multiple zeta values of Euler} ${}$\par \label{sec: MZV's}

We now illustrate two examples of quasi-product that are related to the classical MZVs. Recall that multiple zeta values of Euler (MZV's for short) are real positive numbers given by the following convergent series
	\[ \zeta(n_1,\dots,n_r)=\sum_{0<k_1<\dots<k_r} \frac{1}{k_1^{n_1} \dots k_r^{n_r}}, \quad \text{where } n_i \geq 1, n_r \geq 2. \]
Here $r$ is called the depth and $w=n_1+\dots+n_r$ is called the weight of the presentation $\zeta(n_1,\dots,n_r)$. When $r=1$, we recover the special zeta values $\zeta(n)$ for $n \geq 2$ of the Riemann zeta function. It was studied by Euler in the eighteenth century and have been studied intensively especially in the last three decades by mathematicians and physicists. We refer the reader to  \cite{BGF, Zag94} for more details. As mentioned in these references, the main goal of this theory is to understand all $\mathbb Q$-linear relations among MZV's. We note that precise conjectures formulated by Zagier \cite{Zag94} and Hoffman \cite{Hof97} give the dimension and an explicit basis for the $\mathbb Q$-vector space $\mathcal{Z}_k$ spanned by MZV's of weight $k$ for $k \in \mathbb{N}$. The algebraic part of these conjectures was completely settled by Brown \cite{Bro12}, Deligne-Goncharov \cite{DG05} and Terosoma \cite{Ter02}. 

\subsection{The Hoffman algebra, stuffle product and shuffle product} ${}$\par \label{sec: Hoffman algebra}

In this section we take $k=\bQ$. Let $X$ be the alphabet with two letters $x_0, x_1$ with weight 1, that means $w(x_0)=w(x_1)=1$. We denote $\frak h=\bQ\langle X \rangle$ and call it the Hoffman algebra. A word in the alphabet $X$ is said to be positive if it is of the form $x_1 \fu$ and is said to be admissible if it is of the form $x_1 \fu x_0$. We denote by $\frak h^1$ (resp. $\frak h^0$) the subspace of $\frak h$ spanned by positive words (resp. admissible words).

For all $i \in \bN$ we put $z_i=x_1x_0^{i-1}$. Then $w(z_i)=i$. Let $Z$ be the alphabet with letters $\{z_i\}_{i \in \mathbb{N}}$. Then $\frak h^1=\bQ \langle Z \rangle$. We now equip the alphabet $Z$ with the commutative and associative product $\diamond:Z \times Z \to Z$ given by 
	\[ z_i \diamond z_j=z_{i+j} \] 
for all $i, j \in \bN$. The associated quasi-product on $\frak h^1=\bQ \langle Z \rangle$ will be denoted by $*$ and called the stuffle product. A word in $\frak h^1$ is called admissible if it can be expressed as $z_{s_1} \dots z_{s_\ell}$ with $s_\ell>1$. We note that $\frak h^0$ is the subspace generated by admissible words in $\frak h^1$ and that $(\frak h^0,*)$ is a subalgebra of $(\frak h^1,*)$. Further, the harmonic product on MZV's gives rise to a homomorphism of $\bQ$-algebras 
	\[ \zeta_*:\frak h^0 \to \bR \]
which sends an admissible word $z_{s_1} \dots z_{s_\ell}$ to the associated zeta value $\zeta(s_1,\dots,s_r)$, that means 
	\[ \zeta_*(\fu*\fv)=\zeta_*(\fu) \zeta_*(\fv) \]
for all words $\fu,\fv \in \frak h^0$. This map is called the stuffle zeta map.

We now recall the shuffle algebra. We endow $X$ with the trivial product $\diamond:X \times X \to X$ given by 
	\[ a \diamond b=0 \] 
for all $a,b \in X$. The associated quasi-product on $\frak h=\bQ \langle X \rangle$ will be denoted by $\shuffle$ and called the shuffle product. We see that $(\frak h^0,\shuffle)$ and $(\frak h^1,\shuffle)$ are subalgebras of $(\frak h,\shuffle)$. The shuffle product on MZV's defines a homomorphism of $\bQ$-algebras 
	\[ \zeta_\shuffle:\frak h^0 \to \bR \]
which sends an admissible word $z_{s_1} \dots z_{s_\ell}$ to the associated zeta value $\zeta(s_1,\dots,s_r)$, that means 
	\[ \zeta_\shuffle(\fu \shuffle \fv)=\zeta_\shuffle(\fu) \zeta_\shuffle(\fv) \]
for all words $w,v \in \frak h^0$. This map is called the shuffle zeta map.

Using these zeta maps yield the so-called double shuffle relations in the convergent case: for all words $\fu,\fv \in \frak h^0$,
	\[ \quad \zeta_*(\fu * \fv)=\zeta_\shuffle(\fu \shuffle \fv). \]

\subsection{Regularized zeta maps} ${}$\par \label{sec: regularization}

Following Ihara-Kaneko-Zagier \cite{IKZ06}, we note that the homomorphism of $(\frak h^0,*)$-algebras $\varphi_*:\frak h^0[T] \to \frak h^1$ which sends $T$ to $z_1$ is an isomorphism. Further, the following homomorphisms of $(\frak h^0,\shuffle)$-algebras 
\begin{align*}
& \varphi_{\shuffle}:\frak h^0[T] \to \frak h^1, \quad T \mapsto x_1, \\
& \varphi_{\shuffle}:\frak h^0[T,U] \to \frak h, \quad T \mapsto x_1, \, U \mapsto x_0,
\end{align*}
are isomorphisms.

Now we define the stuffle regularized zeta map 
\begin{equation} \label{eq: stuffle zeta map}
\zeta_*:\frak h^1 \to \bR
\end{equation}
as the composition 
	\[ \frak h^1 \to \frak h^0[T] \to \bR[T] \to \bR \]
where the first map is $\varphi_*^{-1}$, the second map is induced by the stuffle zeta map and the last one is the evaluation at $T=0$. Similarly, we define the shuffle regularized zeta map  
\begin{equation} \label{eq: shuffle zeta map}
\zeta_\shuffle:\frak h \to \bR
\end{equation}
as the composition 
	\[ \frak h \to \frak h^0[T,U] \to \bR[T,U] \to \bR \]
where the first map is $\varphi_{\shuffle}^{-1}$, the second map is induced by the shuffle zeta map and the last one is the evaluation at $T=U=0$.

We mention that Ihara, Kaneko and Zagier \cite{IKZ06} use the restriction of these maps on $\frak h^1$ to extend the previous double shuffle relations among MZV's. Further, they conjectured that these extended double shuffle relations exhaust all linear relations among MZV's (see Conjecture \ref{conj: IKZ}). As mentioned in the Introduction, other important conjectures in this theory are those of Zagier  and Hoffman which give precise dimension and a basis for the $\mathbb{Q}$-vector space spanned MZV's of fixed weight (see Conjectures \ref{conj: Zagier} and \ref{conj: Hoffman}).

Recall that the algebraic part of these conjectures was solved by Terasoma \cite{Ter02}, Deligne-Goncharov \cite{DG05} and Brown \cite{Bro12} using the theory of mixed Tate motives (see Theorems \ref{thm: Zagier} and \ref{thm: Hoffman}). The proofs of this theorem use by a crucial manner different Hopf algebra structures of Hoffman's algebra $\frak h$ as described above. We mention that the transcendental part which concerns lower bounds for $\dim_{\mathbb Q} \mathcal Z_k$ is completely open. We refer the reader to \cite{BGF,Del13,Zag94} for more details and more exhaustive references.

\subsection{Stuffle Hopf algebra and shuffle Hopf algebra}  ${}$\par \label{sec: stuffle and shuffle algebras}

By the work of Hoffman \cite{Hof00} the above algebras can be endowed with a richer structure, i.e., that of Hopf algebras. In fact, as a direct consequence of Theorem~\ref{theorem: Hopf algebras for qp}, we get two Hopf algebras for classical MZV's. 

\subsubsection{} The first graded Hopf algebra 
	\[ H_*=(\frak h^1,*) \]
comes from the stuffle product. We note that it is related to the algebra of quasi-symmetric functions over $k$ (see \cite{Ehr96,Hof15}). For some applications of Hopf algebra structure, we refer the reader to \cite{Hof15} (see also \cite{KXY21}).

\subsubsection{} The second graded Hopf algebra 
	\[ H_\shuffle=(\frak h,\shuffle) \] 
is the shuffle algebra (see \cite{Reu93}). Explicitly, 
\begin{itemize}
\item $\frak h=\bQ\langle x_0,x_1 \rangle$.

\item The coproduct is given by the shuffle product $\shuffle$.

\item The unit is given by the empty word $1$.

\item The coproduct $\Delta:\frak h  \to \frak h \otimes \frak h$ is given by the deconcatenation
	\[ \Delta(\fu)= \sum_{\fa \fb=\fu} \fa \otimes \fb \]
for any words $\fu \in \frak h$. 

\item The counit $\epsilon:\frak h \to \bQ$ is given by
\begin{align*}
\epsilon(\fu)=\begin{cases} 1 \quad \text{if } \fu=1, \\ 0 \quad \text{otherwise}. \end{cases}
\end{align*}

\item The antipode $S:\frak h \to \frak h$ is given by
	\[ S(x_{i_1} \dots x_{i_n})=(-1)^n x_{i_n} \dots x_{i_1}. \]
\end{itemize}

This Hopf algebra and its motivic version introduced by Goncharov \cite{Gon05} lie in the heart of the works of Brown \cite{Bro12}, Deligne-Goncharov \cite{DG05} and Terosoma \cite{Ter02} (see also \cite{BGF}).


\section{Multiple zeta values in positive characteristic} \label{sec: Thakur MZV's}

In \S \ref{sec: MZV's of Thakur} we recall the notion of multiple zeta values of Thakur and present the main goal and results of this theory. Then we define the composition space which plays the role of Hoffman's algebra in the function field setting (see \S \ref{sec: composition space}).

\subsection{Multiple zeta values in positive characteristic} ${}$\par \label{sec: MZV's of Thakur}

By analogy between number fields and function fields, Carlitz \cite{Car35} introduced zeta values in positive characteristic which are studied extensively in the last three decades. More recently, Thakur \cite{Tha04} generalized the work of Carlitz and defined analogues of multiple zeta values in positive characteristic. We recall some notations in the Introduction. The ring $A=\Fq[\theta]$ is the polynomial ring in the variable $\theta$ over a finite field $\Fq$ of $q$ elements of characteristic $p>0$. We recall that $A_+$ denotes the set of monic polynomials in $A$ and $K=\Fq(\theta)$ is the fraction field of $A$ equipped with the rational point $\infty$. Then $K_\infty$ is the completion of $K$ at $\infty$ and $\C_\infty$ is the completion of a fixed algebraic closure $\overline K$ of $K$ at $\infty$. We denote by $v_\infty$ the discrete valuation on $K$ corresponding to the place $\infty$ normalized such that $v_\infty(\theta)=-1$, and by $\lvert\cdot\rvert_\infty= q^{-v_\infty}$ the associated absolute value on $K$.  The unique valuation of~$\mathbb C_\infty$ which extends $v_\infty$ will still be denoted by $v_\infty$. 

For any tuple of positive integers $\mathfrak s=(s_1,\ldots,s_r) \in \N^r$, Thakur \cite{Tha04} defined the characteristic $p$ multiple zeta value (MZV for short) $\zeta_A(\fs)$ or $\zeta_A(s_1,\ldots,s_r)$ by
\begin{equation*}
\zeta_A(\fs):=\sum \frac{1}{a_1^{s_1} \ldots a_r^{s_r}} \in K_\infty
\end{equation*}
where the sum runs through the set of tuples $(a_1,\ldots,a_r) \in A_+^r$ with $\deg a_1>\cdots>\deg a_r$. We call $r$ the depth of $\zeta_A(\fs)$ and $w(\fs)=s_1+\dots+s_r$ the weight of $\zeta_A(\fs)$. We note that Carlitz zeta values are exactly depth one MZV's. Thakur \cite{Tha09a} showed that all the MZV's do not vanish. As in the classical setting, the main goal of the theory is to understand all linear relations over $K$ among MZV's. In fact, analogues of Zagier-Hoffman's conjectures in positive characteristic were formulated by Thakur in \cite[\S 8]{Tha17} and by Todd in \cite{Tod18} that we recall below. 

For $w \in \N$ we denote by $\mathcal Z_w$ the $K$-vector space spanned by the MZV's of weight $w$. We denote by $\mathcal T_w$ the set of $\zeta_A(\fs)$ where $\fs=(s_1,\ldots,s_r) \in \N^r$ of weight $w$ with $1\leq s_i\leq q$ for $1\leq i\leq r-1$ and $s_r<q$.

\begin{conjecture}[Zagier's conjecture in positive characteristic] \label{conj: dimension}
Letting 
\begin{align*}
d(w)=\begin{cases}
1 & \text{ if } w=0, \\
2^{w-1} & \text{ if } 1 \leq w \leq q-1, \\
2^{w-1}-1 & \text{ if } w=q,
\end{cases}
\end{align*}
we put $d(w)=\sum_{i=1}^q d(w-i)$ for $w>q$. Then for any $w \in \N$, we have
	\[ \dim_K \mathcal Z_w = d(w). \]
\end{conjecture}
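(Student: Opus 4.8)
This conjecture has in fact been settled: the dimension formula asserted here is precisely the one established by the authors in \cite{IKLNDP22}, so I only outline how the proof runs. The plan is to deduce the equality $\dim_K \mathcal{Z}_w = d(w)$ from two inequalities of quite different natures, an algebraic upper bound and a transcendental lower bound.

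\textbf{Spanning (upper bound).} First I would prove $\dim_K \mathcal{Z}_w \le d(w)$ by showing that $\mathcal{T}_w$ spans $\mathcal{Z}_w$ over $K$ and that $\#\mathcal{T}_w = d(w)$. For the spanning statement, one starts from an arbitrary $\zeta_A(\mathfrak{s})$ of weight $w$ and rewrites it, modulo $K$-linear combinations of MZV's of the \emph{same} weight, using Thakur's shuffle relations together with the index-tuple operations of Todd \cite{Tod18} and the fourth author \cite{ND21} (which descend from ideas of Chen \cite{Che15}); each such operation lowers a suitable complexity attached to the non-admissible entries, so that after finitely many steps only indices $\mathfrak{t} = (s_1,\dots,s_r)$ with $s_i \le q$ for $i < r$ and $s_r < q$ remain. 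For the count, put $c(w) = \#\mathcal{T}_w$: one checks directly that $c(w) = 2^{w-1}$ for $1 \le w \le q-1$, that $c(q) = 2^{q-1}-1$ (the single composition $(q)$ being forbidden by the requirement $s_r < q$), and --- by conditioning on the first entry $s_1 \in \{1,\dots,q\}$ --- that $c(w) = \sum_{i=1}^{q} c(w-i)$ for $w > q$. This is exactly the defining recursion of $d(w)$, whence $c(w) = d(w)$.

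\textbf{Linear independence (lower bound).} The reverse inequality $\dim_K \mathcal{Z}_w \ge d(w)$ requires that the $d(w)$ MZV's indexed by $\mathcal{T}_w$ be $K$-linearly independent, and this is the substantial part. The route is to pass to Anderson's $t$-motives and dual $t$-motives: each $\zeta_A(\mathfrak{s})$ is realized as a period of an effective pre-$t$-motive built from iterated extensions of tensor powers of the Carlitz module \cite{And86}. One then packages all weight-$w$ MZV's into a single such object and applies the Anderson--Brownawell--Papanikolas criterion \cite{ABP04}, which reduces any $\overline{K}$-linear relation among the relevant periods to one already visible in the associated system of Frobenius difference equations. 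It then remains to prove that this system carries no relations beyond those produced by the shuffle rewriting above; this is where the inductive framework of \cite{IKLNDP22}, which works uniformly in $w$, is needed.

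\textbf{Main obstacle.} The hard part is the transcendental lower bound. The shuffle rewriting and the elementary identity $\#\mathcal{T}_w = d(w)$ amount to bookkeeping, whereas excluding \emph{every} hidden linear relation among a large and combinatorially intricate family of periods is tantamount to an exact rank computation for an explicitly presented difference module --- and it is that computation, rather than the formal manipulations, that is delicate and technically demanding.
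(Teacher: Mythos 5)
Your proposal is correct and matches the paper's treatment: the paper does not prove this statement here but records it as resolved by \cite{ND21} and \cite{IKLNDP22}, and your two-part outline (the algebraic upper bound via Thakur's shuffle relations and the operations of Chen, Todd and the fourth author showing that $\mathcal{T}_w$ spans with $\#\mathcal{T}_w=d(w)$, plus the transcendental lower bound via dual $t$-motives and the Anderson--Brownawell--Papanikolas criterion) is exactly the strategy the paper attributes to those references. You also correctly identify the lower bound as the substantial difficulty.
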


\begin{conjecture}[Hoffman's conjecture in positive characteristic] \label{conj: basis}
A $K$-basis for $\mathcal Z_w$ is given by  $\mathcal T_w$ consisting of $\zeta_A(s_1,\ldots,s_r)$ of weight $w$ with  $s_i \leq q$ for $1 \leq i <r$, and $s_r<q$.
\end{conjecture}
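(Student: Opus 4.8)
Conjecture \ref{conj: basis} is in fact a theorem: it was established for all weights by the authors in \cite{IKLNDP22}, and is recalled here because the $K$-vector space $\mathcal Z$ is the target of the shuffle and stuffle maps of Theorems \ref{thm: A} and \ref{thm: E}. We outline a plan of proof. The idea is to decouple the statement into two independent parts: (i) the set $\mathcal T_w$ spans $\mathcal Z_w$ over $K$, and (ii) the elements of $\mathcal T_w$ are $K$-linearly independent. Granting both, $\mathcal T_w$ is a $K$-basis of $\mathcal Z_w$; moreover a short combinatorial count — sorting the tuples of $\mathcal T_w$ by whether $r=1$ or $r\ge 2$ and, in the latter case, peeling off the first letter $s_1\in\{1,\dots,q\}$ — shows that $|\mathcal T_w|$ satisfies the recursion defining $d(w)$, so Conjecture \ref{conj: dimension} follows as a corollary.

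For part (i), I would first assemble the algebraic relations among Thakur's MZV's: the shuffle relations of Thakur (conveniently packaged by the $\shuffle$-algebra structure of Theorem \ref{thm: A}) together with their refinements due to Chen \cite{Che15}, Thakur \cite{Tha10,Tha17}, Todd \cite{Tod18} and the fourth author \cite{ND21}. From these I would extract, for each index tuple $(s_1,\dots,s_r)$ with some $s_i>q$ for $i<r$, or with $s_r\ge q$, an identity expressing $\zeta_A(s_1,\dots,s_r)$ as a $K$-linear combination of same-weight MZV's whose tuples are strictly smaller for a carefully chosen well-founded order (say lexicographic, refined by the total excess $\sum_i\max(s_i-q,0)$). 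A double induction — on the weight $w$, then on this order within a fixed weight — would then yield $\mathcal Z_w=\spn_K\mathcal T_w$. The new feature of \cite{IKLNDP22} compared with \cite{ND21} is precisely that this reduction is carried out uniformly in $w$, rather than weight by weight.

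For part (ii), I would realize MZV's as periods. By Anderson--Thakur \cite{AT90,AT09} and later work of Papanikolas \cite{Pap08}, Chang \cite{Cha14} and Chang--Papanikolas--Yu \cite{CPY19}, each $\zeta_A(\fs)$ is, up to an explicit nonzero factor, an entry of the period matrix of an effective dual $t$-motive built from Anderson--Thakur polynomials and the associated rigid-analytic series. Packaging the tuples of $\mathcal T_w$ into a single such dual $t$-motive with defining matrix $\Phi$, I would invoke the Anderson--Brownawell--Papanikolas criterion \cite{ABP04}: any $\overline K$-linear — hence a fortiori $K$-linear — relation $\sum_\fs c_\fs\,\zeta_A(\fs)=0$ must be ``motivic'', i.e.\ must descend to a $\overline K(t)$-linear relation among the rows of $\Phi$; a direct analysis of the relevant series at the points $t=\theta^{q^j}$ then forces all $c_\fs=0$. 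Since this construction is compatible with the weight grading, $\mathcal Z$ is graded by weight, and it indeed suffices to argue inside a single $\mathcal Z_w$.

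The hard part will be part (i). The transcendence machinery invoked in part (ii), though technically demanding, is by now available off the shelf; by contrast, producing enough reduction relations and — crucially — a well-founded induction that terminates at exactly the set $\mathcal T_w$ for \emph{every} weight, and then matching the cardinality of the reduced spanning set with the Fibonacci-type quantity $d(w)$, is the combinatorial heart of the proof and the main contribution of \cite{IKLNDP22}.
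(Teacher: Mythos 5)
Your proposal is correct and matches the paper's treatment: the paper offers no proof of this statement here, simply remarking that the conjecture has been completely settled in \cite{ND21} and \cite{IKLNDP22}, and your two-part outline (spanning of $\mathcal T_w$ via reduction relations of Chen, Thakur, Todd and the fourth author, plus $K$-linear independence via dual $t$-motives and the Anderson--Brownawell--Papanikolas criterion) accurately reflects the strategy those references use, as summarized in the paper's introduction. Nothing further is required for the purposes of this paper.
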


These conjectures have been completely solved by the works of \cite{ND21} and \cite{IKLNDP22}. 

\subsection{The composition space} ${}$\par \label{sec: composition space}

In this section, we recall the notion of the composition space $\frak C$ as mentioned in the  Introduction. Let $\Sigma = \{x_n\}_{n \in \mathbb{N}}$ be a countable set equipped with the weight $w(x_n)=n$. The elements of $\Sigma$ will be called \textit{letters}. A \textit{word} over $\Sigma$ is a finite string of letters. In particular, the empty word will be denoted by $1$. Let $\langle \Sigma \rangle$ denote the set of all words over $\Sigma$. We endow $\langle \Sigma \rangle$ with the \textit{concatenation product} defined by the following formula: 
\begin{equation*}
    x_{i_1} \dotsc x_{i_n} \cdot x_{j_1} \dotsc x_{j_m} = x_{i_1} \dotsc x_{i_n} x_{j_1} \dotsc x_{j_m}.
\end{equation*} 
The composition space $\mathfrak{C}$ is the free $\Fq$-vector space $\Fq \langle \Sigma \rangle$ with basis $\langle \Sigma \rangle$. The concatenation product extends to $\mathfrak{C}$ by linearity.


\section{Shuffle algebra and shuffle map in positive characteristic} \label{sec: shuffle algebra 0}

This section aims to introduce the notion of the shuffle product of MZV's in positive characteristic. Then we define different products related to the associated shuffle algebra (see \S \ref{sec: shuffle algebra pos char}) and also the shuffle map (see Theorem \ref{thm: shuffle map}). 

\subsection{Shuffle product for power sums} ${}$\par

For $d \in \mathbb Z$ we introduce 
\begin{equation*} 
S_d(\fs):=\sum \frac{1}{a_1^{s_1} \ldots a_r^{s_r}} \in K_\infty
\end{equation*}
where the sum runs through the set of tuples $(a_1,\ldots,a_r) \in A_+^r$ with $d=\deg a_1>\ldots>\deg a_r$. Further, we define
\begin{equation*} 
S_{<d}(\fs):=\sum \frac{1}{a_1^{s_1} \ldots a_r^{s_r}} \in K_\infty
\end{equation*}
where the sum is over $(a_1,\ldots,a_r) \in A_+^r$ with $d>\deg a_1>\ldots>\deg a_r$. Thus
\begin{align*}
S_{<d}(\fs) =\sum_{i=0}^{d-1} S_i(\fs), \quad S_{d}(\fs) =S_d(s_1) S_{<d}(\fs_-)=S_d(s_1) S_{<d}(s_2,\dots,s_r).
\end{align*}
Here by convention we define empty sums to be $0$ and empty products to be $1$. In particular, $S_{<d}$ of the empty tuple is equal to $1$.

We briefly recall some results of Thakur concerning the shuffle product for power sums in \cite{Tha10} (see also \cite[\S 5.2]{Tha17}). Thakur first proved (see \cite[Theorems 1 and 2]{Tha10}) that for all $a,b \in \mathbb N$, there exist $\Delta^i_{a,b} \in \mathbb F_p$ for $0 < i <a+b$ such that for all $d \in \mathbb Z$,
\begin{equation} \label{eq:product depth1}
S_d(a)S_d(b)=S_d(a+b)+\sum_{0 < i <a+b} \Delta^i_{a,b} S_d(a+b-i,i).
\end{equation}
Shortly after, Chen \cite{Che15} gave explicit formulas for the coefficients $\Delta^i_{a,b}$ and proved
\begin{align*}
\Delta^i_{a,b}=\begin{cases} (-1)^{a-1} {i-1 \choose a-1}+(-1)^{b-1} {i-1 \choose b-1} & \quad \text{if } (q-1) \mid i \text{ and } 0<i<a+b, \\
0 & \quad \text{otherwise}.
\end{cases}
\end{align*}
Here we recall that for integers $a, b$ with $b \geq 0
$, 
\begin{equation*}
    \binom{a}{b} = \dfrac{a(a-1) \dotsc (a-b+1)}{b!}.
\end{equation*}
It should be remarked that $\binom{a}{b} = 0$ if $b > a \geq 0$.

\begin{proposition}[Chen] \label{prop: chen1}
Let $r,s$ be positive integers. For all $d \in \mathbb{Z}$, we have 
\begin{equation*}
    S_d(r)S_d(s) = S_d(r+s) + \sum_{\substack{i,j \in \N, \\ i+j=r+s}} \Delta^{j}_{r,s}S_d(i,j).
\end{equation*}
\end{proposition}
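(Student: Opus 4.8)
The plan is to deduce the identity from the results already quoted. Indeed, the statement is simply Thakur's relation \eqref{eq:product depth1} rewritten with Chen's explicit coefficients: taking $a=r$, $b=s$ in \eqref{eq:product depth1}, renaming the summation index $i$ as $j$ and putting $i:=r+s-j$, the term $\Delta^i_{a,b}S_d(a+b-i,i)$ becomes $\Delta^j_{r,s}S_d(i,j)$ with $i+j=r+s$; the range $0<j<r+s$ is exactly the set of $(i,j)\in\N^2$ with $i+j=r+s$, and for the $j$ with $(q-1)\nmid j$ one has $\Delta^j_{r,s}=0$, so such terms may be freely inserted. The degenerate cases $d\le 0$ are trivial (both sides vanish for $d<0$, and both equal $S_0(r+s)$ for $d=0$), so there is nothing more to prove beyond invoking \cite{Tha10} and \cite{Che15}; the reformulation only serves to fix the index conventions used in the sequel.

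Should one instead want a proof that does not treat \eqref{eq:product depth1} as a black box (thereby re-proving Chen's formula), here is the natural route. First I would write $S_d(r)S_d(s)=\sum_{\deg a=\deg b=d}a^{-r}b^{-s}$ and separate the diagonal contribution $\sum_{\deg a=d}a^{-(r+s)}=S_d(r+s)$. For the off-diagonal part one sets $c:=a-b$, which satisfies $\deg c<d$ because the leading terms of $a$ and $b$ cancel; as $(a,b)$ ranges over pairs of distinct monic polynomials of degree $d$, the pair $(a,c)$ ranges over all $a$ monic of degree $d$ and $c\in A\setminus\{0\}$ with $\deg c<d$, and $b=a-c$ is again monic of degree $d$. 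The heart of the computation is the partial fraction decomposition of $a^{-r}(a-c)^{-s}$ as a rational function of $a$: the pole of order $r$ at $a=0$ and the pole of order $s$ at $a=c$ produce, via Taylor expansion, an expression $\sum_{i=1}^{r}\alpha_i(c)\,a^{-i}+\sum_{j=1}^{s}\beta_j(c)\,(a-c)^{-j}$ with $\alpha_i(c),\beta_j(c)$ explicit $\Fq$-multiples of $c^{-(r+s-i)}$ and $c^{-(r+s-j)}$ respectively, the multiples being $\pm$ binomial coefficients.

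Next I would sum over $a$ monic of degree $d$, turning each $a^{-i}$ into $S_d(i)$ and each $(a-c)^{-j}$ into $S_d(j)$ (since $a-c$ runs over the same set), and then sum over $c$. Writing $c=\lambda c'$ with $c'$ monic and $\lambda\in\Fq^\times$ gives $\sum_{c\neq 0,\ \deg c<d}c^{-m}=\big(\sum_{\lambda\in\Fq^\times}\lambda^{-m}\big)\,S_{<d}(m)$, and $\sum_{\lambda\in\Fq^\times}\lambda^{-m}$ equals $-1$ when $(q-1)\mid m$ and $0$ otherwise; this is exactly where the divisibility condition defining $\Delta^j_{r,s}$ appears. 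Combining this with $S_d(i)\,S_{<d}(m)=S_d(i,m)$ (recalled above) reassembles the off-diagonal part as a sum of depth-two power sums $S_d(r+s-j,j)$. The last and most delicate step — the main obstacle — is to check that the contributions of the two pole families combine into the single coefficient $(-1)^{r-1}\binom{j-1}{r-1}+(-1)^{s-1}\binom{j-1}{s-1}$. This requires identifying the binomial coefficients coming from the Taylor expansions and reconciling signs; here one uses that $\binom{j-1}{r-1}=0$ for $j\le r-1$ (so the two families together account for every relevant $j$) and that $(q-1)\mid j$ forces $j$ to be even in odd characteristic (so the parities match), which is precisely Chen's computation in \cite{Che15}.
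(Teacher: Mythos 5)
Your first paragraph is exactly the paper's treatment: Proposition \ref{prop: chen1} is given no proof there and is simply Thakur's identity \eqref{eq:product depth1} with Chen's explicit coefficients substituted and the index renamed, so your reformulation-plus-citation argument matches the paper's route. Your optional self-contained derivation via the diagonal/off-diagonal split, the substitution $c=a-b$, and partial fractions is also sound, and in fact reproduces precisely the machinery the paper develops a section later (Corollary \ref{cor: identity}, the quantities $\nabla^j_r$ and $\delta_j$, and the identity $\Delta^j_{r,s}=\delta_j(\nabla^j_r+\nabla^j_s)$ of \eqref{eq: formula 1}), so it introduces nothing beyond what the paper itself uses.
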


\begin{proposition} \label{prop: chen2}
Let $r,s$ be positive integers. For all $d \in \mathbb{Z}$, we have 
\begin{equation*}
    S_{<d}(r)S_{<d}(s) =  S_{<d}(r+s) + S_{<d}(r,s) + S_{<d}(s,r) +  \sum_{\substack{i,j \in \N, \\ i+j=r+s}} \Delta^{j}_{r,s}S_{<d}(i,j).
\end{equation*}
\end{proposition}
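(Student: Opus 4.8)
The plan is to deduce this from Proposition \ref{prop: chen1} (the ``diagonal'' case, which already encodes all the arithmetic, including the coefficients $\Delta^j_{r,s}$) by expanding both factors over degrees and splitting a double sum. Recall $S_{<d}(r)=\sum_{e=0}^{d-1}S_e(r)$ and $S_{<d}(s)=\sum_{f=0}^{d-1}S_f(s)$, where $S_e(r)=\sum_{a\in A_+,\,\deg a=e}a^{-r}$. Hence $S_{<d}(r)\,S_{<d}(s)=\sum_{0\le e,f\le d-1}S_e(r)S_f(s)$, and I would organise this double sum according to whether $e=f$, $e>f$, or $e<f$.

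First I would treat the diagonal $e=f$. For each $e$, Proposition \ref{prop: chen1} gives $S_e(r)S_e(s)=S_e(r+s)+\sum_{i+j=r+s}\Delta^j_{r,s}S_e(i,j)$. Summing over $e=0,\dots,d-1$ and using the defining relation $S_{<d}(\cdot)=\sum_{e=0}^{d-1}S_e(\cdot)$, now applied also to the depth-two tuples $(i,j)$, produces exactly $S_{<d}(r+s)+\sum_{i+j=r+s}\Delta^j_{r,s}S_{<d}(i,j)$, i.e.\ the ``diagonal part'' of the claimed identity.

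Next I would handle the off-diagonal terms directly from the definitions. For a pair $(e,f)$ with $d-1\ge e>f\ge 0$, the product $S_e(r)S_f(s)=\sum_{\deg a_1=e}\sum_{\deg a_2=f}a_1^{-r}a_2^{-s}$ automatically satisfies $\deg a_1>\deg a_2$; summing over all such pairs reconstitutes precisely $S_{<d}(r,s)=\sum_{d>\deg a_1>\deg a_2}a_1^{-r}a_2^{-s}$. By the symmetric computation (relabelling the two monic polynomials), the terms with $e<f$ sum to $S_{<d}(s,r)$. Adding the three contributions yields the asserted formula.

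I do not expect a genuine obstacle here: Proposition \ref{prop: chen1} does all the real work, and what remains is bookkeeping with the degree ranges $0\le e,f\le d-1$. The only point requiring a little care is to apply the empty-sum/empty-product conventions consistently, in particular to make sure that $S_{<d}$ of the depth-two tuples $(i,j)$ is expanded as $\sum_{e=0}^{d-1}S_e(i,j)$, so that the diagonal contribution matches the target term by term.
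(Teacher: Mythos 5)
Your proof is correct and follows essentially the same route as the paper: both decompose $S_{<d}(r)S_{<d}(s)=\sum_{e,f<d}S_e(r)S_f(s)$ into the diagonal part (handled by Proposition \ref{prop: chen1} and summed over degrees) and the two off-diagonal parts, which by definition reassemble into $S_{<d}(r,s)$ and $S_{<d}(s,r)$. The paper merely writes the off-diagonal sums in the compact form $\sum_{k<d}S_k(r)S_{<k}(s)$ and $\sum_{k<d}S_k(s)S_{<k}(r)$, which is the same bookkeeping.
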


\begin{proof}
Using Proposition \ref{prop: chen1}, we have 
\begin{align*}
    S_{<d}(r)S_{<d}(s) &= \sum \limits_{k<d} S_k(r)S_k(s) + \sum \limits_{k<d} S_k(r)S_{<k}(s) + \sum \limits_{k<d} S_k(s)S_{<k}(r)\\
    &= \sum \limits_{k<d} \Bigg(S_k(r+s) + \sum_{\substack{i,j \in \N, \\ i+j=r+s}} \Delta^{j}_{r,s}S_k(i,j)\Bigg) + \sum \limits_{k<d} S_k(r,s) + \sum \limits_{k<d} S_k(s,r)\\ 
    &= S_{<d}(r+s) +  \sum_{\substack{i,j \in \N, \\ i+j=r+s}} \Delta^{j}_{r,s}S_{<d}(i,j) + S_{<d}(r,s) + S_{<d}(s,r). 
\end{align*}
This proves the proposition.
\end{proof}

\subsection{The shuffle algebra in positive characteristic} ${}$\par \label{sec: shuffle algebra pos char}

Recall that the composition space is defined as in \S \ref{section: Composition space}. We define the unit $u:\Fq \to \frak C$ by sending $1$ to the empty word $1$. Next we define recursively two products on $\mathfrak{C}$ as $\Fq$-bilinear maps
\begin{align*}
   \diamond \colon \mathfrak{C} \times \mathfrak{C} \longrightarrow \mathfrak{C} \quad \text{and} \quad 
   \shuffle \colon \mathfrak{C} \times \mathfrak{C} \longrightarrow \mathfrak{C}
\end{align*}
by setting $1 \diamond \mathfrak{a} = \mathfrak{a} \diamond 1 = \mathfrak{a}, 1 \shuffle  \mathfrak{a} = \mathfrak{a} \shuffle  1 = \mathfrak{a}$ and
\begin{align*}
    \fa \diamond \fb &= \ x_{a + b}(\fa_- \shuffle  \fb_-) + \sum\limits_{i+j = a + b} \Delta^j_{a,b} x_i(x_j \shuffle  (\fa_- \shuffle  \fb_-)),\\
    \fa \shuffle  \fb &= \ x_{a}(\fa_- \shuffle  \fb) + x_{b}(\fa \shuffle  \fb_-) + \fa \diamond \fb
\end{align*}
for any words $\fa,\fb \in \langle \Sigma \rangle$. We call $\diamond$ the \textit{diamond product} and $\shuffle $ the \textit{shuffle product}.

\begin{proposition} \label{prop: commutative}
 The diamond product and the shuffle product are commutative.
\end{proposition}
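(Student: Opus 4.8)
The plan is to prove commutativity of both $\diamond$ and $\shuffle$ simultaneously by induction on the total weight $w(\fa) + w(\fb)$ of the two words $\fa, \fb \in \langle \Sigma \rangle$ (extending by bilinearity afterwards). Since the recursive definitions of $\diamond$ and $\shuffle$ are intertwined, a joint induction is natural here. The base cases where one of the words is the empty word $1$ are immediate from the normalization $1 \diamond \fa = \fa \diamond 1 = \fa$ and $1 \shuffle \fa = \fa \shuffle 1 = \fa$.

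For the inductive step, write $\fa = x_a \fa_-$ and $\fb = x_b \fb_-$. First I would handle $\diamond$. Comparing
\begin{align*}
\fa \diamond \fb &= x_{a+b}(\fa_- \shuffle \fb_-) + \sum_{i+j=a+b} \Delta^j_{a,b}\, x_i(x_j \shuffle (\fa_- \shuffle \fb_-)), \\
\fb \diamond \fa &= x_{a+b}(\fb_- \shuffle \fa_-) + \sum_{i+j=a+b} \Delta^j_{b,a}\, x_i(x_j \shuffle (\fb_- \shuffle \fa_-)),
\end{align*}
I would use the induction hypothesis on $\shuffle$ (the words $\fa_-, \fb_-$ have strictly smaller total weight) to get $\fa_- \shuffle \fb_- = \fb_- \shuffle \fa_-$, and then observe that the coefficients satisfy the symmetry $\Delta^j_{a,b} = \Delta^j_{b,a}$, which is clear from Chen's explicit formula
\[
\Delta^i_{a,b} = (-1)^{a-1}\binom{i-1}{a-1} + (-1)^{b-1}\binom{i-1}{b-1},
\]
since it is visibly symmetric in $a$ and $b$. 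This gives $\fa \diamond \fb = \fb \diamond \fa$ term by term. Then for $\shuffle$, comparing
\[
\fa \shuffle \fb = x_a(\fa_- \shuffle \fb) + x_b(\fa \shuffle \fb_-) + \fa \diamond \fb
\]
with the analogous expression for $\fb \shuffle \fa$, the first two summands are swapped and agree by the induction hypothesis applied to $\fa_- \shuffle \fb$ and $\fa \shuffle \fb_-$ (both of smaller total weight), while the diamond terms agree by the commutativity of $\diamond$ just established at this weight.

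The main point to be careful about is the logical structure of the nested induction: within a fixed total weight $w$, one must first establish commutativity of $\diamond$ for all pairs of that weight, using only $\shuffle$-commutativity at strictly smaller weights, and then deduce $\shuffle$-commutativity at weight $w$, using $\diamond$-commutativity at weight $w$ together with $\shuffle$-commutativity at smaller weights. There is no circularity because in the recursion $\fa \shuffle \fb = x_a(\fa_- \shuffle \fb) + x_b(\fa \shuffle \fb_-) + \fa \diamond \fb$ the $\shuffle$-terms $\fa_- \shuffle \fb$ and $\fa \shuffle \fb_-$ genuinely have smaller total weight (one letter of positive weight has been removed), and the $\diamond$-term is at the same weight but only invokes $\diamond$-commutativity, which was handled first. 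So the only genuinely non-formal input is the symmetry $\Delta^j_{a,b} = \Delta^j_{b,a}$ of Chen's coefficients, which is transparent; everything else is bookkeeping. I expect no real obstacle here — this proposition is the easy warm-up before the hard associativity statement flagged in the introduction.
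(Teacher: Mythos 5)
Your proof is correct and follows essentially the same route as the paper's, which runs the same joint induction but on the total depth $\depth(\fa)+\depth(\fb)$ rather than the total weight (either parameter works, since removing a letter strictly decreases both). Your explicit remark that $\Delta^j_{a,b}=\Delta^j_{b,a}$, visible from Chen's formula, is a detail the paper leaves implicit but which is indeed needed to match the two sums in the diamond product.
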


\begin{proof}
Let $\fa, \fb \in \langle \Sigma \rangle$ be two arbitrary words. It is suffices to show that 
\begin{equation} \label{eq: commutative}
    \fa \diamond \fb = \fb \diamond \fa \quad \text{and} \quad \fa \shuffle  \fb = \fb \shuffle  \fa .
\end{equation}
We proceed the proof by induction on $\depth(\fa) + \depth(\fb)$. If one of $\fa$ or $\fb$ is empty word, then \eqref{eq: commutative} holds trivially. We assume that \eqref{eq: commutative} holds when $\depth(\fa) + \depth(\fb) < n$ with $n \in \N$ and $n \geq 2$. We need to show that \eqref{eq: commutative} holds when $\depth(\fa) + \depth(\fb) = n$.
\par Indeed, we have 
\begin{align*}
    \fa \diamond \fb &= \ x_{a + b}(\fa_-\shuffle  \fb_-) + \sum\limits_{i+j = a + b} \Delta^j_{a,b} x_i(x_j \shuffle  (\fa_- \shuffle  \fb_-)),\\
    \fb \diamond \fa &= \ x_{b + a}(\fb_- \shuffle  \fa_-) + \sum\limits_{i+j = b + a} \Delta^j_{b,a} x_i(x_j \shuffle  (\fb_- \shuffle  \fa_-)).
\end{align*}
It follows from the induction hypothesis that $\fa_-\shuffle \fb_- = \fb_- \shuffle \fa_-$, hence $\fa \diamond \fb = \fb \diamond \fa$. On the other hand, we have 
\begin{align*}
    \fa \shuffle  \fb &= \ x_{a}(\fa_- \shuffle  \fb) + x_{b}(\fa \shuffle  \fb_-) + \fa \diamond \fb,\\
    \fb \shuffle  \fa &= \ x_{b}(\fb_- \shuffle  \fa) + x_{a}(\fb \shuffle  \fa_-) + \fb \diamond \fa.
\end{align*}
It follows from the induction hypothesis and the above arguments that $\fa_- \shuffle  \fb = \fb \shuffle  \fa_-, \fa \shuffle  \fb_- = \fb_- \shuffle  \fa$ and $\fa \diamond \fb = \fb \diamond \fa$, hence $\fa \shuffle  \fb = \fb \shuffle  \fa$. This proves the proposition.
\end{proof}

We next define recursively a product on $\mathfrak{C}$ as a $\Fq$-bilinear map
\begin{align*}
   \triangleright \colon \mathfrak{C} \times \mathfrak{C} \longrightarrow \mathfrak{C}
\end{align*}
by setting $1 \triangleright \mathfrak{a} = \mathfrak{a} \triangleright 1 = \mathfrak{a}$ and
\begin{align*}
    \fa \triangleright \fb &= \ x_{a}(\fa_- \shuffle  \fb)
\end{align*}
for any words $\fa,\fb \in \langle \Sigma \rangle$. We call $\triangleright$ the triangle product. We stress that the triangle product is neither commutative nor associative, as one verifies at once. 

\begin{lemma} \label{lem: triangle formulas}
For all words $\fa,\fb \in \langle \Sigma \rangle$, we have
\begin{enumerate}
    \item $\fa \diamond \fb = (x_a \diamond x_b) \triangleright (\fa_- \shuffle  \fb_-)$ 
    \item $\fa \shuffle  \fb = \fa \triangleright \fb + \fb \triangleright \fa + \fa \diamond \fb$.
\end{enumerate}
\end{lemma}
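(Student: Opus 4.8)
The plan is to verify both identities directly from the recursive definitions, inducting on $\depth(\fa)+\depth(\fb)$, with the base case being trivial since any product in which one factor is the empty word $1$ reduces to the other factor on both sides. Throughout I write $\fa = x_a\fa_-$ and $\fb = x_b\fb_-$ for nonempty words.

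For part $(1)$, I would start from the definition
\[
\fa \diamond \fb = x_{a+b}(\fa_-\shuffle\fb_-) + \sum_{i+j=a+b}\Delta^j_{a,b}\, x_i\bigl(x_j\shuffle(\fa_-\shuffle\fb_-)\bigr),
\]
and compute the right-hand side $(x_a\diamond x_b)\triangleright(\fa_-\shuffle\fb_-)$ by first expanding $x_a\diamond x_b$ using the diamond definition for the one-letter words $x_a$ and $x_b$ (whose "minus" parts are the empty word $1$), giving $x_a\diamond x_b = x_{a+b} + \sum_{i+j=a+b}\Delta^j_{a,b}\,x_i(x_j\shuffle 1) = x_{a+b} + \sum_{i+j=a+b}\Delta^j_{a,b}\,x_ix_j$. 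Then I apply $\triangleright(\fa_-\shuffle\fb_-)$ termwise; by the definition $\fu\triangleright\fv = x_u(\fu_-\shuffle\fv)$ of the triangle product, $x_{a+b}\triangleright(\fa_-\shuffle\fb_-) = x_{a+b}(1\shuffle(\fa_-\shuffle\fb_-)) = x_{a+b}(\fa_-\shuffle\fb_-)$, and $x_ix_j\triangleright(\fa_-\shuffle\fb_-) = x_i(x_j\shuffle(\fa_-\shuffle\fb_-))$. Summing these matches the displayed formula for $\fa\diamond\fb$ exactly. Note $(1)$ is really an identity of definitions and needs no induction — the subtlety is only bookkeeping that the "minus" of $x_{a+b}$ and of $x_ix_j$ are computed correctly ($1$ and $x_j$ respectively), so that the shuffle in $\triangleright$ unpacks the way we want.

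For part $(2)$, I would again use the definition
\[
\fa\shuffle\fb = x_a(\fa_-\shuffle\fb) + x_b(\fa\shuffle\fb_-) + \fa\diamond\fb
\]
and simply observe that $x_a(\fa_-\shuffle\fb) = \fa\triangleright\fb$ and $x_b(\fa\shuffle\fb_-) = \fb\triangleright\fa$ straight from the definition of $\triangleright$ (using commutativity of $\shuffle$ from Proposition \ref{prop: commutative} to write $\fa\shuffle\fb_- = \fb_-\shuffle\fa$, which is what $\fb\triangleright\fa = x_b(\fb_-\shuffle\fa)$ requires). Substituting gives $\fa\shuffle\fb = \fa\triangleright\fb + \fb\triangleright\fa + \fa\diamond\fb$, as claimed. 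As with $(1)$, this is a direct rewriting rather than an induction.

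I do not expect a genuine obstacle here: both statements are essentially unpackings of the recursive definitions, and the only care needed is the correct identification of the "minus" operation on one-letter words and the (already established) commutativity of $\shuffle$ used to symmetrize the second term of the shuffle recursion into $\fb\triangleright\fa$. If one wished to avoid invoking Proposition \ref{prop: commutative}, one could instead note that the definition of $\shuffle$ is manifestly symmetric in $\fa,\fb$ up to the diamond term, which is symmetric by that proposition anyway; either way the mild point to keep straight is that $\triangleright$ itself is not symmetric, so the two triangle terms $\fa\triangleright\fb$ and $\fb\triangleright\fa$ are genuinely different and must be assembled in the right order.
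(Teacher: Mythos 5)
Your proposal is correct and follows essentially the same route as the paper: part (1) is the same termwise identification of $x_{a+b}\triangleright(\fa_-\shuffle\fb_-)$ and $x_ix_j\triangleright(\fa_-\shuffle\fb_-)$ with the summands in the definition of $\fa\diamond\fb$ (the paper just reads the computation in the opposite direction, starting from $\fa\diamond\fb$ and factoring out $\triangleright$), and part (2) is the same one-line rewriting using commutativity of $\shuffle$, which the paper leaves as "straightforward." Your observation that no induction is needed is also consistent with the paper's proof.
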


\begin{proof}
 We have
\begin{align*}
    \fa \diamond \fb 
    &= \ x_{a+b}(\fa_- \shuffle  \fb_-) + \sum \limits_{i+j = a + b} \Delta^j_{a,b} x_i (x_j \shuffle  (\fa_- \shuffle  \fb_-))\\
    &= \ x_{a+b} \triangleright (\fa_- \shuffle  \fb_-) + \sum \limits_{i+j = a + b} \Delta^j_{a,b} x_ix_j \triangleright (\fa_- \shuffle  \fb_-)\\
    &= \ (x_{a+b} + \sum \limits_{i+j = a + b} \Delta^j_{a,b} x_ix_j) \triangleright (\fa_- \shuffle  \fb_-)\\
    &= \ (x_a \diamond x_b) \triangleright (\fa_- \shuffle  \fb_-).
\end{align*}
This proves part $(1)$. Part $(2)$ is straightforward from the commutativity of the shuffle product. We finish the proof.
\end{proof}

\subsection{The shuffle map in positive characteristic} ${}$\par

For all $d \in \bZ$, we define two $\mathbb{F}_q$-linear maps
\begin{equation*}
    S_{<d} \colon \frak C \rightarrow K_{\infty} \quad \text{and} \quad \zeta_A \colon \frak C \rightarrow K_{\infty},
\end{equation*}
which map the empty word to the element $1 \in K_{\infty}$, and map any word $x_{s_1} \dotsc x_{s_r}$ to $S_{<d}(s_1, \dotsc, s_r)$ and $\zeta_A(s_1, \dotsc, s_r)$, respectively. The main result of this section reads as follows:
\begin{theorem} \label{thm: shuffle map}
For all words $\fa, \fb \in \frak C$ and for all $d \in \bZ$ we have
\begin{align*}
S_{<d}(\fa\shuffle \fb) &=S_{<d}(\fa) \, S_{<d}(\fb), \\
\zeta_A(\fa\shuffle \fb) &=\zeta_A(\fa) \, \zeta_A(\fb).
\end{align*}
\end{theorem}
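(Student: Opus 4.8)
The plan is to prove the statement for $S_{<d}$ first, by induction on $\depth(\fa)+\depth(\fb)$, and then deduce the statement for $\zeta_A$ by a limiting argument. For the $S_{<d}$-part, I would reduce everything to the recursive definition of $\shuffle$. Write $\fa = x_a \fa_-$ and $\fb = x_b \fb_-$ for nonempty words; the base case where one of the words is empty is immediate since $S_{<d}(1)=1$. For the inductive step, apply the definition
\[
\fa \shuffle \fb = x_a(\fa_- \shuffle \fb) + x_b(\fa \shuffle \fb_-) + \fa \diamond \fb,
\]
and use $S_{<d}(x_c \fu) = \sum_{k<d} S_k(c)\, S_{<k}(\fu)$ together with $S_{<d}(\fa)S_{<d}(\fb) = \sum_{k<d}\bigl(S_k(\fa)S_k(\fb) + S_k(\fa)S_{<k}(\fb_-)\,\text{-type cross terms}\bigr)$, mimicking exactly the computation in the proof of Proposition \ref{prop: chen2} but at the level of words of arbitrary depth. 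Concretely, expanding $S_{<d}(\fa)S_{<d}(\fb)$ by splitting each factor according to the degree of its largest variable gives three contributions: a "both top degrees equal $k$" term, which after using Proposition \ref{prop: chen1} (the depth-one product $S_k(a)S_k(b)$) and the induction hypothesis applied to $\fa_-\shuffle\fb_-$ produces exactly $S_{<d}$ of $\fa\diamond\fb$ (here the definition of the diamond product with the coefficients $\Delta^j_{a,b}$ is precisely engineered to match Chen's formula); and two "one top degree strictly below the other" terms, which by the induction hypothesis applied to $\fa_-\shuffle\fb$ and $\fa\shuffle\fb_-$ respectively give $S_{<d}$ of $x_a(\fa_-\shuffle\fb)$ and $x_b(\fa\shuffle\fb_-)$. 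Summing, $S_{<d}(\fa)S_{<d}(\fb)=S_{<d}(\fa\shuffle\fb)$.

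For the $\zeta_A$-part, observe that $\zeta_A(\fs) = \lim_{d\to\infty} S_{<d}(\fs)$ in $K_\infty$ for every word $\fs$ (this is just the definition of the MZV together with the convergence established by Thakur), and that $S_{<d}$ and $\zeta_A$ are $\Fq$-linear on $\frak C$. Hence for any $\fa,\fb$ we have
\[
\zeta_A(\fa)\,\zeta_A(\fb) = \lim_{d\to\infty} S_{<d}(\fa)\,S_{<d}(\fb) = \lim_{d\to\infty} S_{<d}(\fa\shuffle\fb) = \zeta_A(\fa\shuffle\fb),
\]
where the middle equality is the $S_{<d}$-identity just proved and the outer equalities use continuity of multiplication in $K_\infty$ and the fact that $\fa\shuffle\fb$ is a finite $\Fq$-linear combination of words, so $S_{<d}(\fa\shuffle\fb)\to\zeta_A(\fa\shuffle\fb)$.

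The main obstacle is the bookkeeping in the inductive step: one must carefully match the double sum arising from $S_{<d}(\fa)S_{<d}(\fb)$ against the three-term recursion, and in particular verify that the "diagonal" piece (top degrees equal) reproduces the diamond product term by term — this is where Proposition \ref{prop: chen1} and the precise shape of the coefficients $\Delta^j_{a,b}$ enter, and where a naive computation would become combinatorially heavy. A clean way to organize this is to factor out, as in Lemma \ref{lem: triangle formulas}, the triangle product: since $S_{<d}(\fa\triangleright\fb) = \sum_{k<d} S_k(a) S_{<k}(\fa_-\shuffle\fb)$ and the diamond term satisfies $\fa\diamond\fb = (x_a\diamond x_b)\triangleright(\fa_-\shuffle\fb_-)$, the identity $\fa\shuffle\fb = \fa\triangleright\fb + \fb\triangleright\fa + \fa\diamond\fb$ lets one reduce the whole computation to the depth-one case (Proposition \ref{prop: chen1}) applied inside a single $\triangleright$, with the induction hypothesis handling the "tails" $\fa_-\shuffle\fb$, $\fb_-\shuffle\fa$ and $\fa_-\shuffle\fb_-$.
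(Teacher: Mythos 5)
Your plan is correct, and it is worth noting that the paper itself does not prove this theorem: its ``proof'' is a citation to Shi's thesis \cite[Theorem 3.1.4]{Shi18}, so your argument supplies what the paper outsources. The induction you describe is exactly the natural one, and every ingredient you invoke is already available in the paper: the splitting identities $S_{<d}(\fa)S_{<d}(\fb)=\sum_{k<d}S_k(\fa)S_k(\fb)+\sum_{k<d}S_k(\fa)S_{<k}(\fb)+\sum_{k<d}S_k(\fb)S_{<k}(\fa)$ and $S_k(\fa)=S_k(a)S_{<k}(\fa_-)$ appear as \eqref{eq: SdSd}--\eqref{eq: S<dS<d} in \S\ref{sec: 6} (where the authors use them for the associativity argument), Chen's formula is Proposition \ref{prop: chen1}, and $S_{<d}(x_c\fu)=\sum_{k<d}S_k(c)S_{<k}(\fu)$ is how $S_{<d}$ acts on words. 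With these, the diagonal term $\sum_{k<d}S_k(\fa)S_k(\fb)$ becomes $S_{<d}(\fa\diamond\fb)$ (Chen's formula on the leading letters, induction on $\fa_-\shuffle\fb_-$ and on $x_j\shuffle(\fa_-\shuffle\fb_-)$, both of strictly smaller total depth), and the two off-diagonal terms become $S_{<d}(x_a(\fa_-\shuffle\fb))$ and $S_{<d}(x_b(\fa\shuffle\fb_-))$, matching the recursive definition of $\shuffle$ exactly; the passage to $\zeta_A$ by taking $d\to\infty$ in the complete field $K_\infty$ is unproblematic since $\fa\shuffle\fb$ is a finite $\Fq$-linear combination of words. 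The only point to make explicit when writing this up is that the induction hypothesis must be taken uniformly in $d$ (the step for a given $d$ invokes the identity for $S_{<k}$ with all $k<d$), which your formulation leaves implicit but which causes no difficulty since the induction is on depth alone.
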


\begin{proof}
See \cite[Theorem 3.1.4]{Shi18}.
\end{proof}

We denote by $\mathcal{Z}_w$ (resp. $\mathcal{Z}$) the $K$-vector space spanned by MZV's of weight $w$ (resp. by MZV's). Then the $K$-linear map \[ Z_\shuffle:\frak C \otimes_{\Fq} K \to \mathcal Z, \] which sends a word $\fa \in \frak C$ to $\zeta_A(\fa)$, is a homomorphism of $K$-algebras, and is called the shuffle map in positive characteristic.


\section{Algebra structure of the shuffle algebra} \label{sec: algebra structure}

The main goal of this section is to prove that the composition space $\frak C$ equipped with the shuffle product $\shuffle $ given by Thakur is an algebra (see Theorem \ref{theorem: algebras}). The key point is to show the associativity of the shuffle product which consequently solves \cite[Conjecture 3.2.2]{Shi18}. In fact, the associativity property of $(\frak C,\shuffle )$ turned out to be very hard to prove (see \cite[Remark 2.2, Part 1]{ND21}) as pointed out one of the referees of \cite{ND21}. Our method is of algebraic nature. It consists of unpacking the nature of the coefficients $\Delta^i_{a,b}$ appearing in the shuffle product of Thakur by using partial fractional decompositions. 

We mention that the associativity could follow from a transcendental approach. As mentioned in \cite[\S 3.2.1]{Shi18}, it follows from a conjecture of Thakur about the $\Fq$-linear independence of MZV's (see \cite[3.2.3]{Shi18}). 

\subsection{Expansions for $S_d$ of  depth one} ${}$\par

\subsubsection{} Let $r,s, t$ be positive integers. We first expand $(S_d(s) S_d(t))S_d(r)$. We have 
\begin{align*}
    &(S_d(r)S_d(s))S_d(t)\\
    &= \left(S_d(r+s) + \sum_{i+j=r+s} \Delta^j_{r,s} S_d(i,j)\right)S_d(t)\\
    &= S_d(r+s)S_d(t) + \sum_{i+j=r+s} \Delta^j_{r,s} S_d(i,j) S_d(t)\\
    &= \left(S_d(r+s+t) + \sum_{i+j=r+s+t} \Delta^j_{r+s,t} S_d(i,j) \right) \\
    &+ \sum_{i+j=r+s} \Delta^j_{r,s} \left(S_d(i+t,j)+\sum_{i_1+j_1=i+t} \Delta^{j_1}_{i,t} S_d(i_1) S_{<d}(j_1) S_{<d}(j)\right).
\end{align*}
The sum of the terms of depth $2$ in the above expansion is the following:
\begin{align} \label{eq: 1}
    \sum_{i+j=r+s+t} \Delta^j_{r+s,t} S_d(i,j)+\sum_{i+j=r+s} \Delta^j_{r,s} \left(S_d(i+t,j)+\sum_{i_1+j_1=i+t} \Delta^{j_1}_{i,t} S_d(i_1,j_1+j)\right)
\end{align}
The sum of the terms of depth $3$ in the above expansion is the following:
\begin{align*}
	\sum_{i+j=r+s} \Delta^j_{r,s} \sum_{i_1+j_1=i+t} \Delta^{j_1}_{i,t} \left(S_d(i_1,j_1,j)+S_d(i_1,j,j_1) + \sum \limits_{i_2 + j_2 =  j+j_1 } \Delta_{j,j_1}^{j_2} S_d(i_1,i_2,j_2)\right).
\end{align*}

\subsubsection{} We next expand $S_d(r) (S_d(s)S_d(t))$.
\begin{align*}
    &S_d(r)(S_d(s)S_d(t))\\
    &= S_d(r)\left(S_d(s+t) + \sum_{i+j=s+t} \Delta^j_{s,t} S_d(i,j)\right) \\
    &= S_d(r)S_d(s+t) + \sum_{i+j=s+t} \Delta^j_{s,t} S_d(r)S_d(i,j)\\
    &= \left(S_d(r+s+t) + \sum_{i+j=r+s+t} \Delta^j_{r,s+t} S_d(i,j) \right) \\
    &+ \sum_{i+j=s+t} \Delta^j_{s,t} \left(S_d(i+r,j)+\sum_{i_1+j_1=i+r} \Delta^{j_1}_{i,r} S_d(i_1) S_{<d}(j_1) S_{<d}(j)\right).
\end{align*}
The sum of the terms of depth $2$ in the above expansion is the following:
\begin{align} \label{eq: 2}
	\sum_{i+j=r+s+t} \Delta^j_{r,s+t} S_d(i,j)+\sum_{i+j=s+t} \Delta^j_{s,t} \left(S_d(i+r,j)+\sum_{i_1+j_1=i+r} \Delta^{j_1}_{i,r} S_d(i_1,j_1+j)\right).
\end{align}
The sum of the terms of depth $3$ in the above expansion is the following:
\begin{align*}
	\sum_{i+j=s+t} \Delta^j_{s,t} \sum_{i_1+j_1=i+r} \Delta^{j_1}_{i,r} \left(S_d(i_1,j_1,j)+S_d(i_1,j,j_1) + \sum \limits_{i_2 + j_2 =  j+j_1 } \Delta_{j,j_1}^{j_2} S_d(i_1,i_2,j_2)\right).
\end{align*}


\subsubsection{Partial fraction decompositions} ${}$\par

In this section, we will give some partial fraction decompositions which will be used in the next section. To simplify the notations, for $j, r \in \mathbb{N}$, we set
\begin{align*}
\nabla^j_r :&= (-1)^{r} {j-1 \choose r-1}.
\end{align*}

\begin{lemma} \label{lem: basic indentity}
Let $r , s$ be positive integers. The following equality of rational function holds:
\begin{equation*}
    \frac{1}{X^r Y^s}=\sum_{\substack{i,j \in \N, \\ i+j=r+s}} \left[\frac{{j-1 \choose s-1}}{(X+Y)^j X^i}+\frac{{j-1 \choose r-1}}{(X+Y)^j Y^i}\right].
\end{equation*}
\end{lemma}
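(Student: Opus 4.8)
The claimed identity is a statement about the partial fraction decomposition of $\frac{1}{X^rY^s}$ as a rational function in two variables; after clearing denominators it becomes a polynomial identity, which suggests proving it by induction on $r+s$ (or, equivalently, on one of $r,s$). The plan is to proceed by induction on $r$, using the base case $r=1$ and the recursive step that expresses $\frac{1}{X^rY^s}$ in terms of $\frac{1}{X^{r-1}Y^s}$ and $\frac{1}{X^rY^{s-1}}$.

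\textbf{Base case.} For $r=1$ the left-hand side is $\frac{1}{XY^s}$. I would verify directly that
\[
\frac{1}{XY^s} = \sum_{\substack{i,j\in\N\\ i+j=s+1}} \left[\frac{\binom{j-1}{s-1}}{(X+Y)^jX^i} + \frac{\binom{j-1}{0}}{(X+Y)^jY^i}\right],
\]
noting that $\binom{j-1}{s-1}$ is nonzero only when $j=s$ (giving the term $\frac{1}{(X+Y)^sX}$) and that $\binom{j-1}{0}=1$ for all $j\geq 1$, so the right-hand side collapses to $\frac{1}{(X+Y)^sX} + \sum_{i+j=s+1}\frac{1}{(X+Y)^jY^i}$. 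The remaining sum telescopes: repeatedly applying $\frac{1}{(X+Y)Y^i} = \frac{1}{XY^i} - \frac{1}{X(X+Y)Y^{i-1}}$ (equivalently $\frac{1}{AB}=\frac{1}{B-A}(\frac1A-\frac1B)$ with $A=X+Y$, $B=$ appropriate power product) reassembles everything into $\frac{1}{XY^s}$. This is the analogue for two variables of the classical single partial-fraction step $\frac{1}{X(X+Y)} = \frac1{XY}-\frac1{Y(X+Y)}\cdot\frac{Y}{?}$, so I would isolate the clean one-step identity $\frac{1}{X^aY} + \frac{1}{XY^a}$-type relation first and then iterate.

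\textbf{Inductive step.} Assuming the identity for $r-1$ (all $s$), I would start from $\frac{1}{X^rY^s} = \frac1X\cdot\frac{1}{X^{r-1}Y^s}$, apply the induction hypothesis to $\frac{1}{X^{r-1}Y^s}$, and then simplify each resulting term of the form $\frac{1}{X}\cdot\frac{1}{(X+Y)^jX^i}$ and $\frac1X\cdot\frac{1}{(X+Y)^jY^i}$. The first kind is immediate ($\frac{1}{(X+Y)^jX^{i+1}}$). The second kind requires writing $\frac{1}{XY^i} = \frac{1}{(X+Y)Y^i} + \frac{1}{X(X+Y)Y^{i-1}}$ or the partial fraction of $\frac{1}{X(X+Y)^j}$ into $\frac{1}{(X+Y)^jX}$-type pieces; iterating this turns $\frac{1}{X(X+Y)^jY^i}$ into a combination of $\frac{1}{(X+Y)^{j'}X}$ and $\frac{1}{(X+Y)^{j'}Y^{i'}}$ terms. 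Collecting coefficients and invoking the Pascal-type recurrence $\binom{j-1}{r-1} = \binom{j-2}{r-1} + \binom{j-2}{r-2}$ should match the target coefficients $\binom{j-1}{s-1}$ and $\binom{j-1}{r-1}$ on the nose. Alternatively — and this may be cleaner — I would clear all denominators to reduce to the polynomial identity
\[
(X+Y)^{r+s-1} = \sum_{i+j=r+s}\left[\binom{j-1}{s-1}X^{r-i}Y^{s} + \binom{j-1}{r-1}X^{r}Y^{s-i}\right]\cdot(\text{appropriate monomial})
\]
and verify it by comparing coefficients, using the Vandermonde/Chu identity for sums of products of binomial coefficients.

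\textbf{Main obstacle.} The bookkeeping in the inductive step — tracking which power of $(X+Y)$ and which of $X$ or $Y$ each term lands in after iterated single-variable partial fractions, and verifying that the binomial coefficients reassemble correctly via Pascal's rule — is the delicate part; it is easy to mismatch an index by one. I expect the cleanest route is to reduce immediately to the equivalent polynomial identity by multiplying through by $X^rY^s(X+Y)^{r+s-1}$ and then prove that via a coefficient comparison, where the needed binomial sum identity is a standard one. The combinatorial heart is the Vandermonde-type evaluation $\sum_{k}\binom{k-1}{s-1}\binom{r+s-1-k}{r-1} = \binom{r+s-1}{r+s-1}$ (suitably interpreted), which controls the total and is where the "$i+j=r+s$" constraint is really being used.
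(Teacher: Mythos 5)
Your plan is workable, but note first that the paper does not actually prove this lemma: its ``proof'' is the citation \cite[Lemma~1.49]{BGF}, so any self-contained argument is already a different route. Your base case is right: for $r=1$ the only surviving $X$-term is $(i,j)=(1,s)$, and setting $T_j=\frac{1}{X(X+Y)^jY^{s-j}}$ one has $T_{j-1}-T_j=\frac{1}{(X+Y)^jY^{s-j+1}}$, so the remaining sum telescopes to $\frac{1}{XY^s}-\frac{1}{X(X+Y)^s}$ exactly as you describe. Your asymmetric inductive step (multiplying by $1/X$) does close up --- the $X$-terms shift trivially with the correct coefficient $\binom{j-1}{s-1}$, and the terms $\frac{1}{X(X+Y)^jY^i}$, after re-expanding $\frac{1}{XY^i}$ by the base case, recombine via the hockey-stick identity $\sum_{m=0}^{n}\binom{m}{k}=\binom{n+1}{k+1}$ rather than Pascal's rule --- but it is noticeably heavier than the symmetric recursion $\frac{1}{X^rY^s}=\frac{1}{X+Y}\bigl(\frac{1}{X^{r-1}Y^s}+\frac{1}{X^rY^{s-1}}\bigr)$, an induction on $r+s$ in which every term of the inductive hypothesis simply has its $(X+Y)$-exponent raised by one and the coefficients combine by Pascal's rule $\binom{j-2}{s-1}+\binom{j-2}{s-2}=\binom{j-1}{s-1}$ (and likewise for $r$); this is the argument in the cited reference and it eliminates the three-factor bookkeeping you flag as the main obstacle. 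One caution on your alternative route: the displayed ``polynomial identity'' obtained by clearing denominators is not well-formed as written, since $X^{r-i}$ has negative exponent for $i>r$ and the ``appropriate monomial'' is left undetermined; if you pursue that route, multiply only by $(X+Y)^{r+s-1}$ and compare the principal parts at $X=0$ and at $Y=0$ separately, which is where your Vandermonde evaluation genuinely enters.
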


\begin{proof}
See \cite[Lemma 1.49]{BGF}.
\end{proof}

\begin{lemma} \label{lem: decomposition}
Let $R$ be a ring, and let $a, b$ be elements in $R$ such that $a \ne b$. Then for all positive integers $r , s$, we have the following partial fraction decomposition:
\begin{equation*} 
\frac{1}{(X+a)^r (X+b)^s}=\sum_{\substack{i,j \in \N, \\ i+j=r+s}} \left[\frac{\nabla^j_s}{(a-b)^j (X+a)^i}+\frac{\nabla^j_r}{(b-a)^j (X+b)^i}\right].
\end{equation*}
\end{lemma}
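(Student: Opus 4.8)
\textbf{Proof plan for Lemma \ref{lem: decomposition}.}

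The plan is to reduce this to the already-established Lemma \ref{lem: basic indentity} by a change of variables, but since $a-b$ lives in the (commutative) ring $R$ rather than in a field, a direct substitution is not immediately legitimate; the cleanest route is to work at the level of an identity of rational functions in independent indeterminates and then specialize. First I would prove the following purely formal identity in the polynomial ring $R[X,A,B]$ localized at the appropriate elements: writing $U = X+A$, $V = X+B$, so that $U - V = A - B$, the identity of Lemma \ref{lem: basic indentity} applied with the pair $(U,V)$ in place of $(X,Y)$ reads
\begin{equation*}
\frac{1}{U^r V^s} = \sum_{\substack{i,j \in \N,\\ i+j=r+s}}\left[\frac{\binom{j-1}{s-1}}{(U+V)^j U^i} + \frac{\binom{j-1}{r-1}}{(U+V)^j V^i}\right].
\end{equation*}
This is not quite what we want, because the denominators involve $U+V = 2X + A + B$ rather than $U - V = A - B$. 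So instead I would apply Lemma \ref{lem: basic indentity} (or rather its proof, which is an iterated application of the case $r=s=1$) in the form adapted to a \emph{difference} of linear terms; concretely, set $V' = -(X+B) = -V$, so $U - V' = U + V$ is not what we want either. The correct move is: in Lemma \ref{lem: basic indentity} replace $X$ by $X+a$ and $Y$ by $-(X+b)$, i.e.\ use the variables whose sum is $(X+a) - (X+b) = a-b$; then $\frac{1}{(X+a)^r(-(X+b))^s} = (-1)^s\frac{1}{(X+a)^r(X+b)^s}$, and the right-hand side of Lemma \ref{lem: basic indentity} becomes a sum with denominators $(a-b)^j$, $(X+a)^i$ and $(a-b)^j$, $(X+b)^i$, which after tracking the signs $(-1)^s$, $(-1)^i$ and the binomials $\binom{j-1}{s-1}$, $\binom{j-1}{r-1}$ collapses exactly into the claimed expression with $\nabla^j_s = (-1)^s\binom{j-1}{s-1}$ and $\nabla^j_r = (-1)^r\binom{j-1}{r-1}$. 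One checks the sign bookkeeping using $i+j = r+s$ (so $(-1)^i = (-1)^{r+s-j}$).

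The remaining point is legitimacy: Lemma \ref{lem: basic indentity} is stated as an equality of rational functions in two variables $X,Y$ over $\mathbb{Q}$ (or $\mathbb Z$), whereas here I want to substitute $X \leftarrow X+a$, $Y \leftarrow -(X+b)$ with $a,b \in R$ and divide by the element $a-b \in R$. I would phrase Lemma \ref{lem: basic indentity} as an identity in $\mathbb{Z}[X,Y]$ localized at $XY(X+Y)$ — equivalently, after clearing denominators, as a polynomial identity $Y^s \prod(\dots) + \dots = \dots$ in $\mathbb{Z}[X,Y]$ — and then apply the ring homomorphism $\mathbb{Z}[X,Y] \to R[X][\tfrac{1}{(X+a)(X+b)(a-b)}]$ sending $X \mapsto X+a$, $Y \mapsto -(X+b)$. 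Since this is a polynomial identity over $\mathbb Z$, it is preserved by any ring homomorphism, and because $a \ne b$ the element $a-b$ is assumed invertible in the target (which is exactly the hypothesis that makes the statement meaningful), the division by $(a-b)^j$ is justified. This sidesteps the fact that $R$ need not be a domain.

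The main obstacle I anticipate is purely the sign and binomial bookkeeping in the substitution $Y \mapsto -(X+b)$: one has to verify that $(-1)^s \cdot (-1)^i \cdot \binom{j-1}{s-1} = \nabla^j_s \cdot (\text{correct sign for the } (X+a)^i \text{ term})$ and the analogous identity for the $(X+b)^i$ term, using the constraint $i+j=r+s$ to convert $(-1)^i$ into $(-1)^{r+s-j}$ and match powers. This is routine but error-prone; I would do it carefully once for each of the two families of terms. Everything else — the reduction to a polynomial identity, the functoriality of the substitution — is formal. (Alternatively, and perhaps more cleanly, one can avoid Lemma \ref{lem: basic indentity} entirely and prove Lemma \ref{lem: decomposition} directly by induction on $r+s$ using the elementary identity $\frac{1}{(X+a)(X+b)} = \frac{1}{a-b}\bigl(\frac{1}{X+b} - \frac{1}{X+a}\bigr)$ together with the Pascal recursion $\binom{j-1}{s-1} = \binom{j-2}{s-1} + \binom{j-2}{s-2}$; this is the approach I would fall back on if the sign-tracking in the substitution route becomes unwieldy.)
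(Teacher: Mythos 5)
Your proposal is correct and follows essentially the same route as the paper: the paper's proof is precisely the substitution $X \mapsto X+a$, $Y \mapsto -(X+b)$ into Lemma \ref{lem: basic indentity}, so that $X+Y$ becomes $a-b$, with the signs absorbed into $\nabla^j_s$ and $\nabla^j_r$ exactly as you describe. Your additional care about phrasing the identity over $\mathbb{Z}$ and specializing via a ring homomorphism (so that the argument is valid over an arbitrary ring where $a-b$ is invertible in the relevant localization) is a reasonable elaboration of what the paper leaves implicit.
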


\begin{proof}
The result follows immediately from Lemma \ref{lem: basic indentity} by taking $X = X + a$ and $Y = - (X + b)$.
\end{proof}

It should be remarked that Chen's formula is based on the following identity.

\begin{corollary} \label{cor: identity}
Let $a, b$ be elements in $A$ such that $a \ne b$. Then for all positive integers $r , s$, we have 
\begin{equation*} 
\frac{1}{a^r b^s}=\sum_{\substack{i,j \in \N, \\ i+j=r+s}} \left[\frac{\nabla^j_s}{(a-b)^j a^i}+\frac{\nabla^j_r}{(b-a)^j b^i}\right].
\end{equation*}
\end{corollary}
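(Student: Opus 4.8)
The plan is to derive Corollary \ref{cor: identity} by specializing the variable in Lemma \ref{lem: decomposition}. Since $a \neq b$ by hypothesis, $a-b$ is a nonzero element of the fraction field $K=\Fq(\theta)$ of $A$, hence a unit in $K$; we may therefore apply Lemma \ref{lem: decomposition} with the ring $R$ taken to be $K$ and with the given elements $a,b \in A \subset K$. This yields the identity
\[
\frac{1}{(X+a)^r (X+b)^s}=\sum_{\substack{i,j \in \N, \\ i+j=r+s}} \left[\frac{\nabla^j_s}{(a-b)^j (X+a)^i}+\frac{\nabla^j_r}{(b-a)^j (X+b)^i}\right]
\]
in the field $K(X)$ of rational functions in $X$.

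Now I would substitute $X=0$. The left-hand side $1/(a^r b^s)$ of the asserted identity only makes sense when $a \neq 0$ and $b \neq 0$, which we may thus assume; consequently each denominator appearing above, namely $(X+a)^r$, $(X+b)^s$ and the $(X+a)^i$, $(X+b)^i$ with $i \geq 1$, takes a nonzero value at $X=0$. Hence both sides of the displayed identity lie in the local subring of $K(X)$ consisting of rational functions regular at the origin, on which evaluation at $X=0$ is a well-defined $K$-algebra homomorphism. Applying it termwise, and using $(X+a)^i|_{X=0}=a^i$ and $(X+b)^i|_{X=0}=b^i$ (valid because $i \geq 1$), I obtain exactly
\[
\frac{1}{a^r b^s}=\sum_{\substack{i,j \in \N, \\ i+j=r+s}} \left[\frac{\nabla^j_s}{(a-b)^j a^i}+\frac{\nabla^j_r}{(b-a)^j b^i}\right],
\]
which is the claim.

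There is no genuine obstacle here: the corollary is a one-step consequence of Lemma \ref{lem: decomposition}, and the only point warranting a moment's care is the legitimacy of the substitution $X=0$, i.e. checking that no denominator degenerates, which is immediate from $a,b \neq 0$ and the positivity of all relevant exponents. If one prefers a self-contained argument, one can instead substitute $X \mapsto a$ and $Y \mapsto -b$ directly in Lemma \ref{lem: basic indentity}, multiply through by $(-1)^s$, rewrite $(a-b)^j=(-1)^j(b-a)^j$ in the second family of terms, and reconcile the remaining signs via the relation $i+j=r+s$, which forces $s+i \equiv r+j \pmod 2$; this reproduces the same formula.
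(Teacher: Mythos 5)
Your proof is correct and follows exactly the paper's route: the paper also obtains the corollary by setting $X=0$ in Lemma \ref{lem: decomposition}, merely stating this in one line. Your additional care about working in $K(X)$, the regularity of all denominators at the origin, and the implicit hypothesis $a,b\neq 0$ is a sound (if not strictly necessary here) elaboration of the same argument.
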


\begin{proof}
The result is straightforward from Lemma \ref{lem: decomposition} when $X = 0$ and $R = A$.
\end{proof}

\par Let $r,s,t$ be positive integers. Let $R$ be a ring, and let $u, v$ be elements in $R$ such that $u \ne 0, v \ne 0 $ and $u \ne v$. Consider the following fractional function 
\begin{equation*}
    P(A) = \frac{1}{A^{r}(A+u)^s(A+v)^t}
\end{equation*}
Set $B= A + u$ and $C = A+v$. Using Lemma \ref{lem: decomposition}, we give the partial fraction decomposition of $P(A)$ in two ways. First, we expand $P(A)$ from the left to the right as follows:
\begin{align} \label{eq: PFD L-R}
   \frac{1}{A^{r}B^s} \cdot \frac{1}{C^t} &=\sum \limits_{i+j = r+s} \left(\frac{\nabla^j_s}{(A-B)^jA^i} + \frac{\nabla^j_r}{(B-A)^jB^i}\right)\frac{1}{C^t}\\ \notag
    &= \sum \limits_{i+j = r+s} \sum \limits_{i_1+j_1 = i + t}\frac{\nabla^j_s}{(A-B)^j} \frac{\nabla^{j_1}_t}{(A-C)^{j_1}}\frac{1}{A^{i_1}}\\ \notag
    &+ \sum \limits_{i+j = r+s} \sum \limits_{i_1+j_1 = i + t}\frac{\nabla^j_s}{(A-B)^j} \frac{\nabla^{j_1}_i}{(C-A)^{j_1}}\frac{1}{C^{i_1}}\\\notag
    &+ \sum \limits_{i+j = r+s} \sum \limits_{i_1+j_1 = i + t}\frac{\nabla^j_r}{(B-A)^j} \frac{\nabla^{j_1}_t}{(B-C)^{j_1}}\frac{1}{B^{i_1}}\\\notag
    &+ \sum \limits_{i+j = r+s} \sum \limits_{i_1+j_1 = i + t}\frac{\nabla^j_r}{(B-A)^j} \frac{\nabla^{j_1}_i}{(C-B)^{j_1}}\frac{1}{C^{i_1}}\notag.
\end{align}
Next we expand $P(A)$ from the right to the left as follows:
\begin{align} \label{eq: PFD R-L}
  \frac{1}{A^r} \cdot \frac{1}{B^{s}C^t} &=  \frac{1}{A^r}  \sum \limits_{i+j = s + t} \left(\frac{\nabla^j_t}{(B-C)^jB^i} + \frac{\nabla^j_s}{(C-B)^jC^i}\right)\\\notag
    &=  \sum \limits_{i+j = s + t}\sum \limits_{i_1+j_1 = i + r} \frac{\nabla^j_t}{(B-C)^j}\frac{\nabla^{j_1}_i}{(A-B)^{j_1}}\frac{1}{A^{i_1}}\\\notag
    &+ \sum \limits_{i+j = s + t}\sum \limits_{i_1+j_1 = i + r} \frac{\nabla^j_t}{(B-C)^j}\frac{\nabla^{j_1}_r}{(B-A)^{j_1}}\frac{1}{B^{i_1}}\\\notag
    &+\sum \limits_{i+j = s + t}\sum \limits_{i_1+j_1 = i + r}\frac{\nabla^j_s}{(C-B)^j}\frac{\nabla^{j_1}_i}{(A-C)^{j_1}}\frac{1}{A^{i_1}}\\\notag
     &+\sum \limits_{i+j = s + t}\sum \limits_{i_1+j_1 = i + r}\frac{\nabla^j_s}{(C-B)^j}\frac{\nabla^{j_1}_r}{(C-A)^{j_1}}\frac{1}{C^{i_1}}.
\end{align}

\subsection{Associativity for $S_d$ of depth one}

\subsubsection{Main results} ${}$\par

To simplify the notations, for $j,k \in \mathbb{N}$, we set
\begin{align*}
\delta_j :&= \sum \limits_{\lambda \in \Fq^\times} \frac{1}{\lambda^j} = \begin{cases}  -1 & \quad \text{if } (q - 1) \mid j , \\
0 & \quad \text{otherwise,} 
\end{cases}\\
    \delta_{j,k} :&= \sum \limits_{\lambda,\mu \in \Fq^\times; \lambda \ne \mu} \frac{1}{\lambda^j \mu ^k}.
\end{align*}
The following formulas will be used frequently later:
\begin{align}
    \Delta_{r,s}^j &= \delta_j(\nabla^j_r + \nabla^j_s)\label{eq: formula 1},\\
    \delta_j &= \delta_j(-1)^j\label{eq: formula 2},\\
    \delta_j\delta_k &= \delta_{j+k} + \delta_{j,k}.\label{eq: formula 3}
\end{align}
Consider all the cases of tuples $(a,b,c) \in  A_+(d) \times A_+(d) \times A_+(d)$, we set
\begin{align*}
    M_0 &= \{(a,b,c) \in A_+(d) \times A_+(d) \times A_+(d) : a = b = c \},\\
    M_1 &= \{(a,b,c) \in A_+(d) \times A_+(d) \times A_+(d) : a = b \ne c \},\\
    M_2 &= \{(a,b,c) \in A_+(d) \times A_+(d) \times A_+(d) : a \ne b = c \},\\
    M_3 &= \{(a,b,c) \in A_+(d) \times A_+(d) \times A_+(d) : c = a \ne b \},\\
    M_4 &= \{(a,b,c) \in A_+(d) \times A_+(d) \times A_+(d) : a \ne b, b \ne c, c \ne a \}.
\end{align*}
The last set $M_4$ can be decomposed by the following partition:
\begin{equation*}
    M_4 = N_0 \sqcup N_1 \sqcup N_2 \sqcup N_3 \sqcup N_4.
\end{equation*}
Here 
\begin{itemize}
    \item the set $N_0$ consists of tuples $(a,b,c) \in M_4$ where $b = a + \lambda f, c = a + \mu f$ with $\lambda, \mu \in \Fq^\times$ such that $\lambda \ne \mu$ and $f \in A_+$ such that $d > \deg f$;
    \item the set $N_1$ consists of tuples $(a,b,c) \in M_4$ where $b = a + \lambda f, c = a + \mu u$ with $\lambda, \mu \in \Fq^\times$  and $f,u \in A_+$ such that $d > \deg f > \deg u$;
    \item the set $N_2$ consists of tuples $(a,b,c) \in M_4$ where $b = a + \lambda u, c = a + \mu f$ with $\lambda, \mu \in \Fq^\times$  and $f,u \in A_+$ such that $d > \deg f > \deg u$;
    \item the set $N_3$ consists of tuples $(a,b,c) \in M_4$ where $b = a + \lambda f, c = a + \lambda f + \mu u$ with $\lambda, \mu \in \Fq^\times$  and $f,u \in A_+$ such that $d > \deg f > \deg u$;
    \item the set $N_4$ consists of tuples $(a,b,c) \in M_4$ where $b = a + \lambda f, c = a + \mu f + \eta u$ with $\lambda, \mu, \eta \in \Fq^\times$ such that $\lambda \ne \mu$  and $f,u \in A_+$ such that $d > \deg f > \deg u$.
\end{itemize}

\begin{proposition} \label{prop: associativity}
Let $r,s , t$ be positive integers. The expansions using Corollary~\ref{cor: identity} of 
\begin{equation*}
    \sum \frac{1}{a^rb^s} \cdot \frac{1}{c^t} \quad \text{and} \quad \sum \frac{1}{a^r} \cdot \frac{1}{b^s c^ t}
\end{equation*}
yield the same expression in terms of power sums, where $(a,b,c)$ ranges over all tuples in the sets $M_0,M_1,M_2,M_3,N_0,N_1,N_2,N_3,N_4$. Moreover,
\begin{enumerate}[$(1)$]
\item the expansion of $\sum \frac{1}{a^rb^s} \cdot \frac{1}{c^t}$ (respectively, $\sum \frac{1}{a^r} \cdot \frac{1}{b^s c^ t}$), where $(a,b,c)$ ranges over all tuples in $M_0$, yields the term of depth $1$ in the expression of $(S_d(r)S_d(s))S_d(t)$ (respectively, $S_d(r)(S_d(s)S_d(t))$);
\item the expansion of $\sum \frac{1}{a^rb^s} \cdot \frac{1}{c^t}$ (respectively, $\sum \frac{1}{a^r} \cdot \frac{1}{b^s c^ t}$), where $(a,b,c)$ ranges over all tuples in $M_1 \sqcup M_2\sqcup M_3\sqcup N_0$, yields the terms of depth $2$ in the expression of $(S_d(r)S_d(s))S_d(t)$ (respectively, $S_d(r)(S_d(s)S_d(t))$);
\item  the expansion of $\sum \frac{1}{a^rb^s} \cdot \frac{1}{c^t}$ (respectively, $\sum \frac{1}{a^r} \cdot \frac{1}{b^s c^ t}$), where $(a,b,c)$ ranges over all tuples in $N_1 \sqcup N_2\sqcup N_3\sqcup N_4$, yields the terms of depth $3$ in the expression of $(S_d(r)S_d(s))S_d(t)$ (respectively, $S_d(r)(S_d(s)S_d(t))$).
\end{enumerate}  
\end{proposition}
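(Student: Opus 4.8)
The plan is to prove Proposition \ref{prop: associativity} by a ``divide and conquer'' strategy along the partition $A_+(d)^3 = M_0 \sqcup M_1 \sqcup M_2 \sqcup M_3 \sqcup N_0 \sqcup N_1 \sqcup N_2 \sqcup N_3 \sqcup N_4$. The two products $\sum \frac{1}{a^r b^s}\cdot\frac{1}{c^t}$ and $\sum \frac{1}{a^r}\cdot\frac{1}{b^s c^t}$ are genuinely equal before any expansion (both equal $\sum \frac{1}{a^r b^s c^t}$ over $(a,b,c)\in A_+(d)^3$), so the content of the proposition is a \emph{bookkeeping} assertion: when one applies Corollary \ref{cor: identity} in the two prescribed orders, the resulting power-sum expressions agree \emph{term by term within each piece} of the partition, and moreover each piece lands in the claimed depth. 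The first step is to record, for a fixed pair $(i,j)$ with $i+j$ fixed, the effect of one application of Corollary \ref{cor: identity} in terms of the symbols $\nabla^j_r$ introduced above, and to note the translation dictionary: $\sum_{a\in A_+(d)} \frac{1}{a^n} = S_d(n)$, $\sum_{\deg a > \deg b} \frac{1}{a^m b^n} = S_d(m,n)$, and $\sum_{\lambda\in\Fq^\times}\lambda^{-j} = \delta_j$, together with formulas \eqref{eq: formula 1}, \eqref{eq: formula 2}, \eqref{eq: formula 3}.

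\textbf{Step 1: the depth-$1$ piece $M_0$.} On the locus $a=b=c$ the three sums collapse to $\sum_{a\in A_+(d)} a^{-(r+s+t)} = S_d(r+s+t)$, which is exactly the depth-$1$ term appearing on both sides of the two expansions in \S 6.2.1 and \S 6.2.2. This is immediate. \textbf{Step 2: the depth-$2$ pieces $M_1, M_2, M_3, N_0$.} Here exactly two of $a,b,c$ coincide, or all three are distinct but lie on a single ``line'' $a, a+\lambda f, a+\mu f$. On $M_1$ (so $a=b\neq c$) the left expansion $\sum \frac{1}{a^{r+s}c^t}$ is handled by a single application of Corollary \ref{cor: identity} with difference $a-c$, producing $S_d$-terms whose coefficients, after summing over the scalar $\lambda$ with $c=a+\lambda f$, turn into the $\Delta$-coefficients via \eqref{eq: formula 1}; similarly $M_2, M_3$ handle the other two coincidences, and $N_0$ (the line case, all distinct) contributes the remaining genuinely depth-$2$ terms coming from the double-$\Delta$ sums $\sum_{i+j=r+s}\Delta^j_{r,s}\sum_{i_1+j_1=i+t}\Delta^{j_1}_{i,t}S_d(i_1, j_1+j)$ in \eqref{eq: 1} and \eqref{eq: 2}. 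The matching here requires care with which variable is ``larger in degree'' on each piece — e.g.\ $M_1$ contributes $S_d(\ast,\ast)$ with $a$ in the first slot when $\deg a>\deg c$ — so one must check both orderings of the free parameter and see that the two expansions \eqref{eq: 1} and \eqref{eq: 2} receive the \emph{same} total contribution from $M_1\sqcup M_2\sqcup M_3\sqcup N_0$ even though the individual pieces are distributed differently between the ``L-to-R'' and ``R-to-L'' expansions. \textbf{Step 3: the depth-$3$ pieces $N_1, N_2, N_3, N_4$.} On these four strata the three parameters $a,b,c$ are distinct and ``genuinely three-dimensional'' (governed by two polynomials $f, u$ of strictly decreasing degree and several $\Fq^\times$-scalars). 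Here one applies Lemma \ref{lem: decomposition} / Corollary \ref{cor: identity} twice; the ``L-to-R'' expansion is precisely \eqref{eq: PFD L-R} and the ``R-to-L'' expansion is \eqref{eq: PFD R-L}, with $u, v$ the two differences. Summing each of the eight double sums over the relevant scalars produces $\delta$- and $\delta_{\ast,\ast}$-factors, and one uses \eqref{eq: formula 2}, \eqref{eq: formula 3} to rewrite everything in terms of the $\Delta$-coefficients appearing in the depth-$3$ parts of \S 6.2.1 and \S 6.2.2. One then checks stratum by stratum ($N_1$: $\deg f > \deg u$ with $b,c$ ``independent''; $N_3$: $c-b$ small; $N_4$: $b-a, c-a$ proportional up to lower-order $u$; etc.) that the two expressions coincide.

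\textbf{The main obstacle} I anticipate is Step 3, and within it the combinatorial reorganization on $N_3$ and $N_4$: on these strata the nested differences $B-C = (a+\lambda f)-(a+\lambda f+\mu u) = -\mu u$ (resp.\ $(\lambda-\mu)f - \eta u$) have a ``small'' leading term, so the relative sizes of $|A-B|$, $|A-C|$, $|B-C|$ force a specific slot-ordering of the resulting length-$3$ power sums, and matching this ordering between \eqref{eq: PFD L-R} and \eqref{eq: PFD R-L} is exactly the place where the identity $\delta_j\delta_k = \delta_{j+k}+\delta_{j,k}$ and the ``three-fold'' partial fraction identity (the analogue of Lemma \ref{lem: basic indentity} with three factors) must be invoked in just the right way. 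The routine-but-lengthy part will be carrying the $\nabla$- and $\delta$-coefficients through these rearrangements; I will organize it by first proving a single ``master'' lemma on the triple partial fraction decomposition $\frac{1}{A^r B^s C^t}$ and its two natural expansions, specialize that lemma on each stratum by substituting $B = A+\lambda f$ etc.\ and summing over scalars, and only then compare with \eqref{eq: 1}, \eqref{eq: 2} and their depth-$3$ counterparts. Once all nine strata are checked, parts $(1)$--$(3)$ of the proposition are simply the statement of which strata landed in which depth, which will have been established along the way.
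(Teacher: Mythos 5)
Your plan follows essentially the same route as the paper's proof: the same stratification of $A_+(d)^3$ into $M_0,\dots,M_3,N_0,\dots,N_4$, the same parametrizations $b=a+\lambda f$, $c=a+\mu f+\eta u$, etc., the same two-way partial fraction expansion of an auxiliary rational function ($P_{\lambda}(A,F)$ in depth $2$, $P_{\lambda,\mu}(A,F,U)$ or $P_{\lambda,\mu,\eta}(A,F,U)$ in depth $3$), and the same dictionary converting $\delta_j$, $\delta_{j,k}$ into the coefficients $\Delta^j_{a,b}$ via \eqref{eq: formula 1}, \eqref{eq: formula 2} and \eqref{eq: formula 3}. Two small remarks: the worry you raise in Step 2 about contributions being ``distributed differently'' between the two expansions does not in fact arise, since uniqueness of the partial fraction decomposition of each auxiliary function forces the two expansions to agree on \emph{each individual stratum} (which is what the first assertion of the proposition claims); and the ordering difficulty you anticipate on $N_1$--$N_4$ is resolved by substituting monic polynomials such as $g=f+\mu' u$ and $h=f+\eta'' u$ of degree $\deg f$, so that every difference of the three variables is a nonzero scalar multiple of a monic polynomial of the correct degree and the resulting sums are genuine $S_d$'s.
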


As a direct consequence, we obtain the following theorem.

\begin{theorem} \label{thm: assoc S_d depth 1}
Let $r,s , t$ be positive integers. For all $d \in \mathbb{N}$, the expansions using Chen's formula of $(S_d(s) S_d(t))S_d(r)$ and $S_d(s) (S_d(t)S_d(r))$ yield the same expression in terms of power sums.
\end{theorem}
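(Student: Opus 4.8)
The strategy is to realize both iterated products $(S_d(r)S_d(s))S_d(t)$ and $S_d(r)(S_d(s)S_d(t))$ as the image under the summation map of a single rational-function identity, namely the two partial fraction decompositions \eqref{eq: PFD L-R} and \eqref{eq: PFD R-L} of $P(A) = \frac{1}{A^r(A+u)^s(A+v)^t}$. Since partial fraction decomposition over a field is \emph{unique}, once we match up the two expansions term by term with the contributions to the two iterated products, associativity follows immediately. Concretely, one fixes $d \in \mathbb{N}$ and runs $(a,b,c)$ over $A_+(d)^3$; the quantity $\left(S_d(r)S_d(s)\right)S_d(t) = \sum_{(a,b,c)} \frac{1}{a^r b^s c^t}$ is split according to the partition $M_0 \sqcup M_1 \sqcup M_2 \sqcup M_3 \sqcup N_0 \sqcup N_1 \sqcup N_2 \sqcup N_3 \sqcup N_4$ of $A_+(d)^3$, and Proposition~\ref{prop: associativity} is exactly the statement that each block contributes the same power-sum expression to both sides (sorted by depth: $M_0$ gives depth $1$, the next four blocks depth $2$, the last four depth $3$). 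So the theorem is a direct corollary of Proposition~\ref{prop: associativity}.

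\textbf{Key steps.}
First I would invoke Proposition~\ref{prop: associativity}: it asserts that the expansion of $\sum \frac{1}{a^r b^s}\cdot\frac{1}{c^t}$ and that of $\sum \frac{1}{a^r}\cdot\frac{1}{b^s c^t}$, when $(a,b,c)$ ranges over each of the listed blocks, yield identical power-sum expressions. Second, I would observe that these nine blocks partition the full index set $A_+(d) \times A_+(d) \times A_+(d)$, so summing the equalities over all blocks gives
\begin{equation*}
\sum_{(a,b,c)} \frac{1}{a^r b^s}\cdot\frac{1}{c^t} = \sum_{(a,b,c)} \frac{1}{a^r}\cdot\frac{1}{b^s c^t}
\end{equation*}
as expressions in power sums $S_d$. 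Third, I would identify the left-hand side with the depth-graded expansion of $(S_d(r)S_d(s))S_d(t)$ computed via Chen's formula (Proposition~\ref{prop: chen1}) in \S5.3.1, and the right-hand side with that of $S_d(r)(S_d(s)S_d(t))$ computed in \S5.3.2; parts $(1)$--$(3)$ of Proposition~\ref{prop: associativity} make this identification precise by matching the depth-$1$, depth-$2$, and depth-$3$ components respectively. Renaming the variables $(r,s,t) \rightsquigarrow (s,t,r)$ (or permuting as in the statement) and using commutativity where needed then yields the claimed equality of $(S_d(s)S_d(t))S_d(r)$ and $S_d(s)(S_d(t)S_d(r))$.

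\textbf{Main obstacle.}
The genuine content lies entirely in Proposition~\ref{prop: associativity}, and the hardest part there is the combinatorial matching in the generic blocks $N_1, N_2, N_3, N_4$ (depth $3$): one must carefully track, for a triple $(a, b, c)$ with all three distinct, which of the decompositions \eqref{eq: PFD L-R} / \eqref{eq: PFD R-L} applies, rewrite each of the four summands of each decomposition using the identities \eqref{eq: formula 1}, \eqref{eq: formula 2}, \eqref{eq: formula 3} relating $\Delta^j_{r,s}$, $\nabla^j_r$, $\delta_j$ and $\delta_{j,k}$, and reorganize the resulting sums over $\mathbb{F}_q^\times$-parameters $\lambda, \mu, \eta$ so that they aggregate into the power sums $S_d$ appearing in \eqref{eq: 1} and \eqref{eq: 2} and in the depth-$3$ expressions. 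Once Proposition~\ref{prop: associativity} is in hand, however, the passage to Theorem~\ref{thm: assoc S_d depth 1} is purely formal: it is the trivial bookkeeping of summing a partitioned identity and reading off the two sides as the two bracketings. In short, the proof of the theorem itself is one line ("immediate from Proposition~\ref{prop: associativity}"); the work has been front-loaded into that proposition.
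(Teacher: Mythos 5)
Your proposal is correct and follows exactly the route of the paper: Theorem \ref{thm: assoc S_d depth 1} is stated there as a direct consequence of Proposition \ref{prop: associativity}, obtained by summing the block-by-block equalities over the partition $M_0 \sqcup M_1 \sqcup M_2 \sqcup M_3 \sqcup N_0 \sqcup N_1 \sqcup N_2 \sqcup N_3 \sqcup N_4$ of $A_+(d)^3$ and matching the depth-$1$, $2$, $3$ components with the two bracketings. You also correctly locate the real work in the proof of that proposition (the partial fraction decompositions and the $\mathbb{F}_q^\times$-parameter bookkeeping), which is where the paper spends the remainder of the section.
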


The rest of this section is devoted to a proof of Proposition \ref{prop: associativity}.

\subsubsection{Depth 1 terms: proof of Proposition \ref{prop: associativity}, Part 1}  ${}$\par
It is obvious that 
\begin{equation*}
    \sum \limits_{(a,b,c) \in M_0} \frac{1}{a^rb^s} \cdot \frac{1}{c^t} = \sum \limits_{(a,b,c) \in M_0} \frac{1}{a^r} \cdot \frac{1}{b^s c^ t} = S_d(r+s+t),
\end{equation*}
which yields the term of depth $1$ in the expression of $(S_d(r)S_d(s))S_d(t)$ and  $S_d(r)(S_d(s)S_d(t))$.

\subsubsection{Depth 2 terms: proof of Proposition \ref{prop: associativity}, Part 2}  ${}$\par
Consider the following cases:

\medskip
\noindent \textbf{Case 1:} $(a,b,c)$ ranges over all tuples in $M_1$.
\par For each $\lambda \in \Fq^\times$, consider the rational function 
\begin{equation*}
    P_{\lambda}(A,F) = \frac{1}{A^{r+s}(A+\lambda F)^t}.
\end{equation*}
We will deduce the partial fraction decomposition of $  P_{\lambda}(A,F) $ by the following process: we first give the partial fraction decomposition of $  P_{\lambda}(A,F) $ in variable $A$ with coefficients are rational functions of the form $Q(F) \in \Fq(F)$; then we continue to give the partial fraction decomposition of $Q(F)$ in variable $F$ whose coefficients are elements in $\Fq$.
\par Set $C= A + \lambda F$. Using Lemma \ref{lem: decomposition}, we proceed the process in two ways. First, we expand $P_{\lambda}(A,F)$ from the left to the right as follows:
\begin{align} \label{eq: L-R M1}
    \frac{1}{A^{r+s}} \cdot \frac{1}{C^t} &= \sum \limits_{i+j = r + s+ t} \left(\frac{\nabla^j_t}{(A-C)^jA^i} + \frac{\nabla^j_{r+s}}{(C-A)^jC^i} \right)\\ \notag
    &=\sum \limits_{i+j = r + s+ t} \left(\frac{\nabla^j_t}{(-\lambda F)^jA^i} + \frac{\nabla^j_{r+s}}{(\lambda F)^jC^i} \right)\\ \notag
    &= \sum \limits_{i+j = r + s+ t} \frac{\nabla^j_t}{(-\lambda)^j}\frac{1}{ A^iF^j} + \sum \limits_{i+j = r + s+ t}\frac{\nabla^j_{r+s}}{\lambda^j}\frac{1}{C^iF^j}. \notag
\end{align}
Next we expand $P_{\lambda}(A,F)$ from the right to the left as follows:
\begin{align} \label{eq: R-L M1}
   \frac{1}{A^r} \cdot\frac{1}{A^{s}C^t} &= \frac{1}{A^r} \sum \limits_{i+j = s+ t} \left(\frac{\nabla^j_t}{(A-C)^jA^{i}} + \frac{\nabla^j_{s}}{(C-A)^jC^i} \right)\\ \notag
    &=  \sum \limits_{i+j = s+ t} \frac{\nabla^j_t}{(A-C)^j}\frac{1}{A^{i+r}}\\ \notag
    &+ \sum \limits_{i+j = s+ t} \sum \limits_{i_1+j_1 = i+r} \frac{\nabla^j_{s}}{(C-A)^j}\frac{\nabla^{j_1}_{i}}{(A-C)^{j_1}} \frac{1}{A^{i_1}}\\\notag
    &+ \sum \limits_{i+j = s+ t} \sum \limits_{i_1+j_1 = i+r} \frac{\nabla^j_{s}}{(C-A)^j}\frac{\nabla^{j_1}_{r}}{(C-A)^{j_1}}\frac{1}{C^{i_1}}\\\notag
    &=  \sum \limits_{i+j = s+ t} \frac{\nabla^j_t}{(-\lambda F)^j}\frac{1}{A^{i+r}}\\ \notag
    &+ \sum \limits_{i+j = s+ t} \sum \limits_{i_1+j_1 = i+r} \frac{\nabla^j_{s}}{(\lambda F)^j}\frac{\nabla^{j_1}_{i}}{(-\lambda F)^{j_1}} \frac{1}{A^{i_1}}\\ \notag
    &+ \sum \limits_{i+j = s+ t} \sum \limits_{i_1+j_1 = i+r} \frac{\nabla^j_{s}}{(\lambda F)^j}\frac{\nabla^{j_1}_{r}}{(\lambda F)^{j_1}}\frac{1}{C^{i_1}}\\ \notag
    &=  \sum \limits_{i+j = s+ t} \frac{\nabla^j_t}{(-\lambda)^j}\frac{1}{A^{i+r}F^j}\\ \notag
    &+ \sum \limits_{i+j = s+ t} \sum \limits_{i_1+j_1 = i+r}
    \frac{\nabla^j_{s} (-1)^{j_1}\nabla^{j_1}_{i}}{\lambda^{j+j_1}}\frac{1}{A^{i_1}F^{j+j_1}}\\ \notag
    &+ \sum \limits_{i+j = s+ t} \sum \limits_{i_1+j_1 = i+r} \frac{\nabla^j_{s} \nabla^{j_1}_{r}}{\lambda^{j+j_1}}\frac{1}{C^{i_1}F^{j+j_1}}. \notag
\end{align}
For each $(a,b,c)\in M_1$, there exist $\lambda \in \Fq^\times$ and $f \in A_+$ with $d > \deg f$ such that $c = a + \lambda f$. Replacing $A = a, F = f$, one deduces from \eqref{eq: L-R M1} that 
\begin{align*} 
   \sum \limits_{(a,b,c) \in M_1}& \frac{1}{a^rb^s} \cdot \frac{1}{c^t}\\
   &=  \sum\limits_{\substack{a,f \in A_+ \\ d = \deg a > \deg f}} \sum \limits_{\lambda \in \Fq^\times} \frac{1}{a^{r+s}} \cdot \frac{1}{(a+\lambda f)^t}\\
   &= \sum \limits_{i+j = r + s+ t} \delta_j\nabla^j_t \sum\limits_{\substack{a,f \in A_+ \\ d = \deg a > \deg f}}\frac{1}{ a^if^j} + \sum \limits_{i+j = r + s+ t}\delta_j\nabla^j_{r+s}\sum\limits_{\substack{c,f \in A_+ \\ d = \deg c > \deg f}}\frac{1}{c^if^j}\\
   &= \sum \limits_{i+j = r + s+ t} \delta_j\nabla^j_t S_d(i,j) + \sum \limits_{i+j = r + s+ t}\delta_j\nabla^j_{r+s}S_d(i,j)\\
   &= \sum \limits_{i+j = r + s+ t}\Delta^j_{r+s,t}S_d(i,j).
\end{align*}
Similarly, one deduces from \eqref{eq: R-L M1} that
\begin{align*}
    \sum \limits_{(a,b,c) \in M_1} &\frac{1}{a^r} \cdot \frac{1}{b^sc^t} 
    \\&=  \sum\limits_{\substack{a,f \in A_+ \\ d = \deg a > \deg f}} \sum \limits_{\lambda \in \Fq^\times} \frac{1}{a^r} \cdot \frac{1}{a^s(a+\lambda f)^t}\\
    &=  \sum \limits_{i+j = s+ t} \delta_j\nabla^j_t \sum\limits_{\substack{a,f \in A_+ \\ d = \deg a > \deg f}}\frac{1}{a^{i+r}f^j}\\ \notag
  &+ \sum \limits_{i+j = s+ t} \sum \limits_{i_1+j_1 = i+r}\delta_{j+j_1}\nabla^j_{s} (-1)^{j_1}\nabla^{j_1}_{i} \sum\limits_{\substack{a,f \in A_+ \\ d = \deg a > \deg f}}\frac{1}{a^{i_1}f^{j+j_1}}\\ \notag
    &+ \sum \limits_{i+j = s+ t} \sum \limits_{i_1+j_1 = i+r} \delta_{j+j_1}\nabla^j_{s} \nabla^{j_1}_{r}\sum\limits_{\substack{c,f \in A_+ \\ d = \deg c > \deg f}}\frac{1}{c^{i_1}f^{j+j_1}}\\
    &=  \sum \limits_{i+j = s+ t} \delta_j\nabla^j_t S_d(i+r,j)\\ \notag
  &+ \sum \limits_{i+j = s+ t} \sum \limits_{i_1+j_1 = i+r}\delta_{j+j_1}\nabla^j_{s} (-1)^{j_1}\nabla^{j_1}_{i} S_d(i_1,j+j_1)\\ \notag
    &+ \sum \limits_{i+j = s+ t} \sum \limits_{i_1+j_1 = i+r} \delta_{j+j_1}\nabla^j_{s} \nabla^{j_1}_{r}S_d(i_1,j+j_1).
\end{align*}
Since the partial fraction decomposition of $P_{\lambda}(A,F)$ obtained from the process is unique, it follows that the above expansions of 
\begin{equation*}
    \sum \limits_{(a,b,c)\in M_1} \frac{1}{a^rb^s} \cdot \frac{1}{c^t} \quad \text{and} \quad \sum\limits_{(a,b,c)\in M_1} \frac{1}{a^r} \cdot \frac{1}{b^s c^ t}
\end{equation*}
yield the same expression in terms of power sums.

\medskip
\noindent \textbf{Case 2:} $(a,b,c)$ ranges over all tuples in $M_2$.
\par For each $\lambda \in \Fq^\times$, consider the rational function 
\begin{equation*}
    P_{\lambda}(A,F) = \frac{1}{A^{r}(A+\lambda F)^{s+t}}.
\end{equation*}
Set $B = A + \lambda F$. From the same process as in the the case of $M_1$, we expand $P_{\lambda}(A,F)$ in two ways. First, we expand $P_{\lambda}(A,F)$ from the left to the right as follows:
\begin{align} \label{eq: L-R M2}
    \frac{1}{A^rB^s} \cdot \frac{1}{B^t} &= 
    \sum \limits_{i+j = r + s} \left(\frac{\nabla^j_s}{(A-B)^jA^i} + \frac{\nabla^j_r}{(B-A)^jB^i}\right) \frac{1}{B^t}\\ \notag
    &= \sum \limits_{i+j = r + s}\sum \limits_{i_1+j_1 = i + t} \frac{\nabla^j_s}{(A-B)^j}
    \frac{\nabla^{j_1}_t}{(A-B)^{j_1}}\frac{1}{A^{i_1}}\\\notag
    &+ \sum \limits_{i+j = r + s}\sum \limits_{i_1+j_1 = i + t} \frac{\nabla^j_s}{(A-B)^j}
    \frac{\nabla^{j_1}_i}{(B-A)^{j_1}}\frac{1}{B^{i_1}}\\\notag
    &+\sum \limits_{i+j = r + s} \frac{\nabla^j_r}{(B-A)^j}\frac{1}{B^{i+t}}\\\notag
    &= \sum \limits_{i+j = r + s}\sum \limits_{i_1+j_1 = i + t} \frac{\nabla^j_s}{(-\lambda F)^j}
    \frac{\nabla^{j_1}_t}{(-\lambda F)^{j_1}}\frac{1}{A^{i_1}}\\\notag
    &+ \sum \limits_{i+j = r + s}\sum \limits_{i_1+j_1 = i + t} \frac{\nabla^j_s}{(-\lambda F)^j}
    \frac{\nabla^{j_1}_i}{(\lambda F)^{j_1}}\frac{1}{B^{i_1}}\\\notag
    &+\sum \limits_{i+j = r + s} \frac{\nabla^j_r}{(\lambda F)^j}\frac{1}{B^{i+t}}\\\notag
    &= \sum \limits_{i+j = r + s}\sum \limits_{i_1+j_1 = i + t} \frac{\nabla^j_s
    \nabla^{j_1}_t}{(-\lambda)^{j+j_1}}\frac{1}{A^{i_1}F^{j+j_1}}\\\notag
    &+ \sum \limits_{i+j = r + s}\sum \limits_{i_1+j_1 = i + t} \frac{\nabla^j_s
    (-1)^{j_1}\nabla^{j_1}_i}{(-\lambda)^{j+j_1}}\frac{1}{B^{i_1}F^{j+j_1}}\\\notag
    &+\sum \limits_{i+j = r + s} \frac{\nabla^j_r}{\lambda^j}\frac{1}{B^{i+t}F^j}.\notag
\end{align}
Next we expand $P_{\lambda}(A,F)$ from the right to the left as follows:
\begin{align}\label{eq: R-L M2}
    \frac{1}{A^r} \cdot \frac{1}{B^{s+t}} &= \sum \limits_{i+j = r + s + t} \left(\frac{\nabla^j_{s+t}}{(A-B)^jA^i} + \frac{\nabla^j_{r}}{(B-A)^jB^i}\right)\\\notag
    &=\sum \limits_{i+j = r + s + t} \left(\frac{\nabla^j_{s+t}}{(-\lambda F)^jA^i} + \frac{\nabla^j_{r}}{(\lambda F)^jB^i}\right)\\\notag
    &=\sum \limits_{i+j = r + s + t} \frac{\nabla^j_{s+t}}{(-\lambda)^j} \frac{1}{A^iF^j} + \sum \limits_{i+j = r + s + t} \frac{\nabla^j_{r}}{\lambda^j}\frac{1}{B^iF^j}.\notag
\end{align}
For each $(a,b,c)\in M_2$, there exist $\lambda \in \Fq^\times$ and $f \in A_+$ with $d > \deg f$ such that $b = a + \lambda f$. Replacing $A = a, F = f$, one deduces from \eqref{eq: L-R M2} that 
\begin{align*}
    \sum \limits_{(a,b,c) \in M_2} & \frac{1}{a^rb^s} \cdot \frac{1}{c^t}\\
    &=  \sum\limits_{\substack{a,f \in A_+ \\ d = \deg a > \deg f}} \sum \limits_{\lambda \in \Fq^\times} \frac{1}{a^{r}(a+\lambda f)^s} \cdot \frac{1}{(a+\lambda f)^t}\\
    &= \sum \limits_{i+j = r + s}\sum \limits_{i_1+j_1 = i + t} \delta_{j+j_1}\nabla^j_s
    \nabla^{j_1}_t
    \sum\limits_{\substack{a,f \in A_+ \\ d = \deg a > \deg f}}\frac{1}{a^{i_1}f^{j+j_1}}\\\notag
    &+ \sum \limits_{i+j = r + s}\sum \limits_{i_1+j_1 = i + t} \delta_{j+j_1}\nabla^j_s
    (-1)^{j_1}\nabla^{j_1}_i\sum\limits_{\substack{b,f \in A_+ \\ d = \deg b > \deg f}}\frac{1}{b^{i_1}f^{j+j_1}}\\\notag
    &+\sum \limits_{i+j = r + s} \delta_j\nabla^j_r\sum\limits_{\substack{b,f \in A_+ \\ d = \deg b > \deg f}}\frac{1}{b^{i+t}f^j}\\\notag
    &= \sum \limits_{i+j = r + s}\sum \limits_{i_1+j_1 = i + t} \delta_{j+j_1}\nabla^j_s
\nabla^{j_1}_t
    S_d(i_1,j+j_1)\\\notag
    &+ \sum \limits_{i+j = r + s}\sum \limits_{i_1+j_1 = i + t}  \delta_{j+j_1}\nabla^j_s
    (-1)^{j_1}\nabla^{j_1}_i S_d(i_1,j+j_1)\\\notag
    &+\sum \limits_{i+j = r + s} \delta_j\nabla^j_r S_d(i+t,j).
\end{align*}
Similarly, one deduces from \eqref{eq: R-L M2} that
\begin{align*}
    \sum \limits_{(a,b,c) \in M_2} &\frac{1}{a^r} \cdot \frac{1}{b^sc^t}\\
    &=  \sum\limits_{\substack{a,f \in A_+ \\ d = \deg a > \deg f}} \sum \limits_{\lambda \in \Fq^\times} \frac{1}{a^r} \cdot \frac{1}{(a+\lambda f)^{s+t}}\\
    &=\sum \limits_{i+j = r + s + t} \delta_j\nabla^j_{s+t} \sum\limits_{\substack{a,f \in A_+ \\ d = \deg a > \deg f}}\frac{1}{a^if^j} + \sum \limits_{i+j = r + s + t} \delta_j\nabla^j_{r}\sum\limits_{\substack{b,f \in A_+ \\ d = \deg b > \deg f}}\frac{1}{b^if^j}\\
    &=\sum \limits_{i+j = r + s + t} \delta_j\nabla^j_{s+t} S_d(i,j) + \sum \limits_{i+j = r + s + t} \delta_j\nabla^j_{r}S_d(i,j)\\
    &=\sum \limits_{i+j = r + s + t} \Delta^j_{r,s+t} S_d(i,j).\notag
\end{align*}
Since the partial fraction decomposition of $P_{\lambda}(A,F)$ obtained from the process is unique, it follows that the above expansions of 
\begin{equation*}
    \sum \limits_{(a,b,c)\in M_2} \frac{1}{a^rb^s} \cdot \frac{1}{c^t} \quad \text{and} \quad \sum\limits_{(a,b,c)\in M_2} \frac{1}{a^r} \cdot \frac{1}{b^s c^ t}
\end{equation*}
yield the same expression in terms of power sums.

\medskip
\noindent \textbf{Case 3:} $(a,b,c)$ ranges over all tuples in $M_3$.
\par For each $\lambda \in \Fq^\times$, consider the rational function 
\begin{equation*}
    P_{\lambda}(A,F) = \frac{1}{A^{r}(A+\lambda F)^{s}A^t}.
\end{equation*}
Set $B = A + \lambda F$. From the same process as in the the case of $M_1$, we expand $P_{\lambda}(A,F)$ in two ways. First, we expand $P_{\lambda}(A,F)$ from the left to the right as follows:
\begin{align} \label{eq: L-R M3}
    \frac{1}{A^rB^s} \cdot \frac{1}{A^t} &= \sum \limits_{i+j = r+s} \left(\frac{\nabla^j_s}{(A-B)^jA^i} + \frac{\nabla^j_r}{(B-A)^jB^i}\right) \frac{1}{A^t}\\\notag
    &= \sum \limits_{i+j = r+s}\frac{\nabla^j_s}{(A-B)^j}\frac{1}{A^{i+t}}\\\notag
    &+ \sum \limits_{i+j = r+s}\sum \limits_{i_1+j_1 = i+t}\frac{\nabla^j_r}{(B-A)^j}\frac{\nabla^{j_1}_t}{(B-A)^{j_1}}\frac{1}{B^{i_1}}\\\notag
    &+ \sum \limits_{i+j = r+s}\sum \limits_{i_1+j_1 = i+t}\frac{\nabla^j_r}{(B-A)^j}\frac{\nabla^{j_1}_i}{(A-B)^{j_1}}\frac{1}{A^{i_1}}\\\notag
    &= \sum \limits_{i+j = r+s}\frac{\nabla^j_s}{(-\lambda F)^j}\frac{1}{A^{i+t}}\\\notag
    &+ \sum \limits_{i+j = r+s}\sum \limits_{i_1+j_1 = i+t}\frac{\nabla^j_r}{(\lambda F)^j}\frac{\nabla^{j_1}_t}{(\lambda F)^{j_1}}\frac{1}{B^{i_1}}\\\notag
    &+ \sum \limits_{i+j = r+s}\sum \limits_{i_1+j_1 = i+t}\frac{\nabla^j_r}{(\lambda F)^j}\frac{\nabla^{j_1}_i}{(-\lambda F)^{j_1}}\frac{1}{A^{i_1}}\\\notag
     &= \sum \limits_{i+j = r+s}\frac{\nabla^j_s}{(-\lambda)^j}\frac{1}{A^{i+t}F^j}\\\notag
    &+ \sum \limits_{i+j = r+s}\sum \limits_{i_1+j_1 = i+t}\frac{\nabla^j_r\nabla^{j_1}_t}{\lambda^{j+j_1}}\frac{1}{B^{i_1}F^{j+j_1}}\\\notag
    &+ \sum \limits_{i+j = r+s}\sum \limits_{i_1+j_1 = i+t}\frac{\nabla^j_r(-1)^{j_1}\nabla^{j_1}_i}{\lambda^{j+j_1}}\frac{1}{A^{i_1}F^{j+j_1}}.\notag
\end{align}
Next we expand $P_{\lambda}(A,F)$ from the right to the left as follows:
\begin{align}\label{eq: R-L M3}
    \frac{1}{A^r} \cdot \frac{1}{B^{s}A^t} &= \frac{1}{A^r} \sum \limits_{i+j = s+ t} \left(\frac{\nabla^j_t}{(B-A)^jB^i} + \frac{\nabla^j_s}{(A-B)^jA^i}\right)\\\notag
    &= \sum \limits_{i+j = s+ t}\sum \limits_{i_1+j_1 = i+r} \frac{\nabla^j_t}{(B-A)^j}\frac{\nabla^{j_1}_i}{(A-B)^{j_1}}\frac{1}{A^{i_1}}\\\notag
    &+ \sum \limits_{i+j = s+ t}\sum \limits_{i_1+j_1 = i+r} \frac{\nabla^j_t}{(B-A)^j}\frac{\nabla^{j_1}_r}{(B-A)^{j_1}}\frac{1}{B^{i_1}}\\\notag
    &+\sum \limits_{i+j = s+ t}\frac{\nabla^j_s}{(A-B)^j}\frac{1}{A^{i+r}}\\\notag
    &= \sum \limits_{i+j = s+ t}\sum \limits_{i_1+j_1 = i+r} \frac{\nabla^j_t}{(\lambda F)^j}\frac{\nabla^{j_1}_i}{(-\lambda F)^{j_1}}\frac{1}{A^{i_1}}\\\notag
    &+ \sum \limits_{i+j = s+ t}\sum \limits_{i_1+j_1 = i+r} \frac{\nabla^j_t}{(\lambda F)^j}\frac{\nabla^{j_1}_r}{(\lambda F)^{j_1}}\frac{1}{B^{i_1}}\\\notag
    &+\sum \limits_{i+j = s+ t}\frac{\nabla^j_s}{(-\lambda F)^j}\frac{1}{A^{i+r}}\\\notag
    &= \sum \limits_{i+j = s+ t}\sum \limits_{i_1+j_1 = i+r} \frac{\nabla^j_t(-1)^{j_1}\nabla^{j_1}_i}{\lambda^{j+j_1}}\frac{1}{A^{i_1}F^{j+j_1}}\\\notag
    &+ \sum \limits_{i+j = s+ t}\sum \limits_{i_1+j_1 = i+r} \frac{\nabla^j_t\nabla^{j_1}_r}{\lambda^{j+j_1}}\frac{1}{B^{i_1}F^{j+j_1}}\\\notag
    &+\sum \limits_{i+j = s+ t}\frac{\nabla^j_s}{(-\lambda)^j}\frac{1}{A^{i+r}F^j}. \notag
\end{align}
For each $(a,b,c)\in M_3$, there exist $\lambda \in \Fq^\times$ and $f \in A_+$ with $d > \deg f$ such that $b = a + \lambda f$. Replacing $A = a, F = f$, one deduces from \eqref{eq: L-R M3} that 
\begin{align*}
    \sum \limits_{(a,b,c) \in M_3} &\frac{1}{a^rb^s} \cdot \frac{1}{c^t} 
    \\&=  \sum\limits_{\substack{a,f \in A_+ \\ d = \deg a > \deg f}} \sum \limits_{\lambda \in \Fq^\times} \frac{1}{a^{r}(a+\lambda f)^s} \cdot \frac{1}{a^t}\\
    &= \sum \limits_{i+j = r+s}\delta_j\nabla^j_s\sum\limits_{\substack{a,f \in A_+ \\ d = \deg a > \deg f}}\frac{1}{a^{i+t}f^j}\\\notag
    &+ \sum \limits_{i+j = r+s}\sum \limits_{i_1+j_1 = i+t}\delta_{j+j_1}\nabla^j_r\nabla^{j_1}_t\sum\limits_{\substack{b,f \in A_+ \\ d = \deg b > \deg f}}\frac{1}{b^{i_1}f^{j+j_1}}\\\notag
    &+ \sum \limits_{i+j = r+s}\sum \limits_{i_1+j_1 = i+t}\delta_{j+j_1}\nabla^j_r(-1)^{j_1}\nabla^{j_1}_i\sum\limits_{\substack{a,f \in A_+ \\ d = \deg a > \deg f}}\frac{1}{a^{i_1}f^{j+j_1}}.\notag\\
    &= \sum \limits_{i+j = r+s}\delta_j\nabla^j_s S_d(i+t,j)\\\notag
    &+ \sum \limits_{i+j = r+s}\sum \limits_{i_1+j_1 = i+t}\delta_{j+j_1}\nabla^j_r\nabla^{j_1}_tS_d(i_1,j+j_1)\\\notag
    &+ \sum \limits_{i+j = r+s}\sum \limits_{i_1+j_1 = i+t}\delta_{j+j_1}\nabla^j_r(-1)^{j_1}\nabla^{j_1}_iS_d(i_1,j+j_1).\notag
\end{align*}
Similarly, one deduces from \eqref{eq: R-L M3} that
\begin{align*}
    \sum \limits_{(a,b,c) \in M_3} &\frac{1}{a^r} \cdot \frac{1}{b^sc^t}\\
    &=  \sum\limits_{\substack{a,f \in A_+ \\ d = \deg a > \deg f}} \sum \limits_{\lambda \in \Fq^\times} \frac{1}{a^r} \cdot \frac{1}{(a+\lambda f)^{s}a^t}\\
    &= \sum \limits_{i+j = s+ t}\sum \limits_{i_1+j_1 = i+r} \delta_{j+j_1}\nabla^j_t(-1)^{j_1}\nabla^{j_1}_i\sum\limits_{\substack{a,f \in A_+ \\ d = \deg a > \deg f}}\frac{1}{a^{i_1}f^{j+j_1}}\\\notag
    &+ \sum \limits_{i+j = s+ t}\sum \limits_{i_1+j_1 = i+r} \delta_{j+j_1}\nabla^j_t\nabla^{j_1}_r\sum\limits_{\substack{b,f \in A_+ \\ d = \deg b > \deg f}}\frac{1}{b^{i_1}f^{j+j_1}}\\\notag
    &+\sum \limits_{i+j = s+ t}\delta_j\nabla^j_s\sum\limits_{\substack{a,f \in A_+ \\ d = \deg a > \deg f}}\frac{1}{a^{i+r}f^j}. \notag\\
    &= \sum \limits_{i+j = s+ t}\sum \limits_{i_1+j_1 = i+r} \delta_{j+j_1}\nabla^j_t(-1)^{j_1}\nabla^{j_1}_i S_d(i_1,j+j_1)\\\notag
    &+ \sum \limits_{i+j = s+ t}\sum \limits_{i_1+j_1 = i+r} \delta_{j+j_1}\nabla^j_t\nabla^{j_1}_rS_d(i_1,j+j_1)\\\notag
    &+\sum \limits_{i+j = s+ t}\delta_j\nabla^j_s S_d(i+r,j). \notag
\end{align*}

Since the partial fraction decomposition of $P_{\lambda}(A,F)$ obtained from the process is unique, it follows that the above expansions of 
\begin{equation*}
    \sum \limits_{(a,b,c)\in M_3} \frac{1}{a^rb^s} \cdot \frac{1}{c^t} \quad \text{and} \quad \sum\limits_{(a,b,c)\in M_3} \frac{1}{a^r} \cdot \frac{1}{b^s c^ t}
\end{equation*}
yield the same expression in terms of power sums.

\medskip
\noindent \textbf{Case 4:} $(a,b,c)$ ranges over all tuples in $N_0$.
\par For each $\lambda, \mu \in \Fq^\times$ such that $\lambda \ne \mu$, consider the rational function 
\begin{equation*}
    P_{\lambda,\mu}(A,F) = \frac{1}{A^{r}(A+\lambda F)^{s}(A + \mu F)^t}.
\end{equation*}
Set $B = A + \lambda F$ and $C = A + \mu F$. From the same process as in the the case of $M_1$, we expand $P_{\lambda ,\mu}(A,F)$ in two ways. First, we expand $P_{\lambda, \mu}(A,F)$ from the left to the right. From \eqref{eq: PFD L-R}, we have;
\begin{align} \label{eq: L-R N0}
    \frac{1}{A^rB^s} \cdot \frac{1}{C^t} 
    &= \sum \limits_{i+j = r+s} \sum \limits_{i_1+j_1 = i + t}\frac{\nabla^j_s}{(-\lambda F)^j} \frac{\nabla^{j_1}_t}{(-\mu F)^{j_1}}\frac{1}{A^{i_1}}\\ \notag
    &+ \sum \limits_{i+j = r+s} \sum \limits_{i_1+j_1 = i + t}\frac{\nabla^j_s}{(-\lambda F)^j} \frac{\nabla^{j_1}_i}{(\mu F)^{j_1}}\frac{1}{C^{i_1}}\\\notag
    &+ \sum \limits_{i+j = r+s} \sum \limits_{i_1+j_1 = i + t}\frac{\nabla^j_r}{(\lambda F)^j} \frac{\nabla^{j_1}_t}{[(\lambda - \mu)F]^{j_1}}\frac{1}{B^{i_1}}\\\notag
    &+ \sum \limits_{i+j = r+s} \sum \limits_{i_1+j_1 = i + t}\frac{\nabla^j_r}{(\lambda F)^j} \frac{\nabla^{j_1}_i}{[( \mu-\lambda)F]^{j_1}}\frac{1}{C^{i_1}}\\\notag
    &= \sum \limits_{i+j = r+s} \sum \limits_{i_1+j_1 = i + t}\frac{\nabla^j_s\nabla^{j_1}_t}{(-\lambda)^j(-\mu)^{j_1}} \frac{1}{A^{i_1}F^{j+j_1}}\\ \notag
    &+ \sum \limits_{i+j = r+s} \sum \limits_{i_1+j_1 = i + t}\frac{\nabla^j_s(-1 )^{j_1}\nabla^{j_1}_i}{(-\lambda)^j(-\mu)^{j_1}} \frac{1}{C^{i_1}F^{j+j_1}}\\\notag
    &+ \sum \limits_{i+j = r+s} \sum \limits_{i_1+j_1 = i + t}\frac{\nabla^j_r\nabla^{j_1}_t}{\lambda^j(\lambda - \mu)^{j_1}} \frac{1}{B^{i_1}F^{j+j_1}}\\\notag
    &+ \sum \limits_{i+j = r+s} \sum \limits_{i_1+j_1 = i + t}\frac{\nabla^j_r(-1)^{j_1}\nabla^{j_1}_i}{\lambda^j(\lambda- \mu)^{j_1}} \frac{1}{C^{i_1}F^{j+j_1}}\notag.
\end{align}
Next we expand $P_{\lambda,\mu}(A,F)$ from the right to the left. From \eqref{eq: PFD R-L}, we have;
\begin{align}\label{eq: R-L N0}
    \frac{1}{A^r} \cdot \frac{1}{B^{s}C^t} 
    &=  \sum \limits_{i+j = s + t}\sum \limits_{i_1+j_1 = i + r} \frac{\nabla^j_t}{[(\lambda - \mu)F]^j}\frac{\nabla^{j_1}_i}{(-\lambda F)^{j_1}}\frac{1}{A^{i_1}}\\\notag
    &+ \sum \limits_{i+j = s + t}\sum \limits_{i_1+j_1 = i + r} \frac{\nabla^j_t}{[(\lambda - \mu)F]^j}\frac{\nabla^{j_1}_r}{(\lambda F)^{j_1}}\frac{1}{B^{i_1}}\\\notag
    &+\sum \limits_{i+j = s + t}\sum \limits_{i_1+j_1 = i + r}\frac{\nabla^j_s}{[( \mu - \lambda)F]^j}\frac{\nabla^{j_1}_i}{(-\mu F)^{j_1}}\frac{1}{A^{i_1}}\\\notag
     &+\sum \limits_{i+j = s + t}\sum \limits_{i_1+j_1 = i + r}\frac{\nabla^j_s}{[(\mu - \lambda)F]^j}\frac{\nabla^{j_1}_r}{(\mu F)^{j_1}}\frac{1}{C^{i_1}}\\\notag
     &=  \sum \limits_{i+j = s + t}\sum \limits_{i_1+j_1 = i + r} \frac{\nabla^j_t(-1)^{j_1}\nabla^{j_1}_i}{(\lambda - \mu)^j\lambda^{j_1}}\frac{1}{A^{i_1}F^{j+j_1}}\\\notag
    &+ \sum \limits_{i+j = s + t}\sum \limits_{i_1+j_1 = i + r} \frac{\nabla^j_t\nabla^{j_1}_r}{(\lambda - \mu)^j\lambda^{j_1}}\frac{1}{B^{i_1}F^{j+j_1}}\\\notag
    &+\sum \limits_{i+j = s + t}\sum \limits_{i_1+j_1 = i + r}\frac{\nabla^j_s(-1)^{j_1}\nabla^{j_1}_i}{(\mu - \lambda)^j\mu ^{j_1}}\frac{1}{A^{i_1}F^{j+j_1}}\\\notag
     &+\sum \limits_{i+j = s + t}\sum \limits_{i_1+j_1 = i + r}\frac{\nabla^j_s\nabla^{j_1}_r}{(\mu - \lambda)^j\mu ^{j_1}}\frac{1}{C^{i_1}F^{j+j_1}}.\notag
\end{align}
For each $(a,b,c)\in N_0$, we have $b = a + \lambda f, c = a + \mu f$ with $\lambda, \mu \in \Fq^\times$ such that $\lambda \ne \mu$ and $f \in A_+$ such that $d > \deg f$. Replacing $A = a, F = f$, one deduces from \eqref{eq: L-R N0} that 
\begin{align*}
    \sum \limits_{(a,b,c) \in N_0}& \frac{1}{a^rb^s} \cdot \frac{1}{c^t} \\
    &= \sum\limits_{\substack{a,f \in A_+ \\ d = \deg a > \deg f}} \sum \limits_{\lambda,\mu \in \Fq^\times; \lambda \ne \mu} \frac{1}{a^r(a+\lambda f)^s} \cdot \frac{1}{(a+\mu f)^t}\\
    &= \sum \limits_{i+j = r+s} \sum \limits_{i_1+j_1 = i + t}\delta_{j,j_1}\nabla^j_s\nabla^{j_1}_t \sum\limits_{\substack{a,f \in A_+ \\ d = \deg a > \deg f}}\frac{1}{a^{i_1}f^{j+j_1}}\\ \notag
    &+ \sum \limits_{i+j = r+s} \sum \limits_{i_1+j_1 = i + t}\delta_{j,j_1}\nabla^j_s(-1 )^{j_1}\nabla^{j_1}_i \sum\limits_{\substack{c,f \in A_+ \\ d = \deg c > \deg f}}\frac{1}{c^{i_1}f^{j+j_1}}\\\notag
    &+ \sum \limits_{i+j = r+s} \sum \limits_{i_1+j_1 = i + t}\delta_{j,j_1}\nabla^j_r\nabla^{j_1}_t \sum\limits_{\substack{b,f \in A_+ \\ d = \deg b > \deg f}}\frac{1}{b^{i_1}f^{j+j_1}}\\\notag
    &+ \sum \limits_{i+j = r+s} \sum \limits_{i_1+j_1 = i + t}\delta_{j,j_1}\nabla^j_r(-1)^{j_1}\nabla^{j_1}_i \sum\limits_{\substack{c,f \in A_+ \\ d = \deg c > \deg f}}\frac{1}{c^{i_1}f^{j+j_1}}\\\notag
    &= \sum \limits_{i+j = r+s} \sum \limits_{i_1+j_1 = i + t}\delta_{j,j_1}\nabla^j_s\nabla^{j_1}_t S_d(i_1,j+j_1)\\ \notag
    &+ \sum \limits_{i+j = r+s} \sum \limits_{i_1+j_1 = i + t}\delta_{j,j_1}\nabla^j_s(-1 )^{j_1}\nabla^{j_1}_i S_d(i_1,j+j_1)\\\notag
    &+ \sum \limits_{i+j = r+s} \sum \limits_{i_1+j_1 = i + t}\delta_{j,j_1}\nabla^j_r\nabla^{j_1}_t S_d(i_1,j+j_1)\\\notag
    &+ \sum \limits_{i+j = r+s} \sum \limits_{i_1+j_1 = i + t}\delta_{j,j_1}\nabla^j_r(-1)^{j_1}\nabla^{j_1}_i S_d(i_1,j+j_1).\notag
\end{align*}
Similarly, one deduces from \eqref{eq: R-L N0} that
\begin{align*}
    \sum \limits_{(a,b,c) \in N_0}& \frac{1}{a^r} \cdot \frac{1}{b^sc^t} \\&= \sum\limits_{\substack{a,f \in A_+ \\ d = \deg a > \deg f}} \sum \limits_{\lambda,\mu \in \Fq^\times; \lambda \ne \mu} \frac{1}{a^r} \cdot \frac{1}{(a+\lambda f)^s(a+\mu f)^t}\\
    &=  \sum \limits_{i+j = s + t}\sum \limits_{i_1+j_1 = i + r} \delta_{j,j_1}\nabla^j_t(-1)^{j_1}\nabla^{j_1}_i\sum\limits_{\substack{a,f \in A_+ \\ d = \deg a > \deg f}}\frac{1}{a^{i_1}f^{j+j_1}}\\\notag
    &+ \sum \limits_{i+j = s + t}\sum \limits_{i_1+j_1 = i + r} \delta_{j,j_1}\nabla^j_t\nabla^{j_1}_r\sum\limits_{\substack{b,f \in A_+ \\ d = \deg b > \deg f}}\frac{1}{b^{i_1}f^{j+j_1}}\\\notag
    &+\sum \limits_{i+j = s + t}\sum \limits_{i_1+j_1 = i + r}\delta_{j,j_1}\nabla^j_s(-1)^{j_1}\nabla^{j_1}_i\sum\limits_{\substack{a,f \in A_+ \\ d = \deg a > \deg f}}\frac{1}{a^{i_1}f^{j+j_1}}\\\notag
     &+\sum \limits_{i+j = s + t}\sum \limits_{i_1+j_1 = i + r}\delta_{j,j_1}\nabla^j_s\nabla^{j_1}_r\sum\limits_{\substack{c,f \in A_+ \\ d = \deg c > \deg f}}\frac{1}{c^{i_1}f^{j+j_1}}\\\notag
      &=  \sum \limits_{i+j = s + t}\sum \limits_{i_1+j_1 = i + r} \delta_{j,j_1}\nabla^j_t(-1)^{j_1}\nabla^{j_1}_i S_d(i_1,j+j_1)\\\notag
    &+ \sum \limits_{i+j = s + t}\sum \limits_{i_1+j_1 = i + r} \delta_{j,j_1}\nabla^j_t\nabla^{j_1}_r S_d(i_1,j+j_1)\\\notag
    &+\sum \limits_{i+j = s + t}\sum \limits_{i_1+j_1 = i + r}\delta_{j,j_1}\nabla^j_s(-1)^{j_1}\nabla^{j_1}_i S_d(i_1,j+j_1)\\\notag
     &+\sum \limits_{i+j = s + t}\sum \limits_{i_1+j_1 = i + r}\delta_{j,j_1}\nabla^j_s\nabla^{j_1}_r S_d(i_1,j+j_1).\notag
\end{align*}
Since the partial fraction decomposition of $P_{\lambda,\mu}(A,F)$ obtained from the process is unique, it follows that the above expansions of
\begin{equation*}
    \sum \limits_{(a,b,c)\in N_0} \frac{1}{a^rb^s} \cdot \frac{1}{c^t} \quad \text{and} \quad \sum\limits_{(a,b,c)\in N_0} \frac{1}{a^r} \cdot \frac{1}{b^s c^ t}
\end{equation*}
yield the same expression in terms of power sums.\\
\par Using Formulas \eqref{eq: formula 1}, \eqref{eq: formula 2} and \eqref{eq: formula 3}, one verifies easily that 
\begin{align*}
    &\sum \limits_{(a,b,c)\in M_1} \frac{1}{a^rb^s} \cdot \frac{1}{c^t} + \sum \limits_{(a,b,c)\in M_2} \frac{1}{a^rb^s} \cdot \frac{1}{c^t} + \sum \limits_{(a,b,c)\in M_3} \frac{1}{a^rb^s} \cdot \frac{1}{c^t}+ \sum \limits_{(a,b,c)\in N_0} \frac{1}{a^rb^s} \cdot \frac{1}{c^t}\\
    &=  \sum_{i+j=r+s+t} \Delta^j_{r+s,t} S_d(i,j)+\sum_{i+j=r+s} \Delta^j_{r,s} \Bigg(S_d(i+t,j)+\sum_{i_1+j_1=i+t} \Delta^{j_1}_{i,t} S_d(i_1,j_1+j)\Bigg),
\end{align*}
which is the sum of the terms of depth $2$ in the expression of $(S_d(r)S_d(s))S_d(t)$. Similarly, one verifies easily that 
\begin{align*}
    &\sum \limits_{(a,b,c)\in M_1} \frac{1}{a^r} \cdot \frac{1}{b^s c^ t} + \sum \limits_{(a,b,c)\in M_2} \frac{1}{a^r} \cdot \frac{1}{b^s c^ t} + \sum \limits_{(a,b,c)\in M_3} \frac{1}{a^r} \cdot \frac{1}{b^s c^ t}+ \sum \limits_{(a,b,c)\in N_0} \frac{1}{a^r} \cdot \frac{1}{b^s c^ t}\\
    &=  \sum_{i+j=r+s+t} \Delta^j_{r,s+t} S_d(i,j)+\sum_{i+j=s+t} \Delta^j_{s,t} \Bigg(S_d(i+r,j)+\sum_{i_1+j_1=i+r} \Delta^{j_1}_{i,r} S_d(i_1,j_1+j)\Bigg),
\end{align*}
which is the sum the terms of depth $2$ in the expression of $S_d(r)(S_d(s)S_d(t))$.

\subsubsection{Depth 3 terms: proof of Proposition \ref{prop: associativity}, Part 3} ${}$\par

Consider the following cases:

\medskip
\noindent \textbf{Case 1:} $(a,b,c)$ ranges over all tuples in $N_1$.
\par For $\lambda,\mu \in \Fq^\times$, consider the rational function 
\begin{equation*}
    P_{\lambda, \mu }(A,F,U) = \frac{1}{A^{r}(A+\lambda F)^s(A+\mu U)^t}.
\end{equation*}
We will deduce the partial fraction decomposition of $  P_{\lambda, \mu }(A,F,U) $ by the following process: we first give the partial fraction decomposition of $  P_{\lambda, \mu }(A,F,U) $ in variable $A$ with coefficients are rational functions of the form $Q(F,U) \in \Fq(F,U)$; then we continue to give the partial fraction decomposition of $Q(F,U)$ in variable $F$ with coefficients are rational functions of the form $R(U) \in \Fq(U)$; finally, we  give the partial fraction decomposition of $R(U)$ in variable $U$ with coefficients are elements in $\Fq$.
\par Set $B= A + \lambda F, C = A + \mu U$ and $G = F + \mu' U$ where $\mu' = - \frac{\mu}{\lambda}$, so that $B-C = \lambda G$. Using Lemma \ref{lem: decomposition}, we proceed the process in two ways. First, we expand $P_{\lambda, \mu }(A,F,U)$ from the left to the right. From \eqref{eq: PFD L-R}, we have;
\begin{align} \label{eq: L-R N1}
    \frac{1}{A^{r}B^s} \cdot \frac{1}{C^t}
    &= \sum \limits_{i+j = r+s} \sum \limits_{i_1+j_1 = i + t}\frac{\nabla^j_s}{(-\lambda F)^j} \frac{\nabla^{j_1}_t}{(-\mu U)^{j_1}}\frac{1}{A^{i_1}}\\ \notag
    &+ \sum \limits_{i+j = r+s} \sum \limits_{i_1+j_1 = i + t}\frac{\nabla^j_s}{(-\lambda F)^j} \frac{\nabla^{j_1}_i}{(\mu U)^{j_1}}\frac{1}{C^{i_1}}\\\notag
    &+ \sum \limits_{i+j = r+s} \sum \limits_{i_1+j_1 = i + t}\frac{\nabla^j_r}{(\lambda F)^j} \frac{\nabla^{j_1}_t}{(\lambda G)^{j_1}}\frac{1}{B^{i_1}}\\\notag
    &+ \sum \limits_{i+j = r+s} \sum \limits_{i_1+j_1 = i + t}\frac{\nabla^j_r}{(\lambda F)^j} \frac{\nabla^{j_1}_i}{(-\lambda G)^{j_1}}\frac{1}{C^{i_1}}.\notag
\end{align}
Here, 
\begin{align*}
    \frac{\nabla^j_s}{(-\lambda F)^j} \frac{\nabla^{j_1}_t}{(-\mu U)^{j_1}}\frac{1}{A^{i_1}} &=\frac{\nabla^j_s\nabla^{j_1}_t}{(-\lambda)^j(-\mu)^{j_1}} \frac{1}{A^{i_1}F^jU^{j_1}},\\
    \frac{\nabla^j_s}{(-\lambda F)^j} \frac{\nabla^{j_1}_i}{(\mu U)^{j_1}}\frac{1}{C^{i_1}}&=\frac{\nabla^j_s\nabla^{j_1}_i}{(-\lambda)^j\mu^{j_1}} \frac{1}{C^{i_1}F^jU^{j_1}}
\end{align*}
and 
\begin{align*}
    \frac{\nabla^j_r}{(\lambda F)^j} \frac{\nabla^{j_1}_t}{(\lambda G)^{j_1}}\frac{1}{B^{i_1}} 
    &= \frac{\nabla^j_r\nabla^{j_1}_t}{\lambda^{j+j_1}} \sum \limits_{i_2 + j_2 = j + j_1}\Bigg(\frac{\nabla^{j_2}_{j_1}}{(F-G)^{j_2}F^{i_2}} + \frac{\nabla^{j_2}_{j}}{(G-F)^{j_2}G^{i_2}}\Bigg)\frac{1}{B^{i_1}}\\ 
    &= \frac{\nabla^j_r\nabla^{j_1}_t}{\lambda^{j+j_1}} \sum \limits_{i_2 + j_2 = j + j_1}\Bigg(\frac{\nabla^{j_2}_{j_1}}{(-\mu' U)^{j_2}F^{i_2}} + \frac{\nabla^{j_2}_{j}}{(\mu' U)^{j_2}G^{i_2}}\Bigg)\frac{1}{B^{i_1}}\\ 
    &= \frac{\nabla^j_r\nabla^{j_1}_t}{\lambda^{j+j_1}} \sum \limits_{i_2 + j_2 = j + j_1}\frac{\nabla^{j_2}_{j_1}}{(-\mu' )^{j_2}}\frac{1}{B^{i_1}F^{i_2}U^{j_2}}\\ 
    &+ 
    \frac{\nabla^j_r\nabla^{j_1}_t}{\lambda^{j+j_1}} \sum \limits_{i_2 + j_2 = j + j_1}\frac{\nabla^{j_2}_{j}}{(\mu')^{j_2}}\frac{1}{B^{i_1}G^{i_2}U^{j_2}},\\ 
    \frac{\nabla^j_r}{(\lambda F)^j} \frac{\nabla^{j_1}_i}{(-\lambda G)^{j_1}}\frac{1}{C^{i_1}} &=   \frac{(-1)^j\nabla^j_r\nabla^{j_1}_i}{(-\lambda )^{j+j_1}}\sum \limits_{i_2 + j_2 = j + j_1}\Bigg(\frac{\nabla^{j_2}_{j_1}}{(F-G)^{j_2}F^{i_2}} + \frac{\nabla^{j_2}_{j}}{(G-F)^{j_2}G^{i_2}}\Bigg)\frac{1}{C^{i_1}}\\
    &=   \frac{(-1)^j\nabla^j_r\nabla^{j_1}_i}{(-\lambda )^{j+j_1}}\sum \limits_{i_2 + j_2 = j + j_1}\Bigg(\frac{\nabla^{j_2}_{j_1}}{(-\mu' U)^{j_2}F^{i_2}} + \frac{\nabla^{j_2}_{j}}{(\mu' U)^{j_2}G^{i_2}}\Bigg)\frac{1}{C^{i_1}}\\
     &= \frac{(-1)^j\nabla^j_r\nabla^{j_1}_i}{(-\lambda )^{j+j_1}}\sum \limits_{i_2 + j_2 = j + j_1}\frac{\nabla^{j_2}_{j_1}}{(-\mu' )^{j_2}}\frac{1}{C^{i_1}F^{i_2}U^{j_2}}\\
     &+
    \frac{(-1)^j\nabla^j_r\nabla^{j_1}_i}{(-\lambda )^{j+j_1}} \sum \limits_{i_2 + j_2 = j + j_1}\frac{\nabla^{j_2}_{j}}{(\mu')^{j_2}}\frac{1}{C^{i_1}G^{i_2}U^{j_2}}.
\end{align*}
Next we expand $P_{\lambda, \mu }(A,F,U)$ from the right to the left. From \eqref{eq: PFD R-L}, we have;
\begin{align} \label{eq: R-L N1}
   \frac{1}{A^r} \cdot \frac{1}{B^{s}C^t} 
    &=  \sum \limits_{i+j = s + t}\sum \limits_{i_1+j_1 = i + r} \frac{\nabla^j_t}{(\lambda G)^j}\frac{\nabla^{j_1}_i}{(-\lambda F)^{j_1}}\frac{1}{A^{i_1}}\\\notag
    &+ \sum \limits_{i+j = s + t}\sum \limits_{i_1+j_1 = i + r} \frac{\nabla^j_t}{(\lambda G)^j}\frac{\nabla^{j_1}_r}{(\lambda F)^{j_1}}\frac{1}{B^{i_1}}\\\notag
    &+\sum \limits_{i+j = s + t}\sum \limits_{i_1+j_1 = i + r}\frac{\nabla^j_s}{(-\lambda G)^j}\frac{\nabla^{j_1}_i}{(-\mu U)^{j_1}}\frac{1}{A^{i_1}}\\\notag
     &+\sum \limits_{i+j = s + t}\sum \limits_{i_1+j_1 = i + r}\frac{\nabla^j_s}{(-\lambda G)^j}\frac{\nabla^{j_1}_r}{(\mu U)^{j_1}}\frac{1}{C^{i_1}}.\notag
\end{align}
Here,
\begin{align*}
    \frac{\nabla^j_t}{(\lambda G)^j}\frac{\nabla^{j_1}_i}{(-\lambda F)^{j_1}}\frac{1}{A^{i_1}}
    &=\frac{(-1)^j\nabla^j_t\nabla^{j_1}_i}{(-\lambda)^{j+j_1}} \sum \limits_{i_2 + j_2 = j + j_1}\Bigg(\frac{\nabla^{j_2}_{j_1}}{(G-F)^{j_2}G^{i_2}} + \frac{\nabla^{j_2}_{j}}{(F-G)^{j_2}F^{i_2}}\Bigg)\frac{1}{A^{i_1}}\\
    &=\frac{(-1)^j\nabla^j_t\nabla^{j_1}_i}{(-\lambda)^{j+j_1}} \sum \limits_{i_2 + j_2 = j + j_1}\Bigg(\frac{\nabla^{j_2}_{j_1}}{(\mu' U)^{j_2}G^{i_2}} + \frac{\nabla^{j_2}_{j}}{(-\mu' U)^{j_2}F^{i_2}}\Bigg)\frac{1}{A^{i_1}}\\
    &=\frac{(-1)^j\nabla^j_t\nabla^{j_1}_i}{(-\lambda)^{j+j_1}} \sum \limits_{i_2 + j_2 = j + j_1}\frac{\nabla^{j_2}_{j_1}}{(\mu')^{j_2}} \frac{1}{A^{i_1}G^{i_2}U^{j_2}}\\
    &+ 
    \frac{(-1)^j\nabla^j_t\nabla^{j_1}_i}{(-\lambda)^{j+j_1}} \sum \limits_{i_2 + j_2 = j + j_1}\frac{\nabla^{j_2}_{j}}{(-\mu')^{j_2}}\frac{1}{A^{i_1}F^{i_2}U^{j_2}},\\
    \frac{\nabla^j_t}{(\lambda G)^j}\frac{\nabla^{j_1}_r}{(\lambda  F)^{j_1}}\frac{1}{B^{i_1}}  &=\frac{\nabla^j_t\nabla^{j_1}_r}{\lambda ^{j+j_1}} \sum \limits_{i_2 + j_2 = j + j_1}\Bigg(\frac{\nabla^{j_2}_{j_1}}{(G-F)^{j_2}G^{i_2}} + \frac{\nabla^{j_2}_{j}}{(F-G)^{j_2}F^{i_2}}\Bigg)\frac{1}{B^{i_1}}\\
    &=\frac{\nabla^j_t\nabla^{j_1}_r}{\lambda ^{j+j_1}} \sum \limits_{i_2 + j_2 = j + j_1}\Bigg(\frac{\nabla^{j_2}_{j_1}}{(\mu' U)^{j_2}G^{i_2}} + \frac{\nabla^{j_2}_{j}}{(-\mu' U)^{j_2}F^{i_2}}\Bigg)\frac{1}{B^{i_1}}\\
    &=\frac{\nabla^j_t\nabla^{j_1}_r}{\lambda ^{j+j_1}} \sum \limits_{i_2 + j_2 = j + j_1}\frac{\nabla^{j_2}_{j_1}}{(\mu')^{j_2}} \frac{1}{B^{i_1}G^{i_2}U^{j_2}}\\
    &+ 
    \frac{\nabla^j_t\nabla^{j_1}_r}{\lambda ^{j+j_1}} \sum \limits_{i_2 + j_2 = j + j_1}\frac{\nabla^{j_2}_{j}}{(-\mu')^{j_2}}\frac{1}{B^{i_1}F^{i_2}U^{j_2}}.
\end{align*}
and 
\begin{align*}
    \frac{\nabla^j_s}{(-\lambda G)^j}\frac{\nabla^{j_1}_i}{(-\mu U)^{j_1}}\frac{1}{A^{i_1}} &=\frac{\nabla^j_s\nabla^{j_1}_i}{(-\lambda )^j(-\mu)^{j_1}}\frac{1}{A^{i_1}G^jU^{j_1}},\\
    \frac{\nabla^j_s}{(-\lambda G)^j}\frac{\nabla^{j_1}_r}{(\mu U)^{j_1}}\frac{1}{C^{i_1}}&=\frac{\nabla^j_s\nabla^{j_1}_r}{(-\lambda)^j\mu^{j_1}}\frac{1}{C^{i_1}G^jU^{j_1}}.
\end{align*}
For each $(a,b,c)\in N_1$, we have $b = a + \lambda f, c = a + \mu u$ with $\lambda, \mu \in \Fq^\times$  and $f,u \in A_+$ such that $d > \deg f > \deg u$. Set $g = f + \mu' u$ where $\mu' = - \frac{\mu}{\lambda}$. Replacing $A = a, F = f,U = u$, one deduces from \eqref{eq: L-R N1} that 
\begin{align*} 
   \sum \limits_{(a,b,c) \in N_1}& \frac{1}{a^rb^s} \cdot \frac{1}{c^t} \\
   &=  \sum\limits_{\substack{a,f,u \in A_+ \\ d = \deg a > \deg f> \deg u}} \sum \limits_{\lambda, \mu \in \Fq^\times} \frac{1}{a^{r}(a+\lambda  f)^s} \cdot \frac{1}{(a+\mu u)^t}\\
   &= \sum \limits_{i+j = r+s} \sum \limits_{i_1+j_1 = i + t}\delta_j\nabla^j_s\delta_{j_1}\nabla^{j_1}_t \sum\limits_{\substack{a,f,u \in A_+ \\ d = \deg a > \deg f> \deg u}}\frac{1}{a^{i_1}f^ju^{j_1}}\\ 
    &+ \sum \limits_{i+j = r+s} \sum \limits_{i_1+j_1 = i + t}\delta_j\nabla^j_s\delta_{j_1}\nabla^{j_1}_i \sum\limits_{\substack{c,f,u \in A_+ \\ d = \deg c > \deg f> \deg u}}\frac{1}{c^{i_1}f^ju^{j_1}}\\
    &+ \sum \limits_{i+j = r+s} \sum \limits_{i_1+j_1 = i + t}\delta_{j+j_1}\nabla^j_r\nabla^{j_1}_t \sum \limits_{i_2 + j_2 = j + j_1}\delta_{j_2}\nabla^{j_2}_{j_1}\sum\limits_{\substack{b,f,u \in A_+ \\ d = \deg b > \deg f> \deg u}}\frac{1}{b^{i_1}f^{i_2}u^{j_2}}\\
    &+ \sum \limits_{i+j = r+s} \sum \limits_{i_1+j_1 = i + t}\delta_{j+j_1}\nabla^j_r\nabla^{j_1}_t \sum \limits_{i_2 + j_2 = j + j_1}\delta_{j_2}\nabla^{j_2}_{j}\sum\limits_{\substack{b,g,u \in A_+ \\ d = \deg b > \deg g> \deg u}}\frac{1}{b^{i_1}g^{i_2}u^{j_2}}\\
    &+ \sum \limits_{i+j = r+s} \sum \limits_{i_1+j_1 = i + t}\delta_{j+j_1}(-1)^j\nabla^j_r\nabla^{j_1}_i\sum \limits_{i_2 + j_2 = j + j_1}\delta_{j_2}\nabla^{j_2}_{j_1}\sum\limits_{\substack{c,f,u \in A_+ \\ d = \deg c > \deg f> \deg u}}\frac{1}{c^{i_1}f^{i_2}u^{j_2}}\\
    &+ \sum \limits_{i+j = r+s} \sum \limits_{i_1+j_1 = i + t}\delta_{j+j_1}(-1)^j\nabla^j_r\nabla^{j_1}_i \sum \limits_{i_2 + j_2 = j + j_1}\delta_{j_2}\nabla^{j_2}_{j}\sum\limits_{\substack{c,g,u \in A_+ \\ d = \deg c > \deg g> \deg u}}\frac{1}{c^{i_1}g^{i_2}u^{j_2}}\\
    &= \sum \limits_{i+j = r+s} \sum \limits_{i_1+j_1 = i + t}\delta_j\nabla^j_s\delta_{j_1}\nabla^{j_1}_t S_d(i_1,j,j_1)\\
    &+ \sum \limits_{i+j = r+s} \sum \limits_{i_1+j_1 = i + t}\delta_j\nabla^j_s\delta_{j_1}\nabla^{j_1}_i S_d(i_1,j,j_1)\\
    &+ \sum \limits_{i+j = r+s} \sum \limits_{i_1+j_1 = i + t}\delta_{j+j_1}\nabla^j_r\nabla^{j_1}_t \sum \limits_{i_2 + j_2 = j + j_1}\delta_{j_2}\nabla^{j_2}_{j_1}S_d(i_1,i_2,j_2)\\
    &+ \sum \limits_{i+j = r+s} \sum \limits_{i_1+j_1 = i + t}\delta_{j+j_1}\nabla^j_r\nabla^{j_1}_t \sum \limits_{i_2 + j_2 = j + j_1}\delta_{j_2}\nabla^{j_2}_{j}S_d(i_1,i_2,j_2)\\
    &+ \sum \limits_{i+j = r+s} \sum \limits_{i_1+j_1 = i + t}\delta_{j+j_1}(-1)^j\nabla^j_r\nabla^{j_1}_i\sum \limits_{i_2 + j_2 = j + j_1}\delta_{j_2}\nabla^{j_2}_{j_1}S_d(i_1,i_2,j_2)\\
    &+ \sum \limits_{i+j = r+s} \sum \limits_{i_1+j_1 = i + t}\delta_{j+j_1}(-1)^j\nabla^j_r\nabla^{j_1}_i \sum \limits_{i_2 + j_2 = j + j_1}\delta_{j_2}\nabla^{j_2}_{j}S_d(i_1,i_2,j_2)\\
    &= \sum \limits_{i+j = r+s} \sum \limits_{i_1+j_1 = i + t}\delta_j\nabla^j_s\Delta^{j_1}_{i,t} S_d(i_1,j,j_1)\\
    &+ \sum \limits_{i+j = r+s} \sum \limits_{i_1+j_1 = i + t}\delta_{j+j_1}\nabla^j_r\nabla^{j_1}_t \sum \limits_{i_2 + j_2 = j + j_1}\Delta^{j_2}_{j,j_1}S_d(i_1,i_2,j_2)\\
    &+ \sum \limits_{i+j = r+s} \sum \limits_{i_1+j_1 = i + t}\delta_{j+j_1}(-1)^j\nabla^j_r\nabla^{j_1}_i\sum \limits_{i_2 + j_2 = j + j_1}\Delta^{j_2}_{j,j_1}S_d(i_1,i_2,j_2).
\end{align*}
Similarly, one deduces from \eqref{eq: R-L N1} that
\begin{align*}
    \sum \limits_{(a,b,c) \in N_1}& \frac{1}{a^r} \cdot \frac{1}{b^sc^t}\\
    &=  \sum\limits_{\substack{a,f,u \in A_+ \\ d = \deg a > \deg f> \deg u}} \sum \limits_{\lambda, \mu \in \Fq^\times} \frac{1}{a^r} \cdot \frac{1}{(a+\lambda f)^s(a+\mu u)^t}\\
    &=  \sum \limits_{i+j = s + t}\sum \limits_{i_1+j_1 = i + r} \delta_{j+j_1}(-1)^j\nabla^j_t\nabla^{j_1}_i \sum \limits_{i_2 + j_2 = j + j_1}\delta_{j_2}\nabla^{j_2}_{j_1} \sum\limits_{\substack{a,g,u \in A_+ \\ d = \deg a > \deg g> \deg u}}\frac{1}{a^{i_1}g^{i_2}u^{j_2}}\\
    &+ \sum \limits_{i+j = s + t}\sum \limits_{i_1+j_1 = i + r} \delta_{j+j_1}(-1)^j\nabla^j_t\nabla^{j_1}_i \sum \limits_{i_2 + j_2 = j + j_1}\delta_{j_2}\nabla^{j_2}_{j}\sum\limits_{\substack{a,f,u \in A_+ \\ d = \deg a > \deg f> \deg u}}\frac{1}{a^{i_1}f^{i_2}u^{j_2}}\\
    &+ \sum \limits_{i+j = s + t}\sum \limits_{i_1+j_1 = i + r} \delta_{j+j_1}\nabla^j_t\nabla^{j_1}_r \sum \limits_{i_2 + j_2 = j + j_1}\delta_{j_2}\nabla^{j_2}_{j_1} \sum\limits_{\substack{b,g,u \in A_+ \\ d = \deg b > \deg g> \deg u}}\frac{1}{b^{i_1}g^{i_2}u^{j_2}}\\
    &+ \sum \limits_{i+j = s + t}\sum \limits_{i_1+j_1 = i + r} \delta_{j+j_1}\nabla^j_t\nabla^{j_1}_r  \sum \limits_{i_2 + j_2 = j + j_1}\delta_{j_2}\nabla^{j_2}_{j}\sum\limits_{\substack{b,f,u \in A_+ \\ d = \deg b > \deg f> \deg u}}\frac{1}{b^{i_1}f^{i_2}u^{j_2}}\\
    &+ \sum \limits_{i+j = s + t}\sum \limits_{i_1+j_1 = i + r} \delta_j\nabla^j_s\delta_{j_1}\nabla^{j_1}_i\sum\limits_{\substack{a,g,u \in A_+ \\ d = \deg a > \deg g> \deg u}}\frac{1}{a^{i_1}g^ju^{j_1}}\\
    &+ \sum \limits_{i+j = s + t}\sum \limits_{i_1+j_1 = i + r} \delta_j\nabla^j_s\delta_{j_1}\nabla^{j_1}_r\sum\limits_{\substack{c,g,u \in A_+ \\ d = \deg c > \deg g> \deg u}}\frac{1}{c^{i_1}g^ju^{j_1}}\\
    &=  \sum \limits_{i+j = s + t}\sum \limits_{i_1+j_1 = i + r} \delta_{j+j_1}(-1)^j\nabla^j_t\nabla^{j_1}_i \sum \limits_{i_2 + j_2 = j + j_1}\delta_{j_2}\nabla^{j_2}_{j_1} S_d(i_1,i_2,j_2)\\
    &+ \sum \limits_{i+j = s + t}\sum \limits_{i_1+j_1 = i + r} \delta_{j+j_1}(-1)^j\nabla^j_t\nabla^{j_1}_i \sum \limits_{i_2 + j_2 = j + j_1}\delta_{j_2}\nabla^{j_2}_{j}S_d(i_1,i_2,j_2)\\
    &+ \sum \limits_{i+j = s + t}\sum \limits_{i_1+j_1 = i + r} \delta_{j+j_1}\nabla^j_t\nabla^{j_1}_r \sum \limits_{i_2 + j_2 = j + j_1}\delta_{j_2}\nabla^{j_2}_{j_1} S_d(i_1,i_2,j_2)\\
    &+ \sum \limits_{i+j = s + t}\sum \limits_{i_1+j_1 = i + r} \delta_{j+j_1}\nabla^j_t\nabla^{j_1}_r  \sum \limits_{i_2 + j_2 = j + j_1}\delta_{j_2}\nabla^{j_2}_{j}S_d(i_1,i_2,j_2)\\
    &+ \sum \limits_{i+j = s + t}\sum \limits_{i_1+j_1 = i + r} \delta_j\nabla^j_s\delta_{j_1}\nabla^{j_1}_iS_d(i_1,j,j_1)\\
    &+ \sum \limits_{i+j = s + t}\sum \limits_{i_1+j_1 = i + r} \delta_j\nabla^j_s\delta_{j_1}\nabla^{j_1}_rS_d(i_1,j,j_1)\\
    &= \sum \limits_{i+j = s + t}\sum \limits_{i_1+j_1 = i + r} \delta_{j+j_1}(-1)^j\nabla^j_t\nabla^{j_1}_i \sum \limits_{i_2 + j_2 = j + j_1}\Delta^{j_2}_{j,j_1}S_d(i_1,i_2,j_2)\\
    &+ \sum \limits_{i+j = s + t}\sum \limits_{i_1+j_1 = i + r} \delta_{j+j_1}\nabla^j_t\nabla^{j_1}_r \sum \limits_{i_2 + j_2 = j + j_1}\Delta^{j_2}_{j,j_1} S_d(i_1,i_2,j_2)\\
    &+ \sum \limits_{i+j = s + t}\sum \limits_{i_1+j_1 = i + r} \delta_j\nabla^j_s\Delta^{j_1}_{i,r}S_d(i_1,j,j_1).
\end{align*}
Since the partial fraction decomposition of $P_{\lambda,\mu}(A,F,U)$ obtained from the process is unique, it follows that the above expansions of 
\begin{equation*}
    \sum \limits_{(a,b,c)\in N_1} \frac{1}{a^rb^s} \cdot \frac{1}{c^t} \quad \text{and} \quad \sum\limits_{(a,b,c)\in N_1} \frac{1}{a^r} \cdot \frac{1}{b^s c^ t}
\end{equation*}
yield the same expression in terms of power sums.

\medskip
\noindent \textbf{Case 2:} $(a,b,c)$ ranges over all tuples in $N_2$.
\par For $\lambda,\mu \in \Fq^\times$, consider the rational function 
\begin{equation*}
    P_{\lambda, \mu }(A,F,U) = \frac{1}{A^{r}(A+\lambda U)^s(A+\mu F)^t}.
\end{equation*}
\par Set $B= A + \lambda U, C = A + \mu F$ and $G = F + \lambda' U$ where $\lambda' = - \frac{\lambda}{\mu}$, so that $C-B = \mu G$. From the same process as in the the case of $N_1$, we expand $P_{\lambda, \mu }(A,F,U)$ in two ways. First, we expand $P_{\lambda, \mu }(A,F,U)$ from the left to the right. From \eqref{eq: PFD L-R}, we have;
\begin{align} \label{eq: L-R N2}
    \frac{1}{A^{r}B^s} \cdot \frac{1}{C^t}
    &= \sum \limits_{i+j = r+s} \sum \limits_{i_1+j_1 = i + t}\frac{\nabla^j_s}{(-\lambda U)^j} \frac{\nabla^{j_1}_t}{(-\mu F)^{j_1}}\frac{1}{A^{i_1}}\\ \notag
    &+ \sum \limits_{i+j = r+s} \sum \limits_{i_1+j_1 = i + t}\frac{\nabla^j_s}{(-\lambda U)^j} \frac{\nabla^{j_1}_i}{(\mu F)^{j_1}}\frac{1}{C^{i_1}}\\\notag
    &+ \sum \limits_{i+j = r+s} \sum \limits_{i_1+j_1 = i + t}\frac{\nabla^j_r}{(\lambda U)^j} \frac{\nabla^{j_1}_t}{(-\mu G)^{j_1}}\frac{1}{B^{i_1}}\\\notag
    &+ \sum \limits_{i+j = r+s} \sum \limits_{i_1+j_1 = i + t}\frac{\nabla^j_r}{(\lambda U)^j} \frac{\nabla^{j_1}_i}{(\mu G)^{j_1}}\frac{1}{C^{i_1}}\\\notag
    &= \sum \limits_{i+j = r+s} \sum \limits_{i_1+j_1 = i + t}\frac{\nabla^j_s\nabla^{j_1}_t}{(-\lambda)^j(-\mu)^{j_1}} \frac{1}{A^{i_1}F^{j_1}U^j}\\ \notag
    &+ \sum \limits_{i+j = r+s} \sum \limits_{i_1+j_1 = i + t}\frac{\nabla^j_s\nabla^{j_1}_i}{(-\lambda)^j\mu^{j_1}} \frac{1}{C^{i_1}F^{j_1}U^j}\\\notag
    &+ \sum \limits_{i+j = r+s} \sum \limits_{i_1+j_1 = i + t}\frac{\nabla^j_r\nabla^{j_1}_t}{\lambda^j(-\mu)^{j_1}} \frac{1}{B^{i_1}G^{j_1}U^j}\\\notag
    &+ \sum \limits_{i+j = r+s} \sum \limits_{i_1+j_1 = i + t}\frac{\nabla^j_r\nabla^{j_1}_i}{\lambda^j\mu^{j_1}} \frac{1}{C^{i_1}G^{j_1}U^j}\\\notag
\end{align}
Next we expand $P_{\lambda, \mu }(A,F,U)$ from the right to the left. From \eqref{eq: PFD R-L}, we have;
\begin{align} \label{eq: R-L N2}
   \frac{1}{A^r} \cdot \frac{1}{B^{s}C^t} 
    &=  \sum \limits_{i+j = s + t}\sum \limits_{i_1+j_1 = i + r} \frac{\nabla^j_t}{(-\mu G)^j}\frac{\nabla^{j_1}_i}{(-\lambda U)^{j_1}}\frac{1}{A^{i_1}}\\\notag
    &+ \sum \limits_{i+j = s + t}\sum \limits_{i_1+j_1 = i + r} \frac{\nabla^j_t}{(-\mu G)^j}\frac{\nabla^{j_1}_r}{(\lambda U)^{j_1}}\frac{1}{B^{i_1}}\\\notag
    &+\sum \limits_{i+j = s + t}\sum \limits_{i_1+j_1 = i + r}\frac{\nabla^j_s}{(\mu G)^j}\frac{\nabla^{j_1}_i}{(-\mu F)^{j_1}}\frac{1}{A^{i_1}}\\\notag
     &+\sum \limits_{i+j = s + t}\sum \limits_{i_1+j_1 = i + r}\frac{\nabla^j_s}{(\mu G)^j}\frac{\nabla^{j_1}_r}{(\mu F)^{j_1}}\frac{1}{C^{i_1}}.\notag
\end{align}
Here
\begin{align*}
     \frac{\nabla^j_t}{(-\mu G)^j}\frac{\nabla^{j_1}_i}{(-\lambda U)^{j_1}}\frac{1}{A^{i_1}}
    &= \frac{\nabla^j_t\nabla^{j_1}_i}{(-\mu)^j(-\lambda)^{j_1}}\frac{1}{A^{i_1}G^jU^{j_1}},\\
     \frac{\nabla^j_t}{(-\mu G)^j}\frac{\nabla^{j_1}_r}{(\lambda U)^{j_1}}\frac{1}{B^{i_1}}
    &= \frac{\nabla^j_t\nabla^{j_1}_r}{(-\mu)^j\lambda ^{j_1}}\frac{1}{B^{i_1}G^jU^{j_1}}
\end{align*}
and
\begin{align*}
    \frac{\nabla^j_s}{(\mu G)^j}\frac{\nabla^{j_1}_i}{(-\mu F)^{j_1}}\frac{1}{A^{i_1}} 
    &=\frac{(-1)^j\nabla^j_s\nabla^{j_1}_i}{(-\mu)^{j+j_1}} \sum \limits_{i_2 + j_2 = j + j_1} \Bigg(\frac{\nabla^{j_2}_{j_1}}{(G-F)^{j_2}G^{i_2}} + \frac{\nabla^{j_2}_{j}}{(F-G)^{j_2}F^{i_2}}\Bigg)\frac{1}{A^{i_1}} \\
    &=\frac{(-1)^j\nabla^j_s\nabla^{j_1}_i}{(-\mu)^{j+j_1}} \sum \limits_{i_2 + j_2 = j + j_1} \Bigg(\frac{\nabla^{j_2}_{j_1}}{(\lambda' U)^{j_2}G^{i_2}} + \frac{\nabla^{j_2}_{j}}{(-\lambda' U)^{j_2}F^{i_2}}\Bigg)\frac{1}{A^{i_1}} \\
     &=\frac{(-1)^j\nabla^j_s\nabla^{j_1}_i}{(-\mu)^{j+j_1}} \sum \limits_{i_2 + j_2 = j + j_1} \frac{\nabla^{j_2}_{j_1}}{(\lambda' )^{j_2}}\frac{1}{A^{i_1}G^{i_2}U^{j_2}}\\
     &+ 
     \frac{(-1)^j\nabla^j_s\nabla^{j_1}_i}{(-\mu)^{j+j_1}} \sum \limits_{i_2 + j_2 = j + j_1}\frac{\nabla^{j_2}_{j}}{(-\lambda')^{j_2}}\frac{1}{A^{i_1}F^{i_2}U^{j_2}},\\
     \frac{\nabla^j_s}{(\mu G)^j}\frac{\nabla^{j_1}_r}{(\mu F)^{j_1}}\frac{1}{C^{i_1}}
     &=  \frac{\nabla^j_s\nabla^{j_1}_r}{\mu^{j+j_1}} \sum \limits_{i_2 + j_2 = j + j_1} \Bigg(\frac{\nabla^{j_2}_{j_1}}{(G-F)^{j_2}G^{i_2}} + \frac{\nabla^{j_2}_{j}}{(F-G)^{j_2}F^{i_2}}\Bigg)\frac{1}{C^{i_1}}\\
     &=  \frac{\nabla^j_s\nabla^{j_1}_r}{\mu^{j+j_1}} \sum \limits_{i_2 + j_2 = j + j_1} \Bigg(\frac{\nabla^{j_2}_{j_1}}{(\lambda' U)^{j_2}G^{i_2}} + \frac{\nabla^{j_2}_{j}}{(-\lambda' U)^{j_2}F^{i_2}}\Bigg)\frac{1}{C^{i_1}}\\
     &=  \frac{\nabla^j_s\nabla^{j_1}_r}{\mu^{j+j_1}} \sum \limits_{i_2 + j_2 = j + j_1} \frac{\nabla^{j_2}_{j_1}}{(\lambda' )^{j_2}}\frac{1}{C^{i_1}G^{i_2}U^{j_2}}\\
     &+ 
     \frac{\nabla^j_s\nabla^{j_1}_r}{\mu^{j+j_1}} \sum \limits_{i_2 + j_2 = j + j_1}\frac{\nabla^{j_2}_{j}}{(-\lambda')^{j_2}}\frac{1}{C^{i_1}F^{i_2}U^{j_2}}.
\end{align*}
For each $(a,b,c)\in N_2$, we have $b = a + \lambda u, c = a + \mu f$ with $\lambda, \mu \in \Fq^\times$  and $f,u \in A_+$ such that $d > \deg f > \deg u$. Set $g = f + \lambda' u$ where $\lambda' = - \frac{\lambda}{\mu}$. Replacing $A = a, F = f,U = u$, one deduces from \eqref{eq: L-R N2} that 
\begin{align*} 
   \sum \limits_{(a,b,c) \in N_2}& \frac{1}{a^rb^s} \cdot \frac{1}{c^t} \\
   &=  \sum\limits_{\substack{a,f,u \in A_+ \\ d = \deg a > \deg f> \deg u}} \sum \limits_{\lambda, \mu \in \Fq^\times} \frac{1}{a^{r}(a+\lambda  u)^s} \cdot \frac{1}{(a+\mu f)^t}\\
   &= \sum \limits_{i+j = r+s} \sum \limits_{i_1+j_1 = i + t}\delta_j\nabla^j_s\delta_{j_2}\nabla^{j_1}_t \sum\limits_{\substack{a,f,u \in A_+ \\ d = \deg a > \deg f> \deg u}}\frac{1}{a^{i_1}f^{j_1}u^j}\\ \notag
    &+ \sum \limits_{i+j = r+s} \sum \limits_{i_1+j_1 = i + t}\delta_j\nabla^j_s\delta_{j_1}\nabla^{j_1}_i \sum\limits_{\substack{c,f,u \in A_+ \\ d = \deg c > \deg f> \deg u}}\frac{1}{c^{i_1}f^{j_1}u^j}\\\notag
    &+ \sum \limits_{i+j = r+s} \sum \limits_{i_1+j_1 = i + t}\delta_j\nabla^j_r\delta_{j_1}\nabla^{j_1}_t \sum\limits_{\substack{b,g,u \in A_+ \\ d = \deg b > \deg g> \deg u}}\frac{1}{b^{i_1}g^{j_1}u^j}\\\notag
    &+ \sum \limits_{i+j = r+s} \sum \limits_{i_1+j_1 = i + t}\delta_j\nabla^j_r\delta_{j_1}\nabla^{j_1}_i \sum\limits_{\substack{c,g,u \in A_+ \\ d = \deg c > \deg g> \deg u}}\frac{1}{c^{i_1}g^{j_1}u^j}\\\notag
    &= \sum \limits_{i+j = r+s} \sum \limits_{i_1+j_1 = i + t}\delta_j\nabla^j_s\delta_{j_2}\nabla^{j_1}_t S_d(i_1,j_1,j)\\ \notag
    &+ \sum \limits_{i+j = r+s} \sum \limits_{i_1+j_1 = i + t}\delta_j\nabla^j_s\delta_{j_1}\nabla^{j_1}_i S_d(i_1,j_1,j)\\\notag
    &+ \sum \limits_{i+j = r+s} \sum \limits_{i_1+j_1 = i + t}\delta_j\nabla^j_r\delta_{j_1}\nabla^{j_1}_t S_d(i_1,j_1,j)\\\notag
    &+ \sum \limits_{i+j = r+s} \sum \limits_{i_1+j_1 = i + t}\delta_j\nabla^j_r\delta_{j_1}\nabla^{j_1}_i S_d(i_1,j_1,j)\\\notag
    &= \sum \limits_{i+j = r+s} \sum \limits_{i_1+j_1 = i + t}\Delta^{j}_{r,s} \Delta^{j_1}_{i,t}S_d(i_1,j_1,j).
\end{align*}
Similarly, one deduces from \eqref{eq: R-L N2} that
\begin{align*}
    \sum \limits_{(a,b,c) \in N_2} &\frac{1}{a^r} \cdot \frac{1}{b^sc^t} \\
    &=  \sum\limits_{\substack{a,f,u \in A_+ \\ d = \deg a > \deg f> \deg u}} \sum \limits_{\lambda, \mu \in \Fq^\times} \frac{1}{a^r} \cdot \frac{1}{(a+\lambda u)^s(a+\mu f)^t}\\
    &=  \sum \limits_{i+j = s + t}\sum \limits_{i_1+j_1 = i + r} \delta_{j}\nabla^j_t\delta_{j_1}\nabla^{j_1}_i\sum\limits_{\substack{a,g,u \in A_+ \\ d = \deg a > \deg g> \deg u}}\frac{1}{a^{i_1}g^ju^{j_1}}\\\notag
    &+ \sum \limits_{i+j = s + t}\sum \limits_{i_1+j_1 = i + r} \delta_j\nabla^j_t\delta_{j_1}\nabla^{j_1}_r\sum\limits_{\substack{b,g,u \in A_+ \\ d = \deg b > \deg g> \deg u}}\frac{1}{b^{i_1}g^ju^{j_1}}\\\notag
    &+\sum \limits_{i+j = s + t}\sum \limits_{i_1+j_1 = i + r}\delta_{j+j_1}(-1)^j\nabla^j_s\nabla^{j_1}_i \sum \limits_{i_2 + j_2 = j + j_1} \delta_{j_2}\nabla^{j_2}_{j_1}\sum\limits_{\substack{a,g,u \in A_+ \\ d = \deg a > \deg g> \deg u}}\frac{1}{a^{i_1}g^{i_2}u^{j_2}} \\\notag
     &+\sum \limits_{i+j = s + t}\sum \limits_{i_1+j_1 = i + r}\delta_{j+j_1}(-1)^j\nabla^j_s\nabla^{j_1}_i \sum \limits_{i_2 + j_2 = j + j_1}\delta_{j_2}\nabla^{j_2}_{j}\sum\limits_{\substack{a,f,u \in A_+ \\ d = \deg a > \deg f> \deg u}}\frac{1}{a^{i_1}f^{i_2}u^{j_2}}\\\notag
     &+\sum \limits_{i+j = s + t}\sum \limits_{i_1+j_1 = i + r}\delta_{j+j_1}\nabla^j_s\nabla^{j_1}_r \sum \limits_{i_2 + j_2 = j + j_1} \delta_{j_2}\nabla^{j_2}_{j_1}\sum\limits_{\substack{c,g,u \in A_+ \\ d = \deg c > \deg g> \deg u}}\frac{1}{c^{i_1}g^{i_2}u^{j_2}}\\\notag
     &+\sum \limits_{i+j = s + t}\sum \limits_{i_1+j_1 = i + r}\delta_{j+j_1}\nabla^j_s\nabla^{j_1}_r \sum \limits_{i_2 + j_2 = j + j_1}\delta_{j_2}\nabla^{j_2}_{j}\sum\limits_{\substack{c,f,u \in A_+ \\ d = \deg c > \deg f> \deg u}}\frac{1}{c^{i_1}f^{i_2}u^{j_2}}\\\notag
     &=  \sum \limits_{i+j = s + t}\sum \limits_{i_1+j_1 = i + r} \delta_{j}\nabla^j_t\delta_{j_1}\nabla^{j_1}_iS_d(i_1,j,j_1)\\\notag
    &+ \sum \limits_{i+j = s + t}\sum \limits_{i_1+j_1 = i + r} \delta_j\nabla^j_t\delta_{j_1}\nabla^{j_1}_rS_d(i_1,j,j_1)\\\notag
    &+\sum \limits_{i+j = s + t}\sum \limits_{i_1+j_1 = i + r}\delta_{j+j_1}(-1)^j\nabla^j_s\nabla^{j_1}_i \sum \limits_{i_2 + j_2 = j + j_1} \delta_{j_2}\nabla^{j_2}_{j_1}S_d(i_1,i_2,j_2) \\\notag
     &+\sum \limits_{i+j = s + t}\sum \limits_{i_1+j_1 = i + r}\delta_{j+j_1}(-1)^j\nabla^j_s\nabla^{j_1}_i \sum \limits_{i_2 + j_2 = j + j_1}\delta_{j_2}\nabla^{j_2}_{j}S_d(i_1,i_2,j_2) \\\notag
     &+\sum \limits_{i+j = s + t}\sum \limits_{i_1+j_1 = i + r}\delta_{j+j_1}\nabla^j_s\nabla^{j_1}_r \sum \limits_{i_2 + j_2 = j + j_1} \delta_{j_2}\nabla^{j_2}_{j_1}S_d(i_1,i_2,j_2) \\\notag
     &+\sum \limits_{i+j = s + t}\sum \limits_{i_1+j_1 = i + r}\delta_{j+j_1}\nabla^j_s\nabla^{j_1}_r \sum \limits_{i_2 + j_2 = j + j_1}\delta_{j_2}\nabla^{j_2}_{j}S_d(i_1,i_2,j_2) \\\notag
     &=  \sum \limits_{i+j = s + t}\sum \limits_{i_1+j_1 = i + r} \delta_{j}\nabla^{j}_t\Delta^{j_1}_{i,r}S_d(i_1,j,j_1)\\\notag
     &+\sum \limits_{i+j = s + t}\sum \limits_{i_1+j_1 = i + r}\delta_{j+j_1}(-1)^j\nabla^j_s\nabla^{j_1}_i \sum \limits_{i_2 + j_2 = j + j_1}\Delta^{j_2}_{j,j_1}S_d(i_1,i_2,j_2) \\\notag
     &+\sum \limits_{i+j = s + t}\sum \limits_{i_1+j_1 = i + r}\delta_{j+j_1}\nabla^j_s\nabla^{j_1}_r \sum \limits_{i_2 + j_2 = j + j_1} \Delta^{j_2}_{j,j_1}S_d(i_1,i_2,j_2).\notag
\end{align*}
Since the partial fraction decomposition of $P_{\lambda,\mu}(A,F,U)$ obtained from the process is unique, it follows that the above expansions of 
\begin{equation*}
    \sum \limits_{(a,b,c)\in N_2} \frac{1}{a^rb^s} \cdot \frac{1}{c^t} \quad \text{and} \quad \sum\limits_{(a,b,c)\in N_2} \frac{1}{a^r} \cdot \frac{1}{b^s c^ t}
\end{equation*}
yield the same expression in terms of power sums.

\medskip
\noindent \textbf{Case 3:} $(a,b,c)$ ranges over all tuples in $N_3$.
\par For $\lambda,\mu \in \Fq^\times$, consider the rational function 
\begin{equation*}
    P_{\lambda, \mu }(A,F,U) = \frac{1}{A^{r}(A+\lambda F)^s(A+ \lambda F + \mu U)^t}.
\end{equation*}

\par Set $B= A + \lambda F, C = A + \lambda F + \mu U$ and $G = F + \mu' U$ where $\mu' =  \frac{\mu}{\lambda}$, so that $C-B = \mu U$ and $C-A = \lambda G$. From the same process as in the the case of $N_1$, we expand $P_{\lambda, \mu }(A,F,U)$ in two ways. First, we expand $P_{\lambda, \mu }(A,F,U)$ from the left to the right. From \eqref{eq: PFD L-R}, we have;
\begin{align} \label{eq: L-R N3}
    \frac{1}{A^{r}B^s} \cdot \frac{1}{C^t} 
    &= \sum \limits_{i+j = r+s} \sum \limits_{i_1+j_1 = i + t}\frac{\nabla^j_s}{(-\lambda F)^j} \frac{\nabla^{j_1}_t}{(-\lambda G)^{j_1}}\frac{1}{A^{i_1}}\\ \notag
    &+ \sum \limits_{i+j = r+s} \sum \limits_{i_1+j_1 = i + t}\frac{\nabla^j_s}{(-\lambda F)^j} \frac{\nabla^{j_1}_i}{(\lambda G)^{j_1}}\frac{1}{C^{i_1}}\\\notag
    &+ \sum \limits_{i+j = r+s} \sum \limits_{i_1+j_1 = i + t}\frac{\nabla^j_r}{(\lambda F)^j} \frac{\nabla^{j_1}_t}{(-\mu U)^{j_1}}\frac{1}{B^{i_1}}\\\notag
    &+ \sum \limits_{i+j = r+s} \sum \limits_{i_1+j_1 = i + t}\frac{\nabla^j_r}{(\lambda F)^j} \frac{\nabla^{j_1}_i}{(\mu U)^{j_1}}\frac{1}{C^{i_1}}\notag.
\end{align}
Here,
\begin{align*}
   \frac{\nabla^j_s}{(-\lambda F)^j} \frac{\nabla^{j_1}_t}{(-\lambda G)^{j_1}}\frac{1}{A^{i_1}} 
   &= \frac{\nabla^j_s\nabla^{j_1}_t}{(-\lambda)^{j+j_1}} \sum \limits_{i_2 + j_2 = j + j_1} \Bigg(\frac{\nabla^{j_2}_{j_1}}{(F-G)^{j_2}F^{i_2}} + \frac{\nabla^{j_2}_{j}}{(G-F)^{j_2}G^{i_2}}\Bigg)\frac{1}{A^{i_1}}\\\notag
   &= \frac{\nabla^j_s\nabla^{j_1}_t}{(-\lambda)^{j+j_1}} \sum \limits_{i_2 + j_2 = j + j_1} \Bigg(\frac{\nabla^{j_2}_{j_1}}{(-\mu' U)^{j_2}F^{i_2}} + \frac{\nabla^{j_2}_{j}}{(\mu' U)^{j_2}G^{i_2}}\Bigg)\frac{1}{A^{i_1}}\\\notag
   &= \frac{\nabla^j_s\nabla^{j_1}_t}{(-\lambda)^{j+j_1}} \sum \limits_{i_2 + j_2 = j + j_1} \frac{\nabla^{j_2}_{j_1}}{(-\mu' )^{j_2}}\frac{1}{A^{i_1}F^{i_2}U^{j_2}}\\
   &+ 
   \frac{\nabla^j_s\nabla^{j_1}_t}{(-\lambda)^{j+j_1}} \sum \limits_{i_2 + j_2 = j + j_1} \frac{\nabla^{j_2}_{j}}{(\mu' )^{j_2}}\frac{1}{A^{i_1}G^{i_2}U^{j_2}},\\\notag
    \frac{\nabla^j_s}{(-\lambda F)^j} \frac{\nabla^{j_1}_i}{(\lambda G)^{j_1}}\frac{1}{C^{i_1}} 
    &=  \frac{(-1)^j\nabla^j_s\nabla^{j_1}_i}{\lambda^{j+j_1}}\sum \limits_{i_2 + j_2 = j + j_1} \Bigg(\frac{\nabla^{j_2}_{j_1}}{(F-G)^{j_2}F^{i_2}} + \frac{\nabla^{j_2}_{j}}{(G-F)^{j_2}G^{i_2}}\Bigg) \frac{1}{C^{i_1}} \\\notag
    &=  \frac{(-1)^j\nabla^j_s\nabla^{j_1}_i}{\lambda^{j+j_1}}\sum \limits_{i_2 + j_2 = j + j_1} \Bigg(\frac{\nabla^{j_2}_{j_1}}{(-\mu' U)^{j_2}F^{i_2}} + \frac{\nabla^{j_2}_{j}}{(\mu' U)^{j_2}G^{i_2}}\Bigg) \frac{1}{C^{i_1}} \\\notag
    &=  \frac{(-1)^j\nabla^j_s\nabla^{j_1}_i}{\lambda^{j+j_1}}\sum \limits_{i_2 + j_2 = j + j_1} \frac{\nabla^{j_2}_{j_1}}{(-\mu')^{j_2}} \frac{1}{C^{i_1}F^{i_2}U^{j_2}} \\
    &+ 
    \frac{(-1)^j\nabla^j_s\nabla^{j_1}_i}{\lambda^{j+j_1}}\sum \limits_{i_2 + j_2 = j + j_1}\frac{\nabla^{j_2}_{j}}{(\mu')^{j_2}} \frac{1}{C^{i_1}G^{i_2}U^{j_2}}.\notag
\end{align*}
and 
\begin{align*}
    \frac{\nabla^j_r}{(\lambda F)^j} \frac{\nabla^{j_1}_t}{(-\mu U)^{j_1}}\frac{1}{B^{i_1}} &=\frac{\nabla^j_r\nabla^{j_1}_t}{(\lambda)^j(-\mu )^{j_1}} \frac{1}{B^{i_1}F^jU^{j_1}},\\\notag
    \frac{\nabla^j_r}{(\lambda F)^j} \frac{\nabla^{j_1}_i}{(\mu U)^{j_1}}\frac{1}{C^{i_1}} &= \frac{\nabla^j_r\nabla^{j_1}_i}{\lambda^j\mu ^{j_1}} \frac{1}{C^{i_1}F^jU^{j_1}}\notag.
\end{align*}
Next we expand $P_{\lambda, \mu }(A,F,U)$ from the right to the left. From \eqref{eq: PFD R-L}, we have
\begin{align} \label{eq: R-L N3}
   \frac{1}{A^r} \cdot \frac{1}{B^{s}C^t} 
    &=  \sum \limits_{i+j = s + t}\sum \limits_{i_1+j_1 = i + r} \frac{\nabla^j_t}{(-\mu U)^j}\frac{\nabla^{j_1}_i}{(-\lambda F)^{j_1}}\frac{1}{A^{i_1}}\\\notag
    &+ \sum \limits_{i+j = s + t}\sum \limits_{i_1+j_1 = i + r} \frac{\nabla^j_t}{(-\mu U)^j}\frac{\nabla^{j_1}_r}{(\lambda F)^{j_1}}\frac{1}{B^{i_1}}\\\notag
    &+\sum \limits_{i+j = s + t}\sum \limits_{i_1+j_1 = i + r}\frac{\nabla^j_s}{(\mu U)^j}\frac{\nabla^{j_1}_i}{(-\lambda G)^{j_1}}\frac{1}{A^{i_1}}\\\notag
     &+\sum \limits_{i+j = s + t}\sum \limits_{i_1+j_1 = i + r}\frac{\nabla^j_s}{(\mu U)^j}\frac{\nabla^{j_1}_r}{(\lambda G)^{j_1}}\frac{1}{C^{i_1}}\\\notag
      &=  \sum \limits_{i+j = s + t}\sum \limits_{i_1+j_1 = i + r} \frac{\nabla^j_t\nabla^{j_1}_i}{(-\mu)^j(-\lambda)^{j_1}}\frac{1}{A^{i_1}F^{j_1}U^j}\\\notag
    &+ \sum \limits_{i+j = s + t}\sum \limits_{i_1+j_1 = i + r} \frac{\nabla^j_t\nabla^{j_1}_r}{(-\mu )^j\lambda ^{j_1}}\frac{1}{B^{i_1}F^{j_1}U^j}\\\notag
    &+\sum \limits_{i+j = s + t}\sum \limits_{i_1+j_1 = i + r}\frac{\nabla^j_s\nabla^{j_1}_i}{\mu ^j(-\lambda)^{j_1}}\frac{1}{A^{i_1}G^{j_1}U^j}\\\notag
     &+\sum \limits_{i+j = s + t}\sum \limits_{i_1+j_1 = i + r}\frac{\nabla^j_s\nabla^{j_1}_r}{\mu ^j\lambda ^{j_1}}\frac{1}{C^{i_1}G^{j_1}U^j}.\notag
\end{align}
For each $(a,b,c)\in N_3$, we have $b = a + \lambda f, c = a + \lambda f + \mu u$ with $\lambda, \mu \in \Fq^\times$  and $f,u \in A_+$ such that $d > \deg f > \deg u$. Set $g = f + \mu'u$ where $\mu' = \frac{\mu}{\lambda} $. Replacing $A = a, F = f,U = u$, one deduces from \eqref{eq: L-R N3} that 
\vspace{-0.2cm}
\begin{align*} 
   \sum \limits_{(a,b,c) \in N_3}& \frac{1}{a^rb^s} \cdot \frac{1}{c^t} \\
   &=  \sum\limits_{\substack{a,f,u \in A_+ \\ d = \deg a > \deg f> \deg u}} \sum \limits_{\lambda, \mu \in \Fq^\times} \frac{1}{a^{r}(a+\lambda  f)^s} \cdot \frac{1}{(a+\lambda f + \mu u)^t}\\
   &= \sum \limits_{i+j = r+s} \sum \limits_{i_1+j_1 = i + t}\delta_{j+j_1}\nabla^j_s\nabla^{j_1}_t \sum \limits_{i_2 + j_2 = j + j_1} \delta_{j_2}\nabla^{j_2}_{j_1}\sum\limits_{\substack{a,f,u \in A_+ \\ d = \deg a > \deg f> \deg u}}\frac{1}{a^{i_1}f^{i_2}u^{j_2}}\\ \notag
    &+ \sum \limits_{i+j = r+s} \sum \limits_{i_1+j_1 = i + t}\delta_{j+j_1}\nabla^j_s\nabla^{j_1}_t \sum \limits_{i_2 + j_2 = j + j_1} \delta_{j_2}\nabla^{j_2}_{j}\sum\limits_{\substack{a,g,u \in A_+ \\ d = \deg a > \deg g> \deg u}}\frac{1}{a^{i_1}g^{i_2}u^{j_2}}\\\notag
    &+ \sum \limits_{i+j = r+s} \sum \limits_{i_1+j_1 = i + t} \delta_{j+j_1}(-1)^j\nabla^j_s\nabla^{j_1}_i\sum \limits_{i_2 + j_2 = j + j_1} \delta_{j_2}\nabla^{j_2}_{j_1} \sum\limits_{\substack{c,f,u \in A_+ \\ d = \deg c > \deg f> \deg u}}\frac{1}{c^{i_1}f^{i_2}u^{j_2}}\\\notag
    &+ \sum \limits_{i+j = r+s} \sum \limits_{i_1+j_1 = i + t}\delta_{j+j_1}(-1)^j\nabla^j_s\nabla^{j_1}_i\sum \limits_{i_2 + j_2 = j + j_1}\delta_{j_2}\nabla^{j_2}_{j} \sum\limits_{\substack{c,g,u \in A_+ \\ d = \deg c > \deg g> \deg u}}\frac{1}{c^{i_1}g^{i_2}u^{j_2}}\\\notag
    &+ \sum \limits_{i+j = r+s} \sum \limits_{i_1+j_1 = i + t}\delta_j\nabla^j_r\delta_{j_1}\nabla^{j_1}_t \sum\limits_{\substack{b,f,u \in A_+ \\ d = \deg b > \deg f> \deg u}}\frac{1}{b^{i_1}f^ju^{j_1}}\\\notag
    &+ \sum \limits_{i+j = r+s} \sum \limits_{i_1+j_1 = i + t}\delta_{j}\nabla^j_r\delta_{j_1}\nabla^{j_1}_i \sum\limits_{\substack{c,f,u \in A_+ \\ d = \deg c > \deg f> \deg u}}\frac{1}{c^{i_1}f^ju^{j_1}}\\\notag
    &= \sum \limits_{i+j = r+s} \sum \limits_{i_1+j_1 = i + t}\delta_{j+j_1}\nabla^j_s\nabla^{j_1}_t \sum \limits_{i_2 + j_2 = j + j_1} \delta_{j_2}\nabla^{j_2}_{j_1}S_d(i_1,i_2,j_2)\\ \notag
    &+ \sum \limits_{i+j = r+s} \sum \limits_{i_1+j_1 = i + t}\delta_{j+j_1}\nabla^j_s\nabla^{j_1}_t \sum \limits_{i_2 + j_2 = j + j_1} \delta_{j_2}\nabla^{j_2}_{j}S_d(i_1,i_2,j_2)\\\notag
    &+ \sum \limits_{i+j = r+s} \sum \limits_{i_1+j_1 = i + t} \delta_{j+j_1}(-1)^j\nabla^j_s\nabla^{j_1}_i\sum \limits_{i_2 + j_2 = j + j_1} \delta_{j_2}\nabla^{j_2}_{j_1} S_d(i_1,i_2,j_2)\\\notag
    &+ \sum \limits_{i+j = r+s} \sum \limits_{i_1+j_1 = i + t}\delta_{j+j_1}(-1)^j\nabla^j_s\nabla^{j_1}_i\sum \limits_{i_2 + j_2 = j + j_1}\delta_{j_2}\nabla^{j_2}_{j} S_d(i_1,i_2,j_2)\\\notag
    &+ \sum \limits_{i+j = r+s} \sum \limits_{i_1+j_1 = i + t}\delta_j\nabla^j_r\delta_{j_1}\nabla^{j_1}_t S_d(i_1,j,j_1)\\\notag
    &+ \sum \limits_{i+j = r+s} \sum \limits_{i_1+j_1 = i + t}\delta_{j}\nabla^j_r\delta_{j_1}\nabla^{j_1}_i S_d(i_1,j,j_1)\\\notag
    &= \sum \limits_{i+j = r+s} \sum \limits_{i_1+j_1 = i + t}\delta_{j+j_1}\nabla^j_s\nabla^{j_1}_t \sum \limits_{i_2 + j_2 = j + j_1} \Delta^{j_2}_{j,j_1}S_d(i_1,i_2,j_2)\\\notag
    &+ \sum \limits_{i+j = r+s} \sum \limits_{i_1+j_1 = i + t} \delta_{j+j_1}(-1)^j\nabla^j_s\nabla^{j_1}_i\sum \limits_{i_2 + j_2 = j + j_1} \Delta^{j_2}_{j,j_1} S_d(i_1,i_2,j_2)\\\notag
    &+ \sum \limits_{i+j = r+s} \sum \limits_{i_1+j_1 = i + t}\delta_{j}\nabla^j_r\Delta^{j_1}_{i,t} S_d(i_1,j,j_1).
\end{align*}
Similarly, one deduces from \eqref{eq: R-L N3} that
\vspace{-0.2cm}
\begin{align*}
    \sum \limits_{(a,b,c) \in N_3}& \frac{1}{a^r} \cdot \frac{1}{b^sc^t} \\
    &=  \sum\limits_{\substack{a,f,u \in A_+ \\ d = \deg a > \deg f> \deg u}} \sum \limits_{\lambda, \mu \in \Fq^\times} \frac{1}{a^r} \cdot \frac{1}{(a+\lambda f)^s(a+\lambda f + \mu u)^t}\\
    &=  \sum \limits_{i+j = s + t}\sum \limits_{i_1+j_1 = i + r} \delta_j\nabla^j_t\delta_{j_1}\nabla^{j_1}_i\sum\limits_{\substack{a,f,u \in A_+ \\ d = \deg a > \deg f> \deg u}}\frac{1}{a^{i_1}f^{j_1}u^j}\\\notag
    &+ \sum \limits_{i+j = s + t}\sum \limits_{i_1+j_1 = i + r} \delta_j\nabla^j_t\delta_{j_1}\nabla^{j_1}_r\sum\limits_{\substack{b,f,u \in A_+ \\ d = \deg b > \deg f> \deg u}}\frac{1}{b^{i_1}f^{j_1}u^j}\\\notag
    &+\sum \limits_{i+j = s + t}\sum \limits_{i_1+j_1 = i + r}\delta_j\nabla^j_s\delta_{j_1}\nabla^{j_1}_i\sum\limits_{\substack{a,g,u \in A_+ \\ d = \deg a > \deg g> \deg u}}\frac{1}{a^{i_1}g^{j_1}u^j}\\\notag
     &+\sum \limits_{i+j = s + t}\sum \limits_{i_1+j_1 = i + r}\delta_j\nabla^j_s\delta_{j_1}\nabla^{j_1}_r\sum\limits_{\substack{c,g,u \in A_+ \\ d = \deg c > \deg g> \deg u}}\frac{1}{c^{i_1}g^{j_1}u^j}\\\notag
     &=  \sum \limits_{i+j = s + t}\sum \limits_{i_1+j_1 = i + r} \delta_j\nabla^j_t\delta_{j_1}\nabla^{j_1}_iS_d(i_1,j_1,j)\\\notag
    &+ \sum \limits_{i+j = s + t}\sum \limits_{i_1+j_1 = i + r} \delta_j\nabla^j_t\delta_{j_1}\nabla^{j_1}_rS_d(i_1,j_1,j)\\\notag
    &+\sum \limits_{i+j = s + t}\sum \limits_{i_1+j_1 = i + r}\delta_j\nabla^j_s\delta_{j_1}\nabla^{j_1}_iS_d(i_1,j_1,j)\\\notag
     &+\sum \limits_{i+j = s + t}\sum \limits_{i_1+j_1 = i + r}\delta_j\nabla^j_s\delta_{j_1}\nabla^{j_1}_rS_d(i_1,j_1,j)\\\notag
     &=\sum \limits_{i+j = s + t}\sum \limits_{i_1+j_1 = i + r}\Delta^{j}_{s,t} \Delta^{j_1}_{i,r}S_d(i_1,j_1,j).\notag
\end{align*}
Since the partial fraction decomposition of $P_{\lambda,\mu}(A,F,U)$ obtained from the process is unique, it follows that the above expansions of 
\begin{equation*}
    \sum \limits_{(a,b,c)\in N_3} \frac{1}{a^rb^s} \cdot \frac{1}{c^t} \quad \text{and} \quad \sum\limits_{(a,b,c)\in N_3} \frac{1}{a^r} \cdot \frac{1}{b^s c^ t}
\end{equation*}
yield the same expression in terms of power sums.

\medskip
\noindent \textbf{Case 4:} $(a,b,c)$ ranges over all tuples in $N_4$.
\par For $\lambda,\mu, \eta \in \Fq^\times$ such that $\lambda \ne \mu$, consider the rational function 
\begin{equation*}
    P_{\lambda, \mu , \eta}(A,F,U) = \frac{1}{A^{r}(A+\lambda F)^s(A+\mu F + \eta U)^t}.
\end{equation*}

\par Set $B= A + \lambda F, C = A + \mu F + \eta U, G = F + \eta'U$ where $\eta' = \frac{\eta}{\mu}$ and $H = F + \eta'' U$ where $\eta'' =  \frac{\eta}{\mu - \lambda}$, so that $C-A = \mu G$ and $C-B = (\mu - \lambda)H$. It should be remarked that $\eta' \ne \eta''$ since $\lambda \ne 0$,  From the same process as in the the case of $N_1$, we expand $P_{\lambda, \mu , \eta}(A,F,U)$ in two ways. First, we expand $P_{\lambda, \mu , \eta}(A,F,U)$ from the left to the right. From \eqref{eq: PFD L-R}, we have
\begin{align} \label{eq: L-R N4}
    \frac{1}{A^{r}B^s} \cdot \frac{1}{C^t} 
    &= \sum \limits_{i+j = r+s} \sum \limits_{i_1+j_1 = i + t}\frac{\nabla^j_s}{(-\lambda F)^j} \frac{\nabla^{j_1}_t}{(-\mu G)^{j_1}}\frac{1}{A^{i_1}}\\ \notag
    &+ \sum \limits_{i+j = r+s} \sum \limits_{i_1+j_1 = i + t}\frac{\nabla^j_s}{(-\lambda F)^j} \frac{\nabla^{j_1}_i}{(\mu G)^{j_1}}\frac{1}{C^{i_1}}\\\notag
    &+ \sum \limits_{i+j = r+s} \sum \limits_{i_1+j_1 = i + t}\frac{\nabla^j_r}{(\lambda F)^j} \frac{\nabla^{j_1}_t}{[(\lambda - \mu )H]^{j_1}}\frac{1}{B^{i_1}}\\\notag
    &+ \sum \limits_{i+j = r+s} \sum \limits_{i_1+j_1 = i + t}\frac{\nabla^j_r}{(\lambda F)^j} \frac{\nabla^{j_1}_i}{[(\mu - \lambda )H]^{j_1}}\frac{1}{C^{i_1}}\notag.
\end{align}
Here
\begin{align*}
    \frac{\nabla^j_s}{(-\lambda F)^j} &\frac{\nabla^{j_1}_t}{(-\mu G)^{j_1}}\frac{1}{A^{i_1}} \\
    &=\frac{\nabla^j_s\nabla^{j_1}_t}{(-\lambda)^j(-\mu)^{j_1}} \sum \limits_{i_2 + j_2 = j + j_1} \Bigg(\frac{\nabla^{j_2}_{j_1}}{(F-G)^{j_2}F^{i_2}} + \frac{\nabla^{j_2}_{j}}{(G-F)^{j_2}G^{i_2}}\Bigg)\frac{1}{A^{i_1}}\\ \notag
    &=\frac{\nabla^j_s\nabla^{j_1}_t}{(-\lambda)^j(-\mu)^{j_1}} \sum \limits_{i_2 + j_2 = j + j_1} \Bigg(\frac{\nabla^{j_2}_{j_1}}{(-\eta' U)^{j_2}F^{i_2}} + \frac{\nabla^{j_2}_{j}}{(\eta' U)^{j_2}G^{i_2}}\Bigg)\frac{1}{A^{i_1}}\\ \notag
    &=\frac{\nabla^j_s\nabla^{j_1}_t}{(-\lambda)^j(-\mu)^{j_1}} \sum \limits_{i_2 + j_2 = j + j_1} \frac{\nabla^{j_2}_{j_1}}{(-\eta')^{j_2}}\frac{1}{A^{i_1}F^{i_2}U^{j_2}}\\
    &+ 
    \frac{\nabla^j_s\nabla^{j_1}_t}{(-\lambda)^j(-\mu)^{j_1}} \sum \limits_{i_2 + j_2 = j + j_1}\frac{\nabla^{j_2}_{j}}{(\eta')^{j_2}}\frac{1}{A^{i_1}G^{i_2}U^{j_2}},\\ \notag
    \frac{\nabla^j_s}{(-\lambda F)^j} &\frac{\nabla^{j_1}_i}{(\mu G)^{j_1}}\frac{1}{C^{i_1}} \\
    &=\frac{(-1)^j\nabla^j_s\nabla^{j_1}_i}{\lambda^j\mu^{j_1}} \sum \limits_{i_2 + j_2 = j + j_1} \Bigg(\frac{\nabla^{j_2}_{j_1}}{(F-G)^{j_2}F^{i_2}} + \frac{\nabla^{j_2}_{j}}{(G-F)^{j_2}G^{i_2}}\Bigg)\frac{1}{C^{i_1}}\\\notag
    &=\frac{(-1)^j\nabla^j_s\nabla^{j_1}_i}{\lambda^j\mu^{j_1}} \sum \limits_{i_2 + j_2 = j + j_1} \Bigg(\frac{\nabla^{j_2}_{j_1}}{(-\eta' U)^{j_2}F^{i_2}} + \frac{\nabla^{j_2}_{j}}{(\eta' U)^{j_2}G^{i_2}}\Bigg)\frac{1}{C^{i_1}}\\\notag
    &=\frac{(-1)^j\nabla^j_s\nabla^{j_1}_i}{\lambda^j\mu^{j_1}} \sum \limits_{i_2 + j_2 = j + j_1} \frac{\nabla^{j_2}_{j_1}}{(-\eta')^{j_2}} \frac{1}{C^{i_1}F^{i_2}U^{j_2}}\\
    &+ 
    \frac{(-1)^j\nabla^j_s\nabla^{j_1}_i}{\lambda^j\mu^{j_1}} \sum \limits_{i_2 + j_2 = j + j_1} \frac{\nabla^{j_2}_{j}}{(\eta' )^{j_2}}\frac{1}{C^{i_1}G^{i_2}U^{j_2}},\\\notag
    \frac{\nabla^j_r}{(\lambda F)^j} &\frac{\nabla^{j_1}_t}{[(\lambda - \mu )H]^{j_1}}\frac{1}{B^{i_1}} \\
    &=\frac{\nabla^j_r\nabla^{j_1}_t}{\lambda^j(\lambda - \mu )^{j_1}}\sum \limits_{i_2 + j_2 = j + j_1} \Bigg(\frac{\nabla^{j_2}_{j_1}}{(F-H)^{j_2}F^{i_2}} + \frac{\nabla^{j_2}_{j}}{(H-F)^{j_2}H^{i_2}}\Bigg) \frac{1}{B^{i_1}}\\\notag
    &=\frac{\nabla^j_r\nabla^{j_1}_t}{\lambda^j(\lambda - \mu )^{j_1}}\sum \limits_{i_2 + j_2 = j + j_1} \Bigg(\frac{\nabla^{j_2}_{j_1}}{(-\eta'' U)^{j_2}F^{i_2}} + \frac{\nabla^{j_2}_{j}}{(\eta'' U)^{j_2}G^{i_2}}\Bigg) \frac{1}{B^{i_1}}\\\notag
    &=\frac{\nabla^j_r\nabla^{j_1}_t}{\lambda^j(\lambda - \mu )^{j_1}}\sum \limits_{i_2 + j_2 = j + j_1} \frac{\nabla^{j_2}_{j_1}}{(-\eta'')^{j_2}} \frac{1}{B^{i_1}F^{i_2}U^{j_2}}\\
    &+ 
    \frac{\nabla^j_r\nabla^{j_1}_t}{\lambda^j(\lambda - \mu )^{j_1}}\sum \limits_{i_2 + j_2 = j + j_1}\frac{\nabla^{j_2}_{j}}{(\eta'')^{j_2}} \frac{1}{B^{i_1}G^{i_2}U^{j_2}},\\\notag
    \frac{\nabla^j_r}{(\lambda F)^j}& \frac{\nabla^{j_1}_i}{[(\mu - \lambda )H]^{j_1}}\frac{1}{C^{i_1}} \\
    &=\frac{(-1)^j\nabla^j_r\nabla^{j_1}_i}{(-\lambda )^j(\mu - \lambda)^{j_1}} \sum \limits_{i_2 + j_2 = j + j_1} \Bigg(\frac{\nabla^{j_2}_{j_1}}{(F-H)^{j_2}F^{i_2}} + \frac{\nabla^{j_2}_{j}}{(H-F)^{j_2}H^{i_2}}\Bigg) \frac{1}{C^{i_1}}\\\notag
    &=\frac{(-1)^j\nabla^j_r\nabla^{j_1}_i}{(-\lambda )^j(\mu - \lambda)^{j_1}} \sum \limits_{i_2 + j_2 = j + j_1} \Bigg(\frac{\nabla^{j_2}_{j_1}}{(-\eta'' U)^{j_2}F^{i_2}} + \frac{\nabla^{j_2}_{j}}{(\eta'' U)^{j_2}G^{i_2}}\Bigg) \frac{1}{C^{i_1}}\\\notag
    &=\frac{(-1)^j\nabla^j_r\nabla^{j_1}_i}{(-\lambda )^j(\mu - \lambda)^{j_1}} \sum \limits_{i_2 + j_2 = j + j_1} \frac{\nabla^{j_2}_{j_1}}{(-\eta'' )^{j_2}} \frac{1}{C^{i_1}F^{i_2}U^{j_2}}\\
    &+
    \frac{(-1)^j\nabla^j_r\nabla^{j_1}_i}{(-\lambda )^j(\mu - \lambda)^{j_1}} \sum \limits_{i_2 + j_2 = j + j_1} \frac{\nabla^{j_2}_{j}}{(\eta'' )^{j_2}}\frac{1}{C^{i_1}G^{i_2}U^{j_2}}.\notag
\end{align*}
Next we expand $P_{\lambda, \mu , \eta}(A,F,U)$ from the right to the left. From \eqref{eq: PFD R-L}, we have;
\begin{align} \label{eq: R-L N4}
   \frac{1}{A^r} \cdot \frac{1}{B^{s}C^t} 
    &=  \sum \limits_{i+j = s + t}\sum \limits_{i_1+j_1 = i + r} \frac{\nabla^j_t}{[(\lambda - \mu )H]^j}\frac{\nabla^{j_1}_i}{(-\lambda F)^{j_1}}\frac{1}{A^{i_1}}\\\notag
    &+ \sum \limits_{i+j = s + t}\sum \limits_{i_1+j_1 = i + r} \frac{\nabla^j_t}{[(\lambda - \mu )H]^j}\frac{\nabla^{j_1}_r}{(\lambda F)^{j_1}}\frac{1}{B^{i_1}}\\\notag
    &+\sum \limits_{i+j = s + t}\sum \limits_{i_1+j_1 = i + r}\frac{\nabla^j_s}{[(\mu - \lambda )H]^j}\frac{\nabla^{j_1}_i}{(-\mu G)^{j_1}}\frac{1}{A^{i_1}}\\\notag
     &+\sum \limits_{i+j = s + t}\sum \limits_{i_1+j_1 = i + r}\frac{\nabla^j_s}{[(\mu - \lambda )H]^j}\frac{\nabla^{j_1}_r}{(\mu G)^{j_1}}\frac{1}{C^{i_1}}.\notag
\end{align}
Here
\begin{align*}
    \frac{\nabla^j_t}{[(\lambda - \mu )H]^j}&\frac{\nabla^{j_1}_i}{(-\lambda F)^{j_1}}\frac{1}{A^{i_1}} \\
    &=\frac{(-1)^j\nabla^j_t\nabla^{j_1}_i}{(\mu - \lambda)^j(-\lambda )^{j_1}}\sum \limits_{i_2+j_2 = j + j_1} \Bigg(\frac{\nabla^{j_2}_{j_1}}{(H-F)^{j_2}H^{i_2}} + \frac{\nabla^{j_2}_{j}}{(F-H)^{j_2}F^{i_2}}\Bigg) \frac{1}{A^{i_1}}\\\notag
    &=\frac{(-1)^j\nabla^j_t\nabla^{j_1}_i}{(\mu - \lambda)^j(-\lambda )^{j_1}}\sum \limits_{i_2+j_2 = j + j_1} \Bigg(\frac{\nabla^{j_2}_{j_1}}{(\eta''U)^{j_2}H^{i_2}} + \frac{\nabla^{j_2}_{j}}{(-\eta''U)^{j_2}F^{i_2}}\Bigg) \frac{1}{A^{i_1}}\\\notag
    &=\frac{(-1)^j\nabla^j_t\nabla^{j_1}_i}{(\mu - \lambda)^j(-\lambda )^{j_1}}\sum \limits_{i_2+j_2 = j + j_1} \frac{\nabla^{j_2}_{j_1}}{(\eta'')^{j_2}} \frac{1}{A^{i_1}H^{i_2}U^{j_2}}\\
    &+
    \frac{(-1)^j\nabla^j_t\nabla^{j_1}_i}{(\mu - \lambda)^j(-\lambda )^{j_1}}\sum \limits_{i_2+j_2 = j + j_1}\frac{\nabla^{j_2}_{j}}{(-\eta'')^{j_2}}\frac{1}{A^{i_1}F^{i_2}U^{j_2}},\\\notag
     \frac{\nabla^j_t}{[(\lambda - \mu )H]^j}&\frac{\nabla^{j_1}_r}{(\lambda F)^{j_1}}\frac{1}{B^{i_1}} \\
     &=\frac{\nabla^j_t\nabla^{j_1}_r}{(\lambda - \mu )^j\lambda^{j_1}}\sum \limits_{i_2+j_2 = j + j_1} \Bigg(\frac{\nabla^{j_2}_{j_1}}{(H-F)^{j_2}H^{i_2}} + \frac{\nabla^{j_2}_{j}}{(F-H)^{j_2}F^{i_2}}\Bigg)\frac{1}{B^{i_1}}\\\notag
     &=\frac{\nabla^j_t\nabla^{j_1}_r}{(\lambda - \mu )^j\lambda^{j_1}}\sum \limits_{i_2+j_2 = j + j_1} \Bigg(\frac{\nabla^{j_2}_{j_1}}{(\eta''U)^{j_2}H^{i_2}} + \frac{\nabla^{j_2}_{j}}{(-\eta''U)^{j_2}F^{i_2}}\Bigg)\frac{1}{B^{i_1}}\\\notag
     &=\frac{\nabla^j_t\nabla^{j_1}_r}{(\lambda - \mu )^j\lambda^{j_1}}\sum \limits_{i_2+j_2 = j + j_1} \frac{\nabla^{j_2}_{j_1}}{(\eta'')^{j_2}} \frac{1}{B^{i_1}H^{i_2}U^{j_2}}\\
     &+
     \frac{\nabla^j_t\nabla^{j_1}_r}{(\lambda - \mu )^j\lambda^{j_1}}\sum \limits_{i_2+j_2 = j + j_1}\frac{\nabla^{j_2}_{j}}{(-\eta'')^{j_2}}\frac{1}{B^{i_1}F^{i_2}U^{j_2}},\\\notag
   \frac{\nabla^j_s}{[(\mu - \lambda )H]^j}&\frac{\nabla^{j_1}_i}{(-\mu G)^{j_1}}\frac{1}{A^{i_1}} \\
   &=\frac{(-1)^j\nabla^j_s\nabla^{j_1}_i}{( \lambda - \mu)^j(-\mu )^{j_1}}\sum \limits_{i_2+j_2 = j + j_1} \Bigg(\frac{\nabla^{j_2}_{j_1}}{(H-G)^{j_2}H^{i_2}} + \frac{\nabla^{j_2}_{j}}{(G-H)^{j_2}G^{i_2}}\Bigg)\frac{1}{A^{i_1}}\\\notag
   &=\frac{(-1)^j\nabla^j_s\nabla^{j_1}_i}{( \lambda - \mu)^j(-\mu )^{j_1}}\sum \limits_{i_2+j_2 = j + j_1} \Bigg(\frac{\nabla^{j_2}_{j_1}}{[(\eta''-\eta')U]^{j_2}H^{i_2}} + \frac{\nabla^{j_2}_{j}}{[(\eta'-\eta'')U]^{j_2}G^{i_2}}\Bigg)\frac{1}{A^{i_1}}\\\notag
   &=\frac{(-1)^j\nabla^j_s\nabla^{j_1}_i}{( \lambda - \mu)^j(-\mu )^{j_1}}\sum \limits_{i_2+j_2 = j + j_1} \frac{\nabla^{j_2}_{j_1}}{(\eta''-\eta')^{j_2}}\frac{1}{A^{i_1}H^{i_2}U^{j_2}} \\
   &+ 
   \frac{(-1)^j\nabla^j_s\nabla^{j_1}_i}{( \lambda - \mu)^j(-\mu )^{j_1}}\sum \limits_{i_2+j_2 = j + j_1}\frac{\nabla^{j_2}_{j}}{(\eta'-\eta'')^{j_2}}\frac{1}{A^{i_1}G^{i_2}U^{j_2}},\\\notag
     \frac{\nabla^j_s}{[(\mu - \lambda )H]^j}&\frac{\nabla^{j_1}_r}{(\mu G)^{j_1}}\frac{1}{C^{i_1}} \\
     &=\frac{\nabla^j_s\nabla^{j_1}_r}{(\mu - \lambda )^j\mu ^{j_1}}\sum \limits_{i_2+j_2 = j + j_1} \Bigg(\frac{\nabla^{j_2}_{j_1}}{(H-G)^{j_2}H^{i_2}} + \frac{\nabla^{j_2}_{j}}{(G-H)^{j_2}G^{i_2}}\Bigg)\frac{1}{C^{i_1}}\\\notag
     &=\frac{\nabla^j_s\nabla^{j_1}_r}{(\mu - \lambda )^j\mu ^{j_1}}\sum \limits_{i_2+j_2 = j + j_1} \Bigg(\frac{\nabla^{j_2}_{j_1}}{[(\eta''-\eta')U]^{j_2}H^{i_2}} + \frac{\nabla^{j_2}_{j}}{[(\eta'-\eta'')U]^{j_2}G^{i_2}}\Bigg)\frac{1}{C^{i_1}}\\\notag
      &=\frac{\nabla^j_s\nabla^{j_1}_r}{(\mu - \lambda )^j\mu ^{j_1}}\sum \limits_{i_2+j_2 = j + j_1} \frac{\nabla^{j_2}_{j_1}}{(\eta''-\eta')^{j_2}} \frac{1}{C^{i_1}H^{i_2}U^{j_2}}\\
      &+
      \frac{\nabla^j_s\nabla^{j_1}_r}{(\mu - \lambda )^j\mu ^{j_1}}\sum \limits_{i_2+j_2 = j + j_1}\frac{\nabla^{j_2}_{j}}{(\eta'-\eta'')^{j_2}}\frac{1}{C^{i_1}G^{i_2}U^{j_2}}.\notag
\end{align*}
For each $(a,b,c)\in N_4$, we have $b = a + \lambda f, c = a + \mu f + \eta u$ with $\lambda, \mu, \eta \in \Fq^\times$ such that $\lambda \ne \mu$  and $f,u \in A_+$ such that $d > \deg f > \deg u$. Set $g = f +  \eta'u$ where $\eta' = \frac{\eta}{\mu}$ and $ h = f + \eta'' u$ where $\eta'' = \frac{\eta}{\mu - \lambda}$. Replacing $A = a, F = f,U = u$, one deduces from \eqref{eq: L-R N4} that 
\vspace{-0.2cm}
\begin{align*} 
   \sum \limits_{(a,b,c) \in N_4} & \frac{1}{a^rb^s} \cdot \frac{1}{c^t} \\
   &=  \sum\limits_{\substack{a,f,u \in A_+ \\ d = \deg a > \deg f> \deg u}} \sum \limits_{\substack{\lambda, \mu, \eta \in \Fq^\times \\ \lambda \ne \mu}} \frac{1}{a^{r}(a+\lambda  f)^s} \cdot \frac{1}{(a+\mu f + \eta u)^t}\\
   &= \sum \limits_{i+j = r+s} \sum \limits_{i_1+j_1 = i + t}\delta_{j,j_1}\nabla^j_s\nabla^{j_1}_t \sum \limits_{i_2 + j_2 = j + j_1} \delta_{j_2}\nabla^{j_2}_{j_1}\sum\limits_{\substack{a,f,u \in A_+ \\ d = \deg a > \deg f> \deg u}}\frac{1}{a^{i_1}f^{i_2}u^{j_2}}\\ 
    &+ \sum \limits_{i+j = r+s} \sum \limits_{i_1+j_1 = i + t}\delta_{j,j_1}\nabla^j_s\nabla^{j_1}_t\sum \limits_{i_2 + j_2 = j + j_1}\delta_{j_2}\nabla^{j_2}_{j}\sum\limits_{\substack{a,g,u \in A_+ \\ d = \deg a > \deg g> \deg u}}\frac{1}{a^{i_1}g^{i_2}u^{j_2}}\\
    &+ \sum \limits_{i+j = r+s} \sum \limits_{i_1+j_1 = i + t}\delta_{j,j_1}(-1)^j\nabla^j_s\nabla^{j_1}_i\sum \limits_{i_2 + j_2 = j + j_1} \delta_{j_2}\nabla^{j_2}_{j_1} \sum\limits_{\substack{c,f,u \in A_+ \\ d = \deg c > \deg f> \deg u}}\frac{1}{c^{i_1}f^{i_2}u^{j_2}}\\
    &+ \sum \limits_{i+j = r+s} \sum \limits_{i_1+j_1 = i + t}\delta_{j,j_1}(-1)^j\nabla^j_s\nabla^{j_1}_i \sum \limits_{i_2 + j_2 = j + j_1} \delta_{j_2}\nabla^{j_2}_{j}\sum\limits_{\substack{c,g,u \in A_+ \\ d = \deg c > \deg g> \deg u}}\frac{1}{c^{i_1}g^{i_2}u^{j_2}}\\
    &+ \sum \limits_{i+j = r+s} \sum \limits_{i_1+j_1 = i + t}\delta_{j,j_1}\nabla^j_r\nabla^{j_1}_t\sum \limits_{i_2 + j_2 = j + j_1} \delta_{j_2}\nabla^{j_2}_{j_1} \sum\limits_{\substack{b,f,u \in A_+ \\ d = \deg b > \deg f> \deg u}}\frac{1}{b^{i_1}f^{i_2}u^{j_2}}\\ 
    &+ \sum \limits_{i+j = r+s} \sum \limits_{i_1+j_1 = i + t}\delta_{j,j_1}\nabla^j_r\nabla^{j_1}_t\sum \limits_{i_2 + j_2 = j + j_1}\delta_{j_2}\nabla^{j_2}_{j_1}\nabla^{j_2}_{j} \sum\limits_{\substack{b,g,u \in A_+ \\ d = \deg b > \deg g> \deg u}}\frac{1}{b^{i_1}g^{i_2}u^{j_2}}\\
    &+ \sum \limits_{i+j = r+s} \sum \limits_{i_1+j_1 = i + t}\delta_{j,j_1}(-1)^j\nabla^j_r\nabla^{j_1}_i \sum \limits_{i_2 + j_2 = j + j_1} \delta_{j_2}\nabla^{j_2}_{j_1} \sum\limits_{\substack{c,f,u \in A_+ \\ d = \deg c > \deg f> \deg u}}\frac{1}{c^{i_1}f^{i_2}u^{j_2}}\\
    &+ \sum \limits_{i+j = r+s} \sum \limits_{i_1+j_1 = i + t}\delta_{j,j_1}(-1)^j\nabla^j_r\nabla^{j_1}_i \sum \limits_{i_2 + j_2 = j + j_1} \delta_{j_2}\nabla^{j_2}_{j}\sum\limits_{\substack{c,g,u \in A_+ \\ d = \deg c > \deg g> \deg u}}\frac{1}{c^{i_1}g^{i_2}u^{j_2}}\\
    &= \sum \limits_{i+j = r+s} \sum \limits_{i_1+j_1 = i + t}\delta_{j,j_1}\nabla^j_s\nabla^{j_1}_t \sum \limits_{i_2 + j_2 = j + j_1} \delta_{j_2}\nabla^{j_2}_{j_1}S_d(i_1,i_2,j_2)\\ 
    &+ \sum \limits_{i+j = r+s} \sum \limits_{i_1+j_1 = i + t}\delta_{j,j_1}\nabla^j_s\nabla^{j_1}_t\sum \limits_{i_2 + j_2 = j + j_1}\delta_{j_2}\nabla^{j_2}_{j}S_d(i_1,i_2,j_2)\\
    &+ \sum \limits_{i+j = r+s} \sum \limits_{i_1+j_1 = i + t}\delta_{j,j_1}(-1)^j\nabla^j_s\nabla^{j_1}_i\sum \limits_{i_2 + j_2 = j + j_1} \delta_{j_2}\nabla^{j_2}_{j_1} S_d(i_1,i_2,j_2)\\
    &+ \sum \limits_{i+j = r+s} \sum \limits_{i_1+j_1 = i + t}\delta_{j,j_1}(-1)^j\nabla^j_s\nabla^{j_1}_i \sum \limits_{i_2 + j_2 = j + j_1} \delta_{j_2}\nabla^{j_2}_{j}S_d(i_1,i_2,j_2)\\
    &+ \sum \limits_{i+j = r+s} \sum \limits_{i_1+j_1 = i + t}\delta_{j,j_1}\nabla^j_r\nabla^{j_1}_t\sum \limits_{i_2 + j_2 = j + j_1} \delta_{j_2}\nabla^{j_2}_{j_1} S_d(i_1,i_2,j_2)\\ 
    &+ \sum \limits_{i+j = r+s} \sum \limits_{i_1+j_1 = i + t}\delta_{j,j_1}\nabla^j_r\nabla^{j_1}_t\sum \limits_{i_2 + j_2 = j + j_1}\delta_{j_2}\nabla^{j_2}_{j_1}\nabla^{j_2}_{j} S_d(i_1,i_2,j_2)\\
    &+ \sum \limits_{i+j = r+s} \sum \limits_{i_1+j_1 = i + t}\delta_{j,j_1}(-1)^j\nabla^j_r\nabla^{j_1}_i \sum \limits_{i_2 + j_2 = j + j_1} \delta_{j_2}\nabla^{j_2}_{j_1} S_d(i_1,i_2,j_2)\\
    &+ \sum \limits_{i+j = r+s} \sum \limits_{i_1+j_1 = i + t}\delta_{j,j_1}(-1)^j\nabla^j_r\nabla^{j_1}_i \sum \limits_{i_2 + j_2 = j + j_1} \delta_{j_2}\nabla^{j_2}_{j}S_d(i_1,i_2,j_2)\\
     &= \sum \limits_{i+j = r+s} \sum \limits_{i_1+j_1 = i + t}\delta_{j,j_1}\nabla^j_s\nabla^{j_1}_t \sum \limits_{i_2 + j_2 = j + j_1} \Delta^{j_2}_{j,j_1}S_d(i_1,i_2,j_2)\\ 
    &+ \sum \limits_{i+j = r+s} \sum \limits_{i_1+j_1 = i + t}\delta_{j,j_1}(-1)^j\nabla^j_s\nabla^{j_1}_i\sum \limits_{i_2 + j_2 = j + j_1} \Delta^{j_2}_{j,j_1} S_d(i_1,i_2,j_2)\\
    &+ \sum \limits_{i+j = r+s} \sum \limits_{i_1+j_1 = i + t}\delta_{j,j_1}\nabla^j_r\nabla^{j_1}_t\sum \limits_{i_2 + j_2 = j + j_1} \Delta^{j_2}_{j,j_1} S_d(i_1,i_2,j_2)\\ 
    &+ \sum \limits_{i+j = r+s} \sum \limits_{i_1+j_1 = i + t}\delta_{j,j_1}(-1)^j\nabla^j_r\nabla^{j_1}_i \sum \limits_{i_2 + j_2 = j + j_1} \Delta^{j_2}_{j,j_1} S_d(i_1,i_2,j_2).
\end{align*}
Similarly, one deduces from \eqref{eq: R-L N4} that
\vspace{-0.2cm}
\begin{align*}
    \sum \limits_{(a,b,c) \in N_4}& \frac{1}{a^r} \cdot \frac{1}{b^sc^t} \\
    &=  \sum\limits_{\substack{a,f,u \in A_+ \\ d = \deg a > \deg f> \deg u}} \sum \limits_{\substack{\lambda, \mu, \eta \in \Fq^\times \\ \lambda \ne \mu}} \frac{1}{a^r} \cdot \frac{1}{(a+\lambda f)^s(a+\mu f + \eta u)^t}\\
    &=  \sum \limits_{i+j = s + t}\sum \limits_{i_1+j_1 = i + r} \delta_{j,j_1}(-1)^j\nabla^j_t\nabla^{j_1}_i\sum \limits_{i_2+j_2 = j + j_1} \delta_{j_2}\nabla^{j_2}_{j_1} \sum\limits_{\substack{a,h,u \in A_+ \\ d = \deg a > \deg h> \deg u}}\frac{1}{a^{i_1}h^{i_2}u^{j_2}}\\\notag
    &+ \sum \limits_{i+j = s + t}\sum \limits_{i_1+j_1 = i + r} \delta_{j,j_1}(-1)^j\nabla^j_t\nabla^{j_1}_i\sum \limits_{i_2+j_2 = j + j_1}\delta_{j_2}\nabla^{j_2}_{j}\sum\limits_{\substack{a,f,u \in A_+ \\ d = \deg a > \deg f> \deg u}}\frac{1}{a^{i_1}f^{i_2}u^{j_2}}\\\notag
    &+\sum \limits_{i+j = s + t}\sum \limits_{i_1+j_1 = i + r}\delta_{j,j_1}\nabla^j_t\nabla^{j_1}_r\sum \limits_{i_2+j_2 = j + j_1} \delta_{j_2}\nabla^{j_2}_{j_1} \sum\limits_{\substack{b,h,u \in A_+ \\ d = \deg b > \deg h> \deg u}}\frac{1}{b^{i_1}h^{i_2}u^{j_2}}\\\notag
    &+\sum \limits_{i+j = s + t}\sum \limits_{i_1+j_1 = i + r}\delta_{j,j_1}\nabla^j_t\nabla^{j_1}_r\sum \limits_{i_2+j_2 = j + j_1}\delta_{j_2}\nabla^{j_2}_{j}\sum\limits_{\substack{b,f,u \in A_+ \\ d = \deg b > \deg f> \deg u}}\frac{1}{b^{i_1}f^{i_2}u^{j_2}}\\\notag
    &+  \sum \limits_{i+j = s + t}\sum \limits_{i_1+j_1 = i + r} \delta_{j,j_1}(-1)^j\nabla^j_s\nabla^{j_1}_i\sum \limits_{i_2+j_2 = j + j_1} \delta_{j_2}\nabla^{j_2}_{j_1}\sum\limits_{\substack{a,h,u \in A_+ \\ d = \deg a > \deg h> \deg u}}\frac{1}{a^{i_1}h^{i_2}u^{j_2}}\\\notag
    &+ \sum \limits_{i+j = s + t}\sum \limits_{i_1+j_1 = i + r} \delta_{j,j_1}(-1)^j\nabla^j_s\nabla^{j_1}_i\sum \limits_{i_2+j_2 = j + j_1}\delta_{j_2}\nabla^{j_2}_{j}\sum\limits_{\substack{a,g,u \in A_+ \\ d = \deg a > \deg g> \deg u}}\frac{1}{a^{i_1}g^{i_2}u^{j_2}}\\\notag
    &+\sum \limits_{i+j = s + t}\sum \limits_{i_1+j_1 = i + r}\delta_{j,j_1}\nabla^j_s\nabla^{j_1}_r\sum \limits_{i_2+j_2 = j + j_1} \delta_{j_2}\nabla^{j_2}_{j_1} \sum\limits_{\substack{c,h,u \in A_+ \\ d = \deg c > \deg h> \deg u}}\frac{1}{c^{i_1}h^{i_2}u^{j_2}}\\\notag
     &+\sum \limits_{i+j = s + t}\sum \limits_{i_1+j_1 = i + r}\delta_{j,j_1}\nabla^j_s\nabla^{j_1}_r\sum \limits_{i_2+j_2 = j + j_1}\delta_{j_2}\nabla^{j_2}_{j}\sum\limits_{\substack{c,g,u \in A_+ \\ d = \deg c > \deg g> \deg u}}\frac{1}{c^{i_1}g^{i_2}u^{j_2}}\\\notag
     &=  \sum \limits_{i+j = s + t}\sum \limits_{i_1+j_1 = i + r} \delta_{j,j_1}(-1)^j\nabla^j_t\nabla^{j_1}_i\sum \limits_{i_2+j_2 = j + j_1} \delta_{j_2}\nabla^{j_2}_{j_1} S_d(i_1,i_2,j_2)\\\notag
    &+ \sum \limits_{i+j = s + t}\sum \limits_{i_1+j_1 = i + r} \delta_{j,j_1}(-1)^j\nabla^j_t\nabla^{j_1}_i\sum \limits_{i_2+j_2 = j + j_1}\delta_{j_2}\nabla^{j_2}_{j}S_d(i_1,i_2,j_2)\\\notag
    &+\sum \limits_{i+j = s + t}\sum \limits_{i_1+j_1 = i + r}\delta_{j,j_1}\nabla^j_t\nabla^{j_1}_r\sum \limits_{i_2+j_2 = j + j_1} \delta_{j_2}\nabla^{j_2}_{j_1} S_d(i_1,i_2,j_2)\\\notag
    &+\sum \limits_{i+j = s + t}\sum \limits_{i_1+j_1 = i + r}\delta_{j,j_1}\nabla^j_t\nabla^{j_1}_r\sum \limits_{i_2+j_2 = j + j_1}\delta_{j_2}\nabla^{j_2}_{j}S_d(i_1,i_2,j_2)\\\notag
    &+  \sum \limits_{i+j = s + t}\sum \limits_{i_1+j_1 = i + r} \delta_{j,j_1}(-1)^j\nabla^j_s\nabla^{j_1}_i\sum \limits_{i_2+j_2 = j + j_1} \delta_{j_2}\nabla^{j_2}_{j_1}S_d(i_1,i_2,j_2)\\\notag
    &+ \sum \limits_{i+j = s + t}\sum \limits_{i_1+j_1 = i + r} \delta_{j,j_1}(-1)^j\nabla^j_s\nabla^{j_1}_i\sum \limits_{i_2+j_2 = j + j_1}\delta_{j_2}\nabla^{j_2}_{j}S_d(i_1,i_2,j_2)\\\notag
    &+\sum \limits_{i+j = s + t}\sum \limits_{i_1+j_1 = i + r}\delta_{j,j_1}\nabla^j_s\nabla^{j_1}_r\sum \limits_{i_2+j_2 = j + j_1} \delta_{j_2}\nabla^{j_2}_{j_1} S_d(i_1,i_2,j_2)\\\notag
     &+\sum \limits_{i+j = s + t}\sum \limits_{i_1+j_1 = i + r}\delta_{j,j_1}\nabla^j_s\nabla^{j_1}_r\sum \limits_{i_2+j_2 = j + j_1}\delta_{j_2}\nabla^{j_2}_{j}S_d(i_1,i_2,j_2)\\\notag
     &=  \sum \limits_{i+j = s + t}\sum \limits_{i_1+j_1 = i + r} \delta_{j,j_1}(-1)^j\nabla^j_t\nabla^{j_1}_i\sum \limits_{i_2+j_2 = j + j_1} \Delta^{j_2}_{j,j_1} S_d(i_1,i_2,j_2)\\\notag
    &+\sum \limits_{i+j = s + t}\sum \limits_{i_1+j_1 = i + r}\delta_{j,j_1}\nabla^j_t\nabla^{j_1}_r\sum \limits_{i_2+j_2 = j + j_1} \Delta^{j_2}_{j,j_1} S_d(i_1,i_2,j_2)\\\notag
    &+  \sum \limits_{i+j = s + t}\sum \limits_{i_1+j_1 = i + r} \delta_{j,j_1}(-1)^j\nabla^j_s\nabla^{j_1}_i\sum \limits_{i_2+j_2 = j + j_1} \Delta^{j_2}_{j,j_1}S_d(i_1,i_2,j_2)\\\notag
    &+\sum \limits_{i+j = s + t}\sum \limits_{i_1+j_1 = i + r}\delta_{j,j_1}\nabla^j_s\nabla^{j_1}_r\sum \limits_{i_2+j_2 = j + j_1} \Delta^{j_2}_{j,j_1} S_d(i_1,i_2,j_2).\notag
\end{align*}
Since the partial fraction decomposition of $P_{\lambda, \mu , \eta}(A,F,U)$ obtained from the process is unique, it follows that the above expansions of 
\begin{equation*}
    \sum \limits_{(a,b,c)\in N_4} \frac{1}{a^rb^s} \cdot \frac{1}{c^t} \quad \text{and} \quad \sum\limits_{(a,b,c)\in N_4} \frac{1}{a^r} \cdot \frac{1}{b^s c^ t}
\end{equation*}
yield the same expression in terms of power sums.\\
\par Using Formulas \eqref{eq: formula 1}, \eqref{eq: formula 2} and \eqref{eq: formula 3}, one verifies easily that 
\begin{align*}
    &\sum \limits_{(a,b,c)\in N_1} \frac{1}{a^rb^s} \cdot \frac{1}{c^t} + \sum \limits_{(a,b,c)\in N_2} \frac{1}{a^rb^s} \cdot \frac{1}{c^t} + \sum \limits_{(a,b,c)\in N_3} \frac{1}{a^rb^s} \cdot \frac{1}{c^t}+ \sum \limits_{(a,b,c)\in N_4} \frac{1}{a^rb^s} \cdot \frac{1}{c^t}\\
    &= \sum_{i+j=r+s} \Delta^j_{r,s} \sum_{i_1+j_1=i+t} \Delta^{j_1}_{i,t} \Bigg(S_d(i_1,j_1,j)+S_d(i_1,j,j_1) + \sum \limits_{i_2 + j_2 =  j+j_1 } \Delta_{j,j_1}^{j_2} S_d(i_1,i_2,j_2)\Bigg),
\end{align*}
which is the sum of the terms of depth $3$ in the expression of $(S_d(r)S_d(s))S_d(t)$. Similarly, one verifies easily that 
\begin{align*}
    &\sum \limits_{(a,b,c)\in N_1} \frac{1}{a^r} \cdot \frac{1}{b^s c^ t} + \sum \limits_{(a,b,c)\in N_2} \frac{1}{a^r} \cdot \frac{1}{b^s c^ t} + \sum \limits_{(a,b,c)\in N_3} \frac{1}{a^r} \cdot \frac{1}{b^s c^ t}+ \sum \limits_{(a,b,c)\in N_4} \frac{1}{a^r} \cdot \frac{1}{b^s c^ t}\\
    &=  \sum_{i+j=s+t} \Delta^j_{s,t} \sum_{i_1+j_1=i+r} \Delta^{j_1}_{i,r} \Bigg(S_d(i_1,j_1,j)+S_d(i_1,j,j_1) + \sum \limits_{i_2 + j_2 =  j+j_1 } \Delta_{j,j_1}^{j_2} S_d(i_1,i_2,j_2)\Bigg),
\end{align*}
which is the sum of  the terms of depth $3$ in the expression of $S_d(r)(S_d(s)S_d(t))$. This completes the proof.

\subsection{Expansions for $S_{<d}$ of depth one} \label{sec: 4} ${}$\par

For two positive integers $r,s$ and for all $d \in \mathbb{N}$, we first recall the following formula:
\begin{equation} \label{eq: formula}
    S_{<d}(r)S_{<d}(s) = \sum \limits_{i<d} S_i(r)S_i(s) + S_{<d}(r,s) + S_{<d}(s,r).
\end{equation}
 Let $r,s,t$ be positive integers. In this section, we give the expansions of $(S_{<d}(r)S_{<d}(s))S_{<d}(t)$ and $S_{<d}(r)(S_{<d}(s)S_{<d}(t))$ by using Formula \eqref{eq: formula}.
\subsubsection{} We expand $(S_{<d}(r)S_{<d}(s))S_{<d}(t)$ as follows. We have;
\begin{align*}
   &(S_{<d}(r)S_{<d}(s))S_{<d}(t)\\
   &= \Bigg(\sum \limits_{i<d} S_i(r)S_i(s) + S_{<d}(r,s) + S_{<d}(s,r)\Bigg)S_{<d}(t)\\
   &=\sum \limits_{i<d} S_i(r)S_i(s)S_{<d}(t) + S_{<d}(r,s)S_{<d}(t) + S_{<d}(s,r)S_{<d}(t)\\
   &=\sum \limits_{i<d} S_i(r)S_i(s) \sum \limits_{i<d} S_i(t) + \sum \limits_{i<d} S_i(r,s)\sum \limits_{i<d} S_i(t)+ \sum \limits_{i<d} S_i(s,r)\sum \limits_{i<d} S_i(t)\\
   &=\Bigg(\sum\limits_{i<d} (S_i(r)S_i(s))S_i(t) + \sum\limits_{i<d} (S_i(r)S_i(s))S_{<i}(t) + \sum\limits_{i<d} S_i(t) \sum \limits_{j<i} S_j(r)S_j(s)\Bigg) \\
   &+ \Bigg(\sum\limits_{i<d}S_i(r,s)S_i(t) + \sum\limits_{i<d}S_i(r,s)S_{<i}(t) + \sum\limits_{i<d} S_i(t)S_{<i}(r,s)\Bigg)\\
   &+ \Bigg(\sum\limits_{i<d}S_i(s,r)S_i(t) + \sum\limits_{i<d}S_i(s,r)S_{<i}(t) + \sum\limits_{i<d} S_i(t)S_{<i}(s,r)\Bigg)\\
   &= \sum\limits_{i<d} (S_i(r)S_i(s))S_i(t) + \sum\limits_{i<d} (S_i(r)S_i(s))S_{<i}(t) + \sum\limits_{i<d}S_i(r,s)S_i(t) + \sum\limits_{i<d}S_i(r,s)S_{<i}(t)\\
   &+ \sum\limits_{i<d}S_i(s,r)S_i(t) + \sum\limits_{i<d}S_i(s,r)S_{<i}(t) + \sum\limits_{i<d} S_i(t)\Bigg(\sum \limits_{j<i} S_j(r)S_j(s) + S_{<i}(r,s) + S_{<i}(s,r)\Bigg).
\end{align*}

\subsubsection{} We expand $S_{<d}(r)(S_{<d}(s)S_{<d}(t))$ as follows. We have;
\begin{align*}
    &S_{<d}(r)(S_{<d}(s)S_{<d}(t))\\
    &= S_{<d}(r) \Bigg(\sum \limits_{i<d} S_i(s)S_i(t) + S_{<d}(s,t) + S_{<d}(t,s)\Bigg)\\
    &= S_{<d}(r) \sum \limits_{i<d} S_i(s)S_i(t) + S_{<d}(r)S_{<d}(s,t) + S_{<d}(r)S_{<d}(t,s)\\
    &= \sum\limits_{i<d}S_{i}(r)\sum \limits_{i<d} S_i(s)S_i(t)  + \sum\limits_{i<d}S_{i}(r)\sum \limits_{i<d} S_i(s,t) + \sum\limits_{i<d}S_{i}(r)\sum \limits_{i<d} S_i(t,s)\\
    &= \Bigg(\sum\limits_{i<d}S_{i}(r)(S_i(s)S_i(t)) + \sum\limits_{i<d}S_{i}(r)\sum\limits_{j<i}S_j(s)S_j(t) + \sum\limits_{i<d}(S_i(s)S_i(t))S_{<i}(r)\Bigg)\\
    &+ \Bigg(\sum\limits_{i<d}S_{i}(r)S_i(s,t) + \sum\limits_{i<d}S_{i}(r)S_{<i}(s,t) + \sum\limits_{i<d}S_i(s,t)S_{<i}(r)\Bigg)\\
    &+ \Bigg(\sum\limits_{i<d}S_{i}(r)S_i(t,s) + \sum\limits_{i<d}S_{i}(r)S_{<i}(t,s) + \sum\limits_{i<d}S_i(t,s)S_{<i}(r)\Bigg)\\
    &= \sum\limits_{i<d}S_{i}(r)(S_i(s)S_i(t)) + \sum\limits_{i<d}(S_i(s)S_i(t))S_{<i}(r) + \sum\limits_{i<d}S_{i}(r)S_i(s,t) + \sum\limits_{i<d}S_i(s,t)S_{<i}(r)\\
    &+ \sum\limits_{i<d}S_{i}(r)S_i(t,s) + \sum\limits_{i<d}S_i(t,s)S_{<i}(r) + \sum\limits_{i<d}S_{i}(r) \Bigg(\sum\limits_{j<i}S_j(s)S_j(t) + S_{<i}(s,t) + S_{<i}(t,s)\Bigg).
\end{align*}

\subsection{Associativity for $S_{<d}$ of depth one} \label{sec: 5}
\begin{theorem} \label{thm: assoc S_<d depth one}
Let $r,s , t$ be positive integers. For all $d \in \mathbb{N}$, the expansions using Chen's formula and Formula \eqref{eq: formula} of $(S_{<d}(r)S_{<d}(s))S_{<d}(t)$ and $S_{<d}(r)(S_{<d}(s)S_{<d}(t))$ yield the same expression in terms of $S_{<d}$.
\end{theorem}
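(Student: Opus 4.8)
The plan is to obtain the identity from Theorem~\ref{thm: assoc S_d depth 1} by comparing, term by term, the two expansions already displayed in \S\ref{sec: 4}. Each of the expansions of $(S_{<d}(r)S_{<d}(s))S_{<d}(t)$ and of $S_{<d}(r)(S_{<d}(s)S_{<d}(t))$ written there is a sum of seven summations over $i<d$; I would label the summands of the first by $(a),\dots,(g)$ and those of the second by $(a'),\dots,(g')$, read off in the order printed, and then prove the theorem by establishing the seven equalities
\[
(a)=(a'),\quad (b)=(c'),\quad (c)=(e'),\quad (d)=(g'),\quad (e)=(b'),\quad (f)=(d'),\quad (g)=(f')
\]
and summing.

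The equality $(a)=(a')$ is immediate: $(a)=\sum_{i<d}(S_i(r)S_i(s))S_i(t)$ and $(a')=\sum_{i<d}S_i(r)(S_i(s)S_i(t))$ agree summand by summand by Theorem~\ref{thm: assoc S_d depth 1} (the case $i=0$ being trivial since $S_0(\cdot)=1$). For the six remaining equalities I would use only the head recursion $S_i(n,w)=S_i(n)\,S_{<i}(w)$, the commutativity of multiplication, Chen's formula (Proposition~\ref{prop: chen1}), and Formula~\eqref{eq: formula} applied with $i$ in place of $d$. The point in each case is that, after peeling heads, each of the two brackets being compared reduces to a product in which the power sums carrying the common maximal degree $i$ are the \emph{same} two depth-one power sums, so that the only genuinely ambiguous step is Chen's formula applied to those, with the remaining lower-degree factor merely multiplied on; or else each bracket reduces to a single depth-one power sum at degree $i$ times a product of two depth-one power sums supported at degrees $<i$, the latter expanded unambiguously by Formula~\eqref{eq: formula} at level $i$. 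For instance, both $(S_i(r)S_i(s))S_{<i}(t)$ (the $i$-th summand of $(b)$) and $S_i(r)\,S_i(s,t)=S_i(r)(S_i(s)S_{<i}(t))$ (the $i$-th summand of $(c')$) reduce to the expansion of $\bigl(\text{Chen applied to }S_i(r)S_i(s)\bigr)\cdot S_{<i}(t)$, giving $(b)=(c')$; the same mechanism gives $(c)=(e')$ and $(e)=(b')$. Similarly $(d)=\sum_{i<d}S_i(r)\bigl(S_{<i}(s)S_{<i}(t)\bigr)$ and $(g')=\sum_{i<d}S_i(r)\bigl(\sum_{j<i}S_j(s)S_j(t)+S_{<i}(s,t)+S_{<i}(t,s)\bigr)$ coincide by Formula~\eqref{eq: formula} at level $i$, and likewise $(f)=(d')$ and $(g)=(f')$.

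The main obstacle will be making the previous paragraph rigorous as a statement about the \emph{formal} Chen/Formula~\eqref{eq: formula} expansions rather than about numerical values in $K_\infty$: one must justify that ``peeling the head off a power sum'' and ``moving a power sum of strictly larger degree to the leading slot'' are legitimate steps of the expansion procedure, and that the procedure does not depend on the order in which they are applied. This is so because Chen's formula (and the level-$i$ instance of Formula~\eqref{eq: formula}) only ever acts on a product of power sums that share a common top degree; hence a power sum of strictly larger degree contributes to every monomial with its variable in the leading position and can be factored to the front once and for all, and the sole associativity-sensitive step that survives, namely Chen's formula among three depth-one power sums of one and the same degree, is precisely the content of Theorem~\ref{thm: assoc S_d depth 1}. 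Carrying this verification out for each of the seven matched pairs completes the argument.
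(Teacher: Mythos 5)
Your proposal is correct and follows essentially the same route as the paper: both compare the two seven-term expansions from \S\ref{sec: 4} summand by summand, invoke Theorem~\ref{thm: assoc S_d depth 1} for the pair $\sum_{i<d}(S_i(r)S_i(s))S_i(t)$ versus $\sum_{i<d}S_i(r)(S_i(s)S_i(t))$, and match the remaining six pairs exactly as you list them, reducing each to an identical formal expression via the head recursion $S_i(n,w)=S_i(n)S_{<i}(w)$ and Formula~\eqref{eq: formula} at level $i$. The paper dispatches these six matchings with ``one verifies easily'' and ``a straightforward verification,'' so your extra care about order-independence of the formal expansion procedure is a welcome elaboration rather than a divergence.
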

\begin{proof}
To prove the desired associativity, we compare the expansions of $(S_{<d}(r)S_{<d}(s))S_{<d}(t)$ and $S_{<d}(r)(S_{<d}(s)S_{<d}(t))$ in Section \ref{sec: 4}. From Theorem \ref{thm: assoc S_d depth 1}, it is obvious that the expansions of 
\begin{equation*}
    \sum\limits_{i<d} (S_i(r)S_i(s))S_i(t) \quad \text{and} \quad \sum\limits_{i<d}S_{i}(r)(S_i(s)S_i(t))
\end{equation*}
yield the same expression in terms of $S_{<d}$. From Formula \eqref{eq: formula}, one verifies easily that the expansions of 
\begin{equation*}
    \sum\limits_{i<d} S_i(t)\Bigg(\sum \limits_{j<i} S_j(r)S_j(s) + S_{<i}(r,s) + S_{<i}(s,r)\Bigg) \quad \text{and} \quad \sum\limits_{i<d}S_i(t,s)S_{<i}(r)
\end{equation*}
yield the same expression in terms of $S_{<d}$. Similarly, one deduces that the expansions of 
\begin{equation*}
    \sum\limits_{i<d}S_i(r,s)S_{<i}(t) \quad \text{and} \quad \sum\limits_{i<d}S_{i}(r) \Bigg(\sum\limits_{j<i}S_j(s)S_j(t) + S_{<i}(s,t) + S_{<i}(t,s)\Bigg)
\end{equation*}
yield the same expression in terms of $S_{<d}$. From a straightforward verification, one deduces that the expansions of 
\begin{equation*}
    \sum\limits_{i<d} (S_i(r)S_i(s))S_{<i}(t), \sum\limits_{i<d}S_i(r,s)S_i(t),\sum\limits_{i<d}S_i(s,r)S_i(t),\sum\limits_{i<d}S_i(s,r)S_{<i}(t)
\end{equation*} 
yield respectively the same expressions in terms of $S_{<d}$ as those of
\begin{equation*}
    \sum\limits_{i<d} S_i(r)S_i(s,t),\sum\limits_{i<d}S_i(r)S_i(t,s),\sum\limits_{i<d}(S_i(s)S_i(t))S_{<i}(r),\sum\limits_{i<d}S_i(s,t)S_{<i}(r).
\end{equation*}
This proves the theorem.
\end{proof}


\subsection{Expansions of arbitrary depth} \label{sec: 6} ${}$\par

We now extend our results for the case of arbitrary depth. Let $\fa=(a_1,\dotsc, a_m)$ and $\fb=(b_1,\dotsc, b_n)$ be two positive tuples. For simplicity, we set $\fa_- = (a_2,\dotsc, a_m)$ and $\fb_- = (b_2,\dotsc, b_n)$. For all $d \in \mathbb{N}$, we recall the following formulas (See \cite{ND21}):
\begin{align}
    S_d(\fa) S_d(\fb) &= (S_d(a_1)S_d(b_1))(S_{<d}(\fa_-)S_{<d}(\fb_-))\label{eq: SdSd}\\
    &=S_d(a_1+b_1)(S_{<d}(\fa_-)S_{<d}(\fb_-)) \notag\\ 
    &+ \sum \limits_{i+j = a_1+b_1}\Delta_{a_1,b_1}^{j}S_d(i)[S_{<d}(j)(S_{<d}(\fa_-)S_{<d}(\fb_-))], \notag\\
    S_d(\fa)S_{<d}(\fb) &= S_{<d}(\fb)S_d(\fa) = S_d(a_1)(S_{<d}(\fa_-)S_{<d}(\fb)),\label{eq: SdS<d}\\
    S_{<d}(\fa) S_{<d}(\fb) &= \sum \limits_{i<d}S_i(\fa)S_i(\fb) + \sum \limits_{i<d}S_i(\fa)S_{<i}(\fb) + \sum \limits_{i<d}S_i(\fb)S_{<i}(\fa).\label{eq: S<dS<d}
\end{align}
Let $\fa,\fb,\fc$ be positive tuples. We first give the expansions of $(S_{d}(\fa)S_{d}(\fb))S_{d}(\fc)$ and $S_{d}(\fa)(S_{d}(\fb)S_{d}(\fc))$ by using Formula \eqref{eq: SdSd}.

\subsubsection{} We expand $(S_{d}(\fa)S_{d}(\fb))S_{d}(\fc)$ as follows. We have;
\begin{align} \label{eq: S_d L-R}
    &(S_{d}(\fa)S_{d}(\fb))S_{d}(\fc)\\
    &= \Bigg(S_d(a_1+b_1)(S_{<d}(\fa_-)S_{<d}(\fb_-))+ \sum \limits_{i+j = a_1+b_1}\Delta_{a_1,b_1}^{j}S_d(i)[S_{<d}(j)(S_{<d}(\fa_-)S_{<d}(\fb_-))]\Bigg)S_{d}(\fc)\notag\\
    &= S_d(a_1+b_1)(S_{<d}(\fa_-)S_{<d}(\fb_-))S_{d}(\fc) + \sum \limits_{i+j = a_1+b_1}\Delta_{a_1,b_1}^{j}S_d(i)[S_{<d}(j)(S_{<d}(\fa_-)S_{<d}(\fb_-))]S_{d}(\fc)\notag\\
    &= S_d(a_1+b_1)S_d(c_1)[(S_{<d}(\fa_-)S_{<d}(\fb_-))S_{<d}(\fc_-)]\notag\\
    &+ \sum \limits_{i+j = a_1+b_1}\Delta_{a_1,b_1}^{j}(S_d(i)S_d(c_1))[S_{<d}(j)(S_{<d}(\fa_-)S_{<d}(\fb_-))]S_{<d}(\fc_-).\notag
\end{align}
\subsubsection{} We expand $S_{d}(\fa)(S_{d}(\fb)S_{d}(\fc))$ as follows. We have;
\begin{align}\label{eq: S_d R-L}
    &S_{d}(\fa)(S_{d}(\fb)S_{d}(\fc))\\
    &= S_{d}(\fa)\Bigg(S_d(b_1+c_1)(S_{<d}(\fb_-)S_{<d}(\fc_-))+ \sum \limits_{i+j = b_1+c_1}\Delta_{b_1,c_1}^{j}S_d(i)[S_{<d}(j)(S_{<d}(\fb_-)S_{<d}(\fc_-))]\Bigg)\notag\\
    &= S_{d}(\fa)S_d(b_1+c_1)(S_{<d}(\fb_-)S_{<d}(\fc_-))+ S_{d}(\fa)\sum \limits_{i+j = b_1+c_1}\Delta_{b_1,c_1}^{j}S_d(i)[S_{<d}(j)(S_{<d}(\fb_-)S_{<d}(\fc_-))]\notag\\
    &= (S_{d}(a_1)S_d(b_1+c_1))[S_{<d}(\fa_-)(S_{<d}(\fb_-)S_{<d}(\fc_-))]\notag\\
    &+ \sum \limits_{i+j = b_1+c_1}\Delta_{b_1,c_1}^{j}(S_d(a_1)S_d(i))S_{<d}(\fa_-)[S_{<d}(j)(S_{<d}(\fb_-)S_{<d}(\fc_-))].\notag
\end{align}

We next give the expansions of $(S_{<d}(\fa)S_{<d}(\fb))S_{<d}(\fc)$ and $S_{<d}(\fa)(S_{<d}(\fb)S_{<d}(\fc))$ by using Formula \eqref{eq: S<dS<d}.
\subsubsection{} We expand $(S_{<d}(\fa)S_{<d}(\fb))S_{<d}(\fc)$ as follows. We have;
\begin{align}\label{eq: S<d L-R}
    &(S_{<d}(\fa)S_{<d}(\fb))S_{<d}(\fc)\\
    &= \Bigg(\sum \limits_{i<d}S_i(\fa)S_i(\fb) + \sum \limits_{i<d}S_i(\fa)S_{<i}(\fb) + \sum \limits_{i<d}S_i(\fb)S_{<i}(\fa)\Bigg)S_{<d}(\fc)\notag\\
    &= \sum \limits_{i<d}S_i(\fa)S_i(\fb)S_{<d}(\fc) + \sum \limits_{i<d}S_i(\fa)S_{<i}(\fb)S_{<d}(\fc) + \sum \limits_{i<d}S_i(\fb)S_{<i}(\fa)S_{<d}(\fc)\notag\\
    &= \sum \limits_{i<d}S_i(\fa)S_i(\fb)\sum\limits_{i<d}S_{i}(\fc) + \sum \limits_{i<d}S_i(\fa)S_{<i}(\fb)\sum\limits_{i<d}S_{i}(\fc) + \sum \limits_{i<d}S_i(\fb)S_{<i}(\fa)\sum\limits_{i<d}S_{i}(\fc)\notag\\
    &= \Bigg(\sum \limits_{i<d}(S_i(\fa)S_i(\fb))S_i(\fc) + \sum \limits_{i<d}(S_i(\fa)S_i(\fb))S_{<i}(\fc) + \sum \limits_{i<d}S_i(\fc)\sum \limits_{j<i}S_j(\fa)S_j(\fb) \Bigg)\notag\\
    &+ \Bigg(\sum \limits_{i<d}(S_i(\fa)S_{<i}(\fb))S_i(\fc) + \sum \limits_{i<d}(S_i(\fa)S_{<i}(\fb))S_{<i}(\fc) + \sum \limits_{i<d}S_i(\fc)\sum \limits_{j<i}S_j(\fa)S_{<j}(\fb)\Bigg) \notag\\
    &+ \Bigg(\sum \limits_{i<d}(S_i(\fb)S_{<i}(\fa))S_i(\fc) + \sum \limits_{i<d}(S_i(\fb)S_{<i}(\fa))S_{<i}(\fc) + \sum \limits_{i<d}S_i(\fc)\sum \limits_{j<i}S_j(\fb)S_{<j}(\fa)\Bigg) \notag\\
    &=\sum \limits_{i<d}(S_i(\fa)S_i(\fb))S_i(\fc) + \sum \limits_{i<d}(S_i(\fa)S_i(\fb))S_{<i}(\fc) + \sum \limits_{i<d}(S_i(\fa)S_{<i}(\fb))S_i(\fc)\notag\\
    & + \sum \limits_{i<d}(S_i(\fa)S_{<i}(\fb))S_{<i}(\fc) + \sum \limits_{i<d}(S_i(\fb)S_{<i}(\fa))S_i(\fc) + \sum \limits_{i<d}(S_i(\fb)S_{<i}(\fa))S_{<i}(\fc) \notag\\
    &+ \sum \limits_{i<d}S_i(\fc)\Bigg(\sum \limits_{j<i}S_j(\fa)S_j(\fb) + \sum \limits_{j<i}S_j(\fa)S_{<j}(\fb) + \sum \limits_{j<i}S_j(\fb)S_{<j}(\fa) \Bigg).\notag
\end{align}

\subsubsection{} We expand $S_{<d}(\fa)(S_{<d}(\fb)S_{<d}(\fc))$ as follows. We have;
\begin{align}\label{eq: S<d R-L}
    &S_{<d}(\fa)(S_{<d}(\fb)S_{<d}(\fc))\\
    &= S_{<d}(\fa)\Bigg(\sum \limits_{i<d}S_i(\fb)S_i(\fc) + \sum \limits_{i<d}S_i(\fb)S_{<i}(\fc) + \sum \limits_{i<d}S_i(\fc)S_{<i}(\fb)\Bigg) \notag\\
    &= S_{<d}(\fa)\sum \limits_{i<d}S_i(\fb)S_i(\fc) + S_{<d}(\fa)\sum \limits_{i<d}S_i(\fb)S_{<i}(\fc) + S_{<d}(\fa)\sum \limits_{i<d}S_i(\fc)S_{<i}(\fb) \notag\\
    &= \sum \limits_{i<d}S_{i}(\fa)\sum \limits_{i<d}S_i(\fb)S_i(\fc) + \sum \limits_{i<d}S_{i}(\fa)\sum \limits_{i<d}S_i(\fb)S_{<i}(\fc) + \sum \limits_{i<d}S_{i}(\fa)\sum \limits_{i<d}S_i(\fc)S_{<i}(\fb) \notag\\
    &= \Bigg(\sum \limits_{i<d}S_{i}(\fa)(S_i(\fb)S_i(\fc)) + \sum \limits_{i<d}S_{i}(\fa)  \sum \limits_{j<i}S_j(\fb)S_j(\fc) +  \sum \limits_{i<d}(S_i(\fb)S_i(\fc))S_{<i}(\fa)\Bigg) \notag\\
    &+ \Bigg(\sum \limits_{i<d}S_{i}(\fa)(S_i(\fb)S_{<i}(\fc)) + \sum \limits_{i<d}S_{i}(\fa)  \sum \limits_{j<i}S_j(\fb)S_{<j}(\fc) +  \sum \limits_{i<d}(S_i(\fb)S_{<i}(\fc))S_{<i}(\fa)\Bigg)\notag\\
    &+ \Bigg(\sum \limits_{i<d}S_{i}(\fa)(S_i(\fc)S_{<i}(\fb)) + \sum \limits_{i<d}S_{i}(\fa)  \sum \limits_{j<i}S_j(\fc)S_{<j}(\fb) +  \sum \limits_{i<d}(S_i(\fc)S_{<i}(\fb))S_{<i}(\fa)\Bigg)\notag\\
    &=\sum \limits_{i<d}S_{i}(\fa)(S_i(\fb)S_i(\fc)) + \sum \limits_{i<d}(S_i(\fb)S_i(\fc))S_{<i}(\fa) + 
    \sum \limits_{i<d}S_{i}(\fa)(S_i(\fb)S_{<i}(\fc))\notag\\
    &+ \sum \limits_{i<d}(S_i(\fb)S_{<i}(\fc))S_{<i}(\fa) + \sum \limits_{i<d}S_{i}(\fa)(S_i(\fc)S_{<i}(\fb)) + \sum \limits_{i<d}(S_i(\fc)S_{<i}(\fb))S_{<i}(\fa) \notag\\
    &+ \sum \limits_{i<d}S_{i}(\fa) \Bigg(\sum \limits_{j<i}S_j(\fb)S_j(\fc) + \sum \limits_{j<i}S_j(\fb)S_{<j}(\fc) + \sum \limits_{j<i}S_j(\fc)S_{<j}(\fb)\Bigg). \notag
\end{align}

\subsection{Associativity of arbitrary depth} \label{sec: 7} ${}$\par

\begin{theorem} \label{thm: assoc}
Let $\fa,\fb,\fc$ be positive tuples. 
\begin{enumerate}[$(1)$]
\item For all $d \in \mathbb{N}$, the expansions using \eqref{eq: SdSd}, \eqref{eq: SdS<d}, \eqref{eq: S<dS<d} of $(S_d(\fa) S_d(\fb))S_d(\fc)$ and $S_d(\fa) (S_d(\fb)S_d(\fc))$ yield the same expression in terms of power sums.
\item  For all $d \in \mathbb{N}$, the expansions using \eqref{eq: SdSd}, \eqref{eq: SdS<d}, \eqref{eq: S<dS<d} of $(S_{<d}(\fa) S_{<d}(\fb))S_{<d}(\fc)$ and $S_{<d}(\fa) (S_{<d}(\fb)S_{<d}(\fc))$ yield the same expression in terms of $S_{<d}$.
\end{enumerate}
\end{theorem}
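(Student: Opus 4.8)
The plan is to prove $(1)$ and $(2)$ together by induction on $N:=\depth(\fa)+\depth(\fb)+\depth(\fc)$, which satisfies $N\ge 3$ since the tuples are positive. The base case $N=3$ is Theorem~\ref{thm: assoc S_d depth 1} for $(1)$ and Theorem~\ref{thm: assoc S_<d depth one} for $(2)$. In the inductive step I would first establish $(1)$ at depth $N$ using the inductive hypothesis for $(2)$ at all depths $<N$, and then deduce $(2)$ at depth $N$ using $(1)$ at depth $N$ — whose statement holds for \emph{every} degree parameter — together with the inductive hypothesis for $(2)$ at depths $<N$. Two auxiliary facts will be used freely: the identities \eqref{eq: SdSd}, \eqref{eq: SdS<d}, \eqref{eq: S<dS<d} (valid by \cite{ND21}, irrespective of associativity), and the commutativity of the product of the maps $S_{<d}(\cdot)$, which follows by an easy induction exactly as in Proposition~\ref{prop: commutative} (or from the manifest symmetry of \eqref{eq: S<dS<d}).

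For $(1)$, put $W:=S_{<d}(\fa_-)\,S_{<d}(\fb_-)\,S_{<d}(\fc_-)$, a product of total depth $N-3$; by the inductive hypothesis for $(2)$ and commutativity it is a well-defined $\mathbb F_p$-linear combination of power sums $S_{<d}(\rho)$, independent of order and parenthesization. Let $\Phi$ be the $\mathbb F_p$-linear map, defined on the basis of power sums $S_d(p_0,p_1,\dots,p_k)$, sending such a power sum to $S_d(p_0)\bigl(S_{<d}(p_1,\dots,p_k)\cdot W\bigr)$, the inner product being expanded by means of \eqref{eq: S<dS<d}. The key claim is that, after using \eqref{eq: SdSd} and \eqref{eq: SdS<d} to peel off the first letters and then Chen's formula on the resulting one-variable $S_d$-products, one obtains
\[
(S_d(\fa)S_d(\fb))S_d(\fc)=\Phi\bigl((S_d(a_1)S_d(b_1))S_d(c_1)\bigr),\qquad S_d(\fa)(S_d(\fb)S_d(\fc))=\Phi\bigl(S_d(a_1)(S_d(b_1)S_d(c_1))\bigr),
\]
where on the right the depth-one triple products are fully expanded by Chen's formula. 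Concretely one starts from \eqref{eq: S_d L-R} and \eqref{eq: S_d R-L}, reassociates every pure $S_{<d}$-product that occurs — all of total depth $\le N-1$ — via the inductive hypothesis for $(2)$, and checks that the common factor $W$ is merely carried along while the $S_d$-heads and the absorbed $S_{<d}$-letters reproduce precisely the depth-one combinatorics. Granting these two identities, $(1)$ at depth $N$ follows at once from Theorem~\ref{thm: assoc S_d depth 1}, which asserts that the two depth-one arguments of $\Phi$ have equal expansions.

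For $(2)$, I would compare \eqref{eq: S<d L-R} with \eqref{eq: S<d R-L} term by term, in the style of the proof of Theorem~\ref{thm: assoc S_<d depth one}. The diagonal contributions $\sum_{i<d}(S_i(\fa)S_i(\fb))S_i(\fc)$ and $\sum_{i<d}S_i(\fa)(S_i(\fb)S_i(\fc))$ yield the same expression by $(1)$, applied with $d$ replaced by each $i<d$. In each remaining contribution one reconstitutes the inner sum $\sum_{j<i}(\cdots)$ into a product $S_{<i}(\fa)S_{<i}(\fb)$ or $S_{<i}(\fb)S_{<i}(\fc)$ using \eqref{eq: S<dS<d}, and then matches the terms in pairs via \eqref{eq: SdSd}, \eqref{eq: SdS<d}, commutativity, and the inductive hypothesis for $(2)$ to reassociate the pure $S_{<i}$-products that appear — all of total depth $\le N-1$; for instance $\sum_{i<d}(S_i(\fc)S_{<i}(\fb))S_{<i}(\fa)=\sum_{i<d}S_i(\fc)\bigl(S_{<i}(\fa)S_{<i}(\fb)\bigr)$, which is exactly what the reconstituted bracket in \eqref{eq: S<d L-R} contributes. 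Running through the finitely many remaining pairs in this way completes the induction.

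The main obstacle is the verification of the two ``$\Phi$-identities'' in $(1)$. Conceptually each is just iterated use of \eqref{eq: SdSd}, \eqref{eq: SdS<d} and Chen's formula, plus reassociation of $S_{<d}$-products by the inductive hypothesis; but one must keep careful track of which single-variable $S_d$-head is produced at each stage and which $S_{<d}$-letters get absorbed into the tail, so that the output is genuinely $\Phi$ of the corresponding depth-one expansion. This bookkeeping is the combinatorial heart of the proof, and it is precisely here that Theorem~\ref{thm: assoc S_d depth 1} — hence the partial-fraction analysis of Proposition~\ref{prop: associativity} — is used in an essential way.
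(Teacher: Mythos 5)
Your proposal is correct and follows essentially the same route as the paper: induction on the total depth, with the base case given by Theorems~\ref{thm: assoc S_d depth 1} and \ref{thm: assoc S_<d depth one}, Part~(1) obtained by peeling off the first letters via \eqref{eq: SdSd} so that the triple product factors as the depth-one head product times the reassociated tail product $W$, and Part~(2) obtained by the same term-by-term pairing of \eqref{eq: S<d L-R} against \eqref{eq: S<d R-L}. Your map $\Phi$ is merely a linear-algebraic repackaging of the paper's explicit factorization $[(S_d(a_1)S_d(b_1))S_d(c_1)]\cdot[(S_{<d}(\fa_-)S_{<d}(\fb_-))S_{<d}(\fc_-)]$, and the ``bookkeeping'' you flag is exactly what the paper carries out in Expansions \eqref{eq: S_d L-R} and \eqref{eq: S_d R-L}.
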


\begin{proof}
We proceed the proof by induction on $\depth(\fa) + \depth(\fb) + \depth(\fc)$. The base step $\depth(\fa) + \depth(\fb) + \depth(\fc)= 3$, i.e., $\depth(\fa) = \depth(\fb) =  \depth(\fc)= 1$ of Theorem \ref{thm: assoc} follows from Theorem \ref{thm: assoc S_d depth 1} and  Theorem \ref{thm: assoc S_<d depth one}.

Assume that Theorem \ref{thm: assoc} holds when $\depth(\fa) + \depth(\fb) + \depth(\fc) < n$ with $n \in \mathbb{N}$ and $n \geq 4$. We need to show that Theorem \ref{thm: assoc} holds when $\depth(\fa) + \depth(\fb) + \depth(\fc) = n$. 

In order to prove Part (1), we apply the induction hypothesis on Expansions \eqref{eq: S_d L-R} and \eqref{eq: S_d R-L}. We deduce that the expansion of $(S_d(\fa) S_d(\fb))S_d(\fc)$ yields the same expression in terms of power sums as that of
\begin{align*}
    &\Bigg[\Bigg(S_d(a_1 + b_1) + \sum \limits_{i+j = a_1 + b_1} \Delta^j_{a_1,b_1}S_d(i,j)\Bigg)S_d(c_1)\Bigg][(S_{<d}(\fa_-)S_{<d}(\fb_-))S_{<d}(\fc_-)]\\
    &= [(S_d(a_1)S_d(b_1))S_d(c_1)][(S_{<d}(\fa_-)S_{<d}(\fb_-))S_{<d}(\fc_-)],
\end{align*}
and the expansion of $S_d(\fa) (S_d(\fb)S_d(\fc))$ yields the same expression in terms of power sums as that of 
\begin{align*}
    &\Bigg[S_d(a_1)\Bigg(S_d(b_1 + c_1) + \sum \limits_{i+j = b_1 + c_1} \Delta^j_{b_1,c_1}S_d(i,j)\Bigg)\Bigg][S_{<d}(\fa_-)(S_{<d}(\fb_-)S_{<d}(\fc_-))]\\
    &=[S_d(a_1)(S_d(b_1)S_d(c_1))][S_{<d}(\fa_-)(S_{<d}(\fb_-)S_{<d}(\fc_-))].
\end{align*}
Using Theorem \ref{thm: assoc S_d depth 1} and the induction hypothesis again, we conclude that expansions of $(S_d(\fa) S_d(\fb))S_d(\fc)$ and $S_d(\fa) (S_d(\fb)S_d(\fc))$ yield the same expression in terms of power sums.

In order to prove Part (2), we compare Expansions \eqref{eq: S<d L-R} and \eqref{eq: S<d R-L}. From Part (1), it is obvious that the expansions of 
\begin{equation*}
     \sum \limits_{i<d}(S_i(\fa)S_i(\fb))S_i(\fc)\quad \text{and} \quad \sum \limits_{i<d}S_i(\fa)(S_i(\fb)S_i(\fc))
\end{equation*}
yield the same expression in terms of $S_{<d}$.

Note that from Formulas \eqref{eq: SdS<d} and \eqref{eq: S<dS<d}, we have;
\begin{align*}
    &S_i(\fc)\Bigg(\sum \limits_{j<i}S_j(\fa)S_j(\fb) + \sum \limits_{j<i}S_j(\fa)S_{<j}(\fb) + \sum \limits_{j<i}S_j(\fb)S_{<j}(\fa) \Bigg) \\
    &= S_i(\fc)(S_{<i}(\fb)S_{<i}(\fa))\\
    &= S_i(c_1)[S_{<i}(\fc_-)(S_{<i}(\fb)S_{<i}(\fa))]
\end{align*}
and 
\begin{align*}
    (S_i(\fc)S_{<i}(\fb))S_{<i}(\fa)  = S_i(c_1) [(S_{<i}(\fc_-)S_{<i}(\fb))S_{<i}(\fa)].
\end{align*}
One then deduces from the induction hypothesis that the expansions of
\begin{align*}
    \sum \limits_{i<d}S_i(\fc)\Bigg(\sum \limits_{j<i}S_j(\fa)S_j(\fb) + \sum \limits_{j<i}S_j(\fa)S_{<j}(\fb) + \sum \limits_{j<i}S_j(\fb)S_{<j}(\fa) \Bigg)
\end{align*}
and 
\begin{align*}    
    \sum \limits_{i<d} (S_i(\fc)S_{<i}(\fb))S_{<i}(\fa)
\end{align*}
yield the same expression in terms of $S_{<d}$. Similarly, one verifies easily that the expansions of
\begin{align*}
   \sum \limits_{i<d}(S_i(\fa)S_{<i}(\fb))S_{<i}(\fc)
\end{align*}
and 
\begin{align*}   
   \sum \limits_{i<d}S_{i}(\fa) \Bigg(\sum \limits_{j<i}S_j(\fb)S_j(\fc) + \sum \limits_{j<i}S_j(\fb)S_{<j}(\fc) + \sum \limits_{j<i}S_j(\fc)S_{<j}(\fb)\Bigg)
\end{align*}
yield the same expression in terms of $S_{<d}$.

Note that from Formulas \eqref{eq: SdSd} and \eqref{eq: SdS<d}, we have; 
\begin{align*}
    (S_i(\fa)S_i(\fb))S_{<i}(\fc) = (S_i(a_1)S_i(b_1))[(S_{<i}(\fa_-)S_{<i}(\fb_-))S_{<i}(\fc)]
\end{align*}
and
\begin{align*}
    S_i(\fa)(S_i(\fb)S_{<i}(\fc)) = (S_i(a_1)S_i(b_1))[S_{<i}(\fa_-)(S_{<i}(\fb_-)S_{<i}(\fc))].
\end{align*}
One then deduces from the induction hypothesis that the expansions of 
\begin{align*}
    \sum \limits_{i<d}(S_i(\fa)S_i(\fb))S_{<i}(\fc) \quad \text{and} \quad \sum \limits_{i<d}S_i(\fa)(S_i(\fb)S_{<i}(\fc))
\end{align*}
yield the same expression in terms of $S_{<d}$. Similarly, one verifies easily that the expansions of
\begin{align*}
    \sum \limits_{i<d}(S_i(\fa)S_{<i}(\fb))S_i(\fc), \sum \limits_{i<d}(S_i(\fb)S_{<i}(\fa))S_i(\fc), \sum \limits_{i<d}(S_i(\fb)S_{<i}(\fa))S_{<i}(\fc)
\end{align*}
yield respectively the same expressions in terms of $S_{<d}$ as those of 
\begin{align*}
    \sum \limits_{i<d}S_i(\fa)(S_i(\fc)S_{<i}(\fb)), \sum \limits_{i<d}(S_i(\fb)S_i(\fc))S_{<i}(\fa), \sum \limits_{i<d}(S_i(\fb)S_{<i}(\fc))S_{<i}(\fa). 
\end{align*}
From the above arguments, we conclude that the expansions of $(S_{<d}(\fa) S_{<d}(\fb))S_{<d}(\fc)$ and $S_{<d}(\fa) (S_{<d}(\fb)S_{<d}(\fc))$ yield the same expression in terms of $S_{<d}$. This completes the proof.
 \end{proof}

\subsection{Associativity of the shuffle algebra}

\begin{proposition} \label{prop: associative}
  The diamond product and the shuffle product are associative.
\end{proposition}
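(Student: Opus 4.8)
The plan is to deduce both associativity statements from the formal power-sum identities already obtained in Theorem \ref{thm: assoc}, by transporting them to $\mathfrak C$ through the $\mathbb F_q$-linear maps $S_{<d}\colon\mathfrak C\to K_\infty$. The bridge is a \emph{correspondence lemma} that I would prove first: for all words $\fa,\fb\in\langle\Sigma\rangle$, if one writes $\fa\shuffle\fb=\sum_{\fu}c_{\fu}\,\fu$ and $\fa\diamond\fb=\sum_{\fu}e_{\fu}\,\fu$ in $\mathfrak C$, then, after repeatedly applying the recursions \eqref{eq: SdSd}, \eqref{eq: SdS<d}, \eqref{eq: S<dS<d} (equivalently Propositions \ref{prop: chen1} and \ref{prop: chen2} in depth one) to fully expand the products, one gets the formal $\mathbb F_q$-linear combinations $S_{<d}(\fa)\,S_{<d}(\fb)=\sum_{\fu}c_{\fu}\,S_{<d}(\fu)$ and $\sum_{i<d}S_i(\fa)\,S_i(\fb)=\sum_{\fu}e_{\fu}\,S_{<d}(\fu)$. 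In words: under the $\mathbb F_q$-linear bijection $\fu\leftrightarrow S_{<d}(\fu)$, the shuffle product corresponds to ``multiply and expand'' on power sums, and the diamond product to its diagonal variant $\sum_{i<d}S_i(\cdot)S_i(\cdot)$. At the level of \emph{values} in $K_\infty$ the first identity is already contained in Theorem \ref{thm: shuffle map}; the content of the lemma is the finer statement that the \emph{formal} expansions agree coefficient by coefficient.

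I would prove the correspondence lemma by induction on $\depth(\fa)+\depth(\fb)$, handling the $\shuffle$- and $\diamond$-parts together. When $\fa$ or $\fb$ equals $1$ everything is immediate from $S_{<d}(1)=1$ and $S_i(1)=1$; the depth-one case is exactly Propositions \ref{prop: chen1} and \ref{prop: chen2}. In the inductive step one uses the elementary identities $S_{<d}(x_a\fv)=\sum_{i<d}S_i(a)\,S_{<i}(\fv)$ and $S_i(a)\,S_{<i}(\fv)=S_i(x_a\fv)$, together with Lemma \ref{lem: triangle formulas} and the commutativity of $\shuffle$ and $\diamond$ (Proposition \ref{prop: commutative}). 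Concretely: for the diamond part one rewrites $\fa\diamond\fb=(x_a\diamond x_b)\triangleright(\fa_-\shuffle\fb_-)$ and expands $\sum_{i<d}S_i(\fa)S_i(\fb)=\sum_{i<d}\big(S_i(a)S_i(b)\big)\big(S_{<i}(\fa_-)S_{<i}(\fb_-)\big)$ using Proposition \ref{prop: chen1} and the induction hypothesis applied to the shorter pair $(\fa_-,\fb_-)$ (and to pairs of the form $(x_q,\fa_-\shuffle\fb_-)$, again of strictly smaller depth-sum); for the shuffle part one writes $\fa\shuffle\fb=x_a(\fa_-\shuffle\fb)+x_b(\fa\shuffle\fb_-)+\fa\diamond\fb$ and checks that, after applying the induction hypothesis to $(\fa_-,\fb)$ and $(\fa,\fb_-)$ and the diamond part just proved, the three summands match the three terms of \eqref{eq: S<dS<d}. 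The delicate point is that the $\fa\diamond\fb$-term of the shuffle recursion has the \emph{same} depth-sum as $(\fa,\fb)$, so one must pass through $\fa\diamond\fb=(x_a\diamond x_b)\triangleright(\fa_-\shuffle\fb_-)$ to see it is assembled from shuffles of strictly shorter words, keeping the induction well-founded. Finally, by $\mathbb F_q$-bilinearity the correspondence extends from words to arbitrary elements of $\mathfrak C$ in either argument.

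Granting the lemma, the proposition follows quickly. Given words $\fa,\fb,\fc$, expand $(\fa\shuffle\fb)\shuffle\fc$ and $\fa\shuffle(\fb\shuffle\fc)$ in $\mathfrak C$ and apply the (bilinearly extended) correspondence lemma twice, nested: the image under $S_{<d}$ of $(\fa\shuffle\fb)\shuffle\fc=\sum_\fu c_\fu\,\fu$ is, coefficient by coefficient, precisely the complete expansion of $\big(S_{<d}(\fa)S_{<d}(\fb)\big)S_{<d}(\fc)$ via \eqref{eq: SdSd}--\eqref{eq: S<dS<d}, and likewise $S_{<d}\big(\fa\shuffle(\fb\shuffle\fc)\big)$ is the complete expansion of $S_{<d}(\fa)\big(S_{<d}(\fb)S_{<d}(\fc)\big)$. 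By Theorem \ref{thm: assoc}(2) these two formal expansions coincide as $\mathbb F_q$-linear combinations of symbols $S_{<d}(\fu)$; comparing coefficients gives $(\fa\shuffle\fb)\shuffle\fc=\fa\shuffle(\fb\shuffle\fc)$ in $\mathfrak C$, and bilinearity extends this to all of $\mathfrak C$. The associativity of $\diamond$ is entirely parallel: from the diagonal part of the correspondence lemma (and the relation $S_i(\fa\diamond\fb)\leftrightarrow S_i(\fa)S_i(\fb)$, obtained by differencing it at consecutive values of $d$) one gets that $(\fa\diamond\fb)\diamond\fc$ and $\fa\diamond(\fb\diamond\fc)$ map under $S_{<d}$ to the complete expansions of $\sum_{i<d}\big(S_i(\fa)S_i(\fb)\big)S_i(\fc)$ and $\sum_{i<d}S_i(\fa)\big(S_i(\fb)S_i(\fc)\big)$, which agree by Theorem \ref{thm: assoc}(1) summed over $i<d$.

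The main obstacle I anticipate lies entirely in the correspondence lemma, not in the final deduction: one must pin down precisely what ``the complete expansion by \eqref{eq: SdSd}--\eqref{eq: S<dS<d}'' means and check that it is well defined (the reduction terminates, since weight and depth are controlled, and the rewrites $S_i(a)S_{<i}(\fv)=S_i(x_a\fv)$ and Proposition \ref{prop: chen1} leave no ambiguity); one must thread the induction through the depth-preserving diamond term; and one must exploit that Theorem \ref{thm: assoc} is genuinely a statement about formal power-sum expressions rather than merely about their values in $K_\infty$ — it is exactly this formal character that legitimizes reading off an equality of elements of $\mathfrak C$.
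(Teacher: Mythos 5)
Your proposal is correct, but it takes a genuinely different route from the paper. The paper proves Proposition \ref{prop: associative} by a direct induction on $\depth(\fa)+\depth(\fb)+\depth(\fc)$ carried out entirely inside $\mathfrak C$: using Lemma \ref{lem: triangle formulas} it rewrites $\fa\diamond\fb=(x_a\diamond x_b)\triangleright(\fa_-\shuffle\fb_-)$ and $\fa\shuffle\fb=\fa\triangleright\fb+\fb\triangleright\fa+\fa\diamond\fb$, pushes everything through the interchange identities for $\triangleright$, $\diamond$, $\shuffle$ (recorded afterwards in Proposition \ref{prop: key properties}), and in the end appeals to the power-sum world only once, for the single letter-level identity $(x_a\diamond x_b)\diamond x_c=x_a\diamond(x_b\diamond x_c)$, read off from Theorem \ref{thm: assoc S_d depth 1}. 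You instead globalize that one appeal: your correspondence lemma is a formal-coefficient strengthening of Theorem \ref{thm: shuffle map} (which the paper quotes from Shi's thesis without proof), identifying $\fa\shuffle\fb$ with the full expansion of $S_{<d}(\fa)S_{<d}(\fb)$ and $\fa\diamond\fb$ with that of $S_d(\fa)S_d(\fb)$, after which associativity in $\mathfrak C$ is imported wholesale from Theorem \ref{thm: assoc} at arbitrary depth by comparing coefficients of the formal symbols $S_{<d}(\fu)$. Your induction is well founded for the reason you give — the depth-neutral diamond term must be routed through $(x_a\diamond x_b)\triangleright(\fa_-\shuffle\fb_-)$ so that the inductive calls land on $(\fa_-,\fb_-)$ and on pairs $(x_j,\fu)$ with $\fu$ in the support of $\fa_-\shuffle\fb_-$, all of strictly smaller depth-sum — and your two caveats are exactly the right ones: the expansion must be taken with respect to a fixed parenthesization (for binary products the rewrites are deterministic, so no confluence issue arises), and Theorem \ref{thm: assoc} must be used as an identity of formal expressions rather than of values in $K_\infty$, which is legitimate because its depth-one base case rests on the uniqueness of partial fraction decompositions. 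What each approach buys: the paper's argument keeps the analytic input minimal (depth one, letters only) and generates the $\triangleright$-identities that are reused throughout the Hopf-algebra sections, at the cost of a long symbolic computation; yours trades that computation for a clean transfer principle, at the cost of having to prove the correspondence lemma carefully and of consuming the full arbitrary-depth strength of Theorem \ref{thm: assoc}.
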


\begin{proof}
Let $\fa, \fb, \fc \in \langle \Sigma \rangle$ be arbitrary words. It is suffices to show that 
\begin{equation} \label{eq: assocative}
    (\fa \diamond \fb) \diamond \fc = \fa \diamond (\fb \diamond \fc) \quad \text{and} \quad (\fa \shuffle  \fb) \shuffle  \fc = \fa \shuffle  (\fb \shuffle \fc) .
\end{equation}
We proceed the proof by induction on $\depth(\fa) + \depth(\fb) + \depth(\fc)$. If one of $\fa, \fb$ or $\fc$ is empty word, then \eqref{eq: assocative} holds trivially. We assume that \eqref{eq: assocative} holds when $\depth(\fa) + \depth(\fb) + \depth(\fc) < n$ with $n \in \N$ and $n \geq 3$. We need to show that \eqref{eq: assocative} holds when $\depth(\fa) + \depth(\fb) + \depth(\fc) = n$.

We first show that $(\fa \diamond \fb) \diamond \fc = \fa \diamond (\fb \diamond \fc)$. From Lemma \ref{lem: triangle formulas}, we have
\begin{align*}
& (\fa \diamond \fb) \diamond \fc \\
    &=  \Bigg[x_{a + b}(\fa_-\shuffle  \fb_-) + \sum\limits_{i+j = a + b} \Delta^j_{a,b} x_i(x_j \shuffle  (\fa_- \shuffle  \fb_-))\Bigg] \diamond \fc\\
    &= \ x_{a + b}(\fa_-\shuffle  \fb_-) \diamond \fc + \sum\limits_{i+j = a + b} \Delta^j_{a,b} x_i(x_j \shuffle  (\fa_- \shuffle  \fb_-)) \diamond \fc\\
    &= \ (x_{a + b} \diamond x_c) \triangleright ((\fa_-\shuffle  \fb_-) \shuffle  \fc_-) + \sum\limits_{i+j = a + b} \Delta^j_{a,b} (x_i \diamond x_c) \triangleright [(x_j \shuffle  (\fa_- \shuffle  \fb_-)) \shuffle  \fc_-].
\end{align*}
For all $i,j \in \N$ with $i+j = a+b$, it follows from the induction hypothesis that
\begin{align*}
    &(x_i \diamond x_c) \triangleright [(x_j \shuffle  (\fa_- \shuffle  \fb_-)) \shuffle  \fc_-]\\ &= \ (x_i \diamond x_c) \triangleright [x_j \shuffle  ((\fa_- \shuffle  \fb_-) \shuffle  \fc_-)]\\
    &=  \Bigg[x_{i + c} + \sum\limits_{i_1+j_1 = i + c} \Delta^{j_1}_{i,c} x_{i_1}x_{j_1}\Bigg] \triangleright [x_j \shuffle  ((\fa_- \shuffle  \fb_-) \shuffle  \fc_-)]\\
    &= \ x_{i + c}\triangleright [x_j \shuffle  ((\fa_- \shuffle  \fb_-) \shuffle  \fc_-)] + \sum\limits_{i_1+j_1 = i + c} \Delta^{j_1}_{i,c} x_{i_1}x_{j_1}\triangleright [x_j \shuffle  ((\fa_- \shuffle  \fb_-) \shuffle  \fc_-)]\\
    &= \ x_{i + c} [x_j \shuffle  ((\fa_- \shuffle  \fb_-) \shuffle  \fc_-)] + \sum\limits_{i_1+j_1 = i + c} \Delta^{j_1}_{i,c} x_{i_1}[(x_{j_1} \shuffle  x_j) \shuffle  ((\fa_- \shuffle  \fb_-) \shuffle  \fc_-)]\\
    &= \ x_{i + c}x_j \triangleright ((\fa_- \shuffle  \fb_-) \shuffle  \fc_-) + \sum\limits_{i_1+j_1 = i + c} \Delta^{j_1}_{i,c} x_{i_1}(x_{j_1} \shuffle  x_j) \triangleright  ((\fa_- \shuffle  \fb_-) \shuffle  \fc_-)\\
    &= \Bigg[ x_{i + c}x_j  + \sum\limits_{i_1+j_1 = i + c} \Delta^{j_1}_{i,c} x_{i_1}(x_{j_1} \shuffle  x_j)\Bigg] \triangleright  ((\fa_- \shuffle  \fb_-) \shuffle  \fc_-)\\
    &= (x_ix_j \diamond x_c) \triangleright ((\fa_- \shuffle  \fb_-) \shuffle  \fc_-).
\end{align*}
Thus
\begin{align*}
    (\fa \diamond \fb) \diamond \fc &= \ (x_{a + b} \diamond x_c) \triangleright ((\fa_-\shuffle  \fb_-) \shuffle  \fc_-) + \sum\limits_{i+j = a + b} \Delta^j_{a,b} (x_ix_j \diamond x_c) \triangleright ((\fa_- \shuffle  \fb_-) \shuffle  \fc_-)\\
    &= \Bigg[(x_{a + b} \diamond x_c) + \sum\limits_{i+j = a + b} \Delta^j_{a,b} (x_ix_j \diamond x_c) \Bigg]\triangleright ((\fa_- \shuffle  \fb_-) \shuffle  \fc_-)\\
    &= \ ((x_a \diamond x_b) \diamond x_c) \triangleright ((\fa_- \shuffle  \fb_-) \shuffle  \fc_-).
\end{align*}
On the other hand, from Proposition \ref{prop: commutative} and the above arguments, we deduce that 
\begin{align*}
    \fa \diamond (\fb \diamond \fc) &= (\fb \diamond \fc) \diamond \fa \\
    &=  ((x_b \diamond x_c) \diamond x_a) \triangleright ((\fb_- \shuffle  \fc_-) \shuffle  \fa_-) \\
    &= (x_a \diamond (x_b \diamond x_c) ) \triangleright (\fa_- \shuffle   (\fb_- \shuffle  \fc_-)).
\end{align*}
It is straightforward from Theorem \ref{thm: assoc S_d depth 1} that $(x_a \diamond x_b) \diamond x_c = x_a \diamond (x_b \diamond x_c)$. Moreover, it follows from the induction hypothesis that $(\fa_- \shuffle  \fb_-) \shuffle  \fc_- = \fa_- \shuffle   (\fb_- \shuffle  \fc_-)$. We thus conclude that $(\fa \diamond \fb) \diamond \fc = \fa \diamond (\fb \diamond \fc)$.

We next show that $(\fa \shuffle  \fb) \shuffle  \fc = \fa \shuffle  (\fb \shuffle  \fc)$. From Lemma \ref{lem: triangle formulas}, we have;
\begin{align*}
    (\fa \shuffle  \fb) \shuffle  \fc 
    &= \ (\fa \triangleright \fb + \fb \triangleright \fa + \fa \diamond \fb) \shuffle  \fc\\
    &= \ (\fa \triangleright \fb) \shuffle  \fc  + (\fb \triangleright \fa) \shuffle  \fc  + (\fa \diamond \fb) \shuffle  \fc\\
    &= \ ((\fa \triangleright \fb) \triangleright \fc + \fc \triangleright (\fa \triangleright \fb) + (\fa \triangleright \fb) \diamond \fc)\\
    &+  ((\fb \triangleright \fa) \triangleright \fc + \fc \triangleright (\fb \triangleright \fa) + (\fb \triangleright \fa) \diamond \fc)\\
    &+  ((\fa \diamond \fb) \triangleright \fc + \fc \triangleright (\fa \diamond \fb) + (\fa \diamond \fb) \diamond \fc)\\
    &= \ (\fa \triangleright \fb) \triangleright \fc + (\fa \triangleright \fb) \diamond \fc + (\fb \triangleright \fa) \triangleright \fc + (\fb \triangleright \fa) \diamond \fc + (\fa \diamond \fb) \triangleright \fc\\
    &+ (\fc \triangleright (\fa \triangleright \fb) + \fc \triangleright (\fb \triangleright \fa) + \fc \triangleright (\fa \diamond \fb)) + (\fa \diamond \fb) \diamond \fc\\
    &= \ (\fa \triangleright \fb) \triangleright \fc + (\fa \triangleright \fb) \diamond \fc + (\fb \triangleright \fa) \triangleright \fc + (\fb \triangleright \fa) \diamond \fc + (\fa \diamond \fb) \triangleright \fc\\
    &+ \fc \triangleright (\fa \shuffle  \fb)  + (\fa \diamond \fb) \diamond \fc
\end{align*}
and 
\begin{align*}
    \fa \shuffle  (\fb \shuffle  \fc) 
    &= \ \fa \shuffle  (\fb \triangleright \fc + \fc \triangleright \fb + \fb \diamond \fc)\\
    &= \ \fa \shuffle  (\fb \triangleright \fc) + \fa \shuffle  (\fc \triangleright \fb) + \fa \shuffle  (\fb \diamond \fc)\\
    &= \ (\fa \triangleright (\fb \triangleright \fc) + (\fb \triangleright \fc) \triangleright \fa + \fa \diamond (\fb \triangleright \fc))\\
    &+ (\fa \triangleright (\fc \triangleright \fb) + (\fc \triangleright \fb) \triangleright \fa + \fa \diamond (\fc \triangleright \fb)) \\
    &+ (\fa \triangleright (\fb \diamond \fc) + (\fb \diamond \fc) \triangleright \fa + \fa \diamond (\fb \diamond \fc))\\
    &= \ (\fb \triangleright \fc) \triangleright \fa + \fa \diamond (\fb \triangleright \fc) + (\fc \triangleright \fb) \triangleright \fa + \fa \diamond (\fc \triangleright \fb) + (\fb \diamond \fc) \triangleright \fa\\
    &+ (\fa \triangleright (\fb \triangleright \fc) + \fa \triangleright (\fc \triangleright \fb) + \fa \triangleright (\fb \diamond \fc)) + \fa \diamond (\fb \diamond \fc)\\
    &= \ (\fb \triangleright \fc) \triangleright \fa + \fa \diamond (\fb \triangleright \fc) + (\fc \triangleright \fb) \triangleright \fa + \fa \diamond (\fc \triangleright \fb) + (\fb \diamond \fc) \triangleright \fa\\
    &+ \fa \triangleright (\fb \shuffle  \fc) + \fa \diamond (\fb \diamond \fc).
\end{align*}

We now compare the above expansions. We have showed that $(\fa \diamond \fb) \diamond \fc = \fa \diamond (\fb \diamond \fc)$. On the other hand, we have;
\begin{align*}
    \fc \triangleright (\fa \shuffle  \fb) = x_c(\fc_- \shuffle (\fa \shuffle  \fb))
\end{align*}
and
\begin{align*}
    (\fc \triangleright \fb) \triangleright \fa = x_c(\fc_- \shuffle  \fb) \triangleright \fa = x_c((\fc_- \shuffle  \fb) \shuffle  \fa).
\end{align*}
From the induction hypothesis and commutativity of shuffle product, one deduces that $\fc \triangleright (\fa \shuffle  \fb) = (\fc \triangleright \fb) \triangleright \fa$. Similarly, one deduces that $(\fa \triangleright \fb) \triangleright \fc= \fa \triangleright (\fb \shuffle  \fc)$.

We have
\begin{align*}
    (\fa \triangleright \fb) \diamond \fc &= x_a(\fa_- \shuffle  \fb) \diamond \fc \\
    &= (x_a \diamond x_c) \triangleright ((\fa_- \shuffle  \fb) \shuffle  \fc_-)
\end{align*}
and
\begin{align*}
    \fa \diamond (\fc \triangleright \fb) &= \fa \diamond x_c(\fc_- \shuffle \fb)\\
    &= (x_a \diamond x_c) \triangleright (\fa_- \shuffle  (\fc_- \shuffle \fb)).
\end{align*}
From the induction hypothesis and commutativity of shuffle product, one deduces that $(\fa \triangleright \fb) \diamond \fc = \fa \diamond (\fc \triangleright \fb)$.  

We have
\begin{align*}
    (\fb \triangleright \fa) \triangleright \fc &= x_b(\fb_- \shuffle  \fa) \triangleright \fc\\
    &=  x_b((\fb_- \shuffle  \fa) \shuffle  \fc)
\end{align*}
and
\begin{align*}
    (\fb \triangleright \fc) \triangleright \fa &= x_b(\fb_- \shuffle  \fc) \triangleright \fa\\
    &= x_b((\fb_- \shuffle  \fc) \shuffle  \fa).
\end{align*}
From the induction hypothesis and commutativity of shuffle product, one deduces that $ (\fb \triangleright \fa) \triangleright \fc = (\fb \triangleright \fc) \triangleright \fa$.

It follows from the induction hypothesis that
\begin{align*}
(\fa \diamond \fb) \triangleright \fc
    &= \Bigg[x_{a+b}(\fa_- \shuffle  \fb_-) + \sum \limits_{i+j = a + b} \Delta^j_{a,b} x_i (x_j \shuffle  (\fa_- \shuffle  \fb_-))\Bigg] \triangleright \fc\\
    &= \ x_{a+b}(\fa_- \shuffle  \fb_-) \triangleright \fc + \sum \limits_{i+j = a + b} \Delta^j_{a,b} x_i (x_j \shuffle  (\fa_- \shuffle  \fb_-)) \triangleright \fc\\
    &= \ x_{a+b}((\fa_- \shuffle  \fb_-) \shuffle  \fc) + \sum \limits_{i+j = a + b} \Delta^j_{a,b} x_i [(x_j \shuffle  (\fa_- \shuffle  \fb_-)) \shuffle \fc]\\
    &= \ x_{a+b}((\fa_- \shuffle  \fb_-) \shuffle  \fc) + \sum \limits_{i+j = a + b} \Delta^j_{a,b} x_i [x_j \shuffle  ((\fa_- \shuffle  \fb_-) \shuffle \fc)]\\
    &= \ x_{a+b} \triangleright ((\fa_- \shuffle  \fb_-) \shuffle  \fc) + \sum \limits_{i+j = a + b} \Delta^j_{a,b} x_ix_j \triangleright ((\fa_- \shuffle  \fb_-) \shuffle \fc)\\
    &= \ \Bigg[x_{a+b}  + \sum \limits_{i+j = a + b} \Delta^j_{a,b} x_ix_j \Bigg] \triangleright ((\fa_- \shuffle  \fb_-) \shuffle \fc)\\
    &= \ (x_a \diamond x_b) \triangleright ((\fa_- \shuffle  \fb_-) \shuffle \fc)\\
\end{align*}
and 
\begin{align*}
    \fa \diamond (\fb \triangleright \fc) = \fa \diamond x_b(\fb_- \shuffle \fc) = (x_a \diamond x_b) \triangleright (\fa_- \shuffle  (\fb_- \shuffle \fc)).
\end{align*}
From the induction hypothesis, one deduces that $(\fa \diamond \fb) \triangleright \fc = \fa \diamond (\fb \triangleright \fc)$. Similarly, one deduces that $(\fb \triangleright \fa) \diamond \fc = (\fb \diamond \fc) \triangleright \fa$. 

From the above arguments, we conclude that $(\fa \shuffle  \fb) \shuffle  \fc = \fa \shuffle  (\fb \shuffle  \fc)$. This completes the proof.
\end{proof}

As a direct consequence of Proposition \ref{prop: commutative} and Proposition \ref{prop: associative}, we obtain the following result.

\begin{theorem} \label{theorem: algebras}
The spaces $(\mathfrak{C}, \diamond)$ and $(\mathfrak{C}, \shuffle )$ are commutative $\Fq$-algebras. In particular, Conjecture 3.2.2 of \cite{Shi18} holds.
\end{theorem}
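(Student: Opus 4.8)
The plan is to verify directly the three defining axioms of a commutative algebra for each of the products $\diamond$ and $\shuffle$ on $\frak C$. The unit axiom is immediate: by construction $u$ sends $1 \in \Fq$ to the empty word, and the recursive definitions impose $1 \diamond \fa = \fa \diamond 1 = \fa$ and $1 \shuffle \fa = \fa \shuffle 1 = \fa$. Commutativity has already been established in Proposition~\ref{prop: commutative} by an easy induction on $\depth(\fa)+\depth(\fb)$, using $\Delta^j_{a,b}=\Delta^j_{b,a}$. Hence the whole weight of the theorem lies in the associativity statements $(\fa\diamond\fb)\diamond\fc=\fa\diamond(\fb\diamond\fc)$ and $(\fa\shuffle\fb)\shuffle\fc=\fa\shuffle(\fb\shuffle\fc)$ for all words $\fa,\fb,\fc$, which is the content of Proposition~\ref{prop: associative}; once that is in hand, the theorem — and with it Conjecture 3.2.2 of \cite{Shi18} — follows by combining Propositions~\ref{prop: commutative} and~\ref{prop: associative}.

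To attack associativity I would first use Lemma~\ref{lem: triangle formulas} to re-express both $\diamond$ and $\shuffle$ through the triangle product $\triangleright$, so that a triple product unfolds into sums of terms of the shape $x_?\triangleright(\cdots)$ whose inner part is a shuffle of strictly shorter words. This sets up an induction on $\depth(\fa)+\depth(\fb)+\depth(\fc)$: the base case is total depth $3$, and in the inductive step every occurrence of $\shuffle$ or $\diamond$ acting on words of total depth $<n$ may be reorganized freely, so that the whole problem collapses onto the single-letter identity $(x_a\diamond x_b)\diamond x_c = x_a\diamond(x_b\diamond x_c)$, together with repeated use of commutativity of $\shuffle$ and of Proposition~\ref{prop: commutative} to interchange the order of the three arguments. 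This single-letter identity is precisely Theorem~\ref{thm: assoc S_d depth 1}, namely that the two expansions of $(S_d(r)S_d(s))S_d(t)$ and $S_d(r)(S_d(s)S_d(t))$ via Chen's formula agree.

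The real obstacle, therefore, is Theorem~\ref{thm: assoc S_d depth 1}; a brute-force manipulation of the coefficients $\Delta^i_{a,b}$ is hopeless (as Shi observed in \cite[\S 3.1]{Shi18}). The key maneuver is to trace $\Delta^i_{a,b}$ back to its origin in partial fraction decompositions (Corollary~\ref{cor: identity} and Lemma~\ref{lem: decomposition}) and to stratify the sum defining $(S_d(r)S_d(s))S_d(t)$ over triples $(a,b,c)\in A_+(d)^3$ according to their coincidence pattern, producing the disjoint pieces $M_0,M_1,M_2,M_3$ and $N_0,\dots,N_4$ of Proposition~\ref{prop: associativity}. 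On each piece one writes the relevant rational function — one of $\frac{1}{A^r(A+\lambda F)^s(A+\mu U)^t}$ and its degenerations — and computes its partial fraction decomposition in two different orders, left-to-right and right-to-left, cf. \eqref{eq: PFD L-R} and \eqref{eq: PFD R-L}; uniqueness of the decomposition then forces the two resulting power-sum expressions to coincide. Summing over the pieces and collapsing the arithmetic sums over $\Fq^\times$ via the identities \eqref{eq: formula 1}, \eqref{eq: formula 2}, \eqref{eq: formula 3} for $\delta_j$ and $\delta_{j,k}$ recovers exactly the depth-$1$, $2$ and $3$ components of the two triple products, which proves Proposition~\ref{prop: associativity} and hence Theorem~\ref{thm: assoc S_d depth 1}.

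Finally I would propagate this from $S_d$ to $S_{<d}$ (Theorem~\ref{thm: assoc S_<d depth one}, by expanding via \eqref{eq: formula} and matching the resulting double sums term by term) and then to arbitrary depth (Theorem~\ref{thm: assoc}, by a further induction on total depth using the splitting formulas \eqref{eq: SdSd}, \eqref{eq: SdS<d}, \eqref{eq: S<dS<d} to peel off the leading letters). The word-level associativity of $\diamond$ and $\shuffle$ is then read off as in Proposition~\ref{prop: associative}, and Theorem~\ref{theorem: algebras} follows. I expect the bookkeeping in the depth-$3$ strata $N_1,\dots,N_4$ — keeping track of the auxiliary linear combinations $g=f+\mu'u$, $h=f+\eta''u$, etc., and reconciling the $\nabla$- and $\delta$-factors — to be the most delicate part of the argument.
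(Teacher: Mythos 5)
Your proposal is correct and follows essentially the same route as the paper: commutativity via the easy induction of Proposition~\ref{prop: commutative}, and associativity reduced via Lemma~\ref{lem: triangle formulas} and induction on total depth to the single-letter identity $(x_a\diamond x_b)\diamond x_c=x_a\diamond(x_b\diamond x_c)$, which is established through the stratification $M_0,\dots,M_3,N_0,\dots,N_4$ and the two-sided partial fraction decompositions of Proposition~\ref{prop: associativity}. The only cosmetic difference is that the paper's proof of Proposition~\ref{prop: associative} invokes only the depth-one result (Theorem~\ref{thm: assoc S_d depth 1}), so the propagation to $S_{<d}$ and to arbitrary depth, though present in the paper, is not strictly needed for Theorem~\ref{theorem: algebras}.
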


The following proposition summarizes several properties of different products $\triangleright$, $\diamond$ and $\shuffle $ that will be useful in the sequel.

\begin{proposition} \label{prop: key properties}
Let $\fa, \fb, \fc \in \langle \Sigma \rangle$ be arbitrary words. Then we have
\begin{enumerate}[$(1)$]
\item $\fa \diamond \fb=\fb \diamond \fa$.
\item $(\fa \diamond \fb) \diamond \fc=\fa \diamond (\fb \diamond \fc)$.
\item $\fa \shuffle  \fb = \fb\shuffle \fa=\fa \triangleright \fb + \fb \triangleright \fa + \fa \diamond \fb$.
\item $(\fa \shuffle  \fb) \shuffle  \fc=\fa \shuffle  (\fb \shuffle  \fc)$.
\end{enumerate}
If we assume further that $\fa, \fb, \fc$ are nonempty words, then 
\begin{enumerate}[$(1)$]
\item $(\fa \triangleright \fb) \triangleright \fc=(\fa \triangleright \fc) \triangleright \fb=\fa \triangleright (\fb\shuffle \fc)$.
\item $(\fa \diamond \fb) \triangleright \fc=\fa \diamond (\fb \triangleright \fc)=(\fa \triangleright \fc) \diamond \fb$.
\end{enumerate}
\end{proposition}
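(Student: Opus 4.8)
The plan is to establish Proposition \ref{prop: key properties} by collecting together results already proved, with the only genuine work being the two identities for nonempty words. Parts (1)--(4) of the first list are nothing more than a restatement: (1) is the content of Proposition \ref{prop: commutative}; (2) is the associativity of $\diamond$ established in Proposition \ref{prop: associative}; (3) is the commutativity of $\shuffle$ from Proposition \ref{prop: commutative} combined with Lemma \ref{lem: triangle formulas}(2); and (4) is the associativity of $\shuffle$ from Proposition \ref{prop: associative}. So the proof will simply cite these and then turn to the second list.

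For the second list I would argue by induction on $\depth(\fa)+\depth(\fb)+\depth(\fc)$, mirroring the bookkeeping that already appears inside the proof of Proposition \ref{prop: associative}. For the first identity, I unfold $(\fa \triangleright \fb) \triangleright \fc = x_a(\fa_- \shuffle \fb) \triangleright \fc = x_a\big((\fa_- \shuffle \fb)\shuffle \fc\big)$ using only the definition of $\triangleright$, and likewise $\fa \triangleright (\fb \shuffle \fc) = x_a\big(\fa_- \shuffle (\fb \shuffle \fc)\big)$; the two agree by associativity (Proposition \ref{prop: associative}) of $\shuffle$ applied to words of strictly smaller total depth, and the middle equality $(\fa \triangleright \fc)\triangleright \fb = x_a\big((\fa_- \shuffle \fc)\shuffle \fb\big)$ then follows from commutativity of $\shuffle$. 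For the second identity I use Lemma \ref{lem: triangle formulas}(1): $(\fa \diamond \fb)\triangleright \fc = \big((x_a \diamond x_b)\triangleright(\fa_- \shuffle \fb_-)\big)\triangleright \fc$, and one must push $\triangleright \fc$ through the $\triangleright$ and through the letter-level $\diamond$. Here I would reuse verbatim the computation already carried out in Proposition \ref{prop: associative} showing $(\fa \diamond \fb)\triangleright \fc = (x_a \diamond x_b)\triangleright\big((\fa_- \shuffle \fb_-)\shuffle \fc\big)$, and symmetrically $\fa \diamond (\fb \triangleright \fc) = (x_a \diamond x_b)\triangleright\big(\fa_- \shuffle (\fb_- \shuffle \fc)\big)$; these coincide by the induction hypothesis (associativity of $\shuffle$ on smaller words), and $(\fa \triangleright \fc)\diamond \fb = (x_a \diamond x_b)\triangleright\big((\fa_- \shuffle \fc)\shuffle \fb_-\big)$ is obtained from the first expression by commutativity of $\shuffle$.

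The main obstacle, such as it is, is purely organizational rather than mathematical: one must be careful that the recursive formula for $\triangleright$ and for $\diamond$ only ever reduces $\fa$ to $\fa_-$ etc., so that every application of associativity or commutativity of $\shuffle$ is to words whose combined depth is strictly less than $n$, keeping the induction honest. In particular the identity $(\fa \diamond \fb)\triangleright \fc = \fa \diamond (\fb \triangleright \fc)$ must be invoked with the letter-level associativity $(x_a \diamond x_b)\diamond x_c = x_a \diamond (x_b \diamond x_c)$, which is Theorem \ref{thm: assoc S_d depth 1}, and I would flag this explicitly. Since all the heavy lifting --- the partial fraction decompositions, Chen's formula, the full associativity of both $\diamond$ and $\shuffle$ --- has already been done, I expect the proof of Proposition \ref{prop: key properties} to be short, essentially a page of cross-references plus the two unfoldings above, with no surprises.

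\begin{proof}
Items (1)--(4) of the first list are immediate: (1) and the commutativity in (3) are Proposition \ref{prop: commutative}; (2) and (4) are Proposition \ref{prop: associative}; the remaining equality in (3) is Lemma \ref{lem: triangle formulas}(2).

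Assume now that $\fa,\fb,\fc$ are nonempty. We prove the two remaining identities by induction on $n=\depth(\fa)+\depth(\fb)+\depth(\fc)$, the base case $n=3$ being the content of Theorem \ref{thm: assoc S_d depth 1} together with the definitions. Assume both hold whenever the total depth is $<n$.

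For the first identity, by definition of $\triangleright$ we have
\begin{align*}
(\fa \triangleright \fb) \triangleright \fc &= x_a(\fa_- \shuffle \fb) \triangleright \fc = x_a\big((\fa_- \shuffle \fb)\shuffle \fc\big),\\
\fa \triangleright (\fb \shuffle \fc) &= x_a\big(\fa_- \shuffle (\fb \shuffle \fc)\big).
\end{align*}
These two expressions agree by Proposition \ref{prop: associative} (associativity of $\shuffle$), applied to $\fa_-,\fb,\fc$, whose total depth is $<n$. Moreover $(\fa \triangleright \fc)\triangleright \fb = x_a\big((\fa_- \shuffle \fc)\shuffle \fb\big)$, which equals $x_a\big((\fa_- \shuffle \fb)\shuffle \fc\big)$ by commutativity of $\shuffle$ (Proposition \ref{prop: commutative}). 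This proves $(\fa \triangleright \fb)\triangleright \fc = (\fa \triangleright \fc)\triangleright \fb = \fa \triangleright(\fb \shuffle \fc)$.

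For the second identity, the computation in the proof of Proposition \ref{prop: associative} shows
\begin{align*}
(\fa \diamond \fb)\triangleright \fc &= (x_a \diamond x_b)\triangleright\big((\fa_- \shuffle \fb_-)\shuffle \fc\big),\\
\fa \diamond (\fb \triangleright \fc) &= (x_a \diamond x_b)\triangleright\big(\fa_- \shuffle (\fb_- \shuffle \fc)\big).
\end{align*}
By the induction hypothesis (associativity of $\shuffle$ on the words $\fa_-,\fb_-,\fc$, of total depth $<n$), $(\fa_- \shuffle \fb_-)\shuffle \fc = \fa_- \shuffle (\fb_- \shuffle \fc)$, hence $(\fa \diamond \fb)\triangleright \fc = \fa \diamond (\fb \triangleright \fc)$. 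Finally, applying the first expression with $\fc$ replaced by $\fc$ and using commutativity of $\shuffle$,
\[
(\fa \triangleright \fc)\diamond \fb = (x_a \diamond x_b)\triangleright\big((\fa_- \shuffle \fc)\shuffle \fb_-\big) = (x_a \diamond x_b)\triangleright\big((\fa_- \shuffle \fb_-)\shuffle \fc\big) = (\fa \diamond \fb)\triangleright \fc,
\]
which completes the induction and the proof.
\end{proof}
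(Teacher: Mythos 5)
Your proof is correct and takes essentially the approach the paper intends: the paper states Proposition \ref{prop: key properties} without proof as a summary, and the identities you verify for nonempty words are exactly the unfoldings already carried out inside the proof of Proposition \ref{prop: associative} (e.g.\ $(\fa \diamond \fb)\triangleright \fc = (x_a\diamond x_b)\triangleright((\fa_-\shuffle\fb_-)\shuffle\fc)$ and its companions), combined with Propositions \ref{prop: commutative}, \ref{prop: associative} and Lemma \ref{lem: triangle formulas}. The induction you set up is harmless but not really needed, since commutativity and associativity of $\shuffle$ are already available in full generality at this point.
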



\section{Shuffle Hopf algebra in positive characteristic} \label{sec: Hopf algebra structure}

We first equip the shuffle algebra with a coproduct $\Delta$ and a counit map. The main theorem of this section states that these give a Hopf algebra structure of the shuffle algebra (see Theorem \ref{thm: Hopf algebra for shuffle product}). 

Throughout this section we continue with the notation of the previous section.

\subsection{Coproduct} ${}$\par \label{sec: coproduct}

We first introduce the coproduct 
	\[ \Delta: \frak C \to \frak C \otimes \frak C. \]
We will define it on  $\langle \Sigma \rangle$ by induction on weight and extend by $\Fq$-linearity to $\frak C$. We put
\begin{align*}
\Delta(1)&:=1 \otimes 1, \\
\Delta(x_1)&:=1 \otimes x_1 + x_1 \otimes 1.
\end{align*}
Let $w \in \bN$ and we suppose that we have defined $\Delta(\fv)$ for all words $\fv$ of weight $w(\fv)<w$. We now give a formula for $\Delta(\fu)$ for all words $\fu$ with $w(\fu)=w$. For such a word $\fu$ with $\depth(\fu)>1$, we put $\fu=x_u \fv$ with $w(\fv)<w$. Since $x_u$ and $\fv$ are both of weight less than $w$, we have already defined
\begin{align*}
\Delta(x_u)&:=1 \otimes x_u + \sum a_u \otimes b_u, \\
\Delta(\fv)&:= \sum a_\fv \otimes b_\fv.
\end{align*}
Then we set
\begin{align*}
\Delta(\fu):=1 \otimes \fu + \sum (a_u \triangleright a_\fv) \otimes (b_u \shuffle  b_\fv).
\end{align*}
Our last task is to define $\Delta(x_w)$. We know that
	\[ x_1\shuffle x_{w-1}=x_w+x_1x_{w-1}+x_{w-1}x_1+\sum_{0<j<w} \Delta^j_{1,w-1} x_{w-j} x_j \]
where all the words $x_{w-j} x_j$ have weight $w$ and depth $2$ and all $\Delta^j_{1,w-1}$ belong to~$\Fq$. Therefore, we set
\begin{equation} \label{eq: coproduct}
\Delta(x_w):=\Delta(x_1) \shuffle  \Delta(x_{w-1})-\Delta(x_1x_{w-1})-\Delta(x_{w-1}x_1)-\sum_{0<j<w} \Delta^j_{1,w-1} \Delta(x_{w-j} x_j).
\end{equation}
We note that this definition of coproduct is different from that given as in Shi's thesis (see \cite[\S 3.2.3]{Shi18}).

\begin{lemma} \label{lem: factor 1}
For all words $\fu$, we have
	\[ \Delta(\fu)=1 \otimes \fu+\sum a_\fu \otimes b_\fu \]
where $a_\fu \neq 1$.
\end{lemma}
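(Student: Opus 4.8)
The plan is to prove Lemma \ref{lem: factor 1} by induction on the weight $w(\fu)$, following exactly the recursive structure of the definition of $\Delta$. The statement asserts that $\Delta(\fu)$ contains the term $1\otimes\fu$ with coefficient $1$, and that every other term $a_\fu\otimes b_\fu$ appearing in $\Delta(\fu)$ has $a_\fu\neq 1$ (more precisely, is a linear combination of words other than the empty word $1$ in the left tensor factor). The base cases $\fu=1$ and $\fu=x_1$ are immediate from the definitions $\Delta(1)=1\otimes 1$ and $\Delta(x_1)=1\otimes x_1 + x_1\otimes 1$.

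For the inductive step I would split into two cases according to the definition of $\Delta$. First, suppose $\depth(\fu)>1$, so $\fu=x_u\fv$ with $w(x_u),w(\fv)<w(\fu)$. By the inductive hypothesis we may write $\Delta(x_u)=1\otimes x_u + \sum a_u\otimes b_u$ with each $a_u\neq 1$, and $\Delta(\fv)=1\otimes\fv + \sum a_\fv\otimes b_\fv$ with each $a_\fv\neq 1$ (here one should be slightly careful: the sum $\sum a_\fv\otimes b_\fv$ in the definition of $\Delta(\fv)$ written in the paper already includes the leading term, so I would re-expand it as $1\otimes\fv$ plus a remainder with nonempty left factors). Then
\[
\Delta(\fu) = 1\otimes\fu + \sum (a_u\triangleright a_\fv)\otimes(b_u\shuffle b_\fv),
\]
where in the sum, each pair $(a_u,a_\fv)$ ranges over the terms of $\Delta(x_u)$ and $\Delta(\fv)$ respectively, excluding the combination $(1\otimes x_u)$ paired with $(1\otimes\fv)$ which produces exactly $1\triangleright 1 = 1$ tensored with $x_u\shuffle\fv$; but wait — I must check what $1\triangleright 1$ and the pairing conventions give, and that this single combination reproduces $1\otimes\fu$. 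Actually the cleanest route: the term $1\otimes\fu$ is extracted explicitly in the definition, and the remaining sum is over pairs where \emph{not both} $a_u=1$ and $a_\fv=1$; so in each remaining term at least one of $a_u,a_\fv$ is a word other than $1$, hence $a_u\triangleright a_\fv$ involves $x_{a_u}(\cdots)$ or $x_{a_\fv}(\cdots)$ and in particular is a linear combination of nonempty words — so $a_u\triangleright a_\fv\neq 1$. This uses only that $\triangleright$ of a nonempty word with anything is nonempty (clear from $\fa\triangleright\fb = x_a(\fa_-\shuffle\fb)$, whose output words all start with a letter), plus the case $1\triangleright a_\fv = a_\fv$ with $a_\fv\neq 1$.

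Second, suppose $\depth(\fu)=1$, i.e. $\fu=x_w$. Here I use the defining relation \eqref{eq: coproduct}:
\[
\Delta(x_w) = \Delta(x_1)\shuffle\Delta(x_{w-1}) - \Delta(x_1 x_{w-1}) - \Delta(x_{w-1}x_1) - \sum_{0<j<w}\Delta^j_{1,w-1}\,\Delta(x_{w-j}x_j).
\]
By the depth-$\geq 2$ case already treated (all the words $x_1x_{w-1}$, $x_{w-1}x_1$, $x_{w-j}x_j$ have depth $2$), each of $\Delta(x_1x_{w-1})$, $\Delta(x_{w-1}x_1)$, $\Delta(x_{w-j}x_j)$ equals $1\otimes(\text{that word}) + (\text{terms with nonempty left factor})$. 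For $\Delta(x_1)\shuffle\Delta(x_{w-1})$, I expand using bilinearity of $\shuffle$ on $\frak C\otimes\frak C$: writing $\Delta(x_1)=1\otimes x_1 + x_1\otimes 1$ and $\Delta(x_{w-1})=1\otimes x_{w-1} + \sum a\otimes b$ (with $a\neq 1$, by induction), the only contribution with empty left tensor factor comes from $(1\otimes x_1)$ paired with $(1\otimes x_{w-1})$, giving $1\otimes(x_1\shuffle x_{w-1})$; every other contribution has left factor $1\shuffle a = a\neq 1$ or $x_1\shuffle(\text{something})$, which is nonempty. Now $x_1\shuffle x_{w-1} = x_w + x_1x_{w-1} + x_{w-1}x_1 + \sum_j\Delta^j_{1,w-1}x_{w-j}x_j$, so the empty-left-factor part of the whole right-hand side of \eqref{eq: coproduct} is
\[
1\otimes(x_1\shuffle x_{w-1}) - 1\otimes(x_1x_{w-1}) - 1\otimes(x_{w-1}x_1) - \sum_j\Delta^j_{1,w-1}\,1\otimes(x_{w-j}x_j) = 1\otimes x_w,
\]
exactly as required, and all other terms carry a nonempty left factor. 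This completes the induction.

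The main obstacle, as usual in these recursive bialgebra arguments, is bookkeeping: one must be meticulous that the single distinguished combination of leading terms (the $(1,1)$ pairing) in each recursive formula produces precisely $1\otimes\fu$ with coefficient $1$ and nothing else, and that every surviving term genuinely has a left tensor factor supported on nonempty words. The key structural input making this painless is the observation that $\triangleright$ and $\shuffle$ never produce the empty word from a nonempty input — $\fa\triangleright\fb$ and $\fa\shuffle\fb$ are linear combinations of words all of which are nonempty whenever $\fa$ or $\fb$ is nonempty — together with $1\triangleright\fc=1\shuffle\fc=\fc$. I would state that observation as a one-line remark before the induction and then the whole proof is a short case split. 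There is a minor notational wrinkle to resolve first: the definition writes $\Delta(\fv)=\sum a_\fv\otimes b_\fv$ without separating the leading term, so I would begin the proof by noting that Lemma \ref{lem: factor 1} is precisely the assertion that this sum can always be normalized as $1\otimes\fv$ plus terms with $a_\fv\neq 1$, and carry that normalization through the recursion.
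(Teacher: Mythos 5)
Your proposal is correct and follows essentially the same route as the paper: induction on weight, handling depth~$>1$ words first via the recursive formula (using that $a_u \neq 1$ forces $a_u \triangleright a_\fv$ to be a combination of nonempty words) and then extracting the empty-left-factor part of the defining relation for $\Delta(x_w)$, which collapses to $1 \otimes x_w$ by the shuffle identity $x_1 \shuffle x_{w-1} = x_w + x_1x_{w-1} + x_{w-1}x_1 + \sum_j \Delta^j_{1,w-1} x_{w-j}x_j$. The only cosmetic difference is that you describe the sum in $\Delta(x_u\fv)$ as ranging over pairs where ``not both left factors are $1$,'' whereas the definition pairs only the non-leading terms of $\Delta(x_u)$ with all terms of $\Delta(\fv)$; this does not affect the argument since the needed conclusion holds on the larger set of pairs.
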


\begin{proof}
The proof is by induction on the weight $w=w(\fu)$. For $w=0$ and $w=1$ Lemma \ref{lem: factor 1}  immediately holds as 
\begin{align*}
\Delta(1)&:=1 \otimes 1, \\
\Delta(x_1)&:=1 \otimes x_1 + x_1 \otimes 1.
\end{align*}

Let $w \in \bN$ with $w \geq 2$. We suppose that for all words $\fu$ with $w(\fu)<w$, we have
	\[ \Delta(\fu)=1 \otimes \fu+\sum a_\fu \otimes b_\fu \]
where $a_\fu \neq 1$. 

We have to prove that the statement holds for all words $\fu$ with $w(\fu)=w$. In fact, we first consider a word $\fu$ with $w(\fu)=w$ and $\depth(\fu)>1$. We put $\fu=x_u \fv$ with $\depth(\fv) \geq 1$. By the induction hypothesis, we write
	\[ \Delta(x_u)=1 \otimes x_u+\sum a_u \otimes b_u \]
where $a_u \neq 1$. If we put $\Delta(x_\fv)=\sum a_\fv \otimes b_\fv$, then we know that
\begin{align*}
\Delta(\fu):=1 \otimes \fu + \sum (a_u \triangleright a_\fv) \otimes (b_u \shuffle  b_\fv).
\end{align*}
Since $a_u \neq 1$,  $a_u \triangleright a_\fv \neq 1$. Thus Lemma \ref{lem: factor 1} holds for $\fu$. 

To conclude, it suffices to prove that Lemma \ref{lem: factor 1} holds for $x_w$. By the induction hypothesis, we deduce that
\begin{align*}
\Delta(x_1)\shuffle  \Delta(x_{w-1})=1 \otimes (x_1\shuffle x_{w-1}) + \sum a \otimes b, \quad a \neq 1,
\end{align*}
and for all $0<j<w$, 
\begin{align*}
\Delta(x_{w-j} x_j)=1 \otimes x_{w-j} x_j + \sum a_j \otimes b_j, \quad a_j \neq 1.
\end{align*}

Thus by \eqref{eq: coproduct},
\begin{align*}
& \Delta(x_w) \\
&=\Delta(x_1) \shuffle  \Delta(x_{w-1})-\Delta(x_1x_{w-1})-\Delta(x_{w-1}x_1)-\sum_{0<j<w} \Delta^j_{j,w-j} \Delta(x_{w-j} x_j) \\
&= \left(1 \otimes (x_1\shuffle x_{w-1}) + \sum a \otimes b\right)-\left(1 \otimes x_1x_{w-1} + \sum a_{w-1} \otimes b_{w-1}\right) \\
&-\left(1 \otimes x_{w-1}x_1 + \sum a_1 \otimes b_1\right)-\sum_{0<j<w} \Delta^j_{j,w-j} \left(1 \otimes x_{w-j} x_j + \sum a_j \otimes b_j\right) \\
&= 1 \otimes \left(x_1\shuffle x_{w-1}-x_1x_{w-1}-x_{w-1}x_1-\sum_{0<j<w} \Delta^j_{j,w-j} x_{w-j} x_j \right) \\
&+ \sum a \otimes b- \sum a_{w-1} \otimes b_{w-1}- \sum a_1 \otimes b_1-\sum_{0<j<w} \Delta^j_{j,w-j} \sum a_j \otimes b_j \\
&= 1 \otimes x_w+ \sum a \otimes b-\sum a_{w-1} \otimes b_{w-1}-\sum a_1 \otimes b_1-\sum_{0<j<w} \Delta^j_{j,w-j} \sum a_j \otimes b_j.
\end{align*}
The proof is finished.
\end{proof}

\subsection{Compatibility of the coproduct} ${}$\par

In this section we prove the compatibility of the coproduct $\Delta$ given as in the previous section.

\begin{theorem} \label{thm: compatibility}
Let $\fa, \fb \in \langle \Sigma \rangle$. Then we have
	\[ \Delta(\fa\shuffle \fb)=\Delta(\fa)\shuffle \Delta(\fb). \]
\end{theorem}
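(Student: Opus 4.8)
The plan is to prove the multiplicativity identity $\Delta(\fa \shuffle \fb) = \Delta(\fa)\shuffle\Delta(\fb)$ by induction on the total weight $w(\fa) + w(\fb)$, using the recursive definition of $\Delta$ together with the structural properties of the products $\triangleright$, $\diamond$, $\shuffle$ collected in Proposition \ref{prop: key properties}. The base cases where $\fa$ or $\fb$ is the empty word $1$ are immediate since $\Delta(1) = 1\otimes 1$ is the unit of $(\frak C\otimes\frak C,\shuffle\otimes\shuffle)$. For the inductive step I would first reduce, by $\Fq$-bilinearity, to the case where $\fa = x_a\fa_-$ and $\fb = x_b\fb_-$ are nonempty words. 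The key algebraic identity to exploit is $\fa\shuffle\fb = \fa\triangleright\fb + \fb\triangleright\fa + \fa\diamond\fb = x_a(\fa_-\shuffle\fb) + x_b(\fa\shuffle\fb_-) + \fa\diamond\fb$, which expresses $\fa\shuffle\fb$ in terms of shuffles and diamonds of strictly smaller total weight (after stripping a leading letter).

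The heart of the argument is to compute $\Delta(x_a(\fa_-\shuffle\fb))$, $\Delta(x_b(\fa\shuffle\fb_-))$, and $\Delta(\fa\diamond\fb)$ separately and show their sum equals $\Delta(\fa)\shuffle\Delta(\fb)$. For a word of the form $x_c\fw$ with $\depth(x_c\fw) > 1$, the definition gives $\Delta(x_c\fw) = 1\otimes(x_c\fw) + \sum(a_c\triangleright a_\fw)\otimes(b_c\shuffle b_\fw)$ where $\Delta(x_c) = 1\otimes x_c + \sum a_c\otimes b_c$ (with $a_c\neq 1$ by Lemma \ref{lem: factor 1}) and $\Delta(\fw) = \sum a_\fw\otimes b_\fw$. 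Applying the induction hypothesis to $\Delta(\fa_-\shuffle\fb) = \Delta(\fa_-)\shuffle\Delta(\fb)$ and similarly for $\Delta(\fa\shuffle\fb_-)$, and writing out $\Delta(\fa) = 1\otimes\fa + \sum(a'\triangleright a_{\fa_-})\otimes(b'\shuffle b_{\fa_-})$ where $\Delta(x_a) = 1\otimes x_a + \sum a'\otimes b'$, one reorganizes $\Delta(\fa)\shuffle\Delta(\fb)$ by splitting according to whether the left-hand $\frak C$-factor comes from the $1\otimes\fa$ term, the $1\otimes\fb$ term, or a product of nontrivial pieces. The identities of Proposition \ref{prop: key properties} — especially $(\fa\triangleright\fb)\triangleright\fc = \fa\triangleright(\fb\shuffle\fc)$, $(\fa\diamond\fb)\triangleright\fc = \fa\diamond(\fb\triangleright\fc)$, commutativity and associativity of $\shuffle$ — are precisely what is needed to match the cross-terms.

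The treatment of $\Delta(\fa\diamond\fb)$ requires care: by Lemma \ref{lem: triangle formulas}(1), $\fa\diamond\fb = (x_a\diamond x_b)\triangleright(\fa_-\shuffle\fb_-)$, and $x_a\diamond x_b = x_{a+b} + \sum_{i+j=a+b}\Delta^j_{a,b}x_ix_j$ is a sum of one letter and several depth-two words, all of weight $a+b$. One then needs a formula for $\Delta$ applied to $x_m\triangleright\fw$ and to $(x_ix_j)\triangleright\fw$ in terms of $\Delta(x_m)$, $\Delta(x_i)$, $\Delta(x_j)$, $\Delta(\fw)$. I expect that the definition of $\Delta(x_w)$ via \eqref{eq: coproduct} was rigged exactly so that $\Delta$ is compatible with $\diamond$ as well — indeed one should be able to prove simultaneously (or as a lemma) that $\Delta(\fa\diamond\fb) = \Delta(\fa)\diamond\Delta(\fb)$ and $\Delta(\fa\triangleright\fb) = \Delta(\fa)\triangleright\Delta(\fb)$ for appropriate coproduct extensions of $\triangleright$ and $\diamond$, so that the three compatibilities can be bootstrapped together in one induction. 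This simultaneous induction — keeping track of $\triangleright$, $\diamond$, and $\shuffle$ all at once and verifying that the recursive clauses in the definition of $\Delta$ respect each — is the main obstacle; the bookkeeping of indices in the diamond coefficients $\Delta^j_{a,b}$ and the need to repeatedly invoke Proposition \ref{prop: key properties} to rearrange nested triangle/diamond/shuffle products make it delicate, but no single step should be conceptually hard once the right simultaneous statement is set up. The definition $\Delta(x_w)$ in \eqref{eq: coproduct} then makes the letter case a formal consequence of the already-established cases of lower weight.
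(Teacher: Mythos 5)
Your overall strategy coincides with the paper's: induction on the total weight, the decomposition $\fa\shuffle\fb=\fa\triangleright\fb+\fb\triangleright\fa+\fa\diamond\fb$, term-by-term computation of the coproduct, and repeated use of Proposition \ref{prop: key properties} to match cross-terms. The paper organizes this into Propositions \ref{prop: compatibility step 1} and \ref{prop: compatibility step 2} (word $\times$ letter, then word $\times$ word), and the identity you need for the diamond contribution is exactly the paper's equation \eqref{eq: x_u x_v 2}, which is extracted from the inductively known compatibility $\Delta(x_a\shuffle x_b)=\Delta(x_a)\shuffle\Delta(x_b)$ for letters of \emph{smaller} total weight. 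That part of your plan is sound.

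The genuine gap is your final sentence: the claim that the definition of $\Delta(x_w)$ in \eqref{eq: coproduct} makes the letter--letter case ``a formal consequence'' of lower-weight cases. It does not. The recursion \eqref{eq: coproduct} builds in compatibility only for the single pair $(x_1,x_{w-1})$; for $u,v\geq 2$ with $u+v=w$ nothing in the definition forces $\Delta(x_u\shuffle x_v)=\Delta(x_u)\shuffle\Delta(x_v)$, and this case cannot be reached by your weight induction because $x_u\diamond x_v$ produces the letter $x_{u+v}=x_w$ of the \emph{same} total weight. The paper needs a separate argument (Proposition \ref{prop: compatibility step 3}): it computes $\Delta(x_1\shuffle x_{u-1}\shuffle x_v)$ in two ways using associativity of $\shuffle$ --- once expanding $x_{u-1}\shuffle x_v$ first (lower weight, plus Step 1 for the resulting depth-two words), once expanding $x_1\shuffle x_{u-1}$ first --- and cancels everything except $\Delta(x_u\shuffle x_v)-\Delta(x_u)\shuffle\Delta(x_v)$. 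Without this step your induction does not close. A secondary caution: the ``simultaneous compatibilities'' $\Delta(\fa\triangleright\fb)=\Delta(\fa)\triangleright\Delta(\fb)$ and $\Delta(\fa\diamond\fb)=\Delta(\fa)\diamond\Delta(\fb)$ are false in the naive form; the correct statements (the paper's Lemma \ref{lem: delta without factor 1} and Lemma \ref{lem: delta diamond}) carry correction terms because the summands with left tensor factor $1$ must be treated separately ($1\triangleright\fv=\fv$ and $1\diamond\fv=\fv$ would otherwise create spurious terms). Your hedge ``for appropriate coproduct extensions'' acknowledges this, but the precise form matters for the bookkeeping you describe.
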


The rest of this section is devoted to a proof of Theorem \ref{thm: compatibility}. The proof is by induction on the total weight $w=w(\fa)+w(\fb)$.

For $w=0$ and $w=1$ we see that Theorem \ref{thm: compatibility} holds. Let $w \in \bN$ with $w \geq 2$ and we suppose that for all $\fa, \fb \in \langle \Sigma \rangle$ such that $w(\fa)+w(\fb)<w$, we have $\Delta(\fa\shuffle \fb)=\Delta(\fa)\shuffle \Delta(\fb)$.

We now show that for all $\fa, \fb \in \langle \Sigma \rangle$ such that $w(\fa)+w(\fb)=w$, we have
 	\[ \Delta(\fa\shuffle \fb)=\Delta(\fa)\shuffle \Delta(\fb). \]
The proof will be divided into three steps.

\subsubsection{Step 1} 
We first prove the following proposition. 
\begin{proposition} \label{prop: compatibility step 1}
For all words $x_u \fu, x_v \in \langle \Sigma \rangle$ with $u,v \in \bN$, $\depth(\fu) \geq 1$ and $w(x_u \fu)+w(x_v)=w$, we have
 	\[ \Delta(x_u \fu\shuffle x_v)=\Delta(x_u \fu)\shuffle \Delta(x_v). \]
\end{proposition}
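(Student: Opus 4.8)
\textbf{Proof proposal for Proposition \ref{prop: compatibility step 1}.}

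The plan is to run an induction on the weight (which is already in force for Theorem \ref{thm: compatibility}) combined with a careful unwinding of the recursive definitions of $\Delta$ and of $\shuffle$. The statement to prove concerns $\Delta(x_u\fu \shuffle x_v)$, i.e.\ the shuffle of a word of depth $\geq 2$ with a letter. First I would expand $x_u\fu \shuffle x_v$ using the recursive rule for $\shuffle$ together with Lemma \ref{lem: triangle formulas}(2): writing $\fa = x_u\fu$ and $\fb = x_v$ we have $\fa \shuffle \fb = \fa \triangleright \fb + \fb \triangleright \fa + \fa \diamond \fb = x_u(\fu \shuffle x_v) + x_v(x_u\fu) + (x_u \diamond x_v)\triangleright (\fu \shuffle 1)$. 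Since $\fb_- = 1$, the diamond term simplifies: $\fa \diamond \fb = (x_u \diamond x_v)\triangleright \fu = x_{u+v}\fu + \sum_{i+j=u+v}\Delta^j_{u,v} x_i(x_j \shuffle \fu)$. So $\Delta(x_u\fu \shuffle x_v)$ is a linear combination of $\Delta$ applied to the terms $x_u(\fu \shuffle x_v)$, $x_v x_u\fu$, $x_{u+v}\fu$ and $x_i(x_j \shuffle \fu)$, all of which are words (or $\Fq$-combinations thereof) beginning with a single letter; crucially, each has the shape $x_c \fw$ with $w(x_c\fw) = w$ and $\depth(\fw) \geq 1$, so the defining formula $\Delta(x_c\fw) = 1\otimes x_c\fw + \sum(a_c \triangleright a_\fw)\otimes(b_c \shuffle b_\fw)$ applies to each of them.

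The core computation, then, is to show that when one writes out $\Delta(x_c\fw)$ via that formula for each of these constituents and recombines, one recovers $\Delta(x_u\fu)\shuffle \Delta(x_v)$. For the right-hand side, I would write $\Delta(x_u\fu) = 1\otimes x_u\fu + \sum(a_u \triangleright a_\fu)\otimes(b_u \shuffle b_\fu)$ (Lemma \ref{lem: factor 1} and the definition) and $\Delta(x_v) = 1\otimes x_v + \sum a_v\otimes b_v$, and expand the shuffle of tensors componentwise. Matching the two sides will require: (i) the induction hypothesis of Theorem \ref{thm: compatibility} applied to $\fu \shuffle x_v$ (legitimate since $w(\fu) + w(x_v) < w$), giving $\Delta(\fu \shuffle x_v) = \Delta(\fu)\shuffle \Delta(x_v)$; (ii) the ``key properties'' of Proposition \ref{prop: key properties}, especially the mixed associativity/commutativity identities $(\fa \triangleright \fb)\triangleright \fc = \fa \triangleright(\fb \shuffle \fc)$, $(\fa \diamond \fb)\triangleright \fc = \fa \diamond(\fb \triangleright \fc) = (\fa \triangleright \fc)\diamond \fb$, and associativity/commutativity of $\shuffle$ and $\diamond$, to reorganize the left-component products $a_u \triangleright a_\fu \triangleright(\cdots)$ and the right-component shuffle products into the form produced by the right-hand side; and (iii) the definition of $\Delta$ for letters, equation \eqref{eq: coproduct}, in the special case needed to handle $\Delta(x_{u+v})$ and the depth-two correction words $\Delta(x_i x_j)$ — in fact \eqref{eq: coproduct} is precisely the identity $\Delta(x_1 \shuffle x_{w-1}) = \Delta(x_1)\shuffle\Delta(x_{w-1})$ rearranged, so the letter case is itself an instance of the proposition and should be invoked as a base/companion fact rather than re-proved.

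I would organize the bookkeeping by treating separately the ``$1\otimes(\text{word})$'' parts and the ``$a\otimes b$ with $a\neq 1$'' parts. The $1\otimes$ part on the left collapses, by the recursive definition, to $1\otimes(x_u\fu \shuffle x_v)$, which matches the $1\otimes 1$-cross-term on the right; this is immediate. The substantive work is entirely in the $a\neq 1$ part, where one must check an identity of the shape
\begin{align*}
\sum \big(a_{c}\triangleright a_{\fw}\big)\otimes\big(b_{c}\shuffle b_{\fw}\big) \;=\; \sum\big((a_u\triangleright a_\fu)\shuffle a_v\big)\otimes\big((b_u\shuffle b_\fu)\shuffle b_v\big)
\end{align*}
after substituting the expansions of each constituent $\Delta(x_c\fw)$, using $\Delta(x_u) = 1\otimes x_u + \sum a_u\otimes b_u$ and $\Delta(\fu) = \sum a_\fu\otimes b_\fu$ (Sweedler-type summation), and applying the identities in (ii). I expect the main obstacle to be exactly this recombination: tracking how the ``head letter'' $x_c$ of each constituent distributes its own coproduct $\Delta(x_c)$ (which itself, for $c = u+v$ or for the $x_ix_j$, is only defined through \eqref{eq: coproduct}) against the coproduct of the tail, and verifying that the triangle/diamond reshuffling via Proposition \ref{prop: key properties} lands on the nose. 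A clean way to tame this is to prove first an auxiliary lemma computing $\Delta(x_c(\fv))$ in terms of $\Delta(x_c)$ and $\Delta(\fv)$ uniformly (this is just the recursive definition when $\depth \fv \geq 1$, and \eqref{eq: coproduct} pushed through when $\fv = 1$, $c = w$), so that all four constituents are handled by one formula; then the whole computation becomes a formal manipulation in $(\frak C\otimes\frak C, \shuffle\otimes\shuffle)$ using only the algebra identities of Proposition \ref{prop: key properties} and the weight-$(<w)$ case of Theorem \ref{thm: compatibility}. The remaining Steps 2 and 3 of the section will presumably bootstrap from this depth-$\geq 2$ versus depth-$1$ case to general $\fa, \fb$ by a further induction on $\depth(\fb)$, again using the recursive rule for $\shuffle$ and the induction hypothesis.
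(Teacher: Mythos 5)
Your plan follows essentially the same route as the paper's proof: the same expansion of $x_u\fu\shuffle x_v$ into the four constituents $x_vx_u\fu$, $x_u(\fu\shuffle x_v)$, $x_{u+v}\fu$ and $x_{u+v-j}(x_j\shuffle\fu)$ via Lemma \ref{lem: triangle formulas}, the same term-by-term application of the recursive formula for $\Delta$, the same separation into the $1\otimes(\cdot)$ part and the $a\neq 1$ part, and the same reliance on the weight induction hypothesis both for $\Delta(\fu\shuffle x_v)$ and for the letter-letter compatibility $\Delta(x_u\shuffle x_v)=\Delta(x_u)\shuffle\Delta(x_v)$ at weight $u+v<w$, which is exactly how the paper derives the key cancellation identity \eqref{eq: x_u x_v 2} combining the $\Delta(x_{u+v})$ and $\Delta(x_{u+v-j}x_j)$ contributions into $\sum(a_u\diamond a_v)\otimes(b_u\shuffle b_v)$. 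The reorganization via Proposition \ref{prop: key properties} that you anticipate is precisely what the paper carries out in showing $S_1=S_3$ and $S_2=S_4$.
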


\begin{proof}
By definition of the product $\shuffle $ and Lemma \ref{lem: triangle formulas}, we write as 
\begin{align} \label{eq: expansion 1}
x_u \fu \shuffle x_v&=x_v \triangleright (x_u \fu)+(x_u \fu) \triangleright x_v+(x_u \fu) \diamond x_v \\
&=x_vx_u\fu+x_u(\fu \shuffle x_v) +(x_u \diamond x_v) \triangleright \fu \notag \\
&=x_v x_u \fu + x_u(\fu\shuffle x_v)+x_{u+v} \fu+ \sum_{0<j<u+v} \Delta^j_{u,v} x_{u+v-j} (x_j\shuffle \fu). \notag
\end{align}
where the coefficients $\Delta^j_{u,v}$ belong to $\Fq$. Therefore, we get
\begin{align} \label{eq:compatibility 1}
& \Delta(x_u \fu \shuffle x_v)-\Delta(x_u \fu)\shuffle \Delta(x_v) \\
&= \Delta(x_v x_u \fu) + \Delta(x_u(\fu\shuffle x_v)) \notag \\
&+\Delta(x_{u+v} \fu)+ \sum_{0<j<u+v} \Delta^j_{u,v} \Delta(x_{u+v-j} (x_j\shuffle \fu)) \notag \\ 
&-\Delta(x_u \fu)\shuffle \Delta(x_v). \notag
\end{align}

We now analyze each term of the RHS of the above expression. To do so we put
\begin{align*}
\Delta(\fu) &=1 \otimes \fu+\sum a_\fu \otimes b_\fu,
\end{align*}
and for all $j \in \bN$, we simply put
\begin{align*}
\Delta(x_j) &=1 \otimes x_j+\sum a_j \otimes b_j.
\end{align*}
In particular, 
\begin{align*}
\Delta(x_u) &=1 \otimes x_u+\sum a_u \otimes b_u, \\
\Delta(x_v) &=1 \otimes x_v+\sum a_v \otimes b_v.
\end{align*}

\noindent {\bf The first term $\Delta(x_v x_u \fu)$.} \par From the definition of the coproduct $\Delta$ we deduce
\begin{equation} \label{eq: x_u u}
\Delta(x_u \fu)=1 \otimes x_u \fu+\sum a_u \otimes (b_u\shuffle \fu)+\sum (a_u \triangleright a_\fu) \otimes (b_u\shuffle b_\fu).
\end{equation}
Thus
\begin{align} \label{eq:term 1}
\Delta(x_v x_u \fu)&=1 \otimes x_v x_u \fu+ \sum a_v \otimes ((x_u \fu)\shuffle b_v)+\sum (a_v \triangleright a_u) \otimes (b_u\shuffle \fu\shuffle b_v)  \\
&+\sum (a_v \triangleright (a_u \triangleright a_\fu)) \otimes (b_u\shuffle b_\fu\shuffle b_v). \notag
\end{align}

\noindent {\bf The second term $\Delta(x_u(\fu\shuffle x_v))$.} \par Since $w(\fu)+w(x_v)<w$, the induction hypothesis implies
\begin{align*}
\Delta(\fu\shuffle x_v)&=\Delta(\fu)\shuffle \Delta(x_v) \\
&=\left(1 \otimes \fu+\sum a_\fu \otimes b_\fu \right)\shuffle \left(1 \otimes x_v+\sum a_v \otimes b_v \right) \\
&=1 \otimes (\fu\shuffle x_v)+\sum a_\fu \otimes (b_\fu\shuffle x_v) \\
&+\sum a_v \otimes (\fu\shuffle b_v)+\sum (a_\fu\shuffle a_v) \otimes (b_\fu\shuffle b_v).
\end{align*}
It follows that
\begin{align} \label{eq:term 2}
\Delta(x_u(\fu\shuffle x_v))&=1 \otimes (x_u(\fu\shuffle x_v))+\sum a_u \otimes (b_u\shuffle \fu\shuffle x_v) \\
&+\sum (a_u \triangleright a_\fu) \otimes (b_u\shuffle b_\fu\shuffle x_v)+\sum (a_u \triangleright a_v) \otimes (b_u\shuffle \fu\shuffle b_v) \notag \\
&+\sum (a_u \triangleright (a_\fu\shuffle a_v)) \otimes (b_u\shuffle b_\fu\shuffle b_v). \notag
\end{align}

\noindent {\bf The third term $\Delta(x_{u+v} \fu)$.} \par By definition,
\begin{align} \label{eq:term 3}
\Delta(x_{u+v} \fu) &=1 \otimes (x_{u+v} \fu)+ \sum a_{u+v} \otimes (b_{u+v}\shuffle \fu)+\sum (a_{u+v} \triangleright a_\fu) \otimes (b_{u+v}\shuffle b_\fu).
\end{align}

\noindent {\bf The fourth terms $\Delta(x_{u+v-j} (x_j\shuffle \fu))$ for all $0<j<u+v$.} \par 

As $w(x_j)+w(\fu)<w$, by the induction hypothesis,
\begin{align*}
\Delta(x_j\shuffle \fu)&=\Delta(x_j)\shuffle \Delta(\fu) \\
&= \left(1 \otimes x_j+\sum a_j \otimes b_j \right)\shuffle \left(1 \otimes \fu+\sum a_\fu \otimes b_\fu \right) \\
&= 1 \otimes (x_j\shuffle \fu)+\sum a_j \otimes (b_j\shuffle \fu) \\
&+\sum a_\fu \otimes (x_j\shuffle b_\fu)+\sum (a_j\shuffle a_\fu) \otimes (b_j\shuffle b_\fu).
\end{align*}
We then get;
\begin{align} \label{eq:term 4}
\Delta(x_{u+v-j} (x_j\shuffle \fu))&=1 \otimes (x_{u+v-j}(x_j\shuffle \fu))+\sum a_{u+v-j} \otimes (b_{u+v-j}\shuffle x_j\shuffle \fu) \\
&+\sum (a_{u+v-j} \triangleright a_j) \otimes (b_{u+v-j}\shuffle b_j\shuffle \fu) \notag \\
&+\sum (a_{u+v-j} \triangleright a_\fu) \otimes (b_{u+v-j}\shuffle x_j\shuffle b_\fu) \notag \\
&+\sum (a_{u+v-j} \triangleright (a_j\shuffle a_\fu)) \otimes (b_{u+v-j}\shuffle b_j\shuffle b_\fu) \notag.
\end{align}

\noindent {\bf The last term $\Delta(x_u \fu)\shuffle \Delta(x_v)$.} \par 

Recall that $\Delta(x_u \fu)$ is given by \eqref{eq: x_u u}. Thus,
\begin{align} \label{eq:term 5}
& \Delta(x_u \fu)\shuffle \Delta(x_v) \\
&= \left(1 \otimes x_u \fu+\sum a_u \otimes (b_u\shuffle \fu)+\sum (a_u \triangleright a_\fu) \otimes (b_u\shuffle b_\fu) \right)\shuffle \left(1 \otimes x_v+\sum a_v \otimes b_v\right) \notag \\
&= 1 \otimes ((x_u \fu)\shuffle x_v) + \sum a_v \otimes (b_v\shuffle (x_u \fu)) \notag \\
&+ \sum a_u \otimes (b_u\shuffle \fu\shuffle x_v)+\sum (a_u\shuffle a_v) \otimes (b_u\shuffle b_v\shuffle \fu) \notag \\
&+ \sum (a_u \triangleright a_\fu) \otimes (b_u\shuffle b_\fu\shuffle x_v)+ \sum ((a_u \triangleright a_\fu)\shuffle a_v) \otimes (b_u\shuffle b_\fu\shuffle b_v). \notag
\end{align}

Plugging the equations \eqref{eq:term 1}, \eqref{eq:term 2}, \eqref{eq:term 3}, \eqref{eq:term 4}, \eqref{eq:term 5} into \eqref{eq:compatibility 1} yields
\begin{align*}
\Delta(x_u \fu \shuffle x_v)-\Delta(x_u \fu)\shuffle \Delta(x_v) =S_0-S_1-S_2+S_3+S_4.
\end{align*}
Here the sums $S_i$ with $0 \leq i \leq 4$ are given as follows:
\begin{align*}
S_0&= 1 \otimes x_v x_u \fu+1 \otimes (x_u(\fu\shuffle x_v))+1 \otimes (x_{u+v} \fu) \\
&+\sum_{0<j<u+v} \Delta^j_{u,v} 1 \otimes (x_{u+v-j}(x_j\shuffle \fu)) - 1 \otimes ((x_u \fu)\shuffle x_v). \\
S_1&=\sum (a_u\shuffle a_v) \otimes (b_u\shuffle b_v\shuffle \fu)-\sum (a_u \triangleright a_v) \otimes (b_u\shuffle b_v\shuffle \fu) \\
&-\sum (a_v \triangleright a_u) \otimes (b_u\shuffle b_v\shuffle \fu). \\
S_2&=\sum ((a_u \triangleright a_\fu)\shuffle a_v) \otimes (b_u\shuffle b_\fu\shuffle b_v)-\sum (a_u \triangleright (a_\fu\shuffle a_v)) \otimes (b_u\shuffle b_\fu\shuffle b_v) \\
&-\sum (a_v \triangleright (a_u \triangleright a_\fu)) \otimes (b_u\shuffle b_\fu\shuffle b_v). \\
S_3&= \sum a_{u+v} \otimes (b_{u+v}\shuffle \fu)+\sum_{0<j<u+v} \Delta^j_{u,v} \sum a_{u+v-j} \otimes (b_{u+v-j}\shuffle x_j\shuffle \fu) \\
&+ \sum_{0<j<u+v} \Delta^j_{u,v} \sum (a_{u+v-j} \triangleright a_j) \otimes (b_{u+v-j}\shuffle b_j\shuffle \fu). \\
S_4&= \sum (a_{u+v} \triangleright a_\fu) \otimes (b_{u+v}\shuffle b_\fu) \\
&+\sum_{0<j<u+v} \Delta^j_{u,v} \sum (a_{u+v-j} \triangleright a_\fu) \otimes (b_{u+v-j}\shuffle x_j\shuffle b_\fu) \\
&+ \sum_{0<j<u+v} \Delta^j_{u,v} \sum (a_{u+v-j} \triangleright (a_j\shuffle a_\fu)) \otimes (b_{u+v-j}\shuffle b_j\shuffle b_\fu).
\end{align*}

We claim that
\begin{enumerate}
\item $S_0=0$.
\item $S_1-S_3=0$.
\item $S_2-S_4=0$.
\end{enumerate}

We now prove the previous claim. For Part (1), we want to show that $S_0=0$. In fact, it follows immediately from \eqref{eq: expansion 1}, e.g.,
\begin{align*}
x_v x_u \fu+x_u(\fu\shuffle x_v)+x_{u+v} \fu +\sum_{0<j<u+v} \Delta^j_{u,v} x_{u+v-j}(x_j\shuffle \fu) - (x_u \fu)\shuffle x_v=0. 
\end{align*}

For Part (2), we will show that 
\begin{align*}
S_1=S_3=\sum (a_u \diamond a_v)  \otimes (b_u\shuffle b_v\shuffle \fu).
\end{align*}
In fact, we note that
\begin{align*}
S_1&=\sum (a_u\shuffle a_v) \otimes (b_u\shuffle b_v\shuffle \fu)-\sum (a_u \triangleright a_v) \otimes (b_u\shuffle b_v\shuffle \fu) \\
&-\sum (a_v \triangleright a_u) \otimes (b_u\shuffle b_v\shuffle \fu) \\
&= \sum (a_u\shuffle a_v- a_u \triangleright a_v-a_v \triangleright a_u) \otimes (b_u\shuffle b_v\shuffle \fu) \\
&= \sum (a_u \diamond a_v)  \otimes (b_u\shuffle b_v\shuffle \fu).
\end{align*}
Here the last equality follows from Lemma \ref{lem: triangle formulas}, Part (2). 

Next, since $w(x_u)+w(x_v)<w$, by the induction hypothesis we know that 
\begin{equation} \label{eq: x_u x_v}
\Delta(x_u\shuffle x_v)=\Delta(x_u)\shuffle \Delta(x_v)
\end{equation}
The LHS equals 
\begin{align*}
& \Delta(x_u\shuffle x_v) \\
&=\Delta(x_ux_v)+\Delta(x_vx_u)+\Delta(x_{u+v}) +\sum_{0<j<u+v} \Delta^j_{u,v} \Delta(x_{u+v-j} x_j) \\
&=1 \otimes x_ux_v + \sum a_u \otimes (b_u\shuffle x_v)+\sum (a_u \triangleright a_v) \otimes (b_u\shuffle b_v) \\
&+1 \otimes x_vx_u + \sum a_v \otimes (x_u\shuffle b_v)+\sum (a_v \triangleright a_u) \otimes (b_u\shuffle b_v) \\
&+ 1 \otimes x_{u+v} + \sum a_{u+v} \otimes b_{u+v} \\
&+ \sum_{0<j<u+v} \Delta^j_{u,v} 1 \otimes x_{u+v-j} x_j \\
&+ \sum_{0<j<u+v} \Delta^j_{u,v} \sum a_{u+v-j} \otimes (b_{u+v-j}\shuffle x_j) \\
&+ \sum_{0<j<u+v} \Delta^j_{u,v} \sum (a_{u+v-j} \triangleright a_j) \otimes (b_{u+v-j}\shuffle b_j).
\end{align*}
We see that
\begin{align*}
\Delta(x_u)\shuffle \Delta(x_v) &= \left(1 \otimes x_u+\sum a_u \otimes b_u \right)\shuffle \left(1 \otimes x_v+\sum a_v \otimes b_v \right) \\
&= 1 \otimes (x_u\shuffle x_v)+\sum a_u \otimes (b_u\shuffle x_v) \\
&+\sum a_v \otimes (x_u\shuffle b_v)+\sum (a_u\shuffle a_v) \otimes (b_u\shuffle b_v).
\end{align*}
Replacing these equalities into \eqref{eq: x_u x_v} gets
\begin{align*}
& \sum (a_u \triangleright a_v) \otimes (b_u\shuffle b_v) +\sum (a_v \triangleright a_u) \otimes (b_u\shuffle b_v) \\
&+ \sum a_{u+v} \otimes b_{u+v} + \sum_{0<j<u+v} \Delta^j_{u,v} \sum a_{u+v-j} \otimes (b_{u+v-j}\shuffle x_j) \\
&+ \sum_{0<j<u+v} \Delta^j_{u,v} \sum (a_{u+v-j} \triangleright a_j) \otimes (b_{u+v-j}\shuffle b_j) \\
&= \sum (a_u\shuffle a_v) \otimes (b_u\shuffle b_v). 
\end{align*}
Thus we deduce
\begin{align} \label{eq: x_u x_v 2}
& \sum a_{u+v} \otimes b_{u+v} + \sum_{0<j<u+v} \Delta^j_{u,v} \sum a_{u+v-j} \otimes (b_{u+v-j}\shuffle x_j) \\
&+ \sum_{0<j<u+v} \Delta^j_{u,v} \sum (a_{u+v-j} \triangleright a_j) \otimes (b_{u+v-j}\shuffle b_j) \notag \\
&= \sum (a_u\shuffle a_v-a_u \triangleright a_v-a_v \triangleright a_u) \otimes (b_u\shuffle b_v) \notag \\
&= \sum (a_u \diamond a_v) \otimes (b_u\shuffle b_v). \notag
\end{align}

As a direct consequence of \eqref{eq: x_u x_v 2}, we obtain
\begin{align*}
S_3&= \sum a_{u+v} \otimes (b_{u+v}\shuffle \fu)+\sum_{0<j<u+v} \Delta^j_{u,v} \sum a_{u+v-j} \otimes (b_{u+v-j}\shuffle x_j\shuffle \fu) \\
&+ \sum_{0<j<u+v} \Delta^j_{u,v} \sum (a_{u+v-j} \triangleright a_j) \otimes (b_{u+v-j}\shuffle b_j\shuffle \fu). \\
&= \sum (a_u \diamond a_v) \otimes (b_u\shuffle b_v\shuffle u)
\end{align*}
as desired. We conclude that $S_1=S_3$ as claimed.

For Part (3), we will show that
\begin{align*}
S_2=S_4=\sum ((a_u \triangleright a_\fu) \diamond a_v) \otimes (b_u\shuffle b_\fu\shuffle b_v).
\end{align*}
In fact, to prove the equality for $S_2$, we note that
\begin{align*}
(a_u \triangleright a_\fu)\shuffle a_v&=(a_u \triangleright a_\fu) \triangleright a_v+a_v \triangleright (a_u \triangleright a_\fu)+(a_u \triangleright a_\fu) \diamond a_v \\
&=a_u \triangleright (a_\fu \shuffle  a_v)+a_v \triangleright (a_u \triangleright a_\fu)+(a_u \triangleright a_\fu) \diamond a_v.
\end{align*}
The first equality follows from Lemma \ref{lem: triangle formulas} and the second one follows from Proposition \ref{prop: key properties}, Part 5. We then obtain
\begin{align*}
S_2&=\sum ((a_u \triangleright a_\fu)\shuffle a_v) \otimes (b_u\shuffle b_\fu\shuffle b_v)-\sum (a_u \triangleright (a_\fu\shuffle a_v)) \otimes (b_u\shuffle b_\fu\shuffle b_v) \\
&-\sum (a_v \triangleright (a_u \triangleright a_\fu)) \otimes (b_u\shuffle b_\fu\shuffle b_v) \\
&=\sum ((a_u \triangleright a_\fu)\shuffle a_v-a_u \triangleright (a_\fu\shuffle a_v)-a_v \triangleright (a_u \triangleright a_\fu)) \otimes (b_u\shuffle b_\fu\shuffle b_v) \\
&=\sum ((a_u \triangleright a_\fu) \diamond a_v) \otimes (b_u\shuffle b_\fu\shuffle b_v).
\end{align*}

We now consider the term $S_4$. We have
\begin{align*}
S_4&= \sum (a_{u+v} \triangleright a_\fu) \otimes (b_{u+v}\shuffle b_\fu) \\
&+\sum_{0<j<u+v} \Delta^j_{u,v} \sum (a_{u+v-j} \triangleright a_\fu) \otimes (b_{u+v-j}\shuffle x_j\shuffle b_\fu) \\
&+ \sum_{0<j<u+v} \Delta^j_{u,v} \sum (a_{u+v-j} \triangleright (a_j\shuffle a_\fu)) \otimes (b_{u+v-j}\shuffle b_j\shuffle b_\fu) \\
&= \sum (a_{u+v} \triangleright a_\fu) \otimes (b_{u+v}\shuffle b_\fu) \\
&+\sum_{0<j<u+v} \Delta^j_{u,v} \sum (a_{u+v-j} \triangleright a_\fu) \otimes (b_{u+v-j}\shuffle x_j\shuffle b_\fu) \\
&+ \sum_{0<j<u+v} \Delta^j_{u,v} \sum ((a_{u+v-j} \triangleright a_j) \triangleright a_\fu) \otimes (b_{u+v-j}\shuffle b_j\shuffle b_\fu) \\
&= \sum ((a_u \diamond a_v) \triangleright a_\fu) \otimes (b_u\shuffle b_v\shuffle b_\fu) \\
&=\sum ((a_u \triangleright a_\fu) \diamond a_v) \otimes (b_u\shuffle b_\fu\shuffle b_v).
\end{align*}
Here the second and fourth equalities follow from Proposition \ref{prop: key properties}, Part 5 and 6. The third one is a direct consequence of \eqref{eq: x_u x_v 2}.

Thus we have proved that $S_2=S_4$ as claimed.

To conclude, we see immediately that 
\begin{align*}
\Delta(x_u \fu \shuffle x_v)-\Delta(x_u \fu)\shuffle \Delta(x_v) =S_0-S_1-S_2+S_3+S_4=0.
\end{align*}
The proof of Proposition \ref{prop: compatibility step 1} is finished.
\end{proof}

\subsubsection{Step 2} 
Next we generalize Proposition \ref{prop: compatibility step 1} for words of arbitrary depth.
\begin{proposition} \label{prop: compatibility step 2}
We work with the above assumption. Then for all words $x_u \fu, x_v \in \langle \Sigma \rangle$ with $u,v \in \bN$, $\depth(\fu) \geq 1$, $\depth(\fv) \geq 1$ and $w(x_u \fu)+w(x_v \fv)=w$, we have
 	\[ \Delta(x_u \fu\shuffle x_v \fv)=\Delta(x_u \fu)\shuffle \Delta(x_v \fv). \]
\end{proposition}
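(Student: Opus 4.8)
The plan is to run a direct variant of the proof of Proposition~\ref{prop: compatibility step 1}, now treating the tail $\fv$ of the second word on the same footing as $\fu$. The first step is to record the analogue of the expansion \eqref{eq: expansion 1}. Using the recursive definitions of $\shuffle$ and $\diamond$, Lemma~\ref{lem: triangle formulas}, the associativity of $\shuffle$ (Proposition~\ref{prop: associative}), and the identities $x_{u+v}\triangleright\bg=x_{u+v}\bg$ and $(x_{u+v-j}x_j)\triangleright\bg=x_{u+v-j}(x_j\shuffle\bg)$ (immediate from $\fa\triangleright\fb=x_a(\fa_-\shuffle\fb)$), one obtains
\begin{align*}
x_u\fu\shuffle x_v\fv=x_u(\fu\shuffle x_v\fv)+x_v(x_u\fu\shuffle\fv)+x_{u+v}(\fu\shuffle\fv)+\sum_{0<j<u+v}\Delta^j_{u,v}\,x_{u+v-j}(x_j\shuffle\fu\shuffle\fv),
\end{align*}
which specialises to \eqref{eq: expansion 1} when $\fv$ is the empty word. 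Applying $\Delta$ to both sides reduces $\Delta(x_u\fu\shuffle x_v\fv)$ to a sum of terms of the form $\Delta(x_c\,\bg)$, where $c\in\{u,v,u+v,u+v-j\}$ and $\bg$ is an $\Fq$-linear combination of nonempty words of weight $<w$.

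The main mechanical device is the following ``peel-off'' observation: if $\bg$ is any $\Fq$-linear combination of nonempty words with $\Delta(\bg)=\sum a_\bg\otimes b_\bg$, and $\Delta(x_c)=1\otimes x_c+\sum a_c\otimes b_c$, then $\Delta(x_c\bg)=1\otimes x_c\bg+\sum(a_c\triangleright a_\bg)\otimes(b_c\shuffle b_\bg)$, immediate from the recursive definition of the coproduct together with $\Fq$-linearity. Now every shuffle that appears --- namely $\fu\shuffle x_v\fv$, $x_u\fu\shuffle\fv$, $\fu\shuffle\fv$, $x_j\shuffle\fu\shuffle\fv$, and (below) $x_u\shuffle x_v\fv$ and $x_u\shuffle x_v$ --- has total weight strictly less than $w$, since $\depth(\fu),\depth(\fv)\geq1$ forces $w(\fu),w(\fv)\geq1$; hence the induction hypothesis on the total weight makes $\Delta$ multiplicative on each of them. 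Combining the two, every term on the right of $\Delta(x_u\fu\shuffle x_v\fv)$ is rewritten explicitly as a sum of tensors built from $\Delta(x_u)$, $\Delta(\fu)$, $\Delta(x_v\fv)$, $\Delta(\fv)$ and the products $\triangleright,\diamond,\shuffle$ --- the exact analogues of the quantities $S_0,\dots,S_4$ in the proof of Proposition~\ref{prop: compatibility step 1}. In particular no induction on depth is needed, and Proposition~\ref{prop: compatibility step 1} itself is not invoked.

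In parallel I would expand $\Delta(x_u\fu)\shuffle\Delta(x_v\fv)$ using formula \eqref{eq: x_u u} for both factors; note that $\Delta(x_v\fv)=1\otimes x_v\fv+\sum a\otimes b$ has exactly the shape guaranteed by Lemma~\ref{lem: factor 1}, with $a\neq1$, just as $\Delta(x_v)$ did in Step~1. Comparing the two expansions, the terms organise into the same bundles as before: a ``$1\otimes(\,\cdot\,)$'' bundle that cancels by the displayed expansion identity; two bundles whose cancellation is precisely the identities in Proposition~\ref{prop: key properties}, parts (3), (5), (6) (e.g. $(\fa\triangleright\fb)\triangleright\fc=\fa\triangleright(\fb\shuffle\fc)$ and $(\fa\diamond\fb)\triangleright\fc=\fa\diamond(\fb\triangleright\fc)$); and a bundle in which the coproduct components of $x_{u+v}$ and of the two-letter words $x_{u+v-j}x_j$ --- entering only through the \emph{unchanged} diamond factor $x_u\diamond x_v$ --- get traded for those of $x_u$ and $x_v$ by means of the relation \eqref{eq: x_u x_v 2}, which is still available because $u+v<w$ and needs no modification since it involves only the letters $x_u,x_v$.

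I expect the only real obstacle to be bookkeeping. Compared with Proposition~\ref{prop: compatibility step 1}, the genuinely new term is $x_v(x_u\fu\shuffle\fv)$, which collapses to $x_vx_u\fu$ when $\fv$ is empty; one must expand $\Delta(x_u\fu\shuffle\fv)=\Delta(x_u\fu)\shuffle\Delta(\fv)$ and verify that the extra tensors it contributes are absorbed into the bundles above after applying the key properties of $\triangleright,\diamond,\shuffle$. This is a finite but somewhat lengthy matching of tensor summands; conceptually it requires nothing beyond the weight induction, the peel-off identity, Proposition~\ref{prop: key properties}, and the relation \eqref{eq: x_u x_v 2}.
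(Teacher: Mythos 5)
Your proposal follows essentially the same route as the paper's own proof: the same expansion of $x_u\fu\shuffle x_v\fv$ into four concatenation terms, the same weight induction making $\Delta$ multiplicative on all the lower-weight shuffles that appear, the same reduction via the recursive (``peel-off'') definition of the coproduct, and the same cancellation of bundles via Lemma \ref{lem: triangle formulas}, Proposition \ref{prop: key properties}, and the relation \eqref{eq: x_u x_v 2}. The remaining work you defer to ``bookkeeping'' is exactly the matching of the sums $S_0,S_1,S_2,S_\fu,S_\fv,S_3,S_4$ that the paper carries out, so the plan is sound and complete in outline.
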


\begin{proof}
We follow the same strategy as that of the proof of Proposition \ref{prop: compatibility step 1} but the proof is much more involved and complicated. 

By the definition of the product $\shuffle $ and Lemma \ref{lem: triangle formulas}, we obtain
\begin{align} \label{eq: expansion 2}
& x_u \fu \shuffle x_v \fv \\
&=(x_u \fu) \triangleright (x_v \fv)+(x_v \fv) \triangleright (x_u \fu)+(x_u \fu) \diamond (x_v \fv) \notag \\
&=x_v(x_u \fu\shuffle \fv) + x_u(\fu\shuffle (x_v \fv))+(x_u \diamond x_v) \triangleright (\fu\shuffle \fv) \notag \\
&=x_v(x_u \fu\shuffle \fv) + x_u(\fu\shuffle (x_v \fv))+x_{u+v}(\fu\shuffle \fv)+ \sum_{0<j<u+v} \Delta^j_{u,v} x_{u+v-j} (x_j\shuffle \fu\shuffle \fv), \notag
\end{align}
where the coefficients $\Delta^j_{u,v}$ belong to $\Fq$. Therefore, we get
\begin{align} \label{eq:compatibility b}
& \Delta(x_u \fu \shuffle x_v \fv)-\Delta(x_u \fu)\shuffle \Delta(x_v \fv) \\
&= \Delta(x_v(x_u \fu\shuffle \fv)) + \Delta(x_u(\fu\shuffle (x_v \fv))+\Delta(x_{u+v}(\fu\shuffle \fv)) \notag \\
&+ \sum_{0<j<u+v} \Delta^j_{u,v} \Delta(x_{u+v-j} (x_j\shuffle \fu\shuffle \fv)) -\Delta(x_u \fu)\shuffle \Delta(x_v \fv). \notag
\end{align}

We now analyze each term of the RHS of the above expression. To do so we put
\begin{align*}
\Delta(\fu) &=1 \otimes \fu+\sum a_\fu \otimes b_\fu, \\
\Delta(\fv) &=1 \otimes \fv+\sum a_\fv \otimes b_\fv, 
\end{align*}
and for all $j \in \bN$, we simply put
\begin{align*}
\Delta(x_j) &=1 \otimes x_j+\sum a_j \otimes b_j.
\end{align*}
In particular, 
\begin{align*}
\Delta(x_u) &=1 \otimes x_u+\sum a_u \otimes b_u, \\
\Delta(x_v) &=1 \otimes x_v+\sum a_v \otimes b_v.
\end{align*}

\noindent {\bf The first term $\Delta(x_v(x_u \fu\shuffle \fv))$.} \par Recall that $\Delta(x_u \fu)$ is given as in \eqref{eq: x_u u}. As $w(x_u \fu)+w(\fv)<w$, we obtain
\begin{align*}
\Delta(x_u \fu\shuffle \fv)&= \Delta(x_u \fu)\shuffle \Delta(\fv) \\
&= 1 \otimes ((x_u \fu)\shuffle \fv)+\sum a_u \otimes (b_u\shuffle \fu\shuffle \fv)+\sum a_\fv \otimes ((x_u \fu)\shuffle b_\fv) \\
&+\sum (a_u \triangleright a_\fu) \otimes (b_u\shuffle b_\fu\shuffle \fv)+\sum (a_u \shuffle  a_\fv) \otimes (b_u\shuffle \fu\shuffle b_\fv) \\
&+\sum ((a_u \triangleright a_\fu)\shuffle a_\fv) \otimes (b_u\shuffle b_\fu\shuffle b_\fv).
\end{align*}
Thus
\begin{align} \label{eq:term 1b}
\Delta(x_v(x_u \fu\shuffle \fv))&=1 \otimes x_v(x_u \fu\shuffle x_v)+ \sum a_v \otimes (b_v\shuffle (x_u \fu)\shuffle \fv) \\
&+\sum (a_v \triangleright a_u) \otimes (b_u\shuffle b_v\shuffle \fu\shuffle \fv) \notag \\
&+\sum (a_v \triangleright (a_u \triangleright a_\fu)) \otimes (b_u\shuffle b_v\shuffle b_\fu\shuffle \fv) \notag \\
&+\sum (a_v \triangleright a_\fv) \otimes (b_v\shuffle (x_u \fu)\shuffle b_\fv) \notag \\
&+\sum (a_v \triangleright (a_u\shuffle a_\fv)) \otimes (b_u\shuffle b_v\shuffle \fu\shuffle b_\fv) \notag \\
&+\sum (a_v \triangleright ((a_u \triangleright a_\fu)\shuffle a_\fv)) \otimes (b_u\shuffle b_v\shuffle b_\fu\shuffle b_\fv). \notag
\end{align}

\noindent {\bf The second term $\Delta(x_u(\fu\shuffle (x_v \fv))$.} \par Similarly, we get
\begin{align} \label{eq:term 2b}
\Delta(x_u(\fu\shuffle (x_v \fv))&=1 \otimes x_u(\fu\shuffle x_v \fv)+ \sum a_u \otimes (b_u\shuffle \fu\shuffle (x_v \fv)) \\
&+\sum ((a_u \triangleright a_v) \otimes (b_u\shuffle b_v\shuffle \fu\shuffle \fv) \notag \\
&+\sum (a_u \triangleright (a_v \triangleright a_\fv)) \otimes (b_u\shuffle b_v\shuffle \fu\shuffle b_\fv) \notag \\
&+\sum (a_u \triangleright a_\fu) \otimes (b_u\shuffle b_\fu\shuffle (x_v \fv)) \notag \\
&+\sum (a_u \triangleright (a_\fu\shuffle a_v)) \otimes (b_u\shuffle b_\fu\shuffle b_v\shuffle \fv) \notag \\
&+\sum (a_u \triangleright (a_\fu\shuffle (a_v \triangleright a_\fv)) \otimes (b_u\shuffle b_v\shuffle b_\fu\shuffle b_\fv). \notag
\end{align}

\noindent {\bf The third term $\Delta(x_{u+v}(\fu\shuffle \fv))$.} \par 

We put
	\[ \Delta(\fu\shuffle \fv)=1 \otimes (\fu\shuffle \fv)+\sum a_{\fu\shuffle \fv} \otimes b_{\fu\shuffle \fv}. \]
As $w(\fu)+w(\fv)<w$, the induction hypothesis implies that
\begin{align*}
\Delta(\fu\shuffle \fv)&=\Delta(\fu)\shuffle \Delta(\fv) \\
&= \left(1 \otimes \fu+\sum a_\fu \otimes b_\fu \right)\shuffle \left(1 \otimes \fv+\sum a_\fv \otimes b_\fv \right).
\end{align*}
Thus
\begin{align*}
& 1 \otimes (\fu\shuffle \fv)+\sum a_{\fu\shuffle \fv} \otimes b_{\fu\shuffle \fv} \\
&= \left(1 \otimes \fu+\sum a_\fu \otimes b_\fu \right)\shuffle \left(1 \otimes \fv+\sum a_\fv \otimes b_\fv \right) \\
&= 1 \otimes (\fu\shuffle \fv)+\sum a_\fu \otimes (b_\fu\shuffle \fv) \\
&+\sum a_\fv \otimes (\fu\shuffle b_\fv)+\sum (a_\fu\shuffle a_\fv) \otimes (b_\fu\shuffle b_\fv),
\end{align*}
which implies
\begin{equation} \label{eq: fu fv}
\sum a_{\fu\shuffle \fv} \otimes b_{\fu\shuffle \fv}=\sum a_\fu \otimes (b_\fu\shuffle \fv)+\sum a_\fv \otimes (\fu\shuffle b_\fv)+\sum (a_\fu\shuffle a_\fv) \otimes (b_\fu\shuffle b_\fv).
\end{equation}

Finally, we have
\begin{align} \label{eq:term 3b}
\Delta(x_{u+v}(\fu\shuffle \fv)) &=1 \otimes (x_{u+v}(\fu\shuffle \fv))+ \sum a_{u+v} \otimes (b_{u+v}\shuffle \fu\shuffle \fv) \\
&+\sum (a_{u+v} \triangleright a_{\fu\shuffle \fv}) \otimes (b_{u+v}\shuffle b_{\fu\shuffle \fv}). \notag
\end{align}

\noindent {\bf The fourth terms $\Delta(x_{u+v-j} (x_j\shuffle \fu\shuffle \fv))$ for all $0<j<u+v$.}  \par 

As $w(x_j)+w(\fu\shuffle \fv)<w$, by the induction hypothesis, we get
\begin{align*}
& \Delta(x_j\shuffle \fu\shuffle \fv) \\
&=\Delta(x_j)\shuffle \Delta(\fu\shuffle \fv) \\
&= \left(1 \otimes x_j+\sum a_j \otimes b_j \right)\shuffle \left(1 \otimes (\fu\shuffle \fv)+\sum a_{\fu\shuffle \fv} \otimes b_{\fu\shuffle \fv} \right) \\
&= 1 \otimes (x_j\shuffle \fu\shuffle \fv)+\sum a_j \otimes (b_j\shuffle \fu\shuffle \fv) \\
&+\sum a_{\fu\shuffle \fv} \otimes (x_j\shuffle b_{\fu\shuffle \fv})+\sum (a_j\shuffle a_{\fu\shuffle \fv}) \otimes (b_j\shuffle b_{\fu\shuffle \fv}).
\end{align*}
We then get
\begin{align} \label{eq:term 4b}
& \Delta(x_{u+v-j} (x_j\shuffle \fu\shuffle \fv)) \\
&=1 \otimes (x_{u+v-j}(x_j\shuffle \fu\shuffle \fv))+\sum a_{u+v-j} \otimes (b_{u+v-j}\shuffle x_j\shuffle \fu\shuffle \fv) \notag \\
&+\sum (a_{u+v-j} \triangleright a_j) \otimes (b_{u+v-j}\shuffle b_j\shuffle \fu\shuffle \fv) \notag \\
&+\sum (a_{u+v-j} \triangleright a_{\fu\shuffle \fv}) \otimes (b_{u+v-j}\shuffle x_j\shuffle b_{\fu\shuffle \fv}) \notag \\
&+\sum (a_{u+v-j} \triangleright (a_j\shuffle a_{\fu\shuffle \fv})) \otimes (b_{u+v-j}\shuffle b_j\shuffle b_{\fu\shuffle \fv}) \notag.
\end{align}

\noindent {\bf The last term $\Delta(x_u \fu)\shuffle \Delta(x_v \fv)$.} \par 

Recall that $\Delta(x_u \fu)$ is given by \eqref{eq: x_u u}. Similarly, 
\begin{equation*}
\Delta(x_v \fv)=1 \otimes x_v \fv+\sum a_v \otimes (b_v\shuffle \fv)+\sum (a_v \triangleright a_\fv) \otimes (b_v\shuffle b_\fv).
\end{equation*}
Thus
\begin{align} \label{eq:term 5b}
& \Delta(x_u \fu)\shuffle \Delta(x_v \fv) \\
&= \left(1 \otimes x_u \fu+\sum a_u \otimes (b_u\shuffle \fu)+\sum (a_u \triangleright a_\fu) \otimes (b_u\shuffle b_\fu) \right) \notag \\
&\shuffle \left(1 \otimes x_v \fv+\sum a_v \otimes (b_v\shuffle \fv)+\sum (a_v \triangleright a_\fv) \otimes (b_v\shuffle b_\fv)\right) \notag \\
&= 1 \otimes (x_u \fu\shuffle x_v \fv) \notag \\
&+ \sum a_v \otimes ((x_u \fu)\shuffle b_v\shuffle \fv)+\sum (a_v \triangleright a_\fv) \otimes ((x_u \fu)\shuffle b_v\shuffle b_\fv) \notag \\
&+ \sum a_u \otimes (b_u\shuffle \fu\shuffle (x_v \fv))+\sum (a_u\shuffle a_v) \otimes (b_u\shuffle b_v\shuffle \fu\shuffle \fv) \notag \\
&+\sum (a_u\shuffle (a_v \triangleright a_\fv)) \otimes (b_u\shuffle \fu\shuffle b_v\shuffle b_\fv) \notag \\
&+ \sum (a_u \triangleright a_\fu) \otimes (b_u\shuffle b_\fu\shuffle (x_v \fv))+ \sum ((a_u \triangleright a_\fu)\shuffle a_v) \otimes (b_u\shuffle b_\fu\shuffle b_v\shuffle \fv) \notag \\
&+\sum ((a_u \triangleright a_\fu)\shuffle (a_v \triangleright a_\fv)) \otimes (b_u\shuffle b_\fu\shuffle b_v\shuffle b_\fv). \notag
\end{align}

Plugging the equations \eqref{eq:term 1b}, \eqref{eq:term 2b}, \eqref{eq:term 3b}, \eqref{eq:term 4b}, \eqref{eq:term 5b} into \eqref{eq:compatibility b} yields
\begin{align*}
\Delta(x_u \fu \shuffle x_v \fv)-\Delta(x_u \fu)\shuffle \Delta(x_v \fv) =S_0-S_1-S_2-S_\fu-S_\fv+S_3+S_4.
\end{align*}
Here the sums $S_i$ with $0 \leq i \leq 4$ and $S_\fu$, $S_\fv$ are given as follows: 
\begin{align*}
S_0&= 1\otimes x_v(x_u \fu\shuffle \fv) + 1 \otimes x_u(\fu\shuffle (x_v \fv))+1 \otimes x_{u+v}(\fu\shuffle \fv) \\
&+ \sum_{0<j<u+v} \Delta^j_{u,v} 1 \otimes x_{u+v-j} (x_j\shuffle \fu\shuffle \fv)-1 \otimes ((x_u \fu)\shuffle (x_v \fv)). \\
S_1&=\sum (a_u\shuffle a_v) \otimes (b_u\shuffle b_v\shuffle \fu\shuffle \fv)-\sum (a_u \triangleright a_v) \otimes (b_u\shuffle b_v\shuffle \fu\shuffle \fv) \\
&-\sum (a_v \triangleright a_u) \otimes (b_u\shuffle b_v\shuffle \fu\shuffle \fv). \\
S_2&=\sum ((a_u \triangleright a_\fu)\shuffle (a_v \triangleright a_\fv)) \otimes (b_u\shuffle b_\fu\shuffle b_v\shuffle b_\fv) \\
&-\sum (a_u \triangleright (a_\fu\shuffle (a_v \triangleright a_\fv))) \otimes (b_u\shuffle b_\fu\shuffle b_v\shuffle b_\fv) \\
&-\sum (a_v \triangleright ((a_u \triangleright a_\fu)\shuffle a_\fv) \otimes (b_u\shuffle b_\fu\shuffle b_v\shuffle b_\fv). \\
S_3&= \sum a_{u+v} \otimes (b_{u+v}\shuffle \fu\shuffle \fv) \\
&+\sum_{0<j<u+v} \Delta^j_{u,v} \sum a_{u+v-j} \otimes (b_{u+v-j}\shuffle x_j\shuffle \fu\shuffle \fv) \\
&+ \sum_{0<j<u+v} \Delta^j_{u,v} \sum (a_{u+v-j} \triangleright a_j) \otimes (b_{u+v-j}\shuffle b_j\shuffle \fu\shuffle \fv). \\
S_4&= \sum (a_{u+v} \triangleright a_{\fu\shuffle \fv}) \otimes (b_{u+v}\shuffle b_{\fu\shuffle \fv}) \\
&+\sum_{0<j<u+v} \Delta^j_{u,v} \sum (a_{u+v-j} \triangleright a_{\fu\shuffle \fv}) \otimes (b_{u+v-j}\shuffle x_j\shuffle b_{\fu\shuffle \fv}) \\
&+ \sum_{0<j<u+v} \Delta^j_{u,v} \sum (a_{u+v-j} \triangleright (a_j\shuffle a_{\fu\shuffle \fv})) \otimes (b_{u+v-j}\shuffle b_j\shuffle b_{\fu\shuffle \fv}),
\end{align*}
and
\begin{align*}
S_\fu&=\sum (a_u\shuffle (a_v \triangleright a_\fv)) \otimes (b_u\shuffle b_v\shuffle \fu\shuffle b_\fv)-\sum (a_u \triangleright (a_v \triangleright a_\fv)) \otimes (b_u\shuffle b_v\shuffle \fu\shuffle b_\fv) \\
&-\sum (a_v \triangleright (a_u\shuffle a_\fv)) \otimes (b_u\shuffle b_v\shuffle \fu\shuffle b_\fv), \\
S_\fv&=\sum ((a_u \triangleright a_\fu)\shuffle a_v) \otimes (b_u\shuffle b_v\shuffle b_\fu\shuffle \fv)-\sum (a_u \triangleright (a_\fu\shuffle  a_v)) \otimes (b_u\shuffle b_v\shuffle b_\fu\shuffle \fv) \\
&-\sum (a_v \triangleright (a_u \triangleright a_\fu)) \otimes (b_u\shuffle b_v\shuffle b_\fu\shuffle \fv).
\end{align*}

We claim that
\begin{enumerate}
\item $S_0=0$.
\item $S_1-S_3=0$.
\item $S_2+S_\fu+S_\fv-S_4=0$.
\end{enumerate}

We now prove the previous claim. For Part (1), we want to show that $S_0=0$. In fact, it follows immediately from \eqref{eq: expansion 2}.

For Part (2), we will show that 
\begin{align*}
S_1=S_3=\sum (a_u \diamond a_v)  \otimes (b_u\shuffle b_v\shuffle \fu\shuffle \fv).
\end{align*}
In fact, Lemma \ref{lem: triangle formulas} implies 
\begin{align*}
S_1&=\sum (a_u\shuffle a_v) \otimes (b_u\shuffle b_v\shuffle \fu\shuffle \fv)-\sum (a_u \triangleright a_v) \otimes (b_u\shuffle b_v\shuffle \fu\shuffle \fv) \\
&-\sum (a_v \triangleright a_u) \otimes (b_u\shuffle b_v\shuffle \fu\shuffle \fv) \\
&= \sum (a_u\shuffle a_v- a_u \triangleright a_v-a_v \triangleright a_u) \otimes (b_u\shuffle b_v\shuffle \fu\shuffle \fv) \\
&= \sum (a_u \diamond a_v)  \otimes (b_u\shuffle b_v\shuffle \fu\shuffle \fv).
\end{align*}

Recall that by \eqref{eq: x_u x_v 2},
\begin{align*} 
& \sum a_{u+v} \otimes b_{u+v} + \sum_{0<j<u+v} \Delta^j_{u,v} \sum a_{u+v-j} \otimes (b_{u+v-j}\shuffle x_j) \\
&+ \sum_{0<j<u+v} \Delta^j_{u,v} \sum (a_{u+v-j} \triangleright a_j) \otimes (b_{u+v-j}\shuffle b_j) \\
&= \sum (a_u \diamond a_v) \otimes (b_u\shuffle b_v). 
\end{align*}

As a direct consequence, we obtain
\begin{align*}
S_3&= \sum a_{u+v} \otimes (b_{u+v}\shuffle \fu\shuffle \fv) \\
&+\sum_{0<j<u+v} \Delta^j_{u,v} \sum a_{u+v-j} \otimes (b_{u+v-j}\shuffle x_j\shuffle \fu\shuffle \fv) \\
&+ \sum_{0<j<u+v} \Delta^j_{u,v} \sum (a_{u+v-j} \triangleright a_j) \otimes (b_{u+v-j}\shuffle b_j\shuffle \fu\shuffle \fv). \\
&= \sum (a_u \diamond a_v) \otimes (b_u\shuffle b_v\shuffle \fu\shuffle \fv)
\end{align*}
as desired. We conclude that $S_1=S_3$ as claimed.

For Part (3), we will show that
\begin{align*}
S_2+S_\fu+S_\fv=S_4=\sum ((a_u \diamond a_v) \triangleright a_{\fu\shuffle \fv}) \otimes (b_u\shuffle b_v\shuffle b_{\fu\shuffle \fv}).
\end{align*}
In fact, we have
\begin{align*}
S_2&=\sum ((a_u \triangleright a_\fu)\shuffle (a_v \triangleright a_\fv)) \otimes (b_u\shuffle b_\fu\shuffle b_v\shuffle b_\fv) \\
&-\sum (a_u \triangleright (a_\fu\shuffle (a_v \triangleright a_\fv))) \otimes (b_u\shuffle b_\fu\shuffle b_v\shuffle b_\fv) \\
&-\sum (a_v \triangleright ((a_u \triangleright a_\fu)\shuffle a_\fv) \otimes (b_u\shuffle b_\fu\shuffle b_v\shuffle b_\fv) \\
&=\sum ((a_u \triangleright a_\fu)\shuffle (a_v \triangleright a_\fv)-a_u \triangleright (a_\fu\shuffle (a_v \triangleright a_\fv))-a_v \triangleright ((a_u \triangleright a_\fu)\shuffle a_\fv)) \\
&\quad \quad \otimes (b_u\shuffle b_\fu\shuffle b_v\shuffle b_\fv) \\
&=\sum ((a_u \triangleright a_\fu) \diamond (a_v \triangleright a_\fv)) \otimes (b_u\shuffle b_\fu\shuffle b_v\shuffle b_\fv).
\end{align*}
Here the last equality follows from Lemma \ref{lem: triangle formulas} and Proposition \ref{prop: key properties}. More precisely,
\begin{align*}
&(a_u \triangleright a_\fu)\shuffle (a_v \triangleright a_\fv) \\&=(a_u \triangleright a_\fu) \triangleright (a_v \triangleright a_\fv)+(a_v \triangleright a_\fv) \triangleright (a_u \triangleright a_\fu)+(a_u \triangleright a_\fu) \diamond (a_v \triangleright a_\fv) \\
&=a_u \triangleright (a_\fu\shuffle (a_v \triangleright a_\fv))+a_v \triangleright ((a_u \triangleright a_\fu)\shuffle a_\fv))+(a_u \triangleright a_\fu) \diamond (a_v \triangleright a_\fv).
\end{align*}

By Lemma \ref{lem: triangle formulas} and Proposition \ref{prop: key properties} again, 
\begin{align*}
S_\fu&=\sum (a_u\shuffle (a_v \triangleright a_\fv)) \otimes (b_u\shuffle b_v\shuffle \fu\shuffle b_\fv)-\sum (a_u \triangleright (a_v \triangleright a_\fv)) \otimes (b_u\shuffle b_v\shuffle \fu\shuffle b_\fv) \\
&-\sum (a_v \triangleright (a_u\shuffle a_\fv)) \otimes (b_u\shuffle b_v\shuffle \fu\shuffle b_\fv) \\
&=\sum (a_u\shuffle (a_v \triangleright a_\fv)-a_u \triangleright (a_v \triangleright a_\fv)-a_v \triangleright (a_u\shuffle a_\fv)) \otimes (b_u\shuffle b_v\shuffle \fu\shuffle b_\fv) \\
&=\sum (a_u \diamond (a_v \triangleright a_\fv)) \otimes (b_u\shuffle b_v\shuffle \fu\shuffle b_\fv),
\end{align*}
and
\begin{align*}
S_\fv&=\sum ((a_u \triangleright a_\fu)\shuffle a_v) \otimes (b_u\shuffle b_v\shuffle b_\fu\shuffle \fv)-\sum (a_u \triangleright (a_\fu\shuffle  a_v)) \otimes (b_u\shuffle b_v\shuffle b_\fu\shuffle \fv) \\
&-\sum (a_v \triangleright (a_u \triangleright a_\fu)) \otimes (b_u\shuffle b_v\shuffle b_\fu\shuffle \fv) \\
&=\sum ((a_u \triangleright a_\fu) \diamond a_v) \otimes (b_u\shuffle b_v\shuffle b_\fu\shuffle \fv).
\end{align*}

Combining the previous formulas for $S_2$, $S_\fu$, $S_\fv$ with \eqref{eq: fu fv} yields
\begin{align*}
S_2+S_\fu+S_\fv=\sum ((a_u \diamond a_v) \triangleright a_{\fu\shuffle \fv}) \otimes (b_u\shuffle b_v\shuffle b_{\fu\shuffle \fv}).
\end{align*}

We now consider the term $S_4$. We have
\begin{align*}
S_4&= \sum (a_{u+v} \triangleright a_{\fu\shuffle \fv}) \otimes (b_{u+v}\shuffle b_{\fu\shuffle \fv}) \\
&+\sum_{0<j<u+v} \Delta^j_{u,v} \sum (a_{u+v-j} \triangleright a_{\fu\shuffle \fv}) \otimes (b_{u+v-j}\shuffle x_j\shuffle b_{\fu\shuffle \fv}) \\
&+ \sum_{0<j<u+v} \Delta^j_{u,v} \sum (a_{u+v-j} \triangleright (a_j\shuffle a_{\fu\shuffle \fv})) \otimes (b_{u+v-j}\shuffle b_j\shuffle b_{\fu\shuffle \fv}) \\
&= \sum (a_{u+v} \triangleright a_{\fu\shuffle \fv}) \otimes (b_{u+v}\shuffle b_{\fu\shuffle \fv}) \\
&+\sum_{0<j<u+v} \Delta^j_{u,v} \sum (a_{u+v-j} \triangleright a_{\fu\shuffle \fv}) \otimes (b_{u+v-j}\shuffle x_j\shuffle b_{\fu\shuffle \fv}) \\
&+ \sum_{0<j<u+v} \Delta^j_{u,v} \sum ((a_{u+v-j} \triangleright a_j) \triangleright a_{\fu\shuffle \fv}) \otimes (b_{u+v-j}\shuffle b_j\shuffle b_{\fu\shuffle \fv}) \\
&= \sum ((a_u \diamond a_v) \triangleright a_{\fu\shuffle \fv}) \otimes (b_u\shuffle b_v\shuffle b_{\fu\shuffle \fv}).
\end{align*}
Here the second equality follows from Proposition \ref{prop: key properties}. The third one is a direct consequence of \eqref{eq: x_u x_v 2}.
We then have proved that $S_2+S_\fu+S_\fv=S_4$ as claimed.

To conclude, we see immediately that 
\begin{align*}
\Delta(x_u \fu \shuffle x_v \fv)-\Delta(x_u \fu)\shuffle \Delta(x_v \fv)&=S_0-S_1-S_2-S_\fu-S_\fv+S_3+S_4 \\
&=S_0-(S_1-S_3)-(S_2+S_\fu+S_\fv-S_4) \\
&=0.
\end{align*}
The proof of Proposition \ref{prop: compatibility step 2} is finished.
\end{proof}

\subsubsection{Step 3} 
By Propositions \ref{prop: compatibility step 1} and \ref{prop: compatibility step 2}, for all words $\fu,\fv$ such that $\depth(\fu)+\depth(\fv)>2$ and $w(\fu)+w(\fv)=w$, we have proved  
	\[ \Delta(\fu\shuffle \fv)=\Delta(\fu)\shuffle \Delta(\fv). \]
To finish the proof of Theorem \ref{thm: compatibility} we prove the remaining case where both $\fu$ and $\fv$ have depth 1. 

\begin{proposition} \label{prop: compatibility step 3}
Let $u,v \in \bN$ such that $u+v=w$. Then 
	\[ \Delta(x_u\shuffle x_v)=\Delta(x_u)\shuffle \Delta(x_v). \]
\end{proposition}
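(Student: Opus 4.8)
The plan is to prove Proposition~\ref{prop: compatibility step 3} by reducing the depth-one-versus-depth-one case to cases already established, namely Propositions~\ref{prop: compatibility step 1} and~\ref{prop: compatibility step 2}, together with the very definition~\eqref{eq: coproduct} of $\Delta(x_w)$. The point is that $\Delta(x_w)$ was \emph{defined} precisely so as to force $\Delta(x_1 \shuffle x_{w-1}) = \Delta(x_1)\shuffle\Delta(x_{w-1})$; so the one special identity $\Delta(x_1 \shuffle x_{w-1}) = \Delta(x_1)\shuffle\Delta(x_{w-1})$ holds essentially tautologically. We must then leverage this single instance to obtain $\Delta(x_u\shuffle x_v) = \Delta(x_u)\shuffle\Delta(x_v)$ for \emph{all} pairs $(u,v)$ with $u+v=w$.

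First I would record the tautological case. By the recursive formula for the shuffle product,
\[
x_1\shuffle x_{w-1} = x_w + x_1 x_{w-1} + x_{w-1}x_1 + \sum_{0<j<w}\Delta^j_{1,w-1}x_{w-j}x_j,
\]
and applying $\Delta$ and using~\eqref{eq: coproduct} gives, by construction,
\[
\Delta(x_1\shuffle x_{w-1}) = \Delta(x_w) + \Delta(x_1x_{w-1}) + \Delta(x_{w-1}x_1) + \sum_{0<j<w}\Delta^j_{1,w-1}\Delta(x_{w-j}x_j) = \Delta(x_1)\shuffle\Delta(x_{w-1}).
\]
(Here $\Delta(x_1 x_{w-1})$, $\Delta(x_{w-1}x_1)$, $\Delta(x_{w-j}x_j)$ are all coproducts of depth-$2$ words of weight $w$, already defined by the depth-$>1$ recursion.) So Proposition~\ref{prop: compatibility step 3} holds when $\{u,v\}=\{1,w-1\}$.

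Next I would bootstrap from the case $u=1$ to general $u$. Fix $v$ and induct on $u$. Suppose $\Delta(x_{u}\shuffle x_{v'}) = \Delta(x_{u})\shuffle\Delta(x_{v'})$ for all $u<u_0$ and all $v'$ with $u+v'=w$ (the base $u_0=1$ being the tautological case above, after using commutativity of $\shuffle$). To handle $u=u_0 \ge 2$, write $x_{u_0} = x_1 \triangleright x_{u_0-1} = x_1(x_{u_0-1})$ — more precisely use the identity from the definition of $\shuffle$ applied to $x_1$ and $x_{u_0-1}$ to express $x_{u_0}$ as a $\Fq$-linear combination of the word $x_1\shuffle x_{u_0-1}$ and the depth-$2$ words $x_1 x_{u_0-1}$, $x_{u_0-1}x_1$, $x_{u_0-j}x_j$. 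Then $x_{u_0}\shuffle x_v$ becomes a linear combination of $(x_1\shuffle x_{u_0-1})\shuffle x_v$ and of terms of the form $(\text{depth-}2\text{ word})\shuffle x_v$. For the first term, associativity of $\shuffle$ (Proposition~\ref{prop: associative}) rewrites it as $x_1\shuffle(x_{u_0-1}\shuffle x_v)$, and since $w(x_1)+w(x_{u_0-1}\shuffle x_v) = w$ with $x_{u_0-1}\shuffle x_v$ of depth $\ge 1$, we are in the situation of Proposition~\ref{prop: compatibility step 1}: $\Delta$ is multiplicative on this product. For the depth-$2$ terms $\fa \shuffle x_v$ with $\depth(\fa)=2$, Proposition~\ref{prop: compatibility step 1} again (or Proposition~\ref{prop: compatibility step 2}) applies directly. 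Assembling, $\Delta(x_{u_0}\shuffle x_v)$ becomes an $\Fq$-linear combination of $\Delta(\cdot)\shuffle\Delta(x_v)$'s; and running the same linear-combination identity \emph{inside} the tensor factors — i.e. using $\Delta(x_{u_0}) = $ the corresponding linear combination of $\Delta(x_1\shuffle x_{u_0-1})$, $\Delta(x_1x_{u_0-1})$, etc., which by the inductive hypothesis and the already-proven cases equals the linear combination of $\Delta(x_1)\shuffle\Delta(x_{u_0-1})$, $\Delta(x_1x_{u_0-1}),\dots$ — shows this equals $\Delta(x_{u_0})\shuffle\Delta(x_v)$, using bilinearity and associativity–commutativity of $\shuffle$ on the tensor algebra $\frak C\otimes\frak C$.

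The main obstacle I anticipate is purely bookkeeping: one must be careful that the linear combination expressing $x_{u_0}$ in terms of $x_1\shuffle x_{u_0-1}$ and depth-$2$ words has \emph{all} its other constituents of weight $w$ and depth $\ge 2$, so that Propositions~\ref{prop: compatibility step 1}–\ref{prop: compatibility step 2} genuinely apply to each term $\fa\shuffle x_v$; and that when one "applies the same identity inside the tensor factors" the coefficients $\Delta^j_{1,u_0-1}$ match on both sides — which they do, since $\Delta$ is $\Fq$-linear and $\shuffle$ on $\frak C\otimes\frak C$ is $\Fq$-bilinear. There is no new combinatorial difficulty here; the whole force of the statement was already absorbed into the definition~\eqref{eq: coproduct} and into the depth-$>1$ cases, and the present step is just a clean deduction. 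Once Proposition~\ref{prop: compatibility step 3} is established, Theorem~\ref{thm: compatibility} follows by combining it with Propositions~\ref{prop: compatibility step 1} and~\ref{prop: compatibility step 2}, completing the induction on total weight.
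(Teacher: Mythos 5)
Your proposal is correct and takes essentially the same route as the paper: both exploit the identity $\Delta(x_1\shuffle x_{n})=\Delta(x_1)\shuffle\Delta(x_{n})$ forced by the definition \eqref{eq: coproduct}, expand $x_1\shuffle x_{u-1}\shuffle x_v$ in two ways via associativity, handle the resulting mixed-depth products with Propositions \ref{prop: compatibility step 1} and \ref{prop: compatibility step 2}, and use the outer induction on total weight for $\Delta(x_{u-1}\shuffle x_v)$, whose weight is $w-1$ (so your auxiliary induction on $u$ is not actually needed). The one point to tighten is your assertion that $x_1\shuffle(x_{u_0-1}\shuffle x_v)$ ``is in the situation of Proposition \ref{prop: compatibility step 1}'' because the second factor has depth $\geq 1$: that proposition requires a word of depth $\geq 2$, so the depth-one component $x_{w-1}$ occurring in the expansion of $x_{u_0-1}\shuffle x_v$ must instead be handled by the tautological case \eqref{eq: x_1}, which you have already established.
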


\begin{proof}
By the definition of the coproduct $\Delta$, for all $n \in \bN$, 
\begin{equation} \label{eq: x_1}
\Delta(x_1\shuffle x_n)=\Delta(x_1)\shuffle \Delta(x_n).
\end{equation}
It follows that Proposition \ref{prop: compatibility step 3} holds if either $u=1$ or $v=1$. We now suppose that $u,v \geq 2$.

We claim that
\begin{equation} \label{eq: x_1 x_u-1 x_v}
\Delta(x_1\shuffle x_{u-1}\shuffle x_v)=\Delta(x_1)\shuffle \Delta(x_{u-1})\shuffle \Delta(x_v).
\end{equation}
In fact, we write
	\[ x_{u-1}\shuffle x_v=x_{u+v-1}+x_{u-1}x_v+x_vx_{u-1}+\sum_{0<j<u+v-1} \Delta^j_{u-1,v} x_{u+v-1-j} x_j. \]
Thus
\begin{align*}
& \Delta(x_1\shuffle x_{u-1}\shuffle x_v) \\
&= \Delta(x_1\shuffle (x_{u-1}\shuffle x_v)) \\
&=\Delta \left(x_1\shuffle \left(x_{u+v-1}+x_{u-1}x_v+x_vx_{u-1}+\sum_{0<j<u+v-1} \Delta^j_{u-1,v} x_{u+v-1-j} x_j\right)\right) \\
&=\Delta (x_1\shuffle x_{u+v-1})+\Delta(x_1\shuffle x_{u-1}x_v)+\Delta(x_1\shuffle x_vx_{u-1}) \\
&+\sum_{0<j<u+v-1} \Delta^j_{u-1,v} \Delta(x_1\shuffle x_{u+v-1-j} x_j) \\
&=\Delta (x_1)\shuffle \Delta(x_{u+v-1})+\Delta(x_1)\shuffle \Delta(x_{u-1}x_v) \\
&+\Delta(x_1)\shuffle \Delta(x_vx_{u-1})+\sum_{0<j<u+v-1} \Delta^j_{u-1,v} \Delta(x_1)\shuffle \Delta(x_{u+v-1-j} x_j).
\end{align*}
Here the last equality follows from \eqref{eq: x_1} and Proposition \ref{prop: compatibility step 1}. It implies
\begin{align*}
& \Delta(x_1\shuffle x_{u-1}\shuffle x_v) \\
&=\Delta (x_1)\shuffle \Delta(x_{u+v-1})+\Delta(x_1)\shuffle \Delta(x_{u-1}x_v) \\
&+\Delta(x_1)\shuffle \Delta(x_vx_{u-1})+\sum_{0<j<u+v-1} \Delta^j_{u-1,v} \Delta(x_1)\shuffle \Delta(x_{u+v-1-j} x_j) \\
&=\Delta (x_1)\shuffle \Delta\left(x_{u+v-1}+x_{u-1}x_v+x_vx_{u-1}+\sum_{0<j<u+v-1} \Delta^j_{u-1,v} x_{u+v-1-j} x_j \right) \\
&=\Delta (x_1)\shuffle \Delta(x_{u-1}\shuffle x_v).
\end{align*}
As $u+v-1=w-1<w$, $\Delta(x_{u-1}\shuffle x_v)=\Delta(x_{u-1})\shuffle \Delta(x_v)$, which implies the claim.

Now we express both sides of \eqref{eq: x_1 x_u-1 x_v} by a different way. First, the LHS of \eqref{eq: x_1 x_u-1 x_v} equals
\begin{align*}
&\Delta(x_1\shuffle x_{u-1}\shuffle x_v) \\
&=\Delta((x_1\shuffle x_{u-1})\shuffle x_v) \\
&=\Delta \left(\left(x_u+x_1x_{u-1}+x_{u-1}x_1+\sum_{0<j<u} \Delta^j_{1,u-1} x_{u-j} x_j \right)\shuffle x_v \right) \\
&=\Delta(x_u\shuffle x_v)+\Delta(x_1x_{u-1}\shuffle x_v)+\Delta(x_{u-1}x_1\shuffle x_v)+\sum_{0<j<u} \Delta^j_{1,u-1} \Delta(x_{u-j} x_j\shuffle x_v) \\
&=\Delta(x_u\shuffle x_v)+\Delta(x_1 x_{u-1})\shuffle \Delta(x_v)+\Delta(x_{u-1} x_1)\shuffle \Delta(x_v) \\
&+\sum_{0<j<u} \Delta^j_{1,u-1} \Delta(x_{u-j} x_j)\shuffle \Delta(x_v). 
\end{align*}
The last equality holds by Proposition \ref{prop: compatibility step 1}.

Next, as $\Delta(x_1)\shuffle \Delta(x_{u-1})=\Delta(x_1\shuffle x_{u-1})$, the RHS of \eqref{eq: x_1 x_u-1 x_v} equals
\begin{align*}
& \Delta(x_1)\shuffle \Delta(x_{u-1})\shuffle \Delta(x_v) \\
&=(\Delta(x_1)\shuffle \Delta(x_{u-1}))\shuffle \Delta(x_v) \\
&=\Delta(x_1\shuffle x_{u-1})\shuffle \Delta(x_v) \\
&=\Delta \left(x_u+x_1x_{u-1}+x_{u-1}x_1+\sum_{0<j<u} \Delta^j_{1,u-1} x_{u-j} x_j \right)\shuffle \Delta(x_v) \\
&=\Delta(x_u)\shuffle \Delta(x_v)+\Delta(x_1 x_{u-1})\shuffle \Delta(x_v)+\Delta(x_{u-1} x_1)\shuffle \Delta(x_v) \\
&+\sum_{0<j<u} \Delta^j_{1,u-1} \Delta(x_{u-j} x_j)\shuffle \Delta(x_v). 
\end{align*}

Putting all together, we deduce
	\[ \Delta(x_u\shuffle x_v)=\Delta(x_u)\shuffle \Delta(x_v) \]
as desired.
\end{proof}

\subsection{Coassociativity of the coproduct} ${}$\par

In this section we prove the coassociativity of the coproduct $\Delta$.

\begin{theorem} \label{thm: coassociativity}
Let $\fu \in \langle \Sigma \rangle$. Then we have
	\[ (\Id \otimes \Delta) \Delta(\fu)=(\Delta \otimes \Id)\Delta(\fu). \]
\end{theorem}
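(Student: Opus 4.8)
The plan is to prove coassociativity by induction on the weight $w=w(\fu)$, exactly mirroring the strategy used for the compatibility theorem. For $w=0$ and $w=1$ the identity is immediate from the explicit formulas $\Delta(1)=1\otimes 1$ and $\Delta(x_1)=1\otimes x_1+x_1\otimes 1$. For the inductive step, one splits into the same two cases that appear in the definition of $\Delta$: words $\fu$ of depth $>1$, and the remaining generators $x_w$.

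First I would handle the depth $>1$ case. Write $\fu=x_u\fv$ with $w(x_u),w(\fv)<w$, so by induction coassociativity holds for $x_u$ and for $\fv$; writing $\Delta(x_u)=\sum a_u\otimes b_u$ and $\Delta(\fv)=\sum a_\fv\otimes b_\fv$ in Sweedler-type notation, the induction hypothesis says $\sum a_u^{(1)}\otimes a_u^{(2)}\otimes b_u=\sum a_u\otimes b_u^{(1)}\otimes b_u^{(2)}$ and similarly for $\fv$. The recursion $\Delta(\fu)=1\otimes\fu+\sum(a_u\triangleright a_\fv)\otimes(b_u\shuffle b_\fv)$ must be pushed through both $\Id\otimes\Delta$ and $\Delta\otimes\Id$; applying $\Delta$ to the first tensor factor invokes the compatibility theorem $\Delta(a_u\triangleright a_\fv)$ — but note $\triangleright$ is not a morphism, so one first rewrites $a_u\triangleright a_\fv=x_{\cdot}(\cdots)$ via Lemma~\ref{lem: triangle formulas} or, better, uses that $\Delta$ is multiplicative for $\shuffle$ (Theorem~\ref{thm: compatibility}) together with the structural identities of Proposition~\ref{prop: key properties} relating $\triangleright$, $\diamond$, $\shuffle$. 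The key point is that both $(\Id\otimes\Delta)\Delta(\fu)$ and $(\Delta\otimes\Id)\Delta(\fu)$ can be expanded into a sum indexed by the "three-fold" splittings coming from $\Delta(x_u)$ (now iterated), $\Delta(\fv)$ (now iterated), and the interaction term; the induction hypothesis identifies the iterated pieces, and the associativity of $\shuffle$ (Proposition~\ref{prop: associative}) together with Proposition~\ref{prop: key properties}(5),(6) identifies the interaction terms. This is the computational heart of the proof and structurally parallels Step 1–2 of the compatibility proof.

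Next I would treat $x_w$. Here the definition is $\Delta(x_w)=\Delta(x_1)\shuffle\Delta(x_{w-1})-\Delta(x_1x_{w-1})-\Delta(x_{w-1}x_1)-\sum_{0<j<w}\Delta^j_{1,w-1}\Delta(x_{w-j}x_j)$. Every term on the right is either $\Delta$ applied to a word of depth $\geq 2$ and weight $w$ (to which the depth $>1$ case just proved applies), or a $\shuffle$-product $\Delta(x_1)\shuffle\Delta(x_{w-1})$ of coproducts of lower-weight elements. Applying $\Id\otimes\Delta$ and $\Delta\otimes\Id$ and using that both are algebra maps for $\shuffle$ on $\frak C\otimes\frak C$ (more precisely, using Theorem~\ref{thm: compatibility} in the form $(\Delta\otimes\Id)(X\shuffle Y)=(\Delta\otimes\Id)(X)\shuffle(\Delta\otimes\Id)(Y)$, which follows since $\Delta$ is a $\shuffle$-morphism), the coassociativity for $x_w$ reduces to coassociativity for $x_1$, $x_{w-1}$ and for the depth-$2$ words of weight $w$ — all of which hold by the inductive hypothesis or the previous case. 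One must check that the linear combination is preserved, i.e. the same $\Fq$-coefficients $\Delta^j_{1,w-1}$ appear on both sides, which is automatic since $\Id\otimes\Delta$ and $\Delta\otimes\Id$ are $\Fq$-linear.

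The main obstacle will be the depth $>1$ case: the bookkeeping needed to match the two triple-tensor expansions is substantial, because the recursive formula mixes $\triangleright$ on the left factor with $\shuffle$ on the right factor, and $\triangleright$ is neither associative nor a coalgebra morphism. The way through is to never expand $\triangleright$ naively but always to re-express the relevant combinations $(a\triangleright b)\diamond c$, $(a\triangleright b)\triangleright c$, $a\triangleright(b\shuffle c)$ using Proposition~\ref{prop: key properties}(5),(6) — precisely the identities that were engineered for the compatibility proof — so that every manipulation stays inside the commutative associative world of $\shuffle$ and $\diamond$. Once the terms are organized by which of the three tensor slots each $\shuffle$-factor lands in, the induction hypotheses for $x_u$ and $\fv$ close the argument. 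I would also remark that, granted Theorems~\ref{thm: compatibility} and~\ref{thm: coassociativity} together with Lemma~\ref{lem: factor 1} (which gives connectedness and hence the counit axiom for free), the bialgebra axioms are complete and Proposition~\ref{prop: graded Hopf algebras} then upgrades $(\frak C,\shuffle,u,\Delta,\epsilon)$ to a connected graded Hopf algebra of finite type, yielding Theorem~B.
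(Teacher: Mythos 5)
Your plan matches the paper's proof essentially step for step: induction on weight, coassociativity for shuffle products deduced from Theorem~\ref{thm: compatibility}, the depth~$>1$ case handled through the interaction of $\triangleright$ with $\Delta$ and Proposition~\ref{prop: key properties}, and the case of $x_w$ reduced to lower-weight and depth-two words via the defining relation for $\Delta(x_w)$. The only ingredient you leave implicit is the identity the paper isolates as Lemma~\ref{lem: delta without factor 1}, namely $(\Delta(\fu)-1 \otimes \fu) \triangleright \Delta(\fv)=\Delta(\fu \triangleright \fv)-1 \otimes (\fu \triangleright \fv)$, which is precisely what organizes the ``interaction terms'' you describe and collapses your bookkeeping to the paper's Proposition~\ref{prop: coassociativity step 2}.
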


The rest of this section is devoted to a proof of Theorem \ref{thm: coassociativity}. The proof is by induction on the total weight $w=w(\fa)+w(\fb)$.

For $w=0$ and $w=1$ we see that Theorem \ref{thm: coassociativity} holds. Let $w \in \bN$ with $w \geq 2$ and we suppose that for all $\fu \in \langle \Sigma \rangle$ such that $w(\fu)<w$, we have
	\[ (\Id \otimes \Delta) \Delta(\fu)=(\Delta \otimes \Id)\Delta(\fu). \]

We now show that for all $\fu \in \langle \Sigma \rangle$ with $w(\fu)=w$,
	\[ (\Id \otimes \Delta) \Delta(\fu)=(\Delta \otimes \Id)\Delta(\fu). \]
The proof will be divided into several steps.

\subsubsection{Step 1} 
We first prove the following proposition. 
\begin{proposition} \label{prop: coassociativity step 1}
For all words $\fu, \fv \in \langle \Sigma \rangle$ with $w(\fu)+w(\fv)=w$, we have
	\[ (\Id \otimes \Delta) \Delta(\fu\shuffle \fv)=(\Delta \otimes \Id)\Delta(\fu\shuffle \fv). \]
\end{proposition}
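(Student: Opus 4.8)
The plan is to reduce the coassociativity of $\Delta$ on a shuffle product $\fu \shuffle \fv$ to coassociativity on the individual factors, exploiting the compatibility theorem (Theorem \ref{thm: compatibility}) which was just proved. The key observation is that, since $\Delta$ is an algebra homomorphism for $\shuffle$ by Theorem \ref{thm: compatibility}, both maps $(\Id \otimes \Delta)\Delta$ and $(\Delta \otimes \Id)\Delta$ are algebra homomorphisms from $(\frak C, \shuffle)$ to $\frak C \otimes \frak C \otimes \frak C$, where the target carries the componentwise shuffle product. Concretely, $(\Id \otimes \Delta)\Delta(\fu \shuffle \fv) = (\Id \otimes \Delta)(\Delta(\fu) \shuffle \Delta(\fv))$, and expanding $\Delta(\fu) = \sum a_\fu \otimes b_\fu$, $\Delta(\fv) = \sum a_\fv \otimes b_\fv$, this equals $\sum (a_\fu \shuffle a_\fv) \otimes \Delta(b_\fu \shuffle b_\fv) = \sum (a_\fu \shuffle a_\fv) \otimes (\Delta(b_\fu) \shuffle \Delta(b_\fv))$, again using Theorem \ref{thm: compatibility}. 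The symmetric computation applies to $(\Delta \otimes \Id)\Delta(\fu \shuffle \fv)$.

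First I would record the above two expansions carefully, writing $\Delta(\fu) = 1 \otimes \fu + \sum a_\fu \otimes b_\fu$ with $a_\fu \neq 1$ (Lemma \ref{lem: factor 1}) and similarly for $\fv$. The point is that each of $b_\fu, b_\fv, a_\fu, a_\fv$ has weight strictly less than $w(\fu) + w(\fv) = w$ whenever it appears nontrivially in the sums, so the induction hypothesis $(\Id \otimes \Delta)\Delta = (\Delta \otimes \Id)\Delta$ applies to all of them. One must be slightly careful about the trivial terms $1 \otimes \fu$ and $1 \otimes \fv$: when both are taken, we get $1 \otimes (\fu \shuffle \fv)$ on both sides after applying $\Id \otimes \Delta$ versus $\Delta \otimes \Id$, and here one uses that $\fu \shuffle \fv$ could still have weight $w$ — but then $\fu \shuffle \fv$ is a sum of words, and one reduces instead by a separate argument or by noting that if $w(\fu) = 0$ or $w(\fv) = 0$ the statement is trivial, so we may assume $w(\fu), w(\fv) \geq 1$, whence $w(b_\fu), w(b_\fv) < w$. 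Then both $(\Id \otimes \Delta)\Delta(\fu \shuffle \fv)$ and $(\Delta \otimes \Id)\Delta(\fu \shuffle \fv)$ reduce, term by term, to $\sum (\Delta(a_\fu \shuffle a_\fv)) \otimes \ldots$ type expressions to which the induction hypothesis applies, and the two sides agree.

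The main obstacle I anticipate is bookkeeping rather than conceptual: one must verify that after applying $\Id \otimes \Delta$ (resp. $\Delta \otimes \Id$) and using Theorem \ref{thm: compatibility} to split $\Delta$ across shuffle products, the resulting triple-tensor expressions on the two sides are literally identical sums, invoking the inductive coassociativity on each factor of weight $< w$. In particular, the term corresponding to the ``unit'' parts must be handled: the contribution where the first tensor slot is $1$ on the left comes from $1 \otimes \Delta(\fu \shuffle \fv)$ and requires the inductive hypothesis applied to $\fu \shuffle \fv$ viewed componentwise — but $w(\fu \shuffle \fv) = w$, so this particular reduction is \emph{not} immediately available. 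This is precisely why Proposition \ref{prop: coassociativity step 1} treats a \emph{shuffle product} rather than an arbitrary word, and why it will be combined later with the depth-induction machinery (as in the compatibility proof, Steps 1--3) to cover words that are not manifestly shuffle products. So the plan for this proposition is: expand both sides using Theorem \ref{thm: compatibility}, match the terms where at least one of $a_\fu, a_\fv$ is nontrivial using the induction hypothesis on strictly smaller weights, and for the remaining ``diagonal'' term $1 \otimes \Delta(\fu \shuffle \fv)$ versus $\Delta(\fu \shuffle \fv) \otimes 1$ (and the mixed $1 \otimes (\text{stuff}) \otimes 1$ pieces), observe that these are symmetric and cancel or coincide after re-expanding $\Delta(\fu \shuffle \fv) = \Delta(\fu) \shuffle \Delta(\fv)$ once more and re-applying the compatibility, so no circular appeal to weight-$w$ coassociativity is actually needed.
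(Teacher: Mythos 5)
Your first paragraph is exactly the paper's proof: by Theorem \ref{thm: compatibility} both composites $(\Id \otimes \Delta)\Delta$ and $(\Delta \otimes \Id)\Delta$ are multiplicative for $\shuffle$, so $(\Id \otimes \Delta)\Delta(\fu\shuffle\fv)=(\Id \otimes \Delta)\Delta(\fu)\shuffle(\Id \otimes \Delta)\Delta(\fv)$ and likewise for $(\Delta\otimes\Id)\Delta$, and the claim follows by applying the weight induction hypothesis to $\fu$ and $\fv$ separately. The worry you raise afterwards about the term $1\otimes(\fu\shuffle\fv)$ is a non-issue: after disposing of the trivial case $\fu=1$ or $\fv=1$, both factors are nonempty, so $w(\fu),w(\fv)<w$ and the induction hypothesis is only ever invoked for $\fu$ and $\fv$ themselves, never for $\fu\shuffle\fv$ or for any $b_\fu\shuffle b_\fv$ — the unit terms are simply absorbed into the factorization above, since the identity is between the two whole sums rather than term by term. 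No separate matching of ``diagonal'' pieces is needed, so you should drop the final paragraph and keep only the factorization argument.
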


\begin{proof}
We see that if $\fu=1$ or $\fv=1$, then the proposition holds. We can suppose that $\fu \neq 1$ and $\fv \neq 1$. If we write 
\begin{align*}
\Delta(\fu)&=\sum a_\fu \otimes b_\fu, \\
\Delta(\fv)&=\sum a_\fv \otimes b_\fv,
\end{align*}
then by the compatibility, we get
\begin{align*}
(\Id \otimes \Delta) \Delta(\fu\shuffle \fv)&=(\Id \otimes \Delta)(\Delta(\fu)\shuffle \Delta(\fv)) \\
&= (\Id \otimes \Delta)\left(\sum (a_\fu\shuffle a_\fv) \otimes (b_\fu\shuffle b_\fv) \right) \\
&= \sum (a_\fu\shuffle a_\fv) \otimes \Delta(b_\fu\shuffle b_\fv) \\
&= \sum (a_\fu\shuffle a_\fv) \otimes (\Delta(b_\fu)\shuffle \Delta(b_\fv)) \\
&= \left(\sum a_\fu \otimes \Delta(b_\fu)\right)\shuffle  \left(\sum a_\fv \otimes \Delta(b_\fv)\right) \\
&=(\Id \otimes \Delta) \Delta(\fu) \shuffle  (\Id \otimes \Delta) \Delta(\fv).
\end{align*}
Similarly, we get
\begin{align*}
(\Delta \otimes \Id) \Delta(\fu\shuffle \fv)&=(\Delta \otimes \Id) (\Delta(\fu)\shuffle \Delta(\fv)) \\
&=(\Delta \otimes \Id) \Delta(\fu) \shuffle  (\Delta \otimes \Id) \Delta(\fv).
\end{align*}

Since $w(\fu)<w$ and $w(\fv)<w$, the induction hypothesis implies 
\begin{align*}
(\Id \otimes \Delta) \Delta(\fu)&=(\Delta \otimes \Id)\Delta(\fu), \\
(\Id \otimes \Delta) \Delta(\fv)&=(\Delta \otimes \Id)\Delta(\fv).
\end{align*}
Putting all together, we deduce
\begin{align*}
(\Id \otimes \Delta) \Delta(\fu\shuffle \fv)&=(\Id \otimes \Delta) \Delta(\fu) \shuffle  (\Id \otimes \Delta) \Delta(\fv) \\
&=(\Delta \otimes \Id)\Delta(\fu)\shuffle (\Delta \otimes \Id)\Delta(\fv) \\
&=(\Delta \otimes \Id) \Delta(\fu\shuffle \fv).
\end{align*}
The proof is finished.
\end{proof}

\subsubsection{Step 2} 
We define the operator $\triangleright$ for tensors as follows:
\begin{align*}
\left(\sum \frak a_1 \otimes \frak b_1\right) \triangleright \left(\sum \frak a_2 \otimes \frak b_2\right):&=\sum (\frak a_1 \triangleright \frak a_2) \otimes (\frak b_1\shuffle \frak b_2), \\ 
\left(\sum \frak a_1 \otimes \frak b_1 \otimes \frak c_1\right) \triangleright \left(\sum \frak a_2 \otimes \frak b_2 \otimes \frak c_2 \right):&=\sum (\frak a_1 \triangleright \frak a_2) \otimes (\frak b_1\shuffle \frak b_2) \otimes (\frak c_1\shuffle \frak c_2).
\end{align*} 
We prove the following lemma which will be useful in the sequel.
\begin{lemma} \label{lem: delta without factor 1}
Let $\fu,\fv \in \langle \Sigma \rangle$ with $\fu \neq 1$. Then 
	\[ (\Delta(\fu)-1 \otimes \fu) \triangleright \Delta(\fv)=\Delta(\fu \triangleright \fv)-1 \otimes (\fu \triangleright \fv). \]
\end{lemma}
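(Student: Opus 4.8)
The plan is to unwind the recursive definition of $\Delta$ and reduce the identity to the compatibility theorem (Theorem \ref{thm: compatibility}), the associativity of $\shuffle$ (Theorem \ref{theorem: algebras}), and the relations of Proposition \ref{prop: key properties}; no new induction is needed. First I would dispose of the case $\fv=1$: then $\fu\triangleright\fv=\fu$ and $\Delta(\fv)=1\otimes 1$, so, using $\frak a\triangleright 1=\frak a$ and $\frak b\shuffle 1=\frak b$, both sides equal $\Delta(\fu)-1\otimes\fu$. So assume $\fv\neq 1$ and write $\fu=x_u\fu_-$ with $\fu_-$ possibly the empty word. Set $\Delta(x_u)=1\otimes x_u+\sum a_u\otimes b_u$ (so $a_u\neq 1$ by Lemma \ref{lem: factor 1}), $\Delta(\fu_-)=\sum a_{\fu_-}\otimes b_{\fu_-}$ and $\Delta(\fv)=\sum a_\fv\otimes b_\fv$, with the convention $\Delta(1)=1\otimes 1$. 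The key bookkeeping point is that the formula
\[ \Delta(\fu)=1\otimes\fu+\sum (a_u\triangleright a_{\fu_-})\otimes(b_u\shuffle b_{\fu_-}) \]
holds for \emph{every} nonempty word $\fu=x_u\fu_-$; for $\depth(\fu)>1$ this is the definition, and for a single letter it holds trivially since then $\fu_-=1$, so $a_{\fu_-}=b_{\fu_-}=1$ and the right-hand side is just $\Delta(x_u)$. Hence $\Delta(\fu)-1\otimes\fu=\sum(a_u\triangleright a_{\fu_-})\otimes(b_u\shuffle b_{\fu_-})$, so by the definition of $\triangleright$ on tensors
\[ (\Delta(\fu)-1\otimes\fu)\triangleright\Delta(\fv)=\sum\big((a_u\triangleright a_{\fu_-})\triangleright a_\fv\big)\otimes\big((b_u\shuffle b_{\fu_-})\shuffle b_\fv\big), \]
the sum being over all terms of $\Delta(\fu_-)$ and of $\Delta(\fv)$.

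For the right-hand side I would use $\fu\triangleright\fv=x_u(\fu_-\shuffle\fv)$. Since $\fv\neq 1$, every word appearing in $\fu_-\shuffle\fv$ has depth $\geq 1$ (immediate from the recursions defining $\shuffle$ and $\diamond$, whose outputs are combinations of words beginning with a letter), so $x_u(\fu_-\shuffle\fv)$ is a combination of words of depth $>1$. Applying the definition of $\Delta$ termwise and then Theorem \ref{thm: compatibility}, with $\Delta(\fu_-\shuffle\fv)=\Delta(\fu_-)\shuffle\Delta(\fv)=\sum(a_{\fu_-}\shuffle a_\fv)\otimes(b_{\fu_-}\shuffle b_\fv)$, gives
\[ \Delta(\fu\triangleright\fv)-1\otimes(\fu\triangleright\fv)=\sum\big(a_u\triangleright(a_{\fu_-}\shuffle a_\fv)\big)\otimes\big(b_u\shuffle(b_{\fu_-}\shuffle b_\fv)\big). \]

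Comparing the two displays termwise, the proof comes down to two identities: $(b_u\shuffle b_{\fu_-})\shuffle b_\fv=b_u\shuffle(b_{\fu_-}\shuffle b_\fv)$, which is associativity of $\shuffle$ (Theorem \ref{theorem: algebras}), and
\[ (a_u\triangleright a_{\fu_-})\triangleright a_\fv=a_u\triangleright(a_{\fu_-}\shuffle a_\fv). \]
When $a_{\fu_-}\neq 1$ and $a_\fv\neq 1$ this is precisely the relation $(\fa\triangleright\fb)\triangleright\fc=\fa\triangleright(\fb\shuffle\fc)$ of Proposition \ref{prop: key properties}, applied with the nonempty words $\fa=a_u$, $\fb=a_{\fu_-}$, $\fc=a_\fv$; and if $a_{\fu_-}=1$ or $a_\fv=1$ both sides reduce, via $\frak a\triangleright 1=\frak a$ and $1\shuffle\frak a=\frak a$, to $a_u\triangleright a_\fv$ or to $a_u\triangleright a_{\fu_-}$ respectively, so the identity holds in all cases. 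This completes the argument.

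I expect the only genuine subtlety to be this last point: Proposition \ref{prop: key properties} is stated only for nonempty words, so one must separately handle the degenerate terms of $\Delta(\fu_-)$ and $\Delta(\fv)$ (where the first tensor leg is $1$), and likewise be careful that the uniform formula for $\Delta(x_u\fu_-)$ — and the termwise application of the definition of $\Delta$ to $x_u(\fu_-\shuffle\fv)$ — genuinely covers the cases requiring the $\depth>1$ clause of the recursion. Everything else is a direct substitution.
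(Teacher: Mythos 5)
Your proof is correct and follows essentially the same route as the paper's: both expand $\Delta(\fu)$ and $\Delta(\fu\triangleright\fv)$ via the recursive definition of the coproduct, invoke Theorem \ref{thm: compatibility} to write $\Delta(\fu_-\shuffle\fv)=\Delta(\fu_-)\shuffle\Delta(\fv)$, and match terms using the identity $(\fa\triangleright\fb)\triangleright\fc=\fa\triangleright(\fb\shuffle\fc)$ from Proposition \ref{prop: key properties}. The only difference is organizational: the paper splits into the cases $\depth(\fu)=1$ and $\depth(\fu)>1$, whereas you unify them by allowing $\fu_-=1$ in the recursion formula, and you are somewhat more explicit than the paper about the degenerate terms in which one tensor leg equals $1$.
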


\begin{proof}
Suppose that $\depth(\fu)=1$, says $\fu=x_u$. Then $\fu \triangleright \fv=x_u \fv$ and we have to show that $(\Delta(x_u)-1 \otimes x_u) \triangleright \Delta(\fv)=\Delta(x_u \fv)-1 \otimes (x_u \fv)$. We write
\begin{align*}
\Delta(x_u)=1 \otimes x_u + \sum a_u \otimes b_u, \quad \Delta(\fv)= \sum a_\fv \otimes b_\fv.
\end{align*}
Recall that $\Delta(x_u\fv)=1 \otimes (x_u \fv)+\sum (a_u \triangleright a_\fv) \otimes (b_u\shuffle b_\fv)$. Thus
\begin{align*}
\Delta(x_u\fv)-1 \otimes (x_u \fv)&=\sum (a_u \triangleright a_\fv) \otimes (b_u\shuffle b_\fv) \\
&=(\Delta(\fu)-1 \otimes \fu) \triangleright \Delta(\fv)
\end{align*}
as desired.

We now suppose that $\depth(\fu)>1$. We write $\fu=x_u \fu'$. Thus $\fu \triangleright \fv=x_u(\fu'\shuffle \fv)$ and we have to show that
	\[ (\Delta(x_u \fu')-1 \otimes x_u \fu') \triangleright \Delta(\fv)=\Delta(x_u(\fu'\shuffle \fv))-1 \otimes x_u(\fu'\shuffle \fv). \]
We put 
\begin{align*}
\Delta(x_u)&=1 \otimes x_u+ \sum a_u \otimes b_u, \\
\Delta(\fu')&=\sum a_{\fu'} \otimes b_{\fu'}, \\
\Delta(\fv)&=\sum a_\fv \otimes b_\fv.
\end{align*}
It follows that 
\begin{equation} \label{eq: words}
\Delta(x_u \fu')-1 \otimes x_u \fu'=\sum (a_u \triangleright a_{\fu'}) \otimes (b_u\shuffle b_{\fu'}).
\end{equation}

By Theorem \ref{thm: compatibility}, we have
	\[ \Delta(\fu'\shuffle \fv)=\Delta(\fu')\shuffle \Delta(\fv)=\sum (a_{\fu'}\shuffle a_\fv) \otimes (b_{\fu'}\shuffle b_\fv). \]
Thus
\begin{align*}
\Delta(x_u(\fu'\shuffle \fv))-1 \otimes x_u(\fu'\shuffle \fv) &=\sum (a_u \triangleright (a_{\fu'}\shuffle a_\fv)) \otimes (b_u\shuffle b_{\fu'}\shuffle b_\fv) \\
&=\sum ((a_u \triangleright a_{\fu'}) \triangleright a_\fv)) \otimes (b_u\shuffle b_{\fu'}\shuffle b_\fv) \\
&=(\Delta(x_u \fu')-1 \otimes x_u \fu') \triangleright \Delta(\fv).
\end{align*}
The second equality holds by Proposition \ref{prop: key properties} and the last one follows from \eqref{eq: words}. We finish the proof.
\end{proof}

Next we prove the following proposition.
\begin{proposition} \label{prop: coassociativity step 2}
For all words $x_u \fu \in \langle \Sigma \rangle$ with $u \in \bN$, $\depth(\fu) \geq 1$ and $w(x_u \fu)=w$, we have
 	\[ (\Id \otimes \Delta) \Delta(x_u \fu)=(\Delta \otimes \Id)\Delta(x_u \fu). \]
\end{proposition}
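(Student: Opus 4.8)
The plan is to use the recursive definition of $\Delta$ from \S\ref{sec: coproduct}, the compatibility $\Delta(\fa\shuffle\fb)=\Delta(\fa)\shuffle\Delta(\fb)$ (Theorem \ref{thm: compatibility}, already established for all weights), and the identity of Lemma \ref{lem: delta without factor 1}, in order to reduce coassociativity on the word $x_u\fu$ to coassociativity of $\Delta$ on the two strictly lighter words $x_u$ and $\fu$; since $\depth(\fu)\geq 1$ we have $u<w$ and $w(\fu)<w$, so both are covered by the induction hypothesis on $w$. Throughout, write $\overline{\Delta}(\fw):=\Delta(\fw)-1\otimes\fw$; by Lemma \ref{lem: factor 1} every word occurring in the left tensor slot of $\overline{\Delta}(\fw)$ is $\neq1$, and Lemma \ref{lem: delta without factor 1} then reads $\overline{\Delta}(\fa\triangleright\fb)=\overline{\Delta}(\fa)\triangleright\Delta(\fb)$ for $\fa\neq1$, where $\triangleright$ is the tensor-level operator defined just before Lemma \ref{lem: delta without factor 1}. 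Set $\Delta(x_u)=1\otimes x_u+\sum a_u\otimes b_u$ and $\Delta(\fu)=\sum a_\fu\otimes b_\fu$. The defining recursion for a word of depth $>1$ reads
\[ \Delta(x_u\fu)=1\otimes x_u\fu+\overline{\Delta}(x_u)\triangleright\Delta(\fu). \]

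\textbf{Expanding the two sides.} First I would apply $\Id\otimes\Delta$ to the above identity. Using Theorem \ref{thm: compatibility} to rewrite $\Delta(b_u\shuffle b_\fu)=\Delta(b_u)\shuffle\Delta(b_\fu)$ and unfolding the tensor-level $\triangleright$ on triple tensors, this gives
\[ (\Id\otimes\Delta)\Delta(x_u\fu)=1\otimes\Delta(x_u\fu)+\big((\Id\otimes\Delta)\overline{\Delta}(x_u)\big)\triangleright\big((\Id\otimes\Delta)\Delta(\fu)\big). \]
Next I would apply $\Delta\otimes\Id$, which produces the terms $\Delta(a_u\triangleright a_\fu)$. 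Since every $a_u\neq1$, Lemma \ref{lem: delta without factor 1} turns each such term into $1\otimes(a_u\triangleright a_\fu)+\overline{\Delta}(a_u)\triangleright\Delta(a_\fu)$; the $1\otimes(\,-\,)$ contributions reassemble, via the displayed recursion, into $1\otimes\Delta(x_u\fu)$, and one is left with
\[ (\Delta\otimes\Id)\Delta(x_u\fu)=1\otimes\Delta(x_u\fu)+\Big(\sum\overline{\Delta}(a_u)\otimes b_u\Big)\triangleright\Big(\sum\Delta(a_\fu)\otimes b_\fu\Big). \]

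\textbf{Matching via the induction hypothesis.} It remains to identify the two trilinear remainder terms. Coassociativity of $\Delta$ on $x_u$ (available since $w(x_u)=u<w$), after substituting $\Delta(x_u)=1\otimes x_u+\overline{\Delta}(x_u)$ and cancelling the terms common to both sides, is exactly the identity $(\Id\otimes\Delta)\overline{\Delta}(x_u)=\sum\overline{\Delta}(a_u)\otimes b_u$. Likewise, coassociativity of $\Delta$ on $\fu$ (available since $w(\fu)<w$) gives $(\Delta\otimes\Id)\Delta(\fu)=(\Id\otimes\Delta)\Delta(\fu)=\sum\Delta(a_\fu)\otimes b_\fu$. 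Substituting these two identities into the last display turns $(\Delta\otimes\Id)\Delta(x_u\fu)$ into exactly the expression obtained for $(\Id\otimes\Delta)\Delta(x_u\fu)$, which proves the proposition.

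\textbf{Main obstacle.} I expect no genuinely new idea is needed beyond Lemma \ref{lem: delta without factor 1}, Theorem \ref{thm: compatibility}, and the induction; the real work is the bookkeeping. One must verify that $\Id\otimes\Delta$ and $\Delta\otimes\Id$ commute with the tensor-level operations $\shuffle$ and $\triangleright$ in the naive way (this is precisely where compatibility is used), and must keep careful track of which tensor slots are forced to be $\neq1$ so that Lemma \ref{lem: delta without factor 1} may be applied term by term inside the sums.
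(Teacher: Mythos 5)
Your argument is correct and coincides with the paper's own proof: both expand $(\Id\otimes\Delta)\Delta(x_u\fu)$ and $(\Delta\otimes\Id)\Delta(x_u\fu)$ from the recursion $\Delta(x_u\fu)=1\otimes x_u\fu+\sum(a_u\triangleright a_\fu)\otimes(b_u\shuffle b_\fu)$, invoke Theorem \ref{thm: compatibility} and Lemma \ref{lem: delta without factor 1}, and then match the remainder terms using exactly the two induction identities $\sum a_u\otimes\Delta(b_u)=\sum(\Delta(a_u)-1\otimes a_u)\otimes b_u$ and $\sum a_\fu\otimes\Delta(b_\fu)=\sum\Delta(a_\fu)\otimes b_\fu$ combined via the tensor-level $\triangleright$. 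Your $\overline{\Delta}$ notation is only a cosmetic repackaging of the paper's computation.
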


\begin{proof}
We put 
\begin{align*}
\Delta(x_u)&=1 \otimes x_u + \sum a_u \otimes b_u, \\
\Delta(\fu)&=\sum a_\fu \otimes b_\fu.
\end{align*}
In particular,
	\[ \Delta(x_u \fu)=1 \otimes x_u\fu + \sum (a_u \triangleright a_\fu) \otimes (b_u\shuffle b_\fu). \]
It follows that
\begin{align*}
&(\Id \otimes \Delta)\Delta(x_u \fu) \\
&=(\Id \otimes \Delta) \left(1 \otimes x_u\fu + \sum (a_u \triangleright a_\fu) \otimes (b_u\shuffle b_\fu)\right) \\
&= 1 \otimes \Delta(x_u\fu) + \sum (a_u \triangleright a_\fu) \otimes \Delta(b_u\shuffle b_\fu) \\
&=1 \otimes \Delta(x_u\fu) + \sum (a_u \triangleright a_\fu) \otimes (\Delta(b_u)\shuffle \Delta(b_\fu)).
\end{align*}
The last equality follows from the compatibility proved in Theorem \ref{thm: compatibility}. 

Next, we have
\begin{align*}
&(\Delta \otimes \Id)\Delta(x_u \fu) \\
&=(\Delta \otimes \Id) \left(1 \otimes x_u\fu + \sum (a_u \triangleright a_\fu) \otimes (b_u\shuffle b_\fu)\right) \\
&= 1 \otimes 1 \otimes x_u\fu + \sum \Delta(a_u \triangleright a_\fu) \otimes (b_u\shuffle b_\fu).
\end{align*}
Thus we have to show that
\begin{align} \label{eq: coassociativity}
& 1 \otimes \Delta(x_u\fu) + \sum (a_u \triangleright a_\fu) \otimes (\Delta(b_u)\shuffle \Delta(b_\fu)) \\
&=1 \otimes 1 \otimes x_u\fu + \sum \Delta(a_u \triangleright a_\fu) \otimes (b_u\shuffle b_\fu). \notag
\end{align}

As $w(x_u)<w$, the induction hypothesis implies
 	\[ (\Id \otimes \Delta) \Delta(x_u)=(\Delta \otimes \Id)\Delta(x_u). \]
We write down the expressions of both sides. The LHS equals
\begin{align*}
(\Id \otimes \Delta) \Delta(x_u)&=(\Id \otimes \Delta)\left(1 \otimes x_u+\sum a_u \otimes b_u\right) \\
&=1 \otimes \Delta(x_u)+\sum a_u \otimes \Delta(b_u),
\end{align*}
and the RHS equals
\begin{align*}
(\Delta \otimes \Id) \Delta(x_u)&=(\Delta \otimes \Id)\left(1 \otimes x_u+\sum a_u \otimes b_u\right) \\
&=1 \otimes 1 \otimes x_u+\sum \Delta(a_u) \otimes b_u.
\end{align*}
As $(\Id \otimes \Delta) \Delta(x_u)=(\Delta \otimes \Id)\Delta(x_u)$, we get;
\begin{equation*} 
1 \otimes \Delta(x_u)+\sum a_u \otimes \Delta(b_u)=1 \otimes 1 \otimes x_u+\sum \Delta(a_u) \otimes b_u.
\end{equation*}
We use Lemma \ref{lem: factor 1} and cancel the terms of the form $1 \otimes \frak a \otimes \frak b$ on both sides to get
\begin{equation} \label{eq: coasso u}
\sum a_u \otimes \Delta(b_u)=\sum (\Delta(a_u)-1 \otimes a_u) \otimes b_u.
\end{equation}

Recall that $\Delta(\fu)=\sum a_\fu \otimes b_\fu$. By similar calculations, the equality $(\Id \otimes \Delta) \Delta(\fu)=(\Delta \otimes \Id)\Delta(\fu)$ implies
\begin{equation} \label{eq: coasso fu}
\sum a_\fu \otimes \Delta(b_\fu)=\sum \Delta(a_\fu) \otimes b_\fu.
\end{equation}

Combining \eqref{eq: coasso u} and \eqref{eq: coasso fu} yields
\begin{align*}
&\left(\sum a_u \otimes \Delta(b_u)\right) \triangleright \left(\sum a_\fu \otimes \Delta(b_\fu)\right) \\
&=\left(\sum (\Delta(a_u)-1 \otimes a_u) \otimes b_u\right) \triangleright \left(\sum \Delta(a_\fu) \otimes b_\fu\right).
\end{align*}
It follows that
\begin{align*}
&\sum (a_u \triangleright a_\fu) \otimes (\Delta(b_u)\shuffle \Delta(b_\fu)) \\
&=\sum \left((\Delta(a_u)-1 \otimes a_u) \triangleright \Delta(a_\fu)\right) \otimes (b_u\shuffle b_\fu) \\
&=\sum \left(\Delta(a_u \triangleright a_\fu)-1 \otimes (a_u \triangleright a_\fu)\right) \otimes (b_u\shuffle b_\fu) \\
&=\sum \Delta(a_u \triangleright a_\fu) \otimes (b_u\shuffle b_\fu)- \sum 1 \otimes (a_u \triangleright a_\fu) \otimes (b_u\shuffle b_\fu).
\end{align*}
The second equality holds by Lemma \ref{lem: delta without factor 1}.

As a consequence, the equality \eqref{eq: coassociativity} follows immediately from the equality
	\[ \Delta(x_u \fu)=1 \otimes x_u \fu+\sum (a_u \triangleright a_\fu) \otimes (b_u\shuffle b_\fu). \]
\end{proof}

\subsubsection{Step 3} 
By Proposition \ref{prop: coassociativity step 2}, for all words $\fu$ such that $\depth(\fu)>1$ and $w(\fu)=w$, we have proved  
 	\[ (\Id \otimes \Delta) \Delta(\fu)=(\Delta \otimes \Id)\Delta(\fu). \]
To finish the proof of Theorem \ref{thm: coassociativity} we prove the remaining case where $\fu=x_w$. 

\begin{proposition}
We have
 	\[ (\Id \otimes \Delta) \Delta(x_w)=(\Delta \otimes \Id)\Delta(x_w). \]
\end{proposition}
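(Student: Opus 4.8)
The plan is to deduce the identity for $x_w$ directly from Propositions \ref{prop: coassociativity step 1} and \ref{prop: coassociativity step 2}, using the defining formula \eqref{eq: coproduct} of $\Delta(x_w)$ together with the $\Fq$-linearity of the maps $\Id \otimes \Delta$ and $\Delta \otimes \Id$. Concretely, I would first rewrite \eqref{eq: coproduct} in the form
\[ \Delta(x_w) = \Delta(x_1 \shuffle x_{w-1}) - \Delta(x_1 x_{w-1}) - \Delta(x_{w-1} x_1) - \sum_{0<j<w} \Delta^j_{1,w-1}\, \Delta(x_{w-j} x_j), \]
which is legitimate since $x_1 \shuffle x_{w-1} = x_w + x_1 x_{w-1} + x_{w-1} x_1 + \sum_{0<j<w} \Delta^j_{1,w-1} x_{w-j} x_j$ and $\Delta$ is $\Fq$-linear.

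Since the displayed formula is an $\Fq$-linear relation in $\frak C \otimes \frak C$ and both $\Id \otimes \Delta$ and $\Delta \otimes \Id$ are $\Fq$-linear, it then suffices to establish $(\Id \otimes \Delta)\Delta = (\Delta \otimes \Id)\Delta$ on each of the words $x_1 \shuffle x_{w-1}$, $x_1 x_{w-1}$, $x_{w-1} x_1$, and $x_{w-j} x_j$ for $0 < j < w$. For $x_1 \shuffle x_{w-1}$, one has $w(x_1) + w(x_{w-1}) = w$, so Proposition \ref{prop: coassociativity step 1} (with $\fu = x_1$, $\fv = x_{w-1}$) gives the desired equality. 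Each of the remaining words has depth $2$ and weight $w$; writing, for instance, $x_{w-j} x_j = x_{w-j} \fu$ with $\fu = x_j$ of depth $1$ (and similarly for $x_1 x_{w-1}$ and $x_{w-1} x_1$), Proposition \ref{prop: coassociativity step 2} applies directly and yields coassociativity for that word. Adding these identities with the coefficients appearing in the displayed relation gives $(\Id \otimes \Delta)\Delta(x_w) = (\Delta \otimes \Id)\Delta(x_w)$, which completes Step 3 and hence the induction in the proof of Theorem \ref{thm: coassociativity}.

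I do not expect any genuine obstacle at this stage: the substance of the argument has already been absorbed into Steps 1 and 2, and the definition of $\Delta$ on single letters was arranged precisely so that $\Delta(x_w)$ lies in the $\Fq$-span of $\Delta(x_1 \shuffle x_{w-1})$ and of coproducts of depth-$2$ words, all of which are covered by the earlier propositions. The only point worth a remark is the degenerate case $w = 2$, where $x_1 x_{w-1} = x_{w-1} x_1 = x_1 x_1$; this is harmless, since we invoke only linearity and the validity of coassociativity for that single word.
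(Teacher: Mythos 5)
Your proposal is correct and follows essentially the same route as the paper: both arguments reduce the identity for $x_w$ to coassociativity for $x_1\shuffle x_{w-1}$ (Proposition \ref{prop: coassociativity step 1}) and for the depth-two words $x_1x_{w-1}$, $x_{w-1}x_1$, $x_{w-j}x_j$ (Proposition \ref{prop: coassociativity step 2}), then conclude by $\Fq$-linearity of $\Id\otimes\Delta$ and $\Delta\otimes\Id$ applied to the defining relation \eqref{eq: coproduct}. The only cosmetic difference is that the paper expands $(\Id\otimes\Delta)\Delta(x_1\shuffle x_{w-1})=(\Delta\otimes\Id)\Delta(x_1\shuffle x_{w-1})$ and cancels, whereas you solve \eqref{eq: coproduct} for $\Delta(x_w)$ first; these are the same computation.
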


\begin{proof}
By Proposition \ref{prop: coassociativity step 1},
\begin{equation*} 
(\Id \otimes \Delta) \Delta(x_1\shuffle x_{w-1})=(\Id \otimes \Delta) \Delta(x_1\shuffle x_{w-1}).
\end{equation*}

Now we express both sides of the above equality by using
	\[ x_1\shuffle x_{w-1}=x_w+x_1x_{w-1}+x_{w-1}x_1+\sum_{0<j<w} \Delta^j_{1,w-1} x_{w-j} x_j. \]
First, the LHS equals
\begin{align*}
&(\Id \otimes \Delta) \Delta(x_1\shuffle x_{w-1}) \\
&=(\Id \otimes \Delta) \Delta\left(x_w+x_1x_{w-1}+x_{w-1}x_1+\sum_{0<j<w} \Delta^j_{1,w-1} x_{w-j} x_j\right) \\
&=(\Id \otimes \Delta) \Delta(x_w)+(\Id \otimes \Delta) \Delta(x_1x_{w-1})+(\Id \otimes \Delta) \Delta(x_{w-1}x_1) \\
&+\sum_{0<j<w} \Delta^j_{1,w-1} (\Id \otimes \Delta) \Delta(x_{w-j} x_j),
\end{align*}
and the RHS equals
\begin{align*}
&(\Delta \otimes \Id) \Delta(x_1\shuffle x_{w-1}) \\
&=(\Delta \otimes \Id) \Delta\left(x_w+x_1x_{w-1}+x_{w-1}x_1+\sum_{0<j<w} \Delta^j_{1,w-1} x_{w-j} x_j\right) \\
&=(\Delta \otimes \Id) \Delta(x_w)+(\Delta \otimes \Id) \Delta(x_1x_{w-1})+(\Delta \otimes \Id)\Delta(x_{w-1}x_1) \\
&+\sum_{0<j<w} \Delta^j_{1,w-1} (\Delta \otimes \Id) \Delta(x_{w-j} x_j).
\end{align*}

Putting all together and using Propositions \ref{prop: coassociativity step 1} and \ref{prop: coassociativity step 2}, we deduce that
 	\[ (\Id \otimes \Delta) \Delta(x_w)=(\Delta \otimes \Id)\Delta(x_w). \]
\end{proof}

\subsection{Hopf algebra structure} ${}$\par

The counit $\epsilon:\frak C \to \Fq$ is defined as follows: $\epsilon(1)=1$ and $\epsilon(\fu)=0$ otherwise. By induction on weight, we can check that $\Delta$ preserves the grading. So $(\frak C,\shuffle,u,\Delta,\epsilon)$ is a connected graded bialgebra. By Proposition \ref{prop: graded Hopf algebras}, we get

\begin{theorem} \label{thm: Hopf algebra for shuffle product}
The connected graded bialgebra $(\frak C,\shuffle,u,\Delta,\epsilon)$ is a connected graded Hopf algebra over $\Fq$.
\end{theorem}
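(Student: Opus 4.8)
The plan is to assemble Theorem \ref{thm: Hopf algebra for shuffle product} from the pieces already established, invoking Proposition \ref{prop: graded Hopf algebras} as the final black box. Recall that Proposition \ref{prop: graded Hopf algebras} guarantees that any connected graded bialgebra over a field automatically carries an antipode and is therefore a Hopf algebra; so the only thing left to verify is that $(\frak C,\shuffle,u,\Delta,\epsilon)$ genuinely is a \emph{connected graded bialgebra}. That means checking, in order: (i) $(\frak C,\shuffle,u)$ is an associative unital commutative $\Fq$-algebra; (ii) $(\frak C,\Delta,\epsilon)$ is a coassociative counital coalgebra; (iii) $\Delta$ and $\epsilon$ are algebra homomorphisms (the bialgebra compatibility axioms); and (iv) the grading by weight is respected by $\shuffle$, $u$, $\Delta$, $\epsilon$, with $\frak C_0=\Fq$.

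First I would record (i): associativity and commutativity of $\shuffle$ are exactly Proposition \ref{prop: associative} and Proposition \ref{prop: commutative}, and the unit axiom holds by the defining relations $1\shuffle \fa=\fa\shuffle 1=\fa$; this is Theorem \ref{theorem: algebras}. Next, for (ii), coassociativity is Theorem \ref{thm: coassociativity}, while the counit axiom $(\epsilon\otimes\Id)\Delta=(\Id\otimes\epsilon)\Delta=\Id$ follows by a short induction on weight using Lemma \ref{lem: factor 1}: that lemma writes $\Delta(\fu)=1\otimes\fu+\sum a_\fu\otimes b_\fu$ with every $a_\fu\neq 1$, and since $\epsilon$ kills every nonempty word and the $b_\fu$ are the complementary (lower-weight, nonempty) factors, applying $\epsilon$ to the appropriate tensor slot collapses the sum and returns $\fu$. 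For (iii), the fact that $\Delta$ is an algebra map is precisely Theorem \ref{thm: compatibility} ($\Delta(\fa\shuffle \fb)=\Delta(\fa)\shuffle \Delta(\fb)$) together with $\Delta(1)=1\otimes 1$; that $\epsilon$ is an algebra map is immediate from the definition of $\epsilon$ and the fact that $\fa\shuffle \fb$ has a nonempty leading term whenever $\fa$ or $\fb$ is nonempty, so $\epsilon(\fa\shuffle \fb)=\epsilon(\fa)\epsilon(\fb)$; the unit-counit compatibilities $\epsilon\circ u=\Id_{\Fq}$ and $\Delta\circ u=(u\otimes u)\circ\Delta_{\Fq}$ are trivial.

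Finally (iv): by construction $\frak C=\bigoplus_{n\geq 0}\frak C_n$ with $\frak C_n$ spanned by words of weight $n$, each of finite dimension, and $\frak C_0=\Fq\cdot 1$, so the bialgebra is connected and of finite type. The product $\shuffle$ is weight-additive: by the recursive definition each term on the right-hand side of $\fa\shuffle \fb$ — the pieces $x_a(\fa_-\shuffle \fb)$, $x_b(\fa\shuffle \fb_-)$, $x_{a+b}(\fa_-\shuffle \fb_-)$, and $x_i(x_j\shuffle (\fa_-\shuffle \fb_-))$ with $i+j=a+b$ — has weight $w(\fa)+w(\fb)$, so by induction $\shuffle(\frak C_r\otimes\frak C_s)\subseteq\frak C_{r+s}$. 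That $\Delta$ respects the grading, i.e. $\Delta(\frak C_r)\subseteq\bigoplus_{i+j=r}\frak C_i\otimes\frak C_j$, follows by induction on weight from its recursive definition: the recursive clause $\Delta(x_u\fv)=1\otimes x_u\fv+\sum(a_u\triangleright a_\fv)\otimes(b_u\shuffle b_\fv)$ preserves total weight because $w(a_u)+w(b_u)=w(x_u)$, $w(a_\fv)+w(b_\fv)=w(\fv)$, and both $\triangleright$ and $\shuffle$ are weight-additive; and the closing clause for $\Delta(x_w)$ in \eqref{eq: coproduct} is a $\Fq$-combination of $\Delta$ applied to weight-$w$ objects and of $\Delta(x_1)\shuffle \Delta(x_{w-1})$, which lies in the right graded piece by the $\shuffle$ case. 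With (i)--(iv) in hand, Proposition \ref{prop: graded Hopf algebras} applies verbatim and yields the antipode $S$ defined recursively there, completing the proof. The only genuinely substantive inputs are the already-proved Theorems \ref{theorem: algebras}, \ref{thm: compatibility} and \ref{thm: coassociativity}; the remaining verifications are the routine bialgebra bookkeeping sketched above, and I do not anticipate any obstacle beyond making the three grading/counit inductions precise.
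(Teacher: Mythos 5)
Your proposal is correct and takes essentially the same route as the paper: the paper's proof simply checks by induction on weight that $\Delta$ preserves the grading, concludes from Theorems \ref{thm: compatibility} and \ref{thm: coassociativity} and Theorem \ref{theorem: algebras} that $(\frak C,\shuffle,u,\Delta,\epsilon)$ is a connected graded bialgebra, and invokes Proposition \ref{prop: graded Hopf algebras}. One small imprecision: the factors $b_\fu$ in $\Delta(\fu)=1\otimes\fu+\sum a_\fu\otimes b_\fu$ are not all nonempty (the term $\fu\otimes 1$ does occur), but your counit verification still goes through because the weight-$(w(\fu),0)$ component of $\Delta(\fu)$ is exactly $\fu\otimes 1$ by the same induction.
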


We also note that the Hopf shuffle algebra is of finite type (see Definition \ref{defn: graded Hopf algebra}).

\subsection{Numerical verification} ${}$\par
We end this section by presenting some numerical experiments for the shuffle algebra in positive characteristic. We mention that these calculations have been crucial for us during this project.

We have written codes which can calculate the operations on $\mathfrak{C}$ and verified Proposition~\ref{prop: associative}, Theorem~\ref{thm: coassociativity} and Theorem~\ref{thm: compatibility} in numerous cases as below, ``extending" the verification which were done in \cite[\S 3.2.3]{Shi18} The machine used for the computation is MacBook Pro (15-inch, 2018), with 2.6 GHz 6-core Intel Core i7 CPU and 16GB of memory. 
\begin{description}
    \item[Associativity] \par For $q=2,3,4, 5, 7$, $\left(S_d(a)S_d(b)\right) S_d(c) = S_d(a)\left(S_d(b) S_d(c)\right)$ for all depth 1 tuples $a, b, c$ with weight $<q^3$. 
    \begin{itemize}
        \item The running time for each of the cases when $q=2, 3, 4, 5, 7$ are less than 1 second, 16 seconds, 170 seconds, 107 minutes, and 118 hours, respectively.
    \end{itemize}
    \item[Coassociativity] Coassociativity holds for some initial cases. Precisely, the coassociativity for depth one word $x_n$ for $1\le n < a_q$ and all words with weight $<w_q$ were verified within execution time $t_{q}$ and $t'_{q}$ seconds respectively, for following $q$'s:
    \begin{table}[!htbp]
\begin{tabular}{c|cccccccc}
\hline
$q$           & $2$          & $3$         & $5$         & $7$           \\
$(a_q, t_q)$  & $(33,362)$   & $(62,432)$  & $(122,335)$ & $(182, 362)$ \\
$(w_q, t_q')$ & $(15, 2038)$ & $(14, 386)$ & $(18, 578)$ & $(18, 236)$   \\
\hline 
$q$           & $8$         & $9$          & $11$         & $13$         \\
$(a_q, t_q)$  & $(300,222)$ & $(302, 183)$ & $(300, 191)$ & $(382, 283)$ \\
$(w_q, t_q')$ & $(18, 207)$ & $(18, 199)$  & $(18, 199)$  & $(18, 218)$  \\
\hline
\end{tabular}
\end{table}
    \item[Compatibility] Compatibility holds for some initial cases. Precisely, we verified that $\Delta(w_1 \shuffle  w_2) = \Delta(w_1)\Delta(w_2)$ holds for all words $w_1, w_2 \in \langle \Sigma \rangle$ with $\operatorname{weight}(w_1)+ \operatorname{weight}{w_2} \le 12$ within $t_q$ seconds of execution time respectively, where $t_2 = 1965$, $t_3 = 1414$, $t_4 = 450$, $t_5 = 825$, $t_7 = 800$, $t_8 = 365$, $t_9 = 506$, $t_{11} = 793$, $t_{13} = 789$.  
    Further, when $q=9$, we verified for all words $w_1, w_2 \in \langle \Sigma \rangle$ with $\operatorname{weight}(w_1)+ \operatorname{weight}(w_2) \le 13$ within 2450 seconds of execution time.
\end{description}


\section{Coproduct of depth one} \label{sec: depth one}

This section aims to prove Shi's conjecture on a Hopf algebra structure of the shuffle algebra (see Theorem \ref{thm: comparison with Shi's coproduct}). To do so we study coproduct for words of depth one and deduce that the coproduct $\Delta$ coincides with that introduced by Shi (see Proposition \ref{prop: comparison with Shi's coproduct}). Explicit formulas for coproduct of such words are given in many cases in \S \ref{sec: explicit formula for small weights} and the Appendix \ref{sec: numerical experiments}.

\subsection{Bracket operators} ${}$\par

In the next section we will give a formula for $\Delta(x_n)$ for all $n \in \N$ (see Prop \ref{prop: formula delta xn}). To do so, we need some preparatory results.

\begin{lemma} \label{lem: delta j<n}
Let $n$ be a natural number. We have
\begin{enumerate}
    \item for all $j < n$, 
\begin{equation*}
    \Delta^j_{1,n} = \begin{cases}  1 & \quad \text{if } (q - 1) \mid j \\
0 & \quad \text{otherwise}.
\end{cases}
\end{equation*}

\item $\Delta^n_{1,n} = 0$.
\end{enumerate}
\end{lemma}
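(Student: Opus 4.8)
The statement is entirely a computation with the coefficients
\[
\Delta^i_{a,b}=\begin{cases}(-1)^{a-1}\binom{i-1}{a-1}+(-1)^{b-1}\binom{i-1}{b-1}&\text{if }(q-1)\mid i\text{ and }0<i<a+b,\\[2pt]0&\text{otherwise},\end{cases}
\]
specialized at $a=1$, $b=n$, so the plan is simply to substitute and simplify. First I would treat part (1): for $0<j<n$ we have $0<j<1+n$, so if $(q-1)\nmid j$ the coefficient is $0$ by definition, matching the claim. If $(q-1)\mid j$, then
\[
\Delta^j_{1,n}=(-1)^{1-1}\binom{j-1}{1-1}+(-1)^{n-1}\binom{j-1}{n-1}=\binom{j-1}{0}+(-1)^{n-1}\binom{j-1}{n-1}.
\]
Now $\binom{j-1}{0}=1$, and since $j<n$ we have $j-1<n-1$, hence $\binom{j-1}{n-1}=0$ by the convention recalled just after Chen's formula ($\binom{a}{b}=0$ if $b>a\ge 0$). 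Thus $\Delta^j_{1,n}=1$ in this case, which is exactly the asserted value.

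For part (2) I would observe that $j=n$ is the boundary index: $i=n$ satisfies $i<a+b=1+n$, so the defining formula still applies provided $(q-1)\mid n$; if $(q-1)\nmid n$ the coefficient is $0$ and we are done. When $(q-1)\mid n$,
\[
\Delta^n_{1,n}=\binom{n-1}{0}+(-1)^{n-1}\binom{n-1}{n-1}=1+(-1)^{n-1}.
\]
Here the point is that $(q-1)\mid n$ forces $n$ to be odd: indeed $q$ is a power of the characteristic $p$. If $p$ is odd then $q-1$ is even, so $n$ even would make $1+(-1)^{n-1}=0$ fail — but actually one must check parity more carefully, so I would split on whether $q$ is even. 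If $q=2^k$ (so $p=2$), then $\Fq$ has characteristic $2$, all signs collapse, $1+(-1)^{n-1}=1+1=0$, and we are done. If $q$ is odd, then $q-1$ is even, hence $(q-1)\mid n$ implies $n$ is even, so $n-1$ is odd and $(-1)^{n-1}=-1$, giving $1+(-1)^{n-1}=0$. In all cases $\Delta^n_{1,n}=0$.

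The only genuine subtlety — and the one step worth stating carefully rather than waving through — is the parity argument in part (2): one needs that $(q-1)\mid n$ together with $q$ odd forces $n$ even, and that in characteristic $2$ the expression $1+(-1)^{n-1}$ vanishes regardless of $n$. Everything else is a direct appeal to the binomial-coefficient convention $\binom{a}{b}=0$ for $b>a\ge0$ already in force in the paper. I expect no real obstacle; the proof is a two-case substitution for (1) and a two-case substitution plus a one-line parity observation for (2).
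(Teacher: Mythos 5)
Your proof is correct and takes the same route as the paper, which simply declares the result ``straightforward from the definition of $\Delta^j_{1,n}$''; your substitution, the convention $\binom{a}{b}=0$ for $b>a\ge 0$, and the parity/characteristic-$2$ case split for $1+(-1)^{n-1}$ are exactly that computation carried out in full. The only blemish is the garbled intermediate sentence claiming that $(q-1)\mid n$ ``forces $n$ to be odd'' (when $q$ is odd it forces $n$ to be even), but you correct this in the final case analysis, so the argument as concluded is sound.
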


\begin{proof}
The result is straightforward from the definition of $\Delta^j_{1,n}$.
\end{proof}

\begin{remark} \label{rmk: delta j<n}
It follows from Lemma \ref{lem: delta j<n} that for all $j,n,m \in \N$ with $j < n < m$, we may identify $\Delta^j_{1,n} = \Delta^{j}_{1,m}$.
\end{remark}

We recall that $\langle \Sigma \rangle$ is the set of all words over $\Sigma = \{x_n\}_{n \in \N}$. Let $\fa = x_{i_1}\dotsb x_{i_r} $ be a non-empty word in $\langle \Sigma \rangle$. We define the \textit{bracket operator} by the following formula:
\begin{equation} \label{eq: bracket operator}
    [\fa] := (-1)^{r}\Delta^{i_1}_{1,w(\fa) + 1}\dotsb\Delta^{i_r}_{1,w(\fa) + 1} x_{i_1} \shuffle \dotsb \shuffle x_{i_r}.
\end{equation}
As a matter of convention, we also agree that $[1] = 1$. The following lemmas will be useful.
\begin{lemma} \label{lem: prod of bracket}
Let $\fb$ and $\fc$ be two words in $\langle \Sigma \rangle$. Then we have
\begin{equation*}
    [\fb] \shuffle [\fc] = [\fb\fc],
\end{equation*}
where $\fb\fc$ is the concatenation product of $\fb$ and $\fc$.
\end{lemma}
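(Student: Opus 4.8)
The statement to prove is $[\fb]\shuffle[\fc] = [\fb\fc]$ for words $\fb,\fc\in\langle\Sigma\rangle$. The natural approach is to unwind the definition \eqref{eq: bracket operator} of the bracket operator on both sides and reduce to a statement about scalar coefficients together with associativity and commutativity of the shuffle product. First I would dispose of the trivial cases: if either $\fb=1$ or $\fc=1$, the convention $[1]=1$ and the fact that $1$ is the shuffle unit give the identity immediately. So assume $\fb = x_{i_1}\dotsb x_{i_r}$ and $\fc = x_{j_1}\dotsb x_{j_s}$ are both non-empty.

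The key observation is that the scalar coefficient $\Delta^{i}_{1,N}$ appearing in \eqref{eq: bracket operator} depends on $N$ only very weakly: by Lemma \ref{lem: delta j<n} and Remark \ref{rmk: delta j<n}, as long as the upper index $i$ is strictly smaller than $N$, the value $\Delta^{i}_{1,N}$ equals $1$ if $(q-1)\mid i$ and $0$ otherwise, independently of $N$. Now in $[\fb]$ the relevant upper index is $w(\fb)+1$, in $[\fc]$ it is $w(\fc)+1$, and in $[\fb\fc]$ it is $w(\fb\fc)+1 = w(\fb)+w(\fc)+1$; in every case each individual letter index $i_k$ (resp. $j_\ell$) is at most $w(\fb)$ (resp. at most $w(\fc)$), hence strictly less than each of these three bounds. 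Therefore $\Delta^{i_k}_{1,w(\fb)+1} = \Delta^{i_k}_{1,w(\fb\fc)+1}$ and $\Delta^{j_\ell}_{1,w(\fc)+1} = \Delta^{j_\ell}_{1,w(\fb\fc)+1}$ for all $k,\ell$. Using this, the scalar prefactor of $[\fb\fc]$, namely $(-1)^{r+s}\prod_k \Delta^{i_k}_{1,w(\fb\fc)+1}\prod_\ell \Delta^{j_\ell}_{1,w(\fb\fc)+1}$, factors exactly as the product of the prefactor $(-1)^r\prod_k\Delta^{i_k}_{1,w(\fb)+1}$ of $[\fb]$ with the prefactor $(-1)^s\prod_\ell\Delta^{j_\ell}_{1,w(\fc)+1}$ of $[\fc]$.

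It remains to match the shuffle-product parts. The word part of $[\fb]$ is $x_{i_1}\shuffle\dotsb\shuffle x_{i_r}$, that of $[\fc]$ is $x_{j_1}\shuffle\dotsb\shuffle x_{j_s}$, and that of $[\fb\fc]$ is $x_{i_1}\shuffle\dotsb\shuffle x_{i_r}\shuffle x_{j_1}\shuffle\dotsb\shuffle x_{j_s}$. By the associativity and commutativity of $\shuffle$ established in Proposition \ref{prop: associative} and Proposition \ref{prop: commutative} (equivalently, Theorem \ref{theorem: algebras}), the iterated shuffle of $r+s$ letters is independent of bracketing, so $(x_{i_1}\shuffle\dotsb\shuffle x_{i_r})\shuffle(x_{j_1}\shuffle\dotsb\shuffle x_{j_s})$ equals the full iterated shuffle appearing in $[\fb\fc]$. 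Combining the scalar identity with this word identity, and using bilinearity of $\shuffle$ over $\Fq$ to pull the scalars out, yields $[\fb]\shuffle[\fc] = [\fb\fc]$.

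The only genuinely non-routine point is the bookkeeping in the previous paragraph: one must be careful that every letter index is strictly below every weight bound in play so that Remark \ref{rmk: delta j<n} applies uniformly — but since $w(x_{i_k})=i_k\ge 1$ and $\fb$ has at least one letter, $i_k\le w(\fb) < w(\fb)+1 \le w(\fb\fc)+1$ holds automatically, and symmetrically for the $j_\ell$. Hence there is no real obstacle; the proof is a short, clean computation once the weak dependence of $\Delta^i_{1,N}$ on $N$ is invoked. I would present it in roughly the three steps above: (i) trivial cases, (ii) the scalar factorization via Lemma \ref{lem: delta j<n} and Remark \ref{rmk: delta j<n}, (iii) the word-part identity via commutativity and associativity of $\shuffle$.
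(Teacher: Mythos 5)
Your proposal is correct and follows essentially the same route as the paper's proof: both reduce the identity to the observation (via Lemma \ref{lem: delta j<n} and Remark \ref{rmk: delta j<n}) that $\Delta^{i}_{1,N}$ is independent of $N$ once $i<N$, so the scalar prefactors multiply as required, and then invoke commutativity and associativity of $\shuffle$ to merge the iterated shuffles of letters. The paper presents this as a single displayed computation, while you spell out the bookkeeping on the weight bounds more explicitly, but the content is identical.
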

\begin{proof}
The result holds trivially if $\fb = 1$ or $\fc = 1$. We thus assume that $\fb = x_{i_1}\dotsb x_{i_r} $ and $\fc = x_{j_1}\dotsb x_{j_s} $. Then it follows from the definition of the bracket operator and Remark \ref{rmk: delta j<n} that 
\begin{align*}
     [\fb] &\shuffle [\fc] \\
    &= (-1)^{r+s} \Delta^{i_1}_{1,w(\fb)+1} \dotsb \Delta^{i_r}_{1,w(\fb)+1}\Delta^{j_1}_{1,w(\fc)+1} \dotsb \Delta^{j_s}_{1,w(\fc)+1}(x_{i_1}\shuffle \cdots \shuffle x_{i_r}) \shuffle (x_{j_1} \shuffle \cdots \shuffle x_{j_s}) \\
    &= (-1)^{r+s} \Delta^{i_1}_{1,w(\fb\fc)+1} \dotsb \Delta^{i_r}_{1,w(\fb\fc)+1}\Delta^{j_1}_{1,w(\fb\fc)+1} \dotsb \Delta^{j_s}_{1,w(\fb\fc)+1}x_{i_1}\shuffle \cdots \shuffle x_{i_r} \shuffle x_{j_1} \shuffle \cdots \shuffle x_{j_s} \\
    &= [\fb\fc].
\end{align*}
This proves the lemma.
\end{proof}

\begin{lemma} \label{lem: bracket zero}
Let $\fa$ be a word in $\langle \Sigma \rangle$. If $(q-1) \nmid w(\fa)$, then $[\fa] = 0$.
\end{lemma}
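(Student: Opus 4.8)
\textbf{Proof proposal for Lemma \ref{lem: bracket zero}.}

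The plan is to reduce the statement to an elementary fact about the coefficients $\Delta^j_{1,m}$, namely Lemma \ref{lem: delta j<n}. Write $\fa = x_{i_1} \dotsb x_{i_r}$ with $w(\fa) = i_1 + \dots + i_r$. By the definition \eqref{eq: bracket operator} of the bracket operator,
\[
[\fa] = (-1)^r \Delta^{i_1}_{1,w(\fa)+1} \dotsb \Delta^{i_r}_{1,w(\fa)+1} \, x_{i_1} \shuffle \dotsb \shuffle x_{i_r},
\]
so $[\fa] = 0$ as soon as at least one of the factors $\Delta^{i_k}_{1,w(\fa)+1}$ vanishes. Since each $i_k$ satisfies $i_k \le w(\fa) < w(\fa)+1$, Lemma \ref{lem: delta j<n}(1) applies and gives $\Delta^{i_k}_{1,w(\fa)+1} = 1$ if $(q-1) \mid i_k$ and $\Delta^{i_k}_{1,w(\fa)+1} = 0$ otherwise. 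Hence the product of the $\Delta$-coefficients is nonzero if and only if $(q-1) \mid i_k$ for every $k$, in which case it equals $1$.

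First I would observe that if $(q-1) \mid i_k$ for all $k$, then summing the congruences gives $(q-1) \mid (i_1 + \dots + i_r) = w(\fa)$. Taking the contrapositive: if $(q-1) \nmid w(\fa)$, then there must exist some index $k$ with $(q-1) \nmid i_k$, and for that index $\Delta^{i_k}_{1,w(\fa)+1} = 0$ by the above. Therefore the product $\Delta^{i_1}_{1,w(\fa)+1} \dotsb \Delta^{i_r}_{1,w(\fa)+1}$ vanishes, and consequently $[\fa] = 0$.

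There is essentially no obstacle here; the only point requiring a moment's care is checking that the hypothesis of Lemma \ref{lem: delta j<n}(1) is met, i.e. that each letter index $i_k$ is strictly less than $w(\fa)+1$ (equivalently $i_k \le w(\fa)$), which is immediate from $w(\fa) = \sum_j i_j \ge i_k$ since all $i_j$ are positive integers (this also handles the edge case $r = 1$, where $i_1 = w(\fa) < w(\fa)+1$). The case $\fa = 1$ is vacuous since $(q-1) \mid 0$. This completes the proof.
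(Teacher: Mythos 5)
Your proposal is correct and follows essentially the same route as the paper: from $(q-1)\nmid w(\fa)$ deduce that some letter index $i_k$ satisfies $(q-1)\nmid i_k$, then apply Lemma \ref{lem: delta j<n} to conclude $\Delta^{i_k}_{1,w(\fa)+1}=0$ and hence $[\fa]=0$. Your extra checks (that $i_k < w(\fa)+1$ so the lemma applies, and the trivial cases $r=1$ and $\fa=1$) are fine but not needed beyond what the paper already records.
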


\begin{proof}
We may assume that $\fa = x_{i_1}\dotsb x_{i_r}$, so that $w(\fa) = i_1 + \cdots + i_r$. If  $(q-1) \nmid w(\fa)$, then there exists an index $i_k$ for $1 \leq k \leq r$ such that $(q-1) \nmid i_k$. It follows from Lemma \ref{lem: delta j<n} that $\Delta^{i_k}_{1,w(\fa) +1} = 0$, and hence $[\fa] =0$. This proves the lemma.
\end{proof}

For the convenience of computation, we introduce the following result.

\begin{lemma} \label{lem: delta diamond}
Let $\mathfrak{u}$ and $\mathfrak{v}$ be two non-empty words in $\langle \Sigma \rangle$. Suppose that
\begin{align*}
    \Delta(\mathfrak{u}) &= 1 \otimes \mathfrak{u} + \sum \mathfrak{u}_{(1)} \otimes \mathfrak{u}_{(2)},\\
    \Delta(\mathfrak{v}) &= 1 \otimes \mathfrak{v} + \sum \mathfrak{v}_{(1)} \otimes \mathfrak{v}_{(2)}.
\end{align*}
Then 
\begin{equation*}
    \Delta(\mathfrak{u} \diamond \mathfrak{v}) = 1 \otimes (\mathfrak{u} \diamond \mathfrak{v}) + \sum (\mathfrak{u}_{(1)} \diamond \mathfrak{v}_{(1)}) \otimes (\mathfrak{u}_{(2)} \shuffle \mathfrak{v}_{(2)}).
\end{equation*}
\end{lemma}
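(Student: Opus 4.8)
The plan is to prove Lemma \ref{lem: delta diamond} by a direct computation, unpacking the definition of the diamond product via Lemma \ref{lem: triangle formulas} and then applying the recursive definition of the coproduct together with the compatibility theorem (Theorem \ref{thm: compatibility}) and the key structural identities of Proposition \ref{prop: key properties}. First I would write $\fu = x_u \fu_-$ and $\fv = x_v \fv_-$, and recall from Lemma \ref{lem: triangle formulas}, Part (1), that
\[
\fu \diamond \fv = (x_u \diamond x_v) \triangleright (\fu_- \shuffle \fv_-).
\]
Expanding $x_u \diamond x_v = x_{u+v} + \sum_{i+j=u+v} \Delta^j_{u,v} x_i x_j$, this becomes a sum of terms of the shape $x_{u+v} \triangleright (\fu_- \shuffle \fv_-)$ and $x_i x_j \triangleright (\fu_- \shuffle \fv_-) = x_i(x_j \shuffle (\fu_- \shuffle \fv_-))$, to which the recursive definition of $\Delta$ from \S\ref{sec: coproduct} applies directly since each such word has depth $\geq 1$ and is a concatenation $x_{(\cdot)} \cdot(\text{lower weight word})$.

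The core of the argument is bookkeeping. Writing $\Delta(x_u) = 1\otimes x_u + \sum a_u \otimes b_u$, $\Delta(x_v) = 1\otimes x_v + \sum a_v\otimes b_v$, $\Delta(\fu_-) = \sum a_{\fu_-}\otimes b_{\fu_-}$, $\Delta(\fv_-) = \sum a_{\fv_-}\otimes b_{\fv_-}$, one has $\fu_{(1)}\otimes\fu_{(2)} = (a_u \triangleright a_{\fu_-})\otimes(b_u\shuffle b_{\fu_-})$ and similarly for $\fv$. Using Theorem \ref{thm: compatibility} to write $\Delta(\fu_-\shuffle\fv_-) = \sum(a_{\fu_-}\shuffle a_{\fv_-})\otimes(b_{\fu_-}\shuffle b_{\fv_-})$, and the already established facts $\Delta(x_u\diamond x_v) = 1\otimes(x_u\diamond x_v) + \sum(a_u\diamond a_v)\otimes(b_u\shuffle b_v)$ — which is exactly \eqref{eq: x_u x_v 2} rearranged — and $\Delta(\mathfrak{a}\triangleright\mathfrak{b}) - 1\otimes(\mathfrak{a}\triangleright\mathfrak{b}) = (\Delta(\mathfrak{a})-1\otimes\mathfrak{a})\triangleright\Delta(\mathfrak{b})$ from Lemma \ref{lem: delta without factor 1}, I would compute
\[
\Delta(\fu\diamond\fv) = \Delta\bigl((x_u\diamond x_v)\triangleright(\fu_-\shuffle\fv_-)\bigr) = 1\otimes(\fu\diamond\fv) + \bigl(\Delta(x_u\diamond x_v) - 1\otimes(x_u\diamond x_v)\bigr)\triangleright\Delta(\fu_-\shuffle\fv_-).
\]
Substituting the expansions and invoking Proposition \ref{prop: key properties}, Part (6), namely $(\mathfrak{p}\diamond\mathfrak{q})\triangleright\mathfrak{r} = \mathfrak{p}\diamond(\mathfrak{q}\triangleright\mathfrak{r})$ and the associativity/commutativity of $\diamond$ and $\shuffle$, the term $(a_u\diamond a_v)\triangleright(a_{\fu_-}\shuffle a_{\fv_-})$ reorganizes as $(a_u\triangleright a_{\fu_-})\diamond(a_v\triangleright a_{\fv_-}) = \fu_{(1)}\diamond\fv_{(1)}$, with the corresponding $b$-components combining under $\shuffle$ into $\fu_{(2)}\shuffle\fv_{(2)}$, yielding exactly the claimed formula.

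The main obstacle I anticipate is the combinatorial rearrangement showing that $(a_u\diamond a_v)\triangleright(a_{\fu_-}\shuffle a_{\fv_-})$ equals $(a_u\triangleright a_{\fu_-})\diamond(a_v\triangleright a_{\fv_-})$: this is precisely a repeated application of Proposition \ref{prop: key properties}, Parts (5) and (6), and must be done carefully to verify that every $\triangleright$, $\diamond$, and $\shuffle$ lands where the statement predicts, and that the $\fu_{(1)}\diamond\fv_{(1)}$ lies in $\bigoplus_{i,j>0,\,i+j=n}H_i\otimes H_j$ (so that no spurious $1\otimes(\cdot)$ terms appear beyond the displayed one). A secondary subtlety is checking the boundary of the induction implicit in Lemma \ref{lem: delta without factor 1} — namely that $x_u\diamond x_v$ is never $1$, which follows since $x_{u+v}$ always appears with coefficient $1$ — so that Lemma \ref{lem: delta without factor 1} applies to $\mathfrak{a} = x_u\diamond x_v$. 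Once these identities are in place the proof is a short formal manipulation, so I would keep the exposition tight and defer the routine shuffle-algebra gymnastics to the reader where Proposition \ref{prop: key properties} makes them mechanical.
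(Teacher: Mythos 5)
Your argument is correct, but it is not the paper's route, and the paper's is shorter. The paper decomposes via Lemma \ref{lem: triangle formulas}, Part (2): $\fu\diamond\fv=\fu\shuffle\fv-\fu\triangleright\fv-\fv\triangleright\fu$. It then applies Theorem \ref{thm: compatibility} to $\Delta(\fu\shuffle\fv)$ and Lemma \ref{lem: delta without factor 1} to $\Delta(\fu\triangleright\fv)$ and $\Delta(\fv\triangleright\fu)$; in the difference the cross-terms $\sum\fu_{(1)}\otimes(\fu_{(2)}\shuffle\fv)$ and $\sum\fv_{(1)}\otimes(\fu\shuffle\fv_{(2)})$ cancel, and the surviving first components recombine as $\fu_{(1)}\shuffle\fv_{(1)}-\fu_{(1)}\triangleright\fv_{(1)}-\fv_{(1)}\triangleright\fu_{(1)}=\fu_{(1)}\diamond\fv_{(1)}$ by the same lemma. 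You instead use Part (1), $\fu\diamond\fv=(x_u\diamond x_v)\triangleright(\fu_-\shuffle\fv_-)$, and push $\Delta$ through the triangle product. This does work: your reading of \eqref{eq: x_u x_v 2} as $\Delta(x_u\diamond x_v)=1\otimes(x_u\diamond x_v)+\sum(a_u\diamond a_v)\otimes(b_u\shuffle b_v)$ is legitimate at this point since Theorem \ref{thm: compatibility} is already established, and Lemma \ref{lem: delta without factor 1} extends bilinearly to $x_u\diamond x_v$, a combination of nonempty words of weight $u+v$. But your route costs two inputs the paper never needs, namely that depth-one identity and the rearrangement $(a_u\diamond a_v)\triangleright(a_{\fu_-}\shuffle a_{\fv_-})=(a_u\triangleright a_{\fu_-})\diamond(a_v\triangleright a_{\fv_-})$. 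The latter does follow by chaining Proposition \ref{prop: key properties}, Parts (5) and (6), together with commutativity of $\shuffle$, but those parts are stated only for nonempty words, so the terms of $\Delta(\fu_-)$ and $\Delta(\fv_-)$ whose first component is the empty word must be treated separately (they reduce to a single application of Part (6), or are trivial); your writeup flags this but leaves it to be carried out, and it should be made explicit if you follow this route. Both proofs are valid; the paper's cancellation argument is the cleaner of the two and is the one to prefer.
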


\begin{proof}
From the compatibility, we have 
\begin{align*}
    \Delta(\mathfrak{u} \shuffle \mathfrak{v}) &= \Delta(\mathfrak{u}) \shuffle \Delta(\mathfrak{v})\\
    &= 1 \otimes (\mathfrak{u} \shuffle \mathfrak{v}) + \sum \mathfrak{u}_{(1)} \otimes (\mathfrak{u}_{(2)} \shuffle \mathfrak{v}) + \sum \mathfrak{v}_{(1)} \otimes (\mathfrak{u} \shuffle \mathfrak{v}_{(2)}) \\
    & + \sum (\mathfrak{u}_{(1)} \shuffle \mathfrak{v}_{(1)}) \otimes (\mathfrak{u}_{(2)} \shuffle \mathfrak{v}_{(2)}).
\end{align*}
On the other hand, it follows from Lemma \ref{lem: delta without factor 1} that
\begin{align*}
    \Delta(\mathfrak{u} \triangleright \mathfrak{v}) &= 1 \otimes (\mathfrak{u} \triangleright \mathfrak{v}) + \sum \mathfrak{u}_{(1)} \otimes (\mathfrak{u}_{(2)} \shuffle \mathfrak{v}) + \sum (\mathfrak{u}_{(1)} \triangleright \mathfrak{v}_{(1)}) \otimes (\mathfrak{u}_{(2)} \shuffle \mathfrak{v}_{(2)}),\\
    \Delta(\mathfrak{v} \triangleright \mathfrak{u}) &= 1 \otimes (\mathfrak{v} \triangleright \mathfrak{u}) + \sum \mathfrak{v}_{(1)} \otimes (\mathfrak{u} \shuffle \mathfrak{v}_{(2)}) + \sum (\mathfrak{v}_{(1)} \triangleright \mathfrak{u}_{(1)}) \otimes (\mathfrak{u}_{(2)} \shuffle \mathfrak{v}_{(2)}).
\end{align*}
Thus 
\begin{align*}
    \Delta(\mathfrak{u} \diamond \mathfrak{v}) &= \Delta(\mathfrak{u} \shuffle \mathfrak{v} - \mathfrak{u} \triangleright \mathfrak{v} - \mathfrak{v} \triangleright \mathfrak{u})\\
    &= \Delta(\mathfrak{u} \shuffle \mathfrak{v}) - \Delta(\mathfrak{u} \triangleright \mathfrak{v}) - \Delta(\mathfrak{v} \triangleright \mathfrak{u})\\
    &= 1 \otimes (\mathfrak{u} \shuffle \mathfrak{v} - \mathfrak{u} \triangleright \mathfrak{v} - \mathfrak{v} \triangleright \mathfrak{u}) \\
    &+ \sum (\mathfrak{u}_{(1)} \shuffle \mathfrak{v}_{(1)} - \mathfrak{u}_{(1)} \triangleright \mathfrak{v}_{(1)} - \mathfrak{v}_{(1)} \triangleright \mathfrak{u}_{(1)}) \otimes (\mathfrak{u}_{(2)} \shuffle \mathfrak{v}_{(2)})\\
    &= 1 \otimes (\mathfrak{u} \diamond \mathfrak{v}) + \sum (\mathfrak{u}_{(1)} \diamond \mathfrak{v}_{(1)}) \otimes (\mathfrak{u}_{(2)} \shuffle \mathfrak{v}_{(2)}).
\end{align*}
This proves the lemma.
\end{proof}

\subsection{A formula for the coproduct of depth one}  ${}$\par

The first result of this section reads as follows.
\begin{proposition} \label{prop: formula delta xn}
For all $n \in \N$, we have
\begin{equation*}
    \Delta(x_n) = 1 \otimes x_n + \sum \limits_{\substack{r \in \N, \fa \in \langle \Sigma \rangle\\ r + w(\fa) = n}} \binom{r + \depth(\fa) - 2}{\depth(\fa)} x_r \otimes [\fa].
\end{equation*}
Here we recall that $[\fa]$ is given as in \eqref{eq: bracket operator}.
\end{proposition}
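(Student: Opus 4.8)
The plan is to prove the formula by induction on $n$, using the recursive definition \eqref{eq: coproduct} of $\Delta(x_n)$. The base cases $n=1$ (and $n=2$, to be safe) follow by direct inspection. For the inductive step, I would compute each of the four terms on the right-hand side of
\[ \Delta(x_w)=\Delta(x_1) \shuffle  \Delta(x_{w-1})-\Delta(x_1x_{w-1})-\Delta(x_{w-1}x_1)-\sum_{0<j<w} \Delta^j_{1,w-1} \Delta(x_{w-j} x_j) \]
using the inductive hypothesis for $\Delta(x_{w-1})$ together with the known formula $\Delta(x_u\fv)=1\otimes x_u\fv+\sum(a_u\triangleright a_\fv)\otimes(b_u\shuffle b_\fv)$ for depth-$\geq 2$ words, and then collect coefficients. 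Throughout, one exploits Lemma \ref{lem: prod of bracket} ($[\fb]\shuffle[\fc]=[\fb\fc]$) to recognize shuffles of brackets as brackets of concatenations, and Lemma \ref{lem: bracket zero} plus Remark \ref{rmk: delta j<n} to handle the $\Delta^j_{1,w-1}$ factors and identify them with the factors appearing inside $[\,\cdot\,]$.

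\textbf{Key steps.} First, I would expand $\Delta(x_1)\shuffle\Delta(x_{w-1})$: writing $\Delta(x_{w-1})=1\otimes x_{w-1}+\sum\binom{r+\depth(\fa)-2}{\depth(\fa)}x_r\otimes[\fa]$ by induction and $\Delta(x_1)=1\otimes x_1+x_1\otimes 1$, the shuffle produces four families of terms; the ``interesting'' ones contributing to the left tensor factor $x_{r}$ with $r\geq 2$ come from $x_1\otimes 1$ shuffled against $x_r\otimes[\fa]$, giving $(x_1\shuffle x_r)\otimes[\fa]$, and one must re-expand $x_1\shuffle x_r$ via Thakur's formula. Second, I would expand the three subtracted terms: $\Delta(x_1x_{w-1})$, $\Delta(x_{w-1}x_1)$ and $\Delta(x_{w-j}x_j)$ all have the shape $1\otimes(\cdots)+\sum(a\triangleright a_\fu)\otimes(b\shuffle b_\fu)$, and since $x_j$ is a \emph{letter} the inner data $a_\fu\otimes b_\fu$ for $\fu=x_j$ is $\Delta(x_j)$ itself, allowing another application of the inductive formula. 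Third — the bookkeeping heart of the argument — I would collect, for each fixed left factor $x_r$ and each fixed word $\fa$ with $r+w(\fa)=w$, the total scalar coefficient of $x_r\otimes[\fa]$ and check it equals $\binom{r+\depth(\fa)-2}{\depth(\fa)}$. This reduces to a Pascal-type binomial identity: the coefficient assembled from $x_1\shuffle x_{r-1}$ acting on a bracket of depth $\depth(\fa)$, minus the contributions indexed by splitting off the first (or a middle) letter of $\fa$, must telescope to the claimed binomial. Here the coefficients $\Delta^{i_k}_{1,w(\fa)+1}$ built into the definition of $[\fa]$ are exactly what make the extra terms coming from $\sum_j\Delta^j_{1,w-1}\Delta(x_{w-j}x_j)$ cancel or combine correctly, using Lemma \ref{lem: delta j<n} to equate $\Delta^j_{1,w-1}$ with $\Delta^j_{1,w(\fa)+1}$ whenever $j<w(\fa)+1$.

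\textbf{Anticipated main obstacle.} I expect the genuine difficulty to be the combinatorial identity in the third step: tracking how the shuffle products interleave letters so that every term on the right-hand side is correctly grouped into a single bracket $[\fa]$ of the right depth, and verifying that the alternating sum of binomial-type coefficients (coming from $x_1\shuffle x_{w-1}$, from the two depth-two subtractions, and from the $j$-sum) collapses to $\binom{r+\depth(\fa)-2}{\depth(\fa)}$. One subtlety is that $[\fa]$ is symmetric in the letters of $\fa$ (being a full shuffle), so distinct concatenation words $\fa$ may yield proportional brackets; the cleanest route is probably to parametrize contributions not by words but by multisets of letters together with a ``distinguished'' splitting, and to perform the coefficient sum in that language, invoking Lemma \ref{lem: prod of bracket} to merge brackets and Lemma \ref{lem: bracket zero} to discard vanishing ones. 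Once the coefficient identity is isolated it should follow from the elementary recursion $\binom{m}{k}=\binom{m-1}{k}+\binom{m-1}{k-1}$ applied with $m=r+\depth(\fa)-2$, but setting up the correct indexing so that this recursion applies is where the real work lies.
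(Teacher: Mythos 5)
Your strategy is essentially the proof given in the paper: induction on $n$ via the defining recursion \eqref{eq: coproduct}, expansion of each term using the inductive hypothesis together with Lemma \ref{lem: prod of bracket} to merge shuffles of brackets, and a term-by-term coefficient count. The one place where your sketch underestimates the work is the final binomial identity: Pascal's rule $\binom{m}{k}=\binom{m-1}{k}+\binom{m-1}{k-1}$ does close the bookkeeping for the terms whose left tensor factor is a single letter $x_r$, but the expansion also produces a whole family of terms of the form $x_hx_k\otimes[\fa]$ (coming from re-expanding $x_1\shuffle x_r$ on the left factor and from the subtracted sum $\sum_j\Delta^j_{1,w-1}\Delta(x_{w-j}x_j)$), and one must prove that these cancel identically. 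That cancellation does not follow from Pascal's rule; in the paper it rests on the Chu--Vandermonde convolution $\sum_{i+j=l}\binom{h+i}{i}\binom{k+j}{j}=\binom{h+k+l+1}{l}$ (Lemma \ref{lem: comb indentity}) together with a case analysis on $h,k$ that invokes the vanishing properties in Lemma \ref{lem: delta j<n} and Lemma \ref{lem: bracket zero}. Your worry about distinct words yielding proportional brackets is harmless: the identity is verified at the level of the formal sum over words, so no multiset reparametrization is needed.
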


The rest of this section is devoted to a proof of Proposition \ref{prop: formula delta xn}. We proceed the proof by induction on $n$. For $n = 1$, we have
\begin{equation*}
    \Delta(x_1) = 1 \otimes x_1 + \binom{-1}{0} x_1 \otimes [1] = 1 \otimes x_1 + x_1 \otimes 1,
\end{equation*}
which proves the base step. We assume that Proposition \ref{prop: formula delta xn} holds for all $n \leq m$ with $m \in \N$ and $m \geq 1$. We need to show that Proposition \ref{prop: formula delta xn} holds for $n = m + 1$. Indeed, from the induction hypothesis, we have
\begin{equation*}
    \Delta(x_m) = 1 \otimes x_m + \sum \limits_{\substack{r \in \N, \fa \in \langle \Sigma \rangle\\ r + w(\fa) = m}} \binom{r + \depth(\fa) - 2}{\depth(\fa)} x_r \otimes [\fa],
\end{equation*}
hence it follows from Lemma \ref{lem: delta diamond} that 
\begin{align} \label{eq: delta x_m+1 (1)}
    \Delta(x_1 \diamond x_m) &= 1 \otimes(x_1 \diamond x_m) + \sum \limits_{\substack{r \in \N, \fa \in \langle \Sigma \rangle\\ r + w(\fa) = m}} \binom{r + \depth(\fa) - 2}{\depth(\fa)} (x_1 \diamond x_r) \otimes [\fa] \\ \notag
    &= 1 \otimes x_{m+1} + \sum \limits_{i+j = m + 1} \Delta^j_{1,m} 1 \otimes x_jx_j\\ \notag
    &+ \sum \limits_{\substack{r \in \N, \fa \in \langle \Sigma \rangle\\ r + w(\fa) = m}} \binom{r + \depth(\fa) - 2}{\depth(\fa)} x_{r+1} \otimes [\fa]\\ \notag
    &+ \sum \limits_{\substack{r \in \N, \fa \in \langle \Sigma \rangle\\ r + w(\fa) = m}} \binom{r + \depth(\fa) - 2}{\depth(\fa)} \sum \limits_{h+k = r + 1} \Delta^k_{1,r}  x_{h}x_k \otimes [\fa].
\end{align}
On the other hand, we have
\begin{equation} \label{eq: delta x_m+1 (2)}
    \Delta(x_1 \diamond x_m) = \Delta(x_{m+1} + \sum \limits_{i+j = m + 1} \Delta^j_{1,m} x_ix_j) = \Delta(x_{m+1}) + \sum \limits_{i+j = m + 1} \Delta^j_{1,m} \Delta(x_ix_j).
\end{equation}
From the induction hypothesis, it follows that for all $i,j \in \N$ such that $i + j = m+1$,
\begin{align*}
    \Delta(x_i) = 1 \otimes x_i + \sum \limits_{\substack{s \in \N, \fb \in \langle \Sigma \rangle\\ s + w(\fb) = i}} \binom{s + \depth(\fb) - 2}{\depth(\fb)} x_s \otimes [\fb],\\
    \Delta(x_j) = 1 \otimes x_j + \sum \limits_{\substack{t \in \N, \fc \in \langle \Sigma \rangle\\ t + w(\fc) = j}} \binom{t + \depth(\fc) - 2}{\depth(\fc)} x_t \otimes [\fc],
\end{align*}
hence
\begin{align}  \label{eq: delta x_m+1 (3)}
    \Delta(x_ix_j) &= 1 \otimes x_ix_j + \sum \limits_{\substack{s \in \N, \fb \in \langle \Sigma \rangle\\ s + w(\fb) = i}} \binom{s + \depth(\fb) - 2}{\depth(\fb)} x_s \otimes ([\fb] \shuffle x_j)\\ \notag
    &+ \sum \limits_{\substack{s,t \in \N; \fb,\fc \in \langle \Sigma \rangle\\ s + w(\fb) = i \\ t + w(\fc) = j}} \binom{s + \depth(\fb) - 2}{\depth(\fb)}\binom{t + \depth(\fc) - 2}{\depth(\fc)} x_sx_t \otimes ([\fb] \shuffle [\fc]).
\end{align}
From \eqref{eq: delta x_m+1 (1)}, \eqref{eq: delta x_m+1 (2)} and \eqref{eq: delta x_m+1 (3)}, we have
\begin{align*}
    \Delta(x_{m+1}) = \Delta(x_1 \diamond x_m) - \sum \limits_{i+j = m + 1} \Delta^j_{1,m} \Delta(x_ix_j) = 1 \otimes x_{m+1} + S_1 + S_2,
\end{align*}
where
\begin{align*}
    S_1 &= \sum \limits_{\substack{r \in \N, \fa \in \langle \Sigma \rangle\\ r + w(\fa) = m}} \binom{r + \depth(\fa) - 2}{\depth(\fa)} x_{r+1} \otimes [\fa] \\
    &- \sum \limits_{i+j = m + 1} \Delta^j_{1,m} \sum \limits_{\substack{s \in \N, \fb \in \langle \Sigma \rangle\\ s + w(\fb) = i}} \binom{s + \depth(\fb) - 2}{\depth(\fb)} x_s \otimes ([\fb] \shuffle x_j),\\
    S_2 &= \sum \limits_{\substack{r \in \N, \fa \in \langle \Sigma \rangle\\ r + w(\fa) = m}} \binom{r + \depth(\fa) - 2}{\depth(\fa)} \sum \limits_{h+k = r + 1} \Delta^k_{1,r}  x_{h}x_k \otimes [\fa] \\
    &- \sum \limits_{i+j = m + 1} \Delta^j_{1,m} \sum \limits_{\substack{s,t \in \N; \fb,\fc \in \langle \Sigma \rangle\\ s + w(\fb) = i \\ t + w(\fc) = j}} \binom{s + \depth(\fb) - 2}{\depth(\fb)}\binom{t + \depth(\fc) - 2}{\depth(\fc)} x_sx_t \otimes ([\fb] \shuffle [\fc]).
\end{align*}

We next compute the sums $S_1$ and $S_2$ as follows. 
\subsubsection{The sum $S_1$} We first note that if $j <m$ then it follows from Remark \ref{rmk: delta j<n} and Lemma \ref{lem: prod of bracket} that $(-\Delta^j_{1,m})([\fb] \shuffle x_j) = [\fb] \shuffle [x_j] = [\fb x_j]$. Thus we have
\begin{align*}
    S_1 &= \sum \limits_{\substack{r \in \N, \fa \in \langle \Sigma \rangle\\ r + w(\fa) = m}} \binom{r + \depth(\fa) - 2}{\depth(\fa)} x_{r+1} \otimes [\fa] \\
    &- \sum \limits_{i+j = m + 1} \Delta^j_{1,m} \sum \limits_{\substack{s \in \N, \fb \in \langle \Sigma \rangle\\ s + w(\fb) = i}} \binom{s + \depth(\fb) - 2}{\depth(\fb)} x_s \otimes ([\fb] \shuffle x_j)\\
    &= \sum \limits_{r=2}^{m+1} \sum \limits_{\substack{\fa \in \langle \Sigma \rangle\\ r + w(\fa) = m +1}} \binom{r - 1 + \depth(\fa) - 2}{\depth(\fa)} x_{r} \otimes [\fa] \\
    &+ \sum \limits_{s=1}^{m}  \sum \limits_{\substack{j \in \N, \fb \in \langle \Sigma \rangle\\ s+ w(\fb) + j= m+1}} \binom{s + \depth(\fb) - 2}{\depth(\fb)} x_s \otimes (-\Delta^j_{1,m})([\fb] \shuffle x_j)\\
    &= \binom{m-2}{0} x_{m+1} \otimes [1] + \sum \limits_{r=2}^{m} \sum \limits_{\substack{\fa \in \langle \Sigma \rangle\\ r + w(\fa) = m +1}} \binom{r - 1 + \depth(\fa) - 2}{\depth(\fa)} x_{r} \otimes [\fa]\\
    &+ \sum \limits_{s=2}^{m}  \sum \limits_{\substack{j \in \N, \fb \in \langle \Sigma \rangle\\ s+ w(\fb) + j= m+1}} \binom{s + \depth(\fb) - 2}{\depth(\fb)} x_s \otimes (-\Delta^j_{1,m})([\fb] \shuffle x_j) \\
    &+ \sum \limits_{\substack{j \in \N, \fb \in \langle \Sigma \rangle\\ w(\fb) + j= m}} \binom{\depth(\fb) - 1}{\depth(\fb)} x_1 \otimes (-\Delta^j_{1,m})([\fb] \shuffle x_j)\\
    &= x_{m+1} \otimes 1 + \Bigg[\sum \limits_{r=2}^{m} \sum \limits_{\substack{\fa \in \langle \Sigma \rangle\\ r + w(\fa) = m +1}} \binom{r + \depth(\fa) - 3}{\depth(\fa)} x_{r} \otimes [\fa] \\
    &+ \sum \limits_{s=2}^{m}  \sum \limits_{\substack{j \in \N, \fb \in \langle \Sigma \rangle\\ s+ w(\fb) + j= m+1}} \binom{s + \depth(\fb) - 2}{\depth(\fb)} x_s \otimes [\fb x_j]\Bigg]
    \\ 
    &+ \sum \limits_{\substack{j \in \N, \fb \in \langle \Sigma \rangle\\ w(\fb) + j= m}} \binom{\depth(\fb) - 1}{\depth(\fb)} x_1 \otimes (-\Delta^j_{1,m})([\fb] \shuffle x_j)\\
    &= x_{m+1} \otimes 1 + \sum \limits_{r=2}^{m} \sum \limits_{\substack{\fa \in \langle \Sigma \rangle\\ r + w(\fa) = m +1}} \Bigg[\binom{r + \depth(\fa) - 3}{\depth(\fa)} + \binom{r + \depth(\fa) - 3}{\depth(\fa)-1} \Bigg]x_{r} \otimes [\fa] 
    \\ 
    &+ \sum \limits_{\substack{j \in \N, \fb \in \langle \Sigma \rangle\\ w(\fb) + j= m}} \binom{\depth(\fb) - 1}{\depth(\fb)} x_1 \otimes (-\Delta^j_{1,m})([\fb] \shuffle x_j)\\
    &= x_{m+1} \otimes 1 + \sum \limits_{r=2}^{m} \sum \limits_{\substack{\fa \in \langle \Sigma \rangle\\ r + w(\fa) = m +1}} \binom{r + \depth(\fa) - 2}{\depth(\fa)}x_{r} \otimes [\fa]  \\
    &+ \sum \limits_{\substack{j \in \N, \fb \in \langle \Sigma \rangle\\ w(\fb) + j= m}} \binom{\depth(\fb) - 1}{\depth(\fb)} x_1 \otimes (-\Delta^j_{1,m})([\fb] \shuffle x_j).
\end{align*}

We claim that for all $j \in \N$ and for all $\fb \in \langle \Sigma \rangle$ with $j + w(\fb) = m$,
\begin{equation} \label{eq: S1 zero}
     \binom{\depth(\fb) - 1}{\depth(\fb)} x_1 \otimes (-\Delta^j_{1,m})([\fb] \shuffle x_j) = 0.
\end{equation}
Indeed, if $j = m$, then it follows from Lemma \ref{lem: delta j<n} that $\Delta^m_{1,m} =0$, hence \eqref{eq: S1 zero} holds. If $j <m$, then $w(\fb) \geq 1$, i.e., $\depth(\fb) \geq 1$, hence $\binom{\depth(\fb) - 1}{\depth(\fb)} = 0$, showing that \eqref{eq: S1 zero} holds.
This proves the claim. As a consequence, one may identify
\begin{equation*}
    \sum \limits_{\substack{j \in \N, \fb \in \langle \Sigma \rangle\\ w(\fb) + j= m}} \binom{\depth(\fb) - 1}{\depth(\fb)} x_1 \otimes (-\Delta^j_{1,m})([\fb] \shuffle x_j) =  \sum \limits_{\substack{\fa \in \langle \Sigma \rangle\\ w(\fa) = m}} \binom{\depth(\fa) - 1}{\depth(\fa)}x_{1} \otimes [\fa] = 0.
\end{equation*}
Thus
\begin{align*}
    S_1 &= x_{m+1} \otimes [1] + \sum \limits_{r=2}^{m} \sum \limits_{\substack{\fa \in \langle \Sigma \rangle\\ r + w(\fa) = m +1}} \binom{r + \depth(\fa) - 2}{\depth(\fa)}x_{r} \otimes [\fa]  \\
    &+ \sum \limits_{\substack{\fa \in \langle \Sigma \rangle\\ w(\fa) = m}} \binom{\depth(\fa) - 1}{\depth(\fa)}x_{1} \otimes [\fa]\\
    &= \sum \limits_{r=1}^{m+1} \sum \limits_{\substack{\fa \in \langle \Sigma \rangle\\ r + w(\fa) = m +1}} \binom{r + \depth(\fa) - 2}{\depth(\fa)}x_{r} \otimes [\fa]\\
    &= \sum \limits_{\substack{r \in \N, \fa \in \langle \Sigma \rangle\\ r + w(\fa) = m +1}} \binom{r + \depth(\fa) - 2}{\depth(\fa)}x_{r} \otimes [\fa].
\end{align*}

\subsubsection{The sum $S_2$} We claim that $S_2 = 0$. The following lemma will be useful.
\begin{lemma} \label{lem: comb indentity}
For positive integers $h,k,l$, we have;
\begin{equation*}
    \sum \limits_{\substack{i , j \geq 0 \\i  +j = l}} \binom{h + i}{i} \binom{k + j}{j} = \binom{h + k + l + 1}{l}.
\end{equation*}
\end{lemma}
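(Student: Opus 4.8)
\textbf{Proof proposal for Lemma \ref{lem: comb indentity}.} The plan is to prove the identity
\[
\sum_{\substack{i,j\ge 0\\ i+j=l}} \binom{h+i}{i}\binom{k+j}{j} = \binom{h+k+l+1}{l}
\]
by the standard generating-function (Vandermonde-type) argument, since both sides are polynomial in the integer parameters and the identity is a finite algebraic statement. First I would recall the negative binomial expansion
\[
\frac{1}{(1-X)^{h+1}} = \sum_{i\ge 0} \binom{h+i}{i} X^i,
\qquad
\frac{1}{(1-X)^{k+1}} = \sum_{j\ge 0} \binom{k+j}{j} X^j,
\]
valid as formal power series in $X$ over $\bQ$ for all $h,k\ge 0$. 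Multiplying these two series and collecting the coefficient of $X^l$ gives exactly the left-hand side as the Cauchy product coefficient.

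The second step is to identify the product of the two series: since
\[
\frac{1}{(1-X)^{h+1}}\cdot\frac{1}{(1-X)^{k+1}} = \frac{1}{(1-X)^{h+k+2}} = \sum_{l\ge 0} \binom{h+k+l+1}{l} X^l,
\]
comparing the coefficient of $X^l$ on both sides yields the claimed formula. This is essentially a one-line computation once the generating functions are in place; alternatively one could invoke the Vandermonde/Chu identity for upper-negated binomial coefficients directly, but the generating-function route is cleanest and self-contained.

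I do not expect a genuine obstacle here — the only point requiring a word of care is that the binomial coefficient $\binom{a}{b}$ in this paper is defined (see the displayed convention earlier) by $\binom{a}{b}=a(a-1)\cdots(a-b+1)/b!$ for $b\ge 0$, so one should note that $\binom{h+i}{i}$ with $h,i\ge 0$ coincides with the usual combinatorial value and that $\binom{h+k+l+1}{l}$ is likewise the ordinary value, so the formal-power-series expansions apply verbatim. With that remark the comparison of coefficients completes the proof.
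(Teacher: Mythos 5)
Your proof is correct, but it takes a different route from the paper's. The paper proves the identity by induction on $k$: the base case $k=0$ is the hockey-stick identity $\sum_{r=0}^{m}\binom{n+r}{r}=\binom{n+m+1}{m}$, and the inductive step rewrites $\binom{k+1+j}{j}$ as $\sum_{r=0}^{j}\binom{k+r}{r}$, reorders the double sum, and applies the hockey-stick identity again. Your generating-function argument --- multiplying $\sum_i\binom{h+i}{i}X^i=(1-X)^{-(h+1)}$ by the analogous series for $k$ and reading off the coefficient of $X^l$ in $(1-X)^{-(h+k+2)}$ --- establishes the same Vandermonde-type identity in essentially one line, and is arguably the more transparent proof; the paper's induction has the merit of staying entirely within finite binomial manipulations and of exhibiting the hockey-stick identity that it reuses elsewhere. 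Your remark about the paper's convention $\binom{a}{b}=a(a-1)\cdots(a-b+1)/b!$ is apt: since $h,i\geq 0$ the coefficients $\binom{h+i}{i}$ agree with the ordinary combinatorial values, so the formal power series expansions apply verbatim, and the resulting integer identity may then be reduced modulo $p$ wherever the paper needs it. Both arguments in fact work for $h,k\geq 0$ (not just positive), which is the generality the paper actually uses when it invokes the lemma.
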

\begin{proof}
For all  $m ,n \in \N$, we have
\begin{equation*}
    \sum \limits_{r = 0}^m \binom{n + r}{r} = \binom{n + m + 1}{m}.
\end{equation*}
We now proceed the proof by induction on $k$. The base step $k = 0$ follows from the above identity. We assume that Lemma \ref{lem: comb indentity} holds for $k \geq 0$. We need to show that Lemma \ref{lem: comb indentity} holds for $k + 1$. From the previous identity and the induction hypothesis, we have
\begin{align*}
    \sum \limits_{\substack{i , j \geq 0 \\i  +j = l}} \binom{h + i}{i} \binom{(k + 1) + j}{j} &= \sum \limits_{\substack{i , j \geq 0 \\i  +j = l}} \binom{h + i}{i} \binom{ k + j + 1}{j}\\
    &=  \sum \limits_{\substack{i , j \geq 0 \\i  +j = l}} \binom{h + i}{i} \sum \limits_{r =0}^j  \binom{k + r}{r} \\
    &= \sum \limits_{s = 0}^l \sum \limits_{\substack{i , r \geq 0 \\i  +r = s}} \binom{h + i}{i}\binom{k + r}{r}\\
    &= \sum \limits_{s = 0}^l \binom{h + k + s + 1}{s}\\
    &= \binom{h + k + l + 2}{l}.
\end{align*}
This proves the lemma.
\end{proof}
It follows from Lemma \ref{lem: prod of bracket} that $[\fb] \shuffle [\fc] = [\fb\fc]$, hence
 \begin{align*}
     S_2&= \sum \limits_{\substack{r \in \N, \fa \in \langle \Sigma \rangle\\ r + w(\fa) = m}} \binom{r + \depth(\fa) - 2}{\depth(\fa)} \sum \limits_{h+k = r + 1} \Delta^k_{1,r}  x_{h}x_k \otimes [\fa] \\
    &- \sum \limits_{i+j = m + 1} \Delta^j_{1,m} \sum \limits_{\substack{s,t \in \N; \fb,\fc \in \langle \Sigma \rangle\\ s + w(\fb) = i \\ t + w(\fc) = j}} \binom{s + \depth(\fb) - 2}{\depth(\fb)}\binom{t + \depth(\fc) - 2}{\depth(\fc)} x_sx_t \otimes [\fb \fc]\\
    &= \sum \limits_{\substack{h,k \in \N, \fa \in \langle \Sigma \rangle\\ h + k + w(\fa) = m + 1}} \binom{h +k - 1 + \depth(\fa) - 2}{\depth(\fa)} \Delta^k_{1,h + k  - 1}  x_{h}x_k \otimes [\fa] \\
    &-  \sum \limits_{\substack{s,t \in \N; \fb,\fc \in \langle \Sigma \rangle\\ s + t + w(\fb) +w(\fc) = m + 1}} \binom{s + \depth(\fb) - 2}{\depth(\fb)}\binom{t + \depth(\fc) - 2}{\depth(\fc)} \Delta^{t + w(\fc)}_{1,m} x_sx_t \otimes [\fb \fc]\\
    &= \sum \limits_{\substack{h,k \in \N, \fa \in \langle \Sigma \rangle\\ h + k + w(\fa) = m + 1}} \Bigg[  \binom{h +k + \depth(\fa) - 3}{\depth(\fa)} \Delta^k_{1,h + k  - 1} \\
    &- \sum \limits_{\substack{ \fb,\fc \in \langle \Sigma \rangle\\ \fb \fc = \fa}} \binom{h + \depth(\fb) - 2}{\depth(\fb)}\binom{k + \depth(\fc) - 2}{\depth(\fc)}\Delta^{k + w(\fc)}_{1,m} \Bigg]  x_{h}x_k \otimes [\fa] 
 \end{align*}
 We will prove that for all $h, k \in \N$ and for all $\fa \in \langle \Sigma \rangle$ with $h + k + w(\fa) = m + 1$,
 \begin{equation} \label{eq: S2 zero}
     \Bigg[  \binom{h +k + \depth(\fa) - 3}{\depth(\fa)} \Delta^k_{1,h + k  - 1} - \sum \limits_{\substack{ \fb,\fc \in \langle \Sigma \rangle\\ \fb \fc = \fa}} \binom{h + \depth(\fb) - 2}{\depth(\fb)}\binom{k + \depth(\fc) - 2}{\depth(\fc)}\Delta^{k + w(\fc)}_{1,m} \Bigg]  x_{h}x_k \otimes [\fa] = 0.
 \end{equation}
If this is the case then $S_2 = 0$. We divide into three cases:\\
 
\noindent \textbf{Case 1: $ h = 1$}.
The LHS of \eqref{eq: S2 zero} equals
\begin{equation*}
    \Bigg[  \binom{k + \depth(\fa) - 2}{\depth(\fa)} \Delta^k_{1,k} - \sum \limits_{\substack{ \fb,\fc \in \langle \Sigma \rangle\\ \fb \fc = \fa}} \binom{\depth(\fb) - 1}{\depth(\fb)}\binom{k + \depth(\fc) - 2}{\depth(\fc)}\Delta^{k + w(\fc)}_{1,m} \Bigg]  x_{1}x_k \otimes [\fa].
\end{equation*}
It follows from Lemma \ref{lem: delta j<n} that $\Delta^k_{1,k} =0$. Moreover, if $\fb \ne 1$, then $\depth(\fb) \geq 1$, hence $\binom{\depth(\fb) - 1}{\depth(\fb)} = 0$. If $\fb = 1$, then $\fc = \fa$ hence $k + w(\fc) = k + w(\fa) = m$, showing that $\Delta^{k + w(\fc)}_{1,m} = \Delta^m_{1,m} = 0$. So \eqref{eq: S2 zero} holds in this case.\\

\noindent\textbf{Case 2: $ h \geq 2, k = 1$}. The LHS of \eqref{eq: S2 zero} equals
\begin{equation*}
    \Bigg[  \binom{h + \depth(\fa) - 2}{\depth(\fa)} \Delta^1_{1,h} - \sum \limits_{\substack{ \fb,\fc \in \langle \Sigma \rangle\\ \fb \fc = \fa}} \binom{h +\depth(\fb) - 2}{\depth(\fb)}\binom{\depth(\fc) - 1}{\depth(\fc)}\Delta^{1 + w(\fc)}_{1,m} \Bigg]  x_{h}x_1 \otimes [\fa].
\end{equation*}
Since $ 1 < h \leq m$, it follows from Remark \ref{rmk: delta j<n} that $\Delta^1_{1,h} = \Delta^1_{1,m}$. Moreover, if $\fc \ne 1$, then $\depth(\fc) \geq 1$, hence $\binom{\depth(\fc) - 1}{\depth(\fc)} = 0$. Then the LHS of \eqref{eq: S2 zero} equals
\begin{equation*}
    \Bigg[  \binom{h + \depth(\fa) - 2}{\depth(\fa)} \Delta^1_{1,m} - \binom{h +\depth(\fa) - 2}{\depth(\fa)}\binom{- 1}{0}\Delta^{1}_{1,m} \Bigg]  x_{h}x_1 \otimes [\fa] = 0.
\end{equation*}
So \eqref{eq: S2 zero} holds in this case.\\

\noindent\textbf{Case 3: $ h \geq 2, k \geq 2$} Since $k < h + k - 1 \leq m$, it follows from Remark \ref{rmk: delta j<n} that $\Delta^k_{1,h+k - 1} = \Delta^k_{1,m}$. Moreover, we claim that 
\begin{equation} \label{eq: S2 case 3}
    \Delta^{k+w(\fc)}_{1,m} x_hx_k \otimes [\fa] = \Delta^{k}_{1,m} x_hx_k \otimes [\fa].
\end{equation}
Indeed, note that $k + w(\fc) < m$. If $(q-1) \mid w(\fc)$, then it follows from Lemma $\ref{lem: delta j<n}$ that $\Delta^{k+w(\fc)}_{1,m} = \Delta^{k}_{1,m}$, hence \eqref{eq: S2 case 3} holds. If $(q-1) \nmid w(\fc)$, then it follows from Lemma \ref{lem: bracket zero} that $[\fc] = 0$, hence $[\fa] = 0$, showing that \eqref{eq: S2 case 3} holds. Thus the LHS of \eqref{eq: S2 zero} becomes
\begin{equation*}
    \Bigg[  \binom{h +k + \depth(\fa) - 3}{\depth(\fa)}  - \sum \limits_{\substack{ \fb,\fc \in \langle \Sigma \rangle\\ \fb \fc = \fa}} \binom{h + \depth(\fb) - 2}{\depth(\fb)}\binom{k + \depth(\fc) - 2}{\depth(\fc)} \Bigg] \Delta^{k}_{1,m} x_{h}x_k \otimes [\fa].
\end{equation*}
It follows from Lemma \ref{lem: comb indentity} that 
\begin{equation*}
    \binom{h +k + \depth(\fa) - 3}{\depth(\fa)}  = \sum \limits_{\substack{ \fb,\fc \in \langle \Sigma \rangle\\ \fb \fc = \fa}} \binom{h + \depth(\fb) - 2}{\depth(\fb)}\binom{k + \depth(\fc) - 2}{\depth(\fc)},
\end{equation*}
hence \eqref{eq: S2 zero} holds in this case.\\

From the above computations, we conclude that 
\begin{equation*}
    \Delta(x_{m+1}) = 1 \otimes x_{m+1} + S_1 + S_2 = 1 \otimes x_{m+1} + \sum \limits_{\substack{r \in \N, \fa \in \langle \Sigma \rangle\\ r + w(\fa) = m + 1}} \binom{r + \depth(\fa) - 2}{\depth(\fa)} x_r \otimes [\fa].
\end{equation*}
This proves Proposition \ref{prop: formula delta xn}.


\subsection{Comparison with Shi's coproduct} ${}$\par

In \cite{Shi18}, Shi defined another coproduct 
	\[ \Delta_1: \frak C \to \frak C \otimes \frak C. \]
using the concatenation rather than $\triangleright$ on recursive steps for words with depth $>1$. More precisely, we define it on  $\langle \Sigma \rangle$ by induction on weight and extend by $\Fq$-linearity to $\frak C$. First, we set
\begin{align*}
\Delta_1(1)&:=1 \otimes 1, \\
\Delta_1(x_1)&:=1 \otimes x_1 + x_1 \otimes 1.
\end{align*}
Let $w \in \bN$ and we suppose that we have defined $\Delta_1(\fv)$ for all words $\fv$ of weight $w(\fv)<w$. We now give a formula for $\Delta_1(\fu)$ for all words $\fu$ with $w(\fu)=w$. For such a word $\fu$ with $\depth(\fu)>1$, we put $\fu=x_u \fv$ with $w(\fv)<w$. Since $x_u$ and $\fv$ are both of weight less than $w$, we have already defined
\begin{align*}
\Delta_1(x_u)&:=1 \otimes x_u + \sum a_u \otimes b_u, \\
\Delta_1(\fv)&:= \sum a_\fv \otimes b_\fv.
\end{align*}
Then we set
\begin{align*}
\Delta_1(\fu):=1 \otimes \fu + \sum (a_u a_\fv) \otimes (b_u \shuffle  b_\fv).
\end{align*}
Our last task is to define $\Delta_1(x_w)$. We know that
	\[ x_1\shuffle x_{w-1}=x_w+x_1x_{w-1}+x_{w-1}x_1+\sum_{0<j<w} \Delta^j_{1,w-1} x_{w-j} x_j \]
where all the words $x_{w-j} x_j$ have weight $w$ and depth $2$ and all $\Delta^j_{1,w-1}$ belong to $\Fq$. Therefore, we set
\begin{equation*} 
\Delta_1(x_w):=\Delta_1(x_1) \shuffle  \Delta_1(x_{w-1})-\Delta_1(x_1x_{w-1})-\Delta_1(x_{w-1}x_1)-\sum_{0<j<w} \Delta^j_{1,w-1} \Delta_1(x_{w-j} x_j).
\end{equation*}

As an application of Proposition \ref{prop: formula delta xn}, we prove:
\begin{proposition} \label{prop: comparison with Shi's coproduct}
For all words $\fu \in \langle \Sigma \rangle$, we have
	\[ \Delta(\fu)=\Delta_1(\fu). \]
\end{proposition}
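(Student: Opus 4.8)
The plan is to induct on the weight $w=w(\fu)$, with an inner induction on the depth, comparing the recursive definitions of $\Delta$ and $\Delta_1$ term by term. For weight $0$ and $1$ the two coproducts agree by definition, so suppose $\Delta(\fv)=\Delta_1(\fv)$ for all words $\fv$ of weight $<w$. There are two cases to treat: words $\fu$ with $\depth(\fu)>1$, and the single letter $x_w$. The key structural observation is that the \emph{only} difference between $\Delta$ and $\Delta_1$ is the use of $\triangleright$ versus the concatenation on the left tensor factor in the recursive step for words of depth $>1$; hence both coproducts already coincide on all of $\frak C$ once one knows they coincide on the letters $\{x_n\}_{n\in\N}$ and that the two recursive steps produce the same output when fed the same inputs.

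First I would handle the letter $x_w$. By Proposition \ref{prop: formula delta xn} we have the closed formula
\begin{equation*}
\Delta(x_n)=1\otimes x_n+\sum_{\substack{r\in\N,\ \fa\in\langle\Sigma\rangle\\ r+w(\fa)=n}}\binom{r+\depth(\fa)-2}{\depth(\fa)}x_r\otimes[\fa],
\end{equation*}
and in particular the left tensor factors appearing in $\Delta(x_n)-1\otimes x_n$ are all \emph{single letters} $x_r$. I would then prove the analogous closed formula for $\Delta_1(x_n)$ by exactly the same induction that establishes Proposition \ref{prop: formula delta xn}: the proof there only ever uses the recursive step $\Delta(x_ix_j)=1\otimes x_ix_j+\sum a_u\otimes(b_u\shuffle x_j)+\sum(a_u\triangleright a_\fu)\otimes(b_u\shuffle b_\fu)$ for depth-two words together with Lemma \ref{lem: delta diamond}. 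Since all left tensor factors of $\Delta(x_i)-1\otimes x_i$ and $\Delta(x_j)-1\otimes x_j$ are single letters, the operation $a_u\triangleright a_\fu$ reduces to $x_s\triangleright[\fc]=x_s[\fc]$, which is literally the concatenation $x_s[\fc]$ — in other words, on these inputs $\triangleright$ and concatenation agree. Therefore the verbatim argument of §\ref{sec: depth one} goes through with $\Delta_1$ in place of $\Delta$ (Lemma \ref{lem: delta diamond} must be re-derived for $\Delta_1$, but this too follows once compatibility $\Delta_1(\fu\shuffle\fv)=\Delta_1(\fu)\shuffle\Delta_1(\fv)$ is available, which I would establish by the same induction as Theorem \ref{thm: compatibility}). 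This yields $\Delta_1(x_w)=\Delta(x_w)$, and moreover re-confirms that every left tensor factor in $\Delta_1(x_n)-1\otimes x_n$ is a single letter.

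Next, for a word $\fu=x_u\fv$ with $\depth(\fv)\ge 1$, I would write $\Delta(x_u)=1\otimes x_u+\sum a_u\otimes b_u$ and $\Delta(\fv)=\sum a_\fv\otimes b_\fv$; by the inner induction on depth, $\Delta_1(x_u)=\Delta(x_u)$ and $\Delta_1(\fv)=\Delta(\fv)$, so the same decompositions hold for $\Delta_1$. By the previous paragraph, each $a_u$ occurring with $a_u\ne 1$ is a single letter $x_r$; hence in the recursive step
\begin{equation*}
\Delta(\fu)=1\otimes\fu+\sum(a_u\triangleright a_\fv)\otimes(b_u\shuffle b_\fv),\qquad
\Delta_1(\fu)=1\otimes\fu+\sum(a_u a_\fv)\otimes(b_u\shuffle b_\fv),
\end{equation*}
we have $a_u\triangleright a_\fv=x_r\triangleright a_\fv=x_r a_\fv=a_u a_\fv$ whenever $a_u=x_r$, and for the term $a_u=1$ no contribution arises (it is absorbed into $1\otimes\fu$, or more precisely one uses Lemma \ref{lem: factor 1} to see the sum ranges only over $a_u\ne1$; I should double-check the bookkeeping of the $\fu_-$ versus the definition of $\Delta$ on depth-$>1$ words here, since this is exactly the place where the recursion is applied). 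Thus the two sums agree term by term, giving $\Delta(\fu)=\Delta_1(\fu)$ and completing the induction.

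\textbf{Main obstacle.} The real work is the first step: establishing that $\Delta_1$ satisfies its own closed-form analogue of Proposition \ref{prop: formula delta xn}, which in turn requires re-running the compatibility theorem and Lemma \ref{lem: delta diamond} for $\Delta_1$. A cleaner route — and the one I would ultimately prefer — is to bypass closed formulas entirely and prove directly by simultaneous induction on weight that (i) $\Delta(\fu)=\Delta_1(\fu)$ for all $\fu$, and (ii) every left tensor factor of $\Delta(x_n)-1\otimes x_n$ is a single letter (a consequence of Proposition \ref{prop: formula delta xn}, hence free). Given (ii), the identity $x_r\triangleright\fc=x_r\fc$ makes the depth-$>1$ recursive steps of $\Delta$ and $\Delta_1$ literally identical on the relevant inputs, and the letter case $x_w$ is then forced since both coproducts are defined by the \emph{same} formula \eqref{eq: coproduct} in terms of their values on lower-weight words. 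The only subtlety to watch is making sure the induction is set up so that property (ii) is available at weight $n$ before it is used to compare the recursive steps at weight $n$; since (ii) is a standalone fact about $\Delta$ proved already in Proposition \ref{prop: formula delta xn}, there is no circularity.
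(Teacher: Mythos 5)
Your preferred ``cleaner route'' is essentially the paper's own proof: a single induction on weight in which Proposition \ref{prop: formula delta xn} guarantees that every nontrivial left tensor factor of $\Delta(x_u)$ is a single letter, so that $a_u \triangleright a_\fv = a_u a_\fv$ and the two recursive steps for depth $>1$ coincide, after which the letter case $x_w$ follows because both coproducts satisfy the same defining relation \eqref{eq: coproduct} (noting only that this relation involves depth-two words of weight $w$, so the depth $>1$ case at weight $w$ must be settled first). Your first, more laborious route --- re-establishing compatibility, Lemma \ref{lem: delta diamond} and the closed formula of Proposition \ref{prop: formula delta xn} for $\Delta_1$ --- is correct in principle but unnecessary, and the bookkeeping worry about the $a_u=1$ term is already resolved by Lemma \ref{lem: factor 1}.
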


\begin{proof}
The proof is by induction on the weight $w$. We have to show that for all words $\fu$ of weight $w$, 
	\[ \Delta(\fu)=\Delta_1(\fu). \]
We denote this claim by $H_w$.

For $w=0$ and $w=1$, we are done as $\Delta(1)=\Delta_1(1)=1 \otimes 1$ and $\Delta(x_1)=\Delta_1(x_1)=1 \otimes x_1 + x_1 \otimes 1$. Suppose that for all words $\fu$ with $w(\fu)<w$, we have $\Delta(\fu)=\Delta_1(\fu)$. We will show that the claim $H_w$ holds.

Let $\fu$ be a word of weight $w$. Suppose that the depth of $\fu$ is at least 2. Then we put $\fu=x_u \fv$ with $w(\fv)<w$. By induction, we know that $\Delta(\fv)=\Delta_1(\fv)$. If $\Delta(x_u) = \sum \mathfrak{a}_u \otimes \mathfrak{b}_u$, then Proposition \ref{prop: formula delta xn} implies that $\depth(\mathfrak{a_n})\le1$. Thus $\Delta(\fu)=\Delta_1(\fu)$.

To conclude, we have to check the claim for $u=x_w$. By induction, for all $i<w$, $\Delta(x_i)=\Delta_1(x_i)$. It follows that
\begin{align*}
& \Delta(x_w) \\
&=\Delta(x_1) \shuffle  \Delta(x_{w-1})-\Delta(x_1x_{w-1})-\Delta(x_{w-1}x_1)-\sum_{0<j<w} \Delta^j_{1,w-1} \Delta(x_{w-j} x_j) \\
&=\Delta_1(x_1) \shuffle  \Delta_1(x_{w-1})-\Delta_1(x_1x_{w-1})-\Delta_1(x_{w-1}x_1)-\sum_{0<j<w} \Delta^j_{1,w-1} \Delta_1(x_{w-j} x_j) \\
&=\Delta_1(x_w).
\end{align*}
Thus we have proved $H_w$. The proposition follows.
\end{proof}

In particular, we get
\begin{theorem} \label{thm: comparison with Shi's coproduct}
Conjecture 3.2.11 in \cite{Shi18} holds.
\end{theorem}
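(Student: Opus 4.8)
The plan is to deduce Theorem~\ref{thm: comparison with Shi's coproduct} directly from Proposition~\ref{prop: comparison with Shi's coproduct}, which establishes that the coproduct $\Delta$ defined in \S\ref{sec: coproduct} agrees with Shi's coproduct $\Delta_1$ on all words $\fu \in \langle \Sigma \rangle$, hence on all of $\frak C$ by $\Fq$-linearity. The content of Conjecture 3.2.11 of \cite{Shi18} is precisely that the ``triangle-product'' recursion for the coproduct (our $\Delta$, which Shi proposed) coincides with the concatenation recursion (our $\Delta_1$), so once the identity $\Delta = \Delta_1$ is in hand the theorem is immediate.

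First I would recall that, by Theorem~\ref{thm: Hopf algebra for shuffle product}, the connected graded bialgebra $(\frak C, \shuffle, u, \Delta, \epsilon)$ is a connected graded Hopf algebra; this is what makes $\Delta$ a genuine coproduct on the shuffle algebra, which is the structure Shi conjectured to exist. Then I would invoke Proposition~\ref{prop: comparison with Shi's coproduct}: for every word $\fu$ we have $\Delta(\fu) = \Delta_1(\fu)$, the proof of which rests on the explicit formula for $\Delta(x_n)$ given in Proposition~\ref{prop: formula delta xn}. The key point used there is that $\Delta(x_n)$ has the shape $1 \otimes x_n + \sum_{r + w(\fa) = n} \binom{r + \depth(\fa) - 2}{\depth(\fa)} x_r \otimes [\fa]$, so the left tensor factors coming from letters always have depth $\le 1$; consequently, in the recursive step $\Delta(\fu) = 1 \otimes \fu + \sum (a_u \triangleright a_\fv) \otimes (b_u \shuffle b_\fv)$, the operation $a_u \triangleright a_\fv = x_{a_u}(\, (a_u)_- \shuffle a_\fv\,)$ reduces to $a_u a_\fv$ because $(a_u)_- = 1$. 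This matches exactly the concatenation recursion defining $\Delta_1$, and an induction on weight closes the argument.

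The proof itself is then one line: combining Theorem~\ref{thm: Hopf algebra for shuffle product} (the Hopf algebra structure exists, as conjectured) with Proposition~\ref{prop: comparison with Shi's coproduct} ($\Delta = \Delta_1$, i.e.\ this structure coincides with the one Shi wrote down), we conclude that all assertions of Conjecture 3.2.11 in \cite{Shi18} hold. I would also remark, as the paper does, that the extensive numerical checks recorded in \S\ref{sec: Hopf algebra structure} and Appendix~\ref{sec: numerical experiments}, together with the explicit low-weight formulas of \S\ref{sec: explicit formula for small weights}, provide independent confirmation.

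The main obstacle is not in this final deduction but upstream, in Proposition~\ref{prop: formula delta xn}: obtaining and verifying the closed formula for $\Delta(x_n)$ with the binomial coefficients $\binom{r + \depth(\fa) - 2}{\depth(\fa)}$ and the bracket operators $[\fa]$. That requires the combinatorial identity of Lemma~\ref{lem: comb indentity} (a Vandermonde-type summation), the multiplicativity $[\fb] \shuffle [\fc] = [\fb\fc]$ of Lemma~\ref{lem: prod of bracket}, the vanishing criterion of Lemma~\ref{lem: bracket zero} when $(q-1) \nmid w(\fa)$, and the diamond-coproduct formula of Lemma~\ref{lem: delta diamond}, assembled through a careful induction that separates the sum $S_1$ (depth-one left factors) from $S_2$ (depth-two left factors) and shows $S_2 = 0$ by the case analysis on $h, k$. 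Once Proposition~\ref{prop: formula delta xn} is established, Proposition~\ref{prop: comparison with Shi's coproduct} and hence Theorem~\ref{thm: comparison with Shi's coproduct} follow with only a routine weight induction.
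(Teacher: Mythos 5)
Your proposal is correct and follows the paper's own route exactly: the theorem is deduced by combining Theorem~\ref{thm: Hopf algebra for shuffle product} with Proposition~\ref{prop: comparison with Shi's coproduct}, whose proof in turn rests on the depth-$\le 1$ property of the left tensor factors of $\Delta(x_n)$ supplied by Proposition~\ref{prop: formula delta xn}, so that $a_u \triangleright a_\fv$ degenerates to concatenation and the two recursions coincide. You have also correctly located where the real work lies (the closed formula of Proposition~\ref{prop: formula delta xn} and its supporting lemmas), so nothing further is needed.
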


\subsection{Auxiliary results} ${}$\par

In this section we prove some auxiliary results that will be useful in the sequel.

We define 
\begin{align*}
x_{s_1}x_{s_2}\dots x_{s_r} \star q &:= x_{qs_1} x_{qs_2}\dots x_{q s_r}, \\
(\mathfrak {u}\otimes \mathfrak {v}) \star q &:= (\mathfrak {u}\star q) \otimes (\mathfrak {v}\star q)
\end{align*}
for $\mathfrak {u}, \mathfrak {v}\in \langle \Sigma \rangle$, and extend it to be $\F_p$-linear. We define $1 \star q = 1$ when $1$ is the empty word.

We recall the Lucas's theorem \cite{Gra97}: \[\binom{a_0+ a_1 p +a_2 p^2+ \dots +a_k p^k}{b_0 + b_1 p + b_2 p^2+ \dots +b_k p^k} \equiv \binom{a_0}{b_0} \binom{a_1}{b_1} \binom{a_2}{b_2}\dots \binom{a_k}{b_k} \pmod{p}, \] where $p$ is a prime and $0\le a_i, b_i<p$.
\begin{lemma}\label{lemma: modular properties on chen delta} The following modular equations hold.
    \begin{enumerate}[$(1)$]
        \item $\binom{i-1}{a-1}\equiv 0 \pmod{p}$ when $q\nmid i$, $q\mid a$. $\Delta_{a, b}^i \equiv 0\pmod p$ follows when $q\mid a, b$.
        \item $\binom{pj-1}{pa-1} \equiv \binom{j-1}{a-1} \pmod{p}$ and $\Delta_{pa, pb}^{pj} \equiv \Delta_{a, b}^j \pmod{p}$ for $0<j<a+b$.
    \end{enumerate}
\end{lemma}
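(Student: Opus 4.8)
\textbf{Proof proposal for Lemma \ref{lemma: modular properties on chen delta}.}

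The plan is to derive both congruences directly from Lucas's theorem, so the only real work is setting up the $p$-adic digit expansions correctly and then tracking how the two summands of $\Delta^i_{a,b}$ transform. Recall from the excerpt that
\[
\Delta^i_{a,b} = (-1)^{a-1}\binom{i-1}{a-1} + (-1)^{b-1}\binom{i-1}{b-1}
\]
when $(q-1)\mid i$ and $0<i<a+b$, and is $0$ otherwise; here $q = p^s$ for some $s \geq 1$. For part (1), I would write $i = i_0 + i_1 p + \cdots$ and $a = a_0 + a_1 p + \cdots$ in base $p$. The hypothesis $q \mid a$ forces $a_0 = a_1 = \cdots = a_{s-1} = 0$, so in particular $a_0 = 0$; since $q \nmid i$, at least one of $i_0,\dots,i_{s-1}$ is nonzero, but the key point is simpler: I claim $i_0 = 0$ is \emph{not} guaranteed, so I should instead argue via $a - 1$. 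Writing $a - 1 = (p-1) + (p-1)p + \cdots + (p-1)p^{s-1} + (a_s - 1)p^s + a_{s+1}p^{s+1} + \cdots$ (valid since $a_0 = \cdots = a_{s-1} = 0$ and $a \geq q$ so $a_s \geq 1$), Lucas gives
\[
\binom{i-1}{a-1} \equiv \binom{(i-1)_0}{p-1}\binom{(i-1)_1}{p-1}\cdots\binom{(i-1)_{s-1}}{p-1}\binom{(i-1)_s}{a_s - 1}\cdots \pmod p.
\]
Now $q \nmid i$ means some digit among $i_0,\dots,i_{s-1}$ of $i$ is nonzero; I need to convert this into a statement about the digits of $i-1$. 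The cleanest route: if $q \nmid i$ then the lowest $s$ digits of $i$ are not all zero, hence $i \not\equiv 0 \pmod q$, hence writing $i - 1$ in base $p$, its $s$ lowest digits are exactly $(i-1) \bmod q$'s digits, and since $0 \le (i-1) \bmod q < q - 1$ is impossible only when $i \equiv 0$, we get that at least one of $(i-1)_0,\dots,(i-1)_{s-1}$ is strictly less than $p-1$. For that index $t$, $\binom{(i-1)_t}{p-1} = 0$, so the whole product vanishes mod $p$. This gives $\binom{i-1}{a-1} \equiv 0$; the symmetric argument with $b$ in place of $a$ (when $q \mid b$ as well) kills the second term, so $\Delta^i_{a,b} \equiv 0 \pmod p$ when $q \mid a$ and $q \mid b$.

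For part (2), I would expand $pj - 1 = (p-1) + (j-1)p$ and $pa - 1 = (p-1) + (a-1)p$ in base $p$ (using the base-$p$ digits of $j-1$ and $a-1$ respectively for the higher-order part). Lucas's theorem then yields
\[
\binom{pj-1}{pa-1} \equiv \binom{p-1}{p-1}\binom{j-1}{a-1} \equiv \binom{j-1}{a-1} \pmod p,
\]
since $\binom{p-1}{p-1} = 1$ and the remaining digit-by-digit product is precisely the Lucas expansion of $\binom{j-1}{a-1}$. Applying this to both binomials in $\Delta^{pj}_{pa,pb}$, together with the observations that $(-1)^{pa-1} = (-1)^{a-1}$ and $(-1)^{pb-1} = (-1)^{b-1}$ in characteristic $p$ (for odd $p$ this is because $pa - 1 \equiv a - 1 \pmod 2$ when $p$ is odd; for $p = 2$ everything is $+1$), and that $(q-1)\mid pj \iff (q-1)\mid j$ and $0 < pj < pa + pb \iff 0 < j < a+b$, we conclude $\Delta^{pj}_{pa,pb} \equiv \Delta^j_{a,b} \pmod p$.

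The main obstacle I anticipate is purely bookkeeping in part (1): translating the hypothesis "$q \nmid i$" into the existence of a digit of $i - 1$ among the lowest $s$ positions that is strictly less than $p - 1$, since subtracting $1$ can trigger borrows. The safe way to handle this is to note that the lowest $s$ base-$p$ digits of $i-1$ encode the integer $(i - 1) \bmod q$, and $q \nmid i$ is equivalent to $(i-1) \bmod q \neq q - 1$, i.e. $(i-1) \bmod q \in \{0,1,\dots,q-2\}$, which in turn forces at least one of those $s$ digits to be $\leq p - 2$. Once that lemma-within-a-lemma is isolated and proved, both parts follow immediately from Lucas and the elementary base-$p$ identity $pk - 1 = (p-1) + (k-1)p$. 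I would also remark that part (1)'s first assertion only needs $q \mid a$ (not $q \mid b$), and the conclusion $\Delta^i_{a,b}\equiv 0$ needs both, exactly as stated.
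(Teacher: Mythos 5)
Your proposal is correct and follows essentially the same route as the paper: both parts reduce to Lucas's theorem, with part (1) resting on the observation that the lowest $k$ base-$p$ digits of $a-1$ are all $p-1$ while those of $i-1$ represent $(i-1)\bmod q<q-1$ and hence cannot all equal $p-1$ (the paper phrases this by writing $i=Aq+B$ with $0<B<q$, which is the same bookkeeping), and part (2) resting on the digit identity $pk-1=(p-1)+(k-1)p$. One cosmetic remark: your parenthetical ``$a\ge q$ so $a_s\ge 1$'' is not actually needed and can fail (e.g.\ $a=q^2$); the argument only uses that the lowest $s$ digits of $a-1$ are all $p-1$, which holds whenever $q\mid a$, so the proof is unaffected.
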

\begin{proof} Let $q = p^k$. 
    
    To prove (1), let $i = Aq +B$ with $A, B \in \mathbb{N}$, $0<B<q$. We write \[i-1 = Aq + (B-1) =(\beta_0 + \beta_1 p + \dots + \beta_{k-1}p^{k-1}) +  \sum_{s=k}^{r}\alpha_s p^s,\]  and \[ a-1 = \left( (p-1)+ (p-1)p + \dots + (p-1)p^{k-1}\right) + \sum_{s=k}^{r}\alpha'_s p^s\]with $0 \le \alpha_s, \alpha'_s,\beta_s<p$. By Lucas's theorem,
    \begin{align*}
        \binom{i-1}{a-1} &\equiv \binom{\beta_0}{p-1}\binom{\beta_1}{p-1}\cdots \binom{\beta_{k-1}}{p-1} \cdot \binom{\alpha_1}{\alpha'_1}\binom{\alpha_2}{\alpha'_2}\cdots \equiv 0 \pmod{p},
    \end{align*}
    since at least one of $\beta_s < p-1$. (Note that $B<q$.)

    For (2), $\binom{pj-1}{pa-1} \equiv \binom{j-1}{a-1} \pmod{p}$ is verified by similar routine calculations with Lucas's theorem.
\end{proof}
\begin{lemma} \label{lemma: star product raised by q}
For all words $\frak u, \frak v \in \frak C$, we have
$(\mathfrak {u}\shuffle  \mathfrak{v})\star q = (\mathfrak{u}\star q)\shuffle  (\mathfrak{v} \star q)$. Equivalently,
if $\mathfrak{u} \shuffle  \mathfrak{v} = \sum \mathfrak{a}_i$, then $(\mathfrak{u}\star q) \shuffle  (\mathfrak{v}\star q) = \sum (\mathfrak{a}_i\star q)$.
\end{lemma}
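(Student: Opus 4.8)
The statement $(\mathfrak{u}\shuffle\mathfrak{v})\star q = (\mathfrak{u}\star q)\shuffle(\mathfrak{v}\star q)$ is an identity between $\F_p$-linear maps on $\mathfrak{C}\times\mathfrak{C}$, so by bilinearity of $\shuffle$ and $\F_p$-linearity of $\star q$ it suffices to prove it for words $\mathfrak{u},\mathfrak{v}\in\langle\Sigma\rangle$. The plan is to run an induction on $\depth(\mathfrak{u})+\depth(\mathfrak{v})$, unwinding the recursive definitions of $\shuffle$ and $\diamond$ exactly as in the proof of Proposition~\ref{prop: commutative}, and checking at each recursive step that $\star q$ intertwines the relevant operations. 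The base case is when one of $\mathfrak{u},\mathfrak{v}$ is the empty word $1$: then $1\star q=1$ and both sides equal $(\mathfrak{u}\shuffle\mathfrak{v})\star q$ trivially.

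For the inductive step, write $\mathfrak{u}=x_a\mathfrak{u}_-$ and $\mathfrak{v}=x_b\mathfrak{v}_-$. By definition,
\begin{align*}
\mathfrak{u}\shuffle\mathfrak{v} &= x_a(\mathfrak{u}_-\shuffle\mathfrak{v}) + x_b(\mathfrak{u}\shuffle\mathfrak{v}_-) + \mathfrak{u}\diamond\mathfrak{v},\\
\mathfrak{u}\diamond\mathfrak{v} &= x_{a+b}(\mathfrak{u}_-\shuffle\mathfrak{v}_-) + \sum_{i+j=a+b}\Delta^j_{a,b}\,x_i(x_j\shuffle(\mathfrak{u}_-\shuffle\mathfrak{v}_-)).
\end{align*}
Applying $\star q$ to the first line, the key point is that $\star q$ turns a leading letter $x_a$ into $x_{qa}$ and commutes with concatenation, so $\left(x_a(\mathfrak{u}_-\shuffle\mathfrak{v})\right)\star q = x_{qa}\bigl((\mathfrak{u}_-\shuffle\mathfrak{v})\star q\bigr)$, and the induction hypothesis rewrites $(\mathfrak{u}_-\shuffle\mathfrak{v})\star q$ as $(\mathfrak{u}_-\star q)\shuffle(\mathfrak{v}\star q)$. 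Since $\mathfrak{u}\star q = x_{qa}(\mathfrak{u}_-\star q)$, this matches the term $x_{qa}\bigl((\mathfrak{u}_-\star q)\shuffle(\mathfrak{v}\star q)\bigr)$ appearing in the expansion of $(\mathfrak{u}\star q)\shuffle(\mathfrak{v}\star q)$; the same works for the $x_b$ term. For the diamond term, the first summand $x_{a+b}(\mathfrak{u}_-\shuffle\mathfrak{v}_-)$ maps under $\star q$ to $x_{q(a+b)}\bigl((\mathfrak{u}_-\star q)\shuffle(\mathfrak{v}_-\star q)\bigr)$ using the induction hypothesis, which is exactly the corresponding summand of $(\mathfrak{u}\star q)\diamond(\mathfrak{v}\star q)$ since $q(a+b)=qa+qb$.

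The main obstacle — the one place where something genuinely arithmetic intervenes — is the sum $\sum_{i+j=a+b}\Delta^j_{a,b}\,x_i(x_j\shuffle(\mathfrak{u}_-\shuffle\mathfrak{v}_-))$. Under $\star q$ each term becomes $\Delta^j_{a,b}\,x_{qi}\bigl(x_{qj}\shuffle\bigl((\mathfrak{u}_-\star q)\shuffle(\mathfrak{v}_-\star q)\bigr)\bigr)$ (again using the induction hypothesis on $\mathfrak{u}_-\shuffle\mathfrak{v}_-$ and the fact that $x_j\shuffle(\cdot)$ at depth one is handled by the same recursion), whereas the corresponding term in $(\mathfrak{u}\star q)\diamond(\mathfrak{v}\star q)$ is $\sum_{i'+j'=qa+qb}\Delta^{j'}_{qa,qb}\,x_{i'}(x_{j'}\shuffle(\cdots))$. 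So we must show that only the indices $(i',j')=(qi,qj)$ with $(q-1)\mid q j$, i.e. $(q-1)\mid qi$ automatically, contribute, and that $\Delta^{qj}_{qa,qb}=\Delta^j_{a,b}$ in $\F_p$ for $0<j<a+b$, while $\Delta^{j'}_{qa,qb}=0$ whenever $q\nmid j'$. Both facts are precisely Lemma~\ref{lemma: modular properties on chen delta}: part~(1) gives $\Delta^{j'}_{qa,qb}\equiv0$ for $q\nmid j'$ (since $q\mid qa,qb$, though here one uses the refined statement that $\binom{j'-1}{qa-1}\equiv0$ when $q\nmid j'$), and part~(2) gives $\Delta^{qj}_{qa,qb}\equiv\Delta^j_{a,b}\pmod p$. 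Matching the index ranges ($0<j<a+b$ corresponds to $0<qj<qa+qb$ under $j'=qj$) then makes the two sums agree term by term, completing the induction. The only care needed is bookkeeping: checking that the "depth one" shuffle $x_j\shuffle(\mathfrak{u}_-\shuffle\mathfrak{v}_-)$ is covered by the induction hypothesis, which it is since $\depth(x_j)+\depth(\mathfrak{u}_-\shuffle\mathfrak{v}_-)$ — interpreted appropriately after first invoking the hypothesis to turn $\mathfrak{u}_-\shuffle\mathfrak{v}_-$ into a linear combination of words — reduces strictly, and one may instead phrase the whole induction more cleanly on $w(\mathfrak{u})+w(\mathfrak{v})$ to avoid this subtlety.
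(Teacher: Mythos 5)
Your proof is correct and follows essentially the same route as the paper: the paper also establishes the depth-one case by showing, via Lemma~\ref{lemma: modular properties on chen delta}, that in $x_{qu}\shuffle x_{qv}$ only the indices divisible by $q$ survive and that $\Delta^{qj}_{qu,qv}\equiv\Delta^j_{u,v}\pmod p$, and then concludes the general case by induction on $\depth(\mathfrak u)+\depth(\mathfrak v)$. Your write-up simply spells out the inductive step (unwinding the recursions for $\shuffle$ and $\diamond$) in more detail than the paper does.
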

\begin{proof}
    Let $\mathfrak{u} = x_u$, $\mathfrak{v} = x_v$ be depth one words. We have \begin{align*}
        x_u\shuffle x_v &= x_{u+v}+ x_u+x_v + \sum_{0<i<u+v}\Delta^{i}_{u, v}x_{u+v-i}x_i, \\
        x_{qu}\shuffle x_{qv} &= x_{q(u+v)}+ x_{qu}+x_{qv} + \sum_{0<i<q(u+v)}\Delta^{i}_{qu, qv}
        x_{q(u+v)-i}x_i.
    \end{align*}
    By Lemma~\ref{lemma: modular properties on chen delta}, 
    \begin{align*}
        \sum_{0<i<q(u+v)}\Delta^{i}_{qu, qv}
        x_{q(u+v)-i}x_i & = \sum_{q\mid i, \ 0<i<q(u+v)}\Delta^{i}_{qu, qv}x_{q(u+v)-i}x_i\\
        & = 
        \sum_{0<j<u+v}\Delta^{qj}_{qu, qv}x_{q(u+v-j)}x_{qj}\\
        &=\sum_{0<j<u+v}\Delta^{qj}_{u, v}x_{q(u+v-j)}x_{qj}.
    \end{align*}
    Thus the lemma is proved for depth 1 words $\mathfrak{u}, \mathfrak{v}$. By the induction on $\depth(\mathfrak u) + \depth(v)$, we can conclude the general result. 
\end{proof}

We recall that
$\Delta(1) = 1\otimes 1$, 
$\Delta(x_1) = 1\otimes x_1 + x_1 \otimes 1$ by definition, and 
$\Delta(\fu) \in \frak C \otimes \frak C$ is defined for other all words $\fu$. 

\begin{proposition} \label{proposition: Hopf deltas depth 1 <= q} 
For all integers $1\le n \le q$, 
\begin{equation} \label{eq: Hopf deltas depth 1 <= q} 
    \Delta(x_n) = 1\otimes x_n  + x_n \otimes 1.
\end{equation}
\end{proposition}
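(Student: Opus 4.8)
\textbf{Proof strategy for Proposition \ref{proposition: Hopf deltas depth 1 <= q}.}

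The plan is to derive this as a clean corollary of Proposition \ref{prop: formula delta xn}, which already gives
\[
\Delta(x_n) = 1 \otimes x_n + \sum \limits_{\substack{r \in \N, \fa \in \langle \Sigma \rangle\\ r + w(\fa) = n}} \binom{r + \depth(\fa) - 2}{\depth(\fa)} x_r \otimes [\fa].
\]
So it suffices to show that for $1 \le n \le q$ every term in the sum vanishes except the one coming from $r = n$, $\fa = 1$, which contributes $\binom{n-2}{0} x_n \otimes [1] = x_n \otimes 1$. First I would isolate that term: when $\fa = 1$ we have $\depth(\fa) = 0$ and the binomial coefficient is $\binom{r-2}{0} = 1$ for any $r \ge 1$ (here $r = n$), giving exactly $x_n \otimes 1$. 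Then I must argue that whenever $\depth(\fa) \ge 1$ (equivalently $w(\fa) \ge 1$, so $r \le n-1 \le q-1$) the coefficient $\binom{r + \depth(\fa) - 2}{\depth(\fa)}$ is zero, or else $[\fa] = 0$.

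The key step is a small numerical observation. Write $d = \depth(\fa) \ge 1$ and note $w(\fa) \ge d$ since each letter has weight $\ge 1$; hence $r = n - w(\fa) \le n - d \le q - d$, so $r + d \le q$, and in fact $r \ge 1$ forces $r + d - 2 \le q - 2 < q - 1$. I then want $\binom{r+d-2}{d} = 0$. Since $r \ge 1$ we have $r + d - 2 \ge d - 1$; the binomial $\binom{r+d-2}{d}$ is a genuine (nonzero-mod-$p$) integer only when $r + d - 2 \ge d$, i.e. $r \ge 2$. For $r = 1$ we get $\binom{d-1}{d} = 0$ by the convention recalled in the paper ($\binom{a}{b} = 0$ if $b > a \ge 0$), so those terms drop out. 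For $r \ge 2$ the remaining terms have $2 \le r \le q - d$ hence $d \le q - 2$ and $r + d - 2 \le q - 2$; here the binomial coefficient $\binom{r+d-2}{d}$ is a nonzero integer in $\Z$, but one checks it need not vanish mod $p$ in general — so the vanishing must instead come from $[\fa]$. This is where Lemma \ref{lem: bracket zero} enters: if $(q-1) \nmid w(\fa)$ then $[\fa] = 0$. Since $1 \le w(\fa) = n - r \le n - 2 \le q - 2 < q - 1$, we have $0 < w(\fa) < q - 1$, so $(q-1) \nmid w(\fa)$ and indeed $[\fa] = 0$. Thus all terms with $d \ge 1$ vanish.

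Putting these together: the $\fa = 1$ term gives $x_n \otimes 1$, the $r = 1$, $d \ge 1$ terms vanish because $\binom{d-1}{d} = 0$, and the $r \ge 2$, $d \ge 1$ terms vanish because $0 < w(\fa) < q-1$ forces $[\fa] = 0$ by Lemma \ref{lem: bracket zero}. Hence $\Delta(x_n) = 1 \otimes x_n + x_n \otimes 1$ for all $1 \le n \le q$. I expect the only mild subtlety to be the bookkeeping distinguishing the $r = 1$ case (killed by the binomial convention) from the $r \ge 2$ case (killed by the bracket operator); both are short, and no serious obstacle arises since all the real work is already in Proposition \ref{prop: formula delta xn} and Lemma \ref{lem: bracket zero}. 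Alternatively — and perhaps more transparently — one can prove the statement by direct induction on $n$ using the defining recursion \eqref{eq: coproduct} for $\Delta(x_n)$ together with the definition of $\Delta$ on depth-two words and the fact that $\Delta^j_{1,n-1}$ is supported on $(q-1) \mid j$ (Lemma \ref{lem: delta j<n}), but the route through Proposition \ref{prop: formula delta xn} is cleaner and I would present that one.
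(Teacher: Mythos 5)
Your argument is correct, but it is not the route the paper takes. The paper proves this proposition by a short induction on $n$ directly from the recursive definition \eqref{eq: coproduct}: for $2\le n\le q$ one checks that $x_1\shuffle x_{n-1}=x_1x_{n-1}+x_{n-1}x_1+x_n$ (the only potentially surviving correction term is $\Delta^{q-1}_{1,q-1}=\binom{q-2}{q-2}+(-1)^q\binom{q-2}{q-2}=0$), expands $\Delta(x_1)\shuffle\Delta(x_{n-1})$, $\Delta(x_1x_{n-1})$ and $\Delta(x_{n-1}x_1)$ using the induction hypothesis, and cancels — exactly the ``alternative'' you sketch in your last sentence. Your chosen route instead specializes the closed formula of Proposition \ref{prop: formula delta xn} and kills every cross term: the $\fa=1$ term gives $x_n\otimes 1$, the $r=1$ terms die because $\binom{d-1}{d}=0$, and the $r\ge 2$ terms die because $0<w(\fa)\le q-2$ forces $(q-1)\nmid w(\fa)$, hence $[\fa]=0$ by Lemma \ref{lem: bracket zero} (and this last case is vacuous when $q=2$, which your inequality chain silently assumes away — worth one clause). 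Since Proposition \ref{prop: formula delta xn} and Lemma \ref{lem: bracket zero} are established earlier in the same section, there is no circularity, and indeed the paper itself remarks in \S\ref{sec: explicit formula for small weights} that the small-weight formulas ``could be obtained as an application of Proposition \ref{prop: formula delta xn}.'' The trade-off is that your derivation is shorter but leans on the heaviest result of the section, while the paper's induction is self-contained, needing only the definition of $\Delta$ and the vanishing of $\Delta^j_{1,n-1}$ in the relevant range.
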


\begin{proof}
    \eqref{eq: Hopf deltas depth 1 <= q} holds when $n=1$. Let $2\le n \le q$, and assume \eqref{eq: Hopf deltas depth 1 <= q} holds for $1, 2,\dots, n-1$. Then, 
    \begin{align*}
        x_1\shuffle x_{n-1} =x_1x_{n-1} +x_{n-1}x_{1} + x_n + \sum_{\substack{q-1 \mid j \\ 0<j< n}}\Delta^j_{1,n-1}x_{n-j}x_{j}. 
    \end{align*}
    The $\sum$ term here is empty sum for $n<q$, and when $n=q$, $j=q-1$ term is the only possible index. $\Delta_{1, q-1}^{q-1} = \binom{q-2}{q-2} + (-1)^q \binom{q-2}{q-2}$ vanishes for all $q$, so 
    \begin{align*}
        x_1\shuffle  x_{n-1} =x_{1}x_{n-1} +x_{n-1}x_{1} + x_n,
    \end{align*}
    that is, $\Delta(x_1)\Delta(x_{n-1}) = \Delta(x_{1}x_{n-1}) + \Delta(x_{n-1}x_1) + \Delta(x_n)$. 

    By calculation and the induction hypothesis, we have 
    \begin{align*}
        \Delta(x_1)\Delta(x_{n-1}) &= (1\otimes x_1 + x_1\otimes 1)(1\otimes x_{n-1} + x_{n-1}\otimes 1)\\ 
        & =1\otimes \left(x_1\shuffle x_{n-1}\right) + x_{n-1}\otimes x_1 + x_1\otimes x_{n-1} + \left( x_1\shuffle x_{n-1}\right) \otimes 1,\\
        \Delta(x_1x_{n-1}) & = e\otimes x_1x_{n-1} + x_1\otimes x_{n-1} + x_1 x_{n-1}\otimes 1, \\
        \Delta(x_{n-1}x_1) & = 1\otimes x_{n-1}x_1 + x_{n-1}\otimes x_1 + x_{n-1}x_1\otimes 1
    \end{align*}
    Recall $x_1\shuffle  x_{n-1} =x_{1}x_{n-1} +x_{n-1}x_{1} + x_n$ to have
    \begin{align*}
        \Delta(x_n) = 1\otimes x_n + x_n\otimes 1.
    \end{align*}
\end{proof}

\begin{proposition}\label{proposition: Hopf coproduct raised by q}
    For all $n \in \bN$, we have
   		\[ \Delta (x_{qn}) = \Delta(x_n)\star q \] for all $n\in \mathbb{N}$. This can be stated as follows:
    letting $\Delta(x_n) = e\otimes x_n + \sum a_n \otimes b_n$ we get $\Delta(x_{qn}) = e\otimes x_{qn} + \sum_i (a_n\star q) \otimes (b_n \star q)$.
\end{proposition}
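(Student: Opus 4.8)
The plan is to prove $\Delta(x_{qn}) = \Delta(x_n) \star q$ by induction on $n$, mirroring closely the structure of the proof of Proposition \ref{prop: formula delta xn} (or even more directly, by invoking that proposition together with the combinatorial lemmas just established). Indeed, by Proposition \ref{prop: formula delta xn} we have
\[
\Delta(x_{qn}) = 1 \otimes x_{qn} + \sum \limits_{\substack{r \in \N, \fa \in \langle \Sigma \rangle\\ r + w(\fa) = qn}} \binom{r + \depth(\fa) - 2}{\depth(\fa)} x_r \otimes [\fa],
\]
and similarly for $\Delta(x_n)$; so the first thing I would do is compare the two sums term by term after applying $\star q$. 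The key point is that $\star q$ sends the pair $(r, \fa)$ with $r + w(\fa) = n$ to the pair $(qr, \fa \star q)$ with $qr + w(\fa \star q) = qn$, and $\depth(\fa \star q) = \depth(\fa)$, so the binomial coefficients match exactly. Thus the whole content is the claim that $[\fa \star q] = [\fa] \star q$ for every word $\fa$, together with the claim that the terms in $\Delta(x_{qn})$ indexed by pairs $(r, \fa)$ \emph{not} of the form $(qr', \fa' \star q)$ all vanish.

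For the first claim, I would argue as follows. By definition $[\fa] = (-1)^{\depth(\fa)} \Delta^{i_1}_{1,w(\fa)+1} \cdots \Delta^{i_r}_{1,w(\fa)+1} \, x_{i_1} \shuffle \cdots \shuffle x_{i_r}$ when $\fa = x_{i_1} \cdots x_{i_r}$. Applying $\star q$ to $\fa$ replaces each $i_k$ by $q i_k$ and $w(\fa)$ by $q w(\fa)$. By Lemma \ref{lem: delta j<n}, $\Delta^{q i_k}_{1, q w(\fa)+1} = 1$ iff $(q-1) \mid q i_k$ iff $(q-1) \mid i_k$ iff $\Delta^{i_k}_{1,w(\fa)+1} = 1$ (for $q i_k < q w(\fa)+1$, which holds); so the scalar prefactors agree. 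For the shuffle part, Lemma \ref{lemma: star product raised by q} gives $x_{q i_1} \shuffle \cdots \shuffle x_{q i_r} = (x_{i_1} \shuffle \cdots \shuffle x_{i_r}) \star q$. Hence $[\fa \star q] = [\fa] \star q$, and in particular $[\fa \star q]$ is a linear combination of words all of whose letters have weight divisible by $q$.

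For the second claim, I would show that in $\Delta(x_{qn})$ every term $\binom{r + \depth(\fa) - 2}{\depth(\fa)} x_r \otimes [\fa]$ with $r + w(\fa) = qn$ either vanishes or has $q \mid r$ and $[\fa]$ supported on words with all letter-weights divisible by $q$. If $(q-1) \nmid w(\fa)$ then $[\fa] = 0$ by Lemma \ref{lem: bracket zero}, so assume $(q-1) \mid w(\fa)$; combined with $r + w(\fa) = qn$ and $(q-1) \mid q - 1 \mid qn$... actually the cleaner route is: $[\fa]$ is nonzero only if every letter $x_{i_k}$ of $\fa$ has $(q-1) \mid i_k$, by Lemma \ref{lem: delta j<n}; I would then use Lemma \ref{lemma: modular properties on chen delta}(1) applied to the binomial coefficient $\binom{r + \depth(\fa)-2}{\depth(\fa)}$ — more precisely, I expect the correct statement to be that whenever the exponents are not all multiples of $q$, either a bracket factor vanishes or the relevant binomial coefficient vanishes mod $p$, so that only the "rescaled" terms survive. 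The main obstacle I anticipate is precisely this bookkeeping: verifying rigorously that the non-$q$-divisible terms cancel, i.e., tracking which of the competing vanishing mechanisms (Lucas-type congruences on the $\Delta^i_{a,b}$ via Lemma \ref{lemma: modular properties on chen delta}, versus $[\fa]=0$ via Lemma \ref{lem: bracket zero}) applies in each case. Once that is settled, reindexing the surviving sum by $(r,\fa) \mapsto (qr', \fa' \star q)$ and comparing with $\Delta(x_n) \star q$ via Proposition \ref{prop: formula delta xn} finishes the argument; alternatively, if the direct induction (using $x_1 \diamond x_{n-1}$ and $\Delta(x_1) \shuffle \Delta(x_{n-1})$ raised to $q$, together with Lemma \ref{lemma: star product raised by q} and Lemma \ref{lemma: modular properties on chen delta}(2) for $\Delta^{qj}_{qa,qb} \equiv \Delta^j_{a,b}$) proves cleaner, I would run that instead, treating the depth-$\le 1$ base cases via Proposition \ref{proposition: Hopf deltas depth 1 <= q}.
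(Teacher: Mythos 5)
Your primary route, via Proposition \ref{prop: formula delta xn}, has a genuine gap at exactly the point you flag as the main obstacle: the claim that every term $\binom{r+\depth(\fa)-2}{\depth(\fa)}\,x_r\otimes[\fa]$ with $(r,\fa)$ not of the form $(qr',\fa'\star q)$ vanishes is false. Take $q=3$ and $n=2$, so $qn=6$. The pair $(r,\fa)=(2,x_2x_2)$ contributes $\binom{2}{2}\,x_2\otimes[x_2x_2]=x_2\otimes(x_2\shuffle x_2)=x_2\otimes x_4\neq 0$ (since $x_2\shuffle x_2=3x_2x_2+x_4=x_4$ in characteristic $3$), even though neither $r=2$ nor the letter weights of $\fa$ are divisible by $q=3$. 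This term disappears from $\Delta(x_6)$ only because it cancels against the contribution of the \emph{different} word $\fa=x_4$ with the same $r$, namely $\binom{1}{1}\,x_2\otimes[x_4]=-x_2\otimes x_4$. So neither of the two vanishing mechanisms you propose (Lemma \ref{lem: bracket zero} for the bracket, or a Lucas-type congruence for the binomial coefficient) applies to the offending terms individually; what is actually needed is a cancellation argument across all words $\fa$ sharing the same bracket support, and your proposal does not supply it. (Your first claim, $[\fa\star q]=[\fa]\star q$, is correct and proved exactly as you say.)

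By contrast, the fallback you mention in your last sentence is essentially the paper's proof and does go through: induct on $n$, with base case from Proposition \ref{proposition: Hopf deltas depth 1 <= q}; for the step, expand $x_w\shuffle x_1$ to get the recursion defining $\Delta(x_{w+1})$, expand $x_{qw}\shuffle x_q$ to get the one defining $\Delta(x_{q(w+1)})$, and match them term by term using $\Delta(x_q)=1\otimes x_q+x_q\otimes 1$, compatibility, Lemma \ref{lemma: star product raised by q}, Lemma \ref{lemma: modular properties on chen delta}(2) for $\Delta^{qj}_{q,qw}$, and the induction hypothesis applied both to $x_{qw}$ and to the depth-two words $x_{w+1-j}x_j$ (this last application is needed and should be stated explicitly). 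As written, however, your proposal rests its weight on the flawed explicit-formula route and only gestures at the working one, so the argument is not complete.
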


\begin{proof}
The statement holds for $n=1$ by Proposition~\ref{proposition: Hopf deltas depth 1 <= q}. 

    Assume that the statement holds for all $n\le w$. Say $\Delta(x_w) = 1\otimes x_w + \sum a_w \otimes b_w$. Since 
    \begin{align*}
        x_w\shuffle  x_1 &= x_{w+1} + x_w x_1 + x_1x_w + \sum_{0 < j < w+1}\Delta_{1, w}^j x_{w+1-j}x_{j}, \\
        \Delta(x_w x_1)& = 1\otimes x_w x_1 + \sum (a_{w}\triangleright x_1) \otimes b_w+ \sum a_{w}\otimes (b_{w}\shuffle  x_1), \\ 
        \Delta(x_1 x_w)& = 1\otimes x_1 x_w + x_1 \otimes x_w + \sum (x_1 \triangleright a_{w}) \otimes b_w, 
    \end{align*} 
    we have 
    \begin{align*}
        & \Delta(x_{w+1}) \\
        & = 1\otimes (x_1 \shuffle  x_w) + x_1 \otimes x_w + \sum a_w \otimes (b_w \shuffle  x_1) + \sum (a_w \shuffle  x_1)\otimes b_w \\
        & - 1 \otimes x_w x_1 - \sum (a_w \triangleright x_1)\otimes b_w - \sum a_{w}\otimes (b_{w}\shuffle  x_1)\\ 
        & - 1\otimes x_1 x_w - x_1 \otimes x_w - \sum (x_1 \triangleright a_{w}) \otimes b_w\\ 
        & -  \sum_{0 < j < w+1}\Delta_{1, w}^j \Delta(x_{w+1-j}x_{j})
    \end{align*}

    By the induction hypothesis, $\Delta (x_{qw}) =  1\otimes x_{qw} + \sum (a_w\star q) \otimes (b_w\star q)$. Note that $\Delta (x_q) = 1 \otimes x_q + x_q \otimes 1$ by Proposition~\ref{proposition: Hopf deltas depth 1 <= q} and since we proved the compatibility for $\Delta$,  
    \begin{align*}
        x_{qw}\shuffle  x_q &= x_{q(w+1)} + x_{qw} x_q + x_q x_{qw} + \sum_{0 < j < q(w+1)}\Delta_{q, qw}^j x_{qw+q-j}x_{j}, \\
        \Delta(x_{qw} x_q)& = 1\otimes x_{qw} x_q + \sum ((a_{w}\star q)\triangleright x_q) \otimes (b_w\star q)+ \sum (a_{w}\star q)\otimes ((b_{w}\star q)\shuffle  x_q), \\ 
        \Delta(x_q x_{qw})& = 1\otimes x_q x_{qw} + x_q \otimes x_{qw} + \sum (x_q \triangleright (a_{w}\star q)) \otimes (b_w\star q), 
    \end{align*} 
    we have
    \begin{align*}
        & \Delta(x_{qw+q}) \\
        & = 1\otimes (x_q \shuffle  x_{qw}) + x_q \otimes x_{qw} + \sum (a_w\star q)\otimes ((b_w\star q) \shuffle  x_q) + \sum ((a_w\star q) \shuffle  x_q)\otimes (b_w\star q) \\
        & -1\otimes x_{qw} x_q - \sum ((a_{w}\star q)\triangleright x_q) \otimes (b_w\star q)- \sum (a_{w}\star q)\otimes ((b_{w}\star q)\shuffle  x_q) \\ 
        & - 1\otimes x_q x_{qw} - x_q \otimes x_{qw} - \sum (x_q \triangleright (a_{w}\star q)) \otimes (b_w\star q) \\ 
        & - \sum_{0 < j < q(w+1)}\Delta_{q, qw}^j \Delta(x_{qw+q-j}x_{j}).
    \end{align*}
    With Lemma~\ref{lemma: star product raised by q}, we are done if $\Delta((x_{w+1-i}x_i)\star q) = \Delta(x_{w+1-i}x_i)\star q$ for $i \le w$, which can be shown by applying the induction hypothesis. 
\end{proof}

\subsection{Explicit formula for $\Delta(x_n)$ with $1\le n \le q^2$} ${}$\par \label{sec: explicit formula for small weights}
 
In this section we give an explicit formula for $\Delta(x_n)$ with $1\le n \le q^2$. It could be obtained as an application of Proposition \ref{prop: formula delta xn}. We will give below another way to do calculations.

For $1 \leq n \leq q$, an explicit formula for $\Delta(x_n)$ is given in Proposition \ref{proposition: Hopf deltas depth 1 <= q}:
	\[ \Delta(x_n) = 1\otimes x_n  + x_n \otimes 1. \]
We note that a direct consequence of Propositions \ref{proposition: Hopf deltas depth 1 <= q} and  \ref{proposition: Hopf coproduct raised by q} implies 	
	\[ \Delta(x_{aq^r}) = 1\otimes x_{a q^r} + x_{a q^r}\otimes 1 \] 
for all $1\le a < q$. By Theorem \ref{thm: compatibility}, we have an algorithm to calculate $\Delta(x_{n})$:
\begin{itemize}
\item Write $n = n_0 + n_1q + \dots + n_r q^r$, with $0\le n_i < q$. Then $\Delta(x_{n_i q^i}) = 1\otimes x_{n_i q^i} + x_{n_i q^i} \otimes 1$ by Proposition~\ref{proposition: Hopf deltas depth 1 <= q}.

\item Use compatibility result to calculate $\Delta(x_n)$, by calculating $\Delta(x_{n_0})\shuffle \Delta(x_{n_1q})\shuffle  \dots \shuffle  \Delta(x_{n_r q^r}) = \Delta(x_{n_0}\shuffle  x_{n_1 q}\shuffle  \dots \shuffle  x_{n_r q^r})$.
\end{itemize}

\begin{lemma}\label{lemma: Hopf delta for (q-1) multiples}
    For $2\le k \le q$, we have 
    \[\Delta(x_{k(q-1)}) = 1 \otimes x_{k(q-1)} + x_{k(q-1)} \otimes 1 + \sum_{i=1}^{k-1} \binom{k}{i} x_{i(q-1)} \otimes x_{(k-i)(q-1)}.\]
\end{lemma}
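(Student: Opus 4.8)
The statement to prove is an explicit formula for $\Delta(x_{k(q-1)})$ when $2 \le k \le q$. My plan is to use Proposition~\ref{prop: formula delta xn}, which gives
\[
\Delta(x_n) = 1 \otimes x_n + \sum_{\substack{r \in \N, \fa \in \langle \Sigma \rangle\\ r + w(\fa) = n}} \binom{r + \depth(\fa) - 2}{\depth(\fa)} x_r \otimes [\fa],
\]
and to analyze carefully which terms survive when $n = k(q-1)$ with $2 \le k \le q$. The key observation is that by Lemma~\ref{lem: bracket zero}, $[\fa] = 0$ unless $(q-1) \mid w(\fa)$, so only words $\fa$ of weight a multiple of $q-1$ contribute. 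Since $r + w(\fa) = k(q-1)$ and $w(\fa)$ is a positive multiple of $q-1$ (as $\fa \ne 1$ for the nontrivial terms), we must have $r = i(q-1)$ for some $0 \le i \le k$, and $w(\fa) = (k-i)(q-1)$.

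First I would record that the $r = n$ (equivalently $\fa = 1$) term contributes $\binom{n-2}{0} x_n \otimes 1 = x_n \otimes 1$, matching the expected formula. Next, for each $1 \le i \le k-1$ (so $r = i(q-1)$ and $w(\fa) = (k-i)(q-1) > 0$), I would need to compute $\sum_{\fa : w(\fa) = (k-i)(q-1)} \binom{i(q-1) + \depth(\fa) - 2}{\depth(\fa)} [\fa]$. Here the main work is to understand the sum $\sum_{w(\fa) = m} [\fa]$ and its weighted variants, where $m = (k-i)(q-1)$. Using Lemma~\ref{lem: prod of bracket} ($[\fb] \shuffle [\fc] = [\fb\fc]$) and the definition of the bracket operator, one sees that $[\fa]$ only depends on $\fa$ through its letters weighted by the $\Delta^{i_\ell}_{1, w(\fa)+1}$ factors, which by Lemma~\ref{lem: delta j<n} equal $1$ when $(q-1) \mid i_\ell$ (for $i_\ell < w(\fa)+1$) and vanish otherwise; so effectively $\fa$ ranges over words in the letters $x_{q-1}, x_{2(q-1)}, \dots$ whose weights sum to $m$, i.e., over compositions of $k-i$ (after dividing weights by $q-1$). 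I expect the sum $\sum [\fa]$ over such $\fa$ to telescope or collapse dramatically because of the shuffle structure: I would conjecture that $\sum_{\fa} [\fa]$ with $w(\fa) = m(q-1)$ actually equals a scalar multiple of $x_{m(q-1)}$, with the depth-weighting binomial coefficients conspiring (via a Vandermonde-type identity, as in Lemma~\ref{lem: comb indentity}) to produce exactly the coefficient $\binom{k}{i}$.

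The cleanest route, and the one I would actually pursue, is probably \emph{not} to attack Proposition~\ref{prop: formula delta xn} head-on but instead to argue by induction on $k$ using compatibility (Theorem~\ref{thm: compatibility}) together with Chen's shuffle formula $x_{q-1} \shuffle x_{(k-1)(q-1)} = x_{k(q-1)} + \dots$, computing $\Delta(x_{q-1} \shuffle x_{(k-1)(q-1)}) = \Delta(x_{q-1}) \shuffle \Delta(x_{(k-1)(q-1)})$. Since $\Delta(x_{q-1}) = 1 \otimes x_{q-1} + x_{q-1} \otimes 1$ by Proposition~\ref{proposition: Hopf deltas depth 1 <= q}, and $\Delta(x_{(k-1)(q-1)})$ is known by induction, expanding the shuffle product of the two tensors and subtracting off the contributions of $\Delta$ applied to the lower-weight, higher-depth words $x_i x_j$ appearing in Chen's formula yields a recursion for $\Delta(x_{k(q-1)})$. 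The binomial identity $\binom{k-1}{i-1} + \binom{k-1}{i} = \binom{k}{i}$ should be exactly what emerges when collecting coefficients. \textbf{The main obstacle} I anticipate is controlling the correction terms $\sum_{0 < j < k(q-1)} \Delta^j_{q-1,(k-1)(q-1)} \Delta(x_{k(q-1)-j} x_j)$: one must verify, using Lemma~\ref{lemma: modular properties on chen delta} (specifically that $\Delta^j_{a,b} \equiv 0$ when $q \mid a, b$ — but here $a = q-1$, so this needs care) and the depth-$2$ coproduct formula, that these contribute nothing new beyond what is already accounted for, or else that they precisely supply the off-diagonal terms $\binom{k}{i} x_{i(q-1)} \otimes x_{(k-i)(q-1)}$. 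Tracking which Chen coefficients $\Delta^j_{q-1, (k-1)(q-1)}$ are nonzero (only $j$ with $(q-1) \mid j$ and $0 < j < k(q-1)$, i.e. $j = \ell(q-1)$) and evaluating $\Delta^{\ell(q-1)}_{q-1,(k-1)(q-1)} = (-1)^{q-2}\binom{\ell(q-1)-1}{q-2} + (-1)^{(k-1)(q-1)-1}\binom{\ell(q-1)-1}{(k-1)(q-1)-1}$ modulo $p$ via Lucas's theorem is the computational heart of the argument, and getting the signs and the vanishing pattern right is where the real effort lies.
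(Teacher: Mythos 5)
Your chosen route---induction on $k$ via the compatibility $\Delta(x_{(k-1)(q-1)}\shuffle x_{q-1})=\Delta(x_{(k-1)(q-1)})\shuffle\Delta(x_{q-1})$, combined with Proposition~\ref{proposition: Hopf deltas depth 1 <= q} and Pascal's identity---is exactly the paper's route. However, as written the proposal has a genuine gap: the computation you yourself flag as ``the computational heart of the argument'' is never carried out, and the lemma does not follow without it. Concretely, the paper isolates this step as Lemma~\ref{lemma: star product of q-1 multiples}: for $a+b\le q$ one has $x_{a(q-1)}\shuffle x_{b(q-1)}=x_{(a+b)(q-1)}$, i.e.\ the \emph{entire} depth-two part of the shuffle product cancels. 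This amounts to showing, via Lucas's theorem, that $\Delta^{\ell(q-1)}_{q-1,(k-1)(q-1)}\equiv -1\pmod p$ for $\ell\in\{1,k-1\}$ (and $\equiv-2$ when these coincide, $k=2$) while $\Delta^{\ell(q-1)}_{q-1,(k-1)(q-1)}\equiv 0$ for all other $\ell$, so that the Chen terms exactly kill the two concatenation terms $x_{q-1}x_{(k-1)(q-1)}$ and $x_{(k-1)(q-1)}x_{q-1}$. You write down the correct formula for these coefficients but leave them unevaluated, and you remain agnostic about the outcome (``contribute nothing new \dots or else \dots precisely supply the off-diagonal terms''). These two scenarios lead to structurally different proofs, and only the first is what actually happens; the off-diagonal terms $\binom{k}{i}x_{i(q-1)}\otimes x_{(k-i)(q-1)}$ arise not from the Chen correction terms but from the cross terms in the tensor shuffle, where the same collapsing identity $x_{i(q-1)}\shuffle x_{q-1}=x_{(i+1)(q-1)}$ must be applied a second time inside each tensor factor---a use of the lemma your sketch does not anticipate.

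Once that collapsing lemma is in hand, the rest of your plan goes through verbatim: $\Delta(x_{k(q-1)})=\Delta(x_{(k-1)(q-1)})\shuffle(1\otimes x_{q-1}+x_{q-1}\otimes 1)$, and applying $x_{i(q-1)}\shuffle x_{q-1}=x_{(i+1)(q-1)}$ in each slot yields the coefficient $\binom{k-1}{i}+\binom{k-1}{i-1}=\binom{k}{i}$ as you predict. Your first, abandoned route through Proposition~\ref{prop: formula delta xn} would also work in principle but requires its own nontrivial collapse of the bracket sums, so abandoning it was reasonable. To complete the proof you must supply the Lucas-theorem evaluation of $\Delta^{\ell(q-1)}_{q-1,(k-1)(q-1)}$ modulo $p$; you are right that Lemma~\ref{lemma: modular properties on chen delta} does not apply directly here since $q\nmid(q-1)$, so this is a fresh calculation, not a citation.
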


\begin{proof}

    The lemma follows from Lemma \ref{lemma: star product of q-1 multiples} and the compatibility equation (see Theorem \ref{thm: compatibility}) \[\Delta(x_{(k-1)(q-1)}\shuffle  x_{q-1}) = \Delta(x_{(k-1)(q-1)})\shuffle \Delta(x_{q-1}).\] 
\end{proof}

\begin{lemma} \label{lemma: star product of q-1 multiples}
For all $a,b \in \bN$ with $a+b\le q$, we have 
\begin{equation}x_{a(q-1)}\shuffle  x_{b(q-1)} = \underbrace{x_{q-1}\shuffle  \dots \shuffle  x_{q-1}}_{a+b} = x_{(a+b)(q-1)}.
\label{eqn: star product of q-1 multiples}
\end{equation}
\end{lemma}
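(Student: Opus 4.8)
\textbf{Proof proposal for Lemma \ref{lemma: star product of q-1 multiples}.}

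The plan is to prove the identity \eqref{eqn: star product of q-1 multiples} by first establishing the crucial base case $x_{q-1} \shuffle x_{q-1} = x_{2(q-1)}$, and then bootstrapping to the general case by induction on $a+b$. For the base case, I would expand $x_{q-1} \shuffle x_{q-1}$ using the definition of the shuffle product: by Lemma \ref{lem: triangle formulas}(2) we have $x_{q-1}\shuffle x_{q-1} = x_{q-1}\triangleright x_{q-1} + x_{q-1}\triangleright x_{q-1} + x_{q-1}\diamond x_{q-1}$, which unwinds to
\begin{equation*}
x_{q-1}\shuffle x_{q-1} = 2\,x_{q-1}x_{q-1} + x_{2(q-1)} + \sum_{0<i<2(q-1)} \Delta^i_{q-1,q-1}\, x_{2(q-1)-i} x_i.
\end{equation*}
I would then compute $\Delta^i_{q-1,q-1}$ using Chen's formula. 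Since $\Delta^i_{q-1,q-1}$ is nonzero only when $(q-1)\mid i$ and $0<i<2(q-1)$, the only surviving index is $i = q-1$, giving $\Delta^{q-1}_{q-1,q-1} = (-1)^{q-2}\binom{q-2}{q-2} + (-1)^{q-2}\binom{q-2}{q-2} = 2(-1)^{q-2}$. In characteristic $p$ this equals $2(-1)^q$; but more importantly, combined with the coefficient $2$ on $x_{q-1}x_{q-1}$, the term $2\,x_{q-1}x_{q-1} + \Delta^{q-1}_{q-1,q-1}x_{q-1}x_{q-1} = (2 + 2(-1)^{q-2})x_{q-1}x_{q-1}$. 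Here I should be careful: when $p=2$ the coefficient $2$ already vanishes, and one checks directly that $\Delta^{q-1}_{q-1,q-1}=0$ in all characteristics (as noted for a similar computation in the proof of Proposition \ref{proposition: Hopf deltas depth 1 <= q}, since $\binom{q-2}{q-2}+(-1)^q\binom{q-2}{q-2}$ — wait, the exponent convention in Chen's formula is $(-1)^{a-1}$, so it is $(-1)^{q-2}+(-1)^{q-2} = 2(-1)^{q-2}$, which is $0$ iff $p=2$). So the clean statement is: the depth-$2$ contributions total $(2 + 2(-1)^{q-2})\,x_{q-1}x_{q-1}$, and I must verify this vanishes in $\F_q$ — indeed for $p$ odd, $2 + 2(-1)^{q-2} = 2(1 + (-1)^{q})$, and $q=p^k$ is odd so $(-1)^q = -1$, giving $0$; for $p=2$, $2=0$. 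Hence $x_{q-1}\shuffle x_{q-1} = x_{2(q-1)}$.

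Next, for the inductive step, I would assume $\underbrace{x_{q-1}\shuffle\cdots\shuffle x_{q-1}}_{m} = x_{m(q-1)}$ for all $m < a+b \le q$ and prove it for $m = a+b$. Write $\underbrace{x_{q-1}\shuffle\cdots\shuffle x_{q-1}}_{a+b} = x_{(a+b-1)(q-1)} \shuffle x_{q-1}$ using the inductive hypothesis (and associativity/commutativity of $\shuffle$ from Proposition \ref{prop: associative} and Proposition \ref{prop: commutative}). Then I expand $x_{(a+b-1)(q-1)}\shuffle x_{q-1}$ via Lemma \ref{lem: triangle formulas}(2) to get $x_{(a+b-1)(q-1)}x_{q-1} + x_{q-1}x_{(a+b-1)(q-1)} + x_{(a+b)(q-1)} + \sum_{0<i<(a+b)(q-1)}\Delta^i_{(a+b-1)(q-1),\,q-1}\,x_{(a+b)(q-1)-i}x_i$. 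The key arithmetic point is that $\Delta^i_{(a+b-1)(q-1),\,q-1}$ is supported on $(q-1)\mid i$, so writing $i = j(q-1)$ with $1 \le j \le a+b-1$, I need $\Delta^{j(q-1)}_{(a+b-1)(q-1),\,q-1} = (-1)^{(a+b-1)(q-1)-1}\binom{j(q-1)-1}{(a+b-1)(q-1)-1} + (-1)^{q-2}\binom{j(q-1)-1}{q-2}$. Here Lucas's theorem (as in Lemma \ref{lemma: modular properties on chen delta}) controls these binomial coefficients mod $p$: since $q-1 = (p-1)+(p-1)p+\cdots+(p-1)p^{k-1}$ has all base-$p$ digits equal to $p-1$, the number $j(q-1)-1$ for $1\le j\le q-1$ and the "denominators" have digit patterns that force these binomials into a tractable shape. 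I expect that all depth-$2$ terms cancel, leaving exactly $x_{(a+b)(q-1)}$, so that the final statement follows. Note $a+b \le q$ is exactly the hypothesis needed so that $j \le a+b-1 \le q-1$, keeping us inside the range where the digit analysis applies cleanly.

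The main obstacle will be the binomial-coefficient bookkeeping in the inductive step: verifying that $\sum_{1\le j\le a+b-1}\Delta^{j(q-1)}_{(a+b-1)(q-1),q-1}\,x_{(a+b)(q-1)-j(q-1)}x_{j(q-1)}$ together with the two "boundary" terms $x_{(a+b-1)(q-1)}x_{q-1} + x_{q-1}x_{(a+b-1)(q-1)}$ sums to zero in $\F_q$. One streamlining option I would pursue is to avoid the direct computation entirely: since the lemma is stated for $a+b \le q$, and since by Proposition \ref{proposition: Hopf deltas depth 1 <= q} we already know $\Delta(x_n) = 1\otimes x_n + x_n\otimes 1$ for $n \le q$ — hmm, but $x_{(a+b)(q-1)}$ can have weight up to $q(q-1) > q$, so that shortcut does not immediately apply. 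A cleaner route: prove by induction on $a+b$ that $x_{q-1}^{\shuffle(a+b)} = (a+b)!\,$ times a single word plus lower-depth-canceling terms — actually the genuinely clean approach is to note that Lemma \ref{lem: delta j<n} and Remark \ref{rmk: delta j<n} combined with the structure of $\Delta^j_{1,\cdot}$ make the analogous "$x_1$-shuffle" computations transparent, and one can instead prove \eqref{eqn: star product of q-1 multiples} by expressing $x_{(a+b)(q-1)}$ through iterated shuffles with $x_{q-1}$ and inducting, reducing every step to the single computation $x_{k(q-1)}\shuffle x_{q-1} = x_{(k+1)(q-1)}$ for $k+1\le q$, which is the one I described above. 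I would present the proof in that form, with the Lucas-theorem digit analysis isolated as the one technical computation.
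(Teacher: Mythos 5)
Your proposal is correct and follows essentially the same route as the paper: both reduce the lemma to the single-step identity $x_{k(q-1)}\shuffle x_{q-1}=x_{(k+1)(q-1)}$ for $k+1\le q$ and verify via Lucas's theorem that the only surviving Chen coefficients are $\Delta^{q-1}_{k(q-1),q-1}\equiv\Delta^{k(q-1)}_{k(q-1),q-1}\equiv -1\pmod p$ (with the $k=1$ case giving $-2$ against the doubled concatenation term), so the depth-two terms cancel. The binomial bookkeeping you flag as the remaining obstacle is exactly the computation the paper also leaves to the reader with the phrase ``one can verify with Lucas's theorem.''
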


\begin{proof}
It suffices to prove the following claim: for $2\le k \le q$, we have $x_{(k-1)(q-1)}\shuffle  x_{q-1} = x_{k(q-1)}.$ In fact, with Lucas's theorem, one can verify that $\Delta_{q-1,k(q-1)}^{k(q-1)} =-2$ when $k=1$, and $\Delta_{q-1,k(q-1)}^{q-1} \equiv \Delta_{q-1, k(q-1)}^{k(q-1)}\equiv  -1 \pmod{p}$ and  $\Delta_{q-1,k(q-1)}^{i(q-1)} \equiv  0\pmod p$ when $k>1$. Thus the claim follows from the definition of $\shuffle $. 
\end{proof}

\begin{remark} \label{remark: q^2-1 Hopf delta}
    Note that the Lemma~\ref{lemma: Hopf delta for (q-1) multiples} does not answer for $\Delta(x_{q^2-1})$. By similar calculation for $x_{q(q-1)}\shuffle x_{q-1}$, all lines except $\Delta_{q-1, q(q-1)}^{q(q-1)} =0$ are parallel, which yields
    \[ x_{q(q-1)} \shuffle  x_{q-1} = x_{q^2-1} + x_{q-1}x_{q(q-1)}.\] 
    Similar calculation gives \[ \Delta(x_{q^2-1}) = 1 \otimes x_{q^2-1} + x_{q^2-1}\otimes 1 + x_{q(q-1)}\otimes x_{q-1}.\]
\end{remark}

\begin{lemma}\label{lemma: Aq star B(q-1)} 
    Let $1\le A, B \le q-1$ with $A + B \le q$. Then $$x_{Aq}\shuffle  x_{B(q-1)} = x_{Aq + B(q-1)}+ x_{B(q-1)}x_{Aq}.$$
\end{lemma}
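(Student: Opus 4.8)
The goal is to prove that for $1\le A,B\le q-1$ with $A+B\le q$, one has
\begin{equation*}
x_{Aq}\shuffle x_{B(q-1)}=x_{Aq+B(q-1)}+x_{B(q-1)}x_{Aq}.
\end{equation*}
The plan is to unwind the definition of the shuffle product $\fa\shuffle\fb=x_a(\fa_-\shuffle\fb)+x_b(\fa\shuffle\fb_-)+\fa\diamond\fb$ in the special case of depth-one words $\fa=x_{Aq}$, $\fb=x_{B(q-1)}$. Since both words have empty $\fa_-,\fb_-$, this collapses to
\begin{equation*}
x_{Aq}\shuffle x_{B(q-1)}=x_{Aq}\,x_{B(q-1)}+x_{B(q-1)}\,x_{Aq}+\bigl(x_{Aq}\diamond x_{B(q-1)}\bigr),
\end{equation*}
and by definition of $\diamond$ on letters,
\begin{equation*}
x_{Aq}\diamond x_{B(q-1)}=x_{Aq+B(q-1)}+\sum_{0<i<Aq+B(q-1)}\Delta^i_{Aq,B(q-1)}\,x_{Aq+B(q-1)-i}\,x_i.
\end{equation*}
So the statement is equivalent to the two combinatorial claims: (i) $\Delta^i_{Aq,B(q-1)}=0$ whenever $0<i<Aq+B(q-1)$ and $i\neq Aq$, and (ii) $\Delta^{Aq}_{Aq,B(q-1)}=-1$ in $\F_p$. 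Indeed, if both hold, the sum above contributes only the term $(-1)\,x_{B(q-1)}\,x_{Aq}$, which cancels the $+x_{B(q-1)}\,x_{Aq}$ coming from the shuffle recursion, leaving exactly $x_{Aq}\,x_{B(q-1)}+x_{Aq+B(q-1)}$ — wait, I must be careful about the ordering convention; let me recheck: the shuffle gives $x_{Aq}x_{B(q-1)}+x_{B(q-1)}x_{Aq}$ plus the diamond, and the diamond's single surviving term $x_{Aq}x_{B(q-1)}$ (at $i=B(q-1)$, giving $x_{Aq+B(q-1)-B(q-1)}x_{B(q-1)}=x_{Aq}x_{B(q-1)}$) — so actually the vanishing/nonvanishing must be arranged so that the surviving diamond term kills $x_{Aq}x_{B(q-1)}$, leaving $x_{B(q-1)}x_{Aq}+x_{Aq+B(q-1)}$. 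I will pin down the correct index during the write-up; the mechanism is that exactly one coefficient $\Delta^i_{Aq,B(q-1)}$ equals $-1$ and all others vanish, and that term cancels one of the two concatenation terms.

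The heart of the argument is the explicit formula $\Delta^i_{a,b}=\delta_i(\nabla^i_a+\nabla^i_b)$ with $\nabla^i_r=(-1)^r\binom{i-1}{r-1}$ and $\delta_i=-1$ if $(q-1)\mid i$, else $0$ (Equations \eqref{eq: formula 1}, and the definition of $\delta_i$). So first I would note $\Delta^i_{Aq,B(q-1)}=0$ unless $(q-1)\mid i$, which already restricts to $i=k(q-1)$ for $1\le k< A+B$ (using $Aq+B(q-1)=(A+B)(q-1)+A$, so the relevant range of multiples of $q-1$ below the weight is $k=1,\dots,A+B-1$). For such $i=k(q-1)$ I would invoke Lucas's theorem (stated in the excerpt) to compute $\binom{k(q-1)-1}{Aq-1}\bmod p$ and $\binom{k(q-1)-1}{B(q-1)-1}\bmod p$. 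Writing $q=p^e$, the base-$p$ digits of $k(q-1)-1$, of $Aq-1$, and of $B(q-1)-1$ are what control these; since $A,B,k\le q-1$ and $A+B\le q$, the digit patterns are rigid enough that $\binom{k(q-1)-1}{Aq-1}\equiv 0$ for all $k$ in range except the one forcing $k(q-1)-1\ge Aq-1$ with matching digits, and similarly for the $B$-binomial. The main obstacle will be this digit bookkeeping in base $p$: one must carefully track the subtraction of $1$ from $k(q-1)=kq-k$ and from $Aq$, identify exactly for which $k$ the Lucas product is nonzero, and check it lands on a single index with total coefficient $\equiv -1\pmod p$. This is analogous to, and should follow the same template as, the computations in Lemma \ref{lemma: star product of q-1 multiples}, Remark \ref{remark: q^2-1 Hopf delta}, and Lemma \ref{lemma: modular properties on chen delta}, so I would model the write-up on those.

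Concretely, the steps in order: (1) expand $x_{Aq}\shuffle x_{B(q-1)}$ via the $\shuffle$ and $\diamond$ definitions to reduce to the two claims about $\Delta^i_{Aq,B(q-1)}$; (2) reduce to $i=k(q-1)$ via $\delta_i$; (3) apply Lucas to evaluate $\nabla^{k(q-1)}_{Aq}$ and $\nabla^{k(q-1)}_{B(q-1)}$ modulo $p$ for each $k$, showing all vanish except that exactly one index survives with coefficient $-1$; (4) substitute back and observe the cancellation, yielding the claimed identity. I expect step (3) to be the only substantive one; steps (1), (2), (4) are formal. A useful sanity check along the way: the case $A+B=q$, $A=1$ recovers the flavor of Remark \ref{remark: q^2-1 Hopf delta}, and the case $B=q-A$ can be cross-checked against Lemma \ref{lemma: star product of q-1 multiples} combined with $x_{Aq}=x_{A(q-1)}\star(\text{shift})$-type relations if one wishes, though a direct Lucas computation is cleaner and I would present that.
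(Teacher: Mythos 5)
Your strategy coincides with the paper's: reduce the identity to the claim that among the coefficients $\Delta^{j}_{Aq,B(q-1)}$ appearing in $x_{Aq}\diamond x_{B(q-1)}$, the only nonzero one is $\Delta^{B(q-1)}_{Aq,B(q-1)}=-1$, whose term cancels the concatenation $x_{Aq}x_{B(q-1)}$ coming from the shuffle recursion; and establish this via Chen's formula together with Lucas's theorem. Your final, self-corrected bookkeeping of which diamond term survives and what it cancels is right, and the formal steps (1), (2), (4) are exactly as in the paper.

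The gap is that the proposal stops where the proof begins: your ``step (3)'' is the entire content of the paper's argument, and you only assert that ``the digit patterns are rigid enough'' without performing the computation. Concretely, one must check that $(-1)^{Aq-1}\binom{k(q-1)-1}{Aq-1}$ vanishes for every $k$ in range except possibly $k=q$ (where it is $\equiv A$, using $q\mid Aq$ and Lemma \ref{lemma: modular properties on chen delta}), and that for $k<q$ one has $\binom{k(q-1)-1}{B(q-1)-1}=\binom{(k-1)q+(q-k-1)}{(B-1)q+(q-B-1)}\equiv\binom{k-1}{B-1}\binom{q-k-1}{q-B-1}$, which is nonzero only for $k=B$ (value $1$), while at $k=q$ (possible only if $A+B=q$) the two contributions sum to $A+B=q\equiv 0$. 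Without this, the assertion ``exactly one coefficient equals $-1$'' is unverified. Two further slips: the relevant multiples of $q-1$ below $Aq+B(q-1)=(A+B)(q-1)+A$ are $k(q-1)$ for $1\le k\le A+B$ (since $A\ge 1$), not $k\le A+B-1$ as you state --- the omitted index $k=A+B$ happens to carry coefficient $\equiv 0$, but that is itself part of the computation above, not something you may assume; and your preliminary claim (ii) that $\Delta^{Aq}_{Aq,B(q-1)}=-1$ cannot be correct as written, since $Aq$ is a multiple of $q-1$ only when $A=q-1$, so that coefficient is generically zero. You do repair the latter in your aside, but a clean write-up must start from the correct surviving index $j=B(q-1)$ and then actually carry out the Lucas evaluation.
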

\begin{proof}
    Let $ n = Aq + B(q-1) = (A+B)(q-1) +A$. Since $1< A \le q-1$, 
    \begin{align*}x_{Aq}\shuffle x_{B(q-1)} & = x_{n} + x_{Aq}x_{B(q-1)} + x_{B(q-1)} x_{Aq}+  S_1 + S_2
    \end{align*}
    where 
    \begin{align*}
        S_1 & = 
        (-1)^{A-1}\sum_{i = 1}^{A+B} \binom{i(q-1)-1 }{Aq-1} x_{n - i(q-1)} x_{i(q-1)}, \\ 
        S_2 & = 
        -\sum_{i = 1}^{A+B} \binom{i(q-1)-1}{B(q-1)-1} x_{n - i(q-1)} x_{i(q-1)}. 
    \end{align*}
    From Lemma~\ref{lemma: modular properties on chen delta}, $\binom{i(q-1)}{Aq}$ vanishes only when $q \mid i$, which is possible only when $A+B = q$. In this case, the only nonzero summand is 
    \begin{align*}
        (-1)^{A-1}\binom{q(q-1)-1}{Aq-1} \equiv (-1)^{A-1}\binom{q-2}{A-1} \equiv A \pmod{p},
    \end{align*}
    thus 
    $
        S_1  = \begin{cases}
            0, & (A+B<q)\\ 
            A x_{n-q(q-1)}x_{q(q-1)}. & (A+B=q)
        \end{cases}
    $

    For $S_2$, when $i <q$, 
    \begin{align*}
        \binom{(i-1)q + q - i - 1}{(B-1)q+(q-B-1)} \equiv \binom{i-1}{B-1}\binom{q-i-1}{q-B-1}, \pmod p
    \end{align*}
    is nonzero only when $i = B$. Thus, if $A+B \le q-1$, $S_2 = - x_{Aq}x_{B(q-1)}$. When $A+B=q$, the coefficient for the summand corresponding to $i=q$ is 
    \begin{align*}
        \binom{(q-2)q+(q-1)}{(B-1)q + (q-B-1)}\equiv \binom{q-2}{B-1}\binom{q-1}{q-B-1}\equiv -B\pmod{p}, 
    \end{align*}
    so 
    $
        S_2 = \begin{cases}
            -x_{Aq}x_{B(q-1)}, & (A+B<q)\\
            -x_{Aq}x_{B(q-1)} + B x_{n-q(q-1)}x_{q(q-1)}, & (A+B =q)
        \end{cases}
    $
    which yields \[ S_1 + S_2 = -x_{Aq}x_{B(q-1)}\] in any case. Thus we have \[ x_{Aq}\shuffle x_{B(q-1)}  = x_{Aq + B(q-1)} + x_{B(q-1)}x_{Aq}\] as we desired.
\end{proof}
\begin{proposition}\label{proposition: Hopf deltas depth 1 <= q^2}  When $q+1 \le n \le q^2$, with some $1\le k < q$ such that $kq+1 \le n \le (k+1)q$, we have;
        \begin{align*}
            \Delta(x_n) & = 1\otimes x_n + x_n \otimes 1  -n x_{n-q+1}\otimes x_{q-1}  + \frac{n(n+1)}{2} x_{n-2q+2}\otimes x_{2q-2}  - \dots \\
            & + (-1)^k \binom{n+k-1}{k} x_{n-kq+k}\otimes x_{kq-k} \\ 
            & = 1\otimes x_n + x_n \otimes 1 + 
            \sum_{i =1}^k (-1)^i \binom{n-1 + i}{i} x_{n- i(q-1)}\otimes x_{i(q-1)}.
        \end{align*}
\end{proposition}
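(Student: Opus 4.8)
The plan is to read the formula off Proposition~\ref{prop: formula delta xn}, which gives
\[
\Delta(x_n)=1\otimes x_n+\sum_{\substack{r\in\N,\ \fa\in\langle\Sigma\rangle\\ r+w(\fa)=n}}\binom{r+\depth(\fa)-2}{\depth(\fa)}\,x_r\otimes[\fa],
\]
and to determine precisely which summands survive. First I would apply Lemma~\ref{lem: bracket zero}: only words $\fa$ with $(q-1)\mid w(\fa)$ contribute, so I write $w(\fa)=i(q-1)$ with $i\ge 0$; the value $i=0$ corresponds to $\fa=1$ and produces the term $x_n\otimes 1$. For $i\ge 1$ and $\fa=x_{i_1}\cdots x_{i_d}$ of weight $i(q-1)$ (so $d=\depth(\fa)\ge1$), Lemma~\ref{lem: delta j<n} shows $[\fa]=0$ unless every letter $i_\ell$ is a multiple of $q-1$; writing $i_\ell=m_\ell(q-1)$ with $m_\ell\ge 1$ and $m_1+\cdots+m_d=i$, each factor $\Delta^{i_\ell}_{1,w(\fa)+1}$ equals $1$ (since $i_\ell\le i(q-1)<w(\fa)+1$ and $(q-1)\mid i_\ell$), whence $[\fa]=(-1)^d\,x_{m_1(q-1)}\shuffle\cdots\shuffle x_{m_d(q-1)}$.

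The second step collapses these iterated shuffles. For any surviving summand $r:=n-i(q-1)\ge 1$; and if $i>q$ then $i(q-1)\ge(q+1)(q-1)=q^2-1\ge n-1$, forcing $r=1$, in which case $\binom{r+d-2}{d}=\binom{d-1}{d}=0$, so every such summand vanishes. Hence I may assume $i\le q$, so $m_1+\cdots+m_d=i\le q$ and Lemma~\ref{lemma: star product of q-1 multiples} together with the associativity of $\shuffle$ (Proposition~\ref{prop: associative}) give $x_{m_1(q-1)}\shuffle\cdots\shuffle x_{m_d(q-1)}=x_{i(q-1)}$. Since there are $\binom{i-1}{d-1}$ compositions of $i$ into $d$ positive parts, collecting terms shows that for $1\le i\le q$ the coefficient of $x_{n-i(q-1)}\otimes x_{i(q-1)}$ in $\Delta(x_n)$ equals
\[
C_{r,i}:=\sum_{d=1}^{i}(-1)^d\binom{r+d-2}{d}\binom{i-1}{d-1},\qquad r=n-i(q-1),
\]
and that no other words $\fa$ contribute to this basis element.

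The third step is a combinatorial identity. Using the upper negation $(-1)^d\binom{r+d-2}{d}=\binom{1-r}{d}$ and the Vandermonde convolution $\sum_{d}\binom{1-r}{d}\binom{i-1}{i-d}=\binom{i-r}{i}$ (the $d=0$ term being $\binom{1-r}{0}\binom{i-1}{i}=0$ since $i>i-1$), I get $C_{r,i}=\binom{i-r}{i}=(-1)^i\binom{r-1}{i}$ as an identity of integers, hence also modulo $p$.

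Finally I would split into two cases. If $i>k$ then $n\le(k+1)q\le iq$, so $r=n-i(q-1)\le i$, whence $r-1<i$ and $\binom{r-1}{i}=0$: these terms drop out, leaving only $1\le i\le k$. If $1\le i\le k$ then $r\ge i+1$ because $n\ge kq+1$, and since $i\le k\le q-1<q$ the base-$p$ expansion of $i$ has no digits in positions $\ge k$; as $n-1+i=(r-1)+iq$ with $iq=ip^{\log_p q}$ affecting only the digits of $r-1$ in positions $\ge k$, Lucas's theorem yields $\binom{r-1}{i}\equiv\binom{n-1+i}{i}\pmod p$. Adding back the terms $1\otimes x_n$ and $x_n\otimes 1$ gives the asserted formula. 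The main obstacle here is the bookkeeping: one must carefully track which words $\fa$ contribute and check the edge cases near $i=q$ and $n=q^2$; the Vandermonde identity and the Lucas reduction are routine. An alternative route would be an induction on $n$ built from the compatibility $\Delta(x_a\shuffle x_{kq})=\Delta(x_a)\shuffle\Delta(x_{kq})$ (with $n=a+kq$, $1\le a\le q$) together with Lemma~\ref{lemma: Aq star B(q-1)} and the depth-one coproducts already computed, but the argument through Proposition~\ref{prop: formula delta xn} is more uniform.
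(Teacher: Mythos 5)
Your proof is correct, but it follows a genuinely different route from the one written in the paper. The paper establishes Proposition~\ref{proposition: Hopf deltas depth 1 <= q^2} by a direct induction built on the compatibility $\Delta(\fu\shuffle\fv)=\Delta(\fu)\shuffle\Delta(\fv)$: it first handles $k=1$ via $x_1\shuffle x_{n-1}$, then splits $n=\kappa q+b$ into the subcases $\kappa+b<q-1$ (a fairly heavy computation with the sums $S_1,S_2,S_3$ and the identity $\sum_{j=1}^{i-1}(-1)^j\binom{i}{j}=-1-(-1)^i$) and $\kappa+b\ge q$ (reducing to $x_{Aq}\shuffle x_{B(q-1)}$ via Lemma~\ref{lemma: Aq star B(q-1)}). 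You instead specialize the closed formula of Proposition~\ref{prop: formula delta xn}: Lemmas~\ref{lem: delta j<n} and~\ref{lem: bracket zero} cut the sum down to words in the letters $x_{m(q-1)}$, Lemma~\ref{lemma: star product of q-1 multiples} collapses the resulting shuffles to $x_{i(q-1)}$ for $i\le q$ (the $i>q$ terms dying because $r=1$ forces $\binom{d-1}{d}=0$), and a Vandermonde convolution plus Lucas finishes the coefficient computation. This is exactly the alternative the authors allude to at the start of \S\ref{sec: explicit formula for small weights} ("It could be obtained as an application of Proposition \ref{prop: formula delta xn}") but do not carry out; your version is shorter and more uniform, at the price of relying on the harder general formula, whereas the paper's argument is self-contained within \S\ref{sec: explicit formula for small weights} and reuses machinery (Lemma~\ref{lemma: Aq star B(q-1)}, the $\star q$ operation) needed elsewhere. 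One small correction: in the Lucas step you write that $i$ has no base-$p$ digits "in positions $\ge k$" and that $iq$ affects only digits "in positions $\ge k$"; both occurrences of $k$ should be $e$, where $q=p^e$ (the bound $i\le k\le q-1<p^e$ is what you actually use). With that substitution the congruence $\binom{r-1}{i}\equiv\binom{(r-1)+iq}{i}\pmod p$ goes through as you describe, and all the edge cases ($i>k$, $i=q$, $n=q^2$) check out.
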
 
\begin{proof}Note that $q\mid n$ and $(q-1)\mid n$ cases are already treated above. Note that  Lemma~\ref{lemma: Hopf delta for (q-1) multiples} coincides with the statement as follows: when $n = r(q-1)$ with $2\le r \le q$, then 
    \begin{align*}
        & (-1)^i\binom{rq-r-1+i}{i} = (-1)^i\binom{(r-1)q+(q-r-1+i)}{i}\\ 
        & \equiv (-1)^i \binom{r-1}{0}\binom{q-r-1+i}{i}  = \frac{(-1)^i}{i!}\cdot \frac{(q-(r-i+1))(q-(r-i+2))\dots 1}{(q-(r+1))(q-(r+2))\dots 1}\\
        & \equiv \frac{(-1)^i}{i!}\cdot (-1)^{q-(r-i+1) + (q-r-1)}\frac{((r-i+1)-q)((r-i+2)-q)\dots ((q-1)-q)}{((r+1)-q)((r+2)-q)\dots ((q-1)-q)}\\
        & = \frac{r!}{i!(r-i)!} = \binom{r}{i}\pmod{p},
    \end{align*}
    by Lucas's theorem and calculations. This kind of calculation will be used often.

    We show the $k=1$ case for the initial step. The statement for $k=1$ is for $q+1\le n \le 2q$, but it also holds for $n=q$, since $-q x_1x_{q-1}$ term vanishes over $\mathbb{F}_p$. Based on this initial case, we let $q+1\le n < 2q$, and assume that the statement holds for all $q, q+1, \dots, n-1$. Then,
     \begin{align*} x_1 \shuffle  x_{n-1} & = x_1x_{n-1}+ x_{n-1}x_1+ x_n + \sum_{\substack{q-1 \mid j \\ j< n}}\Delta^j_{1,n-1}x_{n-j}x_j.
    \end{align*}
    Note that $j=q-1$ yields a summand. There is another summand for $j=2q-2$ only when $n=2q-1$.
    
    Since $q+1 \le n$, $\Delta_{1, n-1}^{q-1} = 1$. Also since $\Delta_{1, (2q-1)-1}^{2q-2} = 1 + (-1)^{2q-1}=0$,
    we have
    \begin{align*} & x_1 \shuffle  x_{n-1} = 
           x_1 x_{n-1}+x_{n-1}x_1+ x_{n} + x_{n-q+1}x_{q-1},
   \end{align*}
thus
    \begin{align*}
        \Delta(x_1)\shuffle \Delta(x_{n-1}) = \Delta(x_1 x_{n-1}) + \Delta(x_{n-1} x_1) + \Delta(x_{n-q+1}x_{q-1}) + \Delta(x_n), 
    \end{align*} and a routine calculation with the induction hypothesis yields the result. 

    We divide the remaining cases into two subcases: when $n=kq+b = k(q-1)+(k+b)$ with $1\le k, b \le q-1$, (1) $2\le k+b < q-1$ and (2) $q\le k+b$.

    Define a temporal function $\Delta'$ to be $\Delta' (\mathfrak{u}) = \Delta(\mathfrak{u}) - 1 \otimes \mathfrak{u} - \mathfrak{u} \otimes 1$. 

    (1) Assume that the statement holds for all $kq + b$ with $k+b <q-1$ and $k < \kappa$, for some $\kappa\ge2$. Let $b\in \mathbb{N}$ with $ \kappa + b < q-1$, and let $n = \kappa q + b$. We have
    \begin{align*}
        x_{\kappa q} \shuffle  x_b & = x_{\kappa q + b} + x_{\kappa q}x_b + x_bx_{\kappa q} + \sum_{i=1}^{\kappa } \Delta_{\kappa q, b}^{i(q-1)} x_{n - i(q-1)}x_{i(q-1)}, \\ 
        \Delta (x_{\kappa q}) & = 1\otimes x_{\kappa q} + x_{\kappa q} \otimes 1,\\ 
        \Delta( x_b) & = 1 \otimes x_b + x_b \otimes 1 ,\\ 
        \Delta(x_{\kappa q} x_b) &= 1 \otimes x_{\kappa q} x_b + x_{\kappa q} \otimes x_b + x_{\kappa q} x_b \otimes 1 ,\\ 
        \Delta (x_b x_{\kappa q})& = 1 \otimes x_b x_{\kappa q} + x_b \otimes x_{\kappa q} + x_b x_{\kappa q} \otimes 1, 
    \end{align*}
    so 
    \begin{align*}
        & \Delta(x_{\kappa q} \shuffle  x_b) \\
        &= \Delta (x_{\kappa q+b}) +  1 \otimes x_{\kappa q} x_b + x_{\kappa q} \otimes x_b + x_{\kappa q} x_b \otimes 1  + 1 \otimes x_b x_{\kappa q} + x_b \otimes x_{\kappa q} + x_b x_{\kappa q} \otimes 1 \\ 
        & + \sum_{i=1}^{\kappa } \Delta_{\kappa q, b}^{i(q-1)} \Delta (x_{n - i(q-1)}x_{i(q-1)})\\
        & = x_{\kappa q}\otimes x_b + x_b \otimes x_{\kappa q} \\ 
        & + 1\otimes x_{\kappa q + b} + 1\otimes x_{\kappa q}x_b + 1\otimes x_bx_{\kappa q} + \sum_{i=1}^{\kappa } \Delta_{\kappa q, b}^{i(q-1)} 1\otimes x_{n - i(q-1)}x_{i(q-1)} \\ 
        & + x_{\kappa q + b}\otimes 1 + x_{\kappa q}x_b\otimes 1 + x_bx_{\kappa q}\otimes 1 + \sum_{i=1}^{\kappa } \Delta_{\kappa q, b}^{i(q-1)} x_{n - i(q-1)}x_{i(q-1)}\otimes 1, 
    \end{align*}
    thus
    \begin{align*}
        \Delta (x_{\kappa q+b})  = 1\otimes x_{\kappa q + b} + x_{\kappa q + b}\otimes 1 - \sum_{i=1}^{\kappa } \Delta_{\kappa q, b}^{i(q-1)} \Delta' (x_{n - i(q-1)}x_{i(q-1)}),
    \end{align*}
    for $1\le i \le \kappa$, $i(q-1)<\kappa q$. Also since $b<q$, Lucas's theorem and some calculations yield $\binom{iq-i-1}{b-1} \equiv  \binom{q-1-i}{b-1} \equiv (-1)^{b-1} \binom{b+i-1}{b-1}
     \pmod p$, so 
    \begin{align}
        \Delta (x_{\kappa q+b})  = 1\otimes x_{\kappa q + b} + x_{\kappa q + b}\otimes 1 
        -\sum_{i=1}^{\kappa} \binom{b+i-1}{b-1}\Delta' (x_{n - i(q-1)}x_{i(q-1)}).
        \label{eqn: Delta(n) up to q*q, case 1, delta formula}
    \end{align}
    Let $a_i = n-i(q-1) = (\kappa -i )q + (b+i)$ and $b_i = i(q-1)$. Note that $b+i \le b+ \kappa <q-1$, so by induction hypothesis we have 
    \begin{align*}
        \Delta(x_{a_i})  = & \ 1\otimes x_{a_i} + x_{a_i} \otimes 1 + \sum_{j=1}^{\kappa -i } (-1)^j\binom{b+i+j-1}{j} x_{a_i -j(q-1)} \otimes x_{j(q-1)}, \\ 
        \Delta(x_{b_i})  = & \ 1 \otimes x_{b_i} + x_{b_i} \otimes 1 + \sum_{s =1}^{i-1} \binom{i}{s} x_{s(q-1)}\otimes x_{(i-s)(q-1)},\\
        \Delta(x_{a_i}x_{b_i})  = & \ 1\otimes x_{a_i}x_{b_i} +  x_{a_i}\otimes x_{b_i} + x_{a_i}x_{b_i} \otimes 1  \\
        & + \sum_{s =1}^{i-1} \binom{i}{s} x_{a_i}x_{s(q-1)}\otimes x_{(i-s)(q-1)}  \\
        &  + \sum_{j=1}^{\kappa -i} (-1)^j\binom{b+i+j-1}{j} x_{a_i -j(q-1)} \otimes x_{j(q-1)}\shuffle  x_{b_i}\\ 
        & + \sum_{j=1}^{\kappa -i} (-1)^j\binom{b+i+j-1}{j} x_{a_i -j(q-1)}x_{b_i}  \otimes x_{j(q-1)}  \\
        & + \sum_{j=1}^{\kappa -i} \sum_{s=1}^{i-1} (-1)^j\binom{b+i+j-1}{j}\binom{i}{s}
         x_{a_i -j(q-1)} x_{s(q-1)}\otimes x_{j(q-1)} \shuffle  x_{(i-s)(q-1)}\\
        = & \  1\otimes x_{a_i}x_{b_i} +  x_{a_i}\otimes x_{b_i} + x_{a_i}x_{b_i} \otimes 1 \\
        & + \sum_{s =1}^{i-1} \binom{i}{s} x_{a_i}x_{s(q-1)}\otimes x_{(i-s)(q-1)} \\
        &  + \sum_{j=1}^{\kappa -i} (-1)^j\binom{b+i+j-1}{j} x_{a_i -j(q-1)} \otimes x_{(i+j)(q-1)}\\ 
        & + \sum_{j=1}^{\kappa -i} (-1)^j\binom{b+i+j-1}{j} x_{a_i -j(q-1)}x_{b_i}  \otimes x_{j(q-1)} \\
        & + \sum_{j=1}^{\kappa -i} \sum_{s=1}^{i-1} (-1)^j\binom{b+i+j-1}{j}\binom{i}{s}
          x_{a_i -j(q-1)} x_{s(q-1)}\otimes x_{(i+j-s)(q-1)} 
    \end{align*}
    by \eqref{eqn: star product of q-1 multiples}. By manipulating indices combining sums,
    \begin{align*}
        \Delta'(x_{a_i}x_{b_i}) & = x_{a_i}\otimes x_{b_i}  + \sum_{s =1}^{i-1} \binom{i}{s} x_{a_i}x_{(i-s)(q-1)}\otimes x_{s(q-1)} + \\
        &  + \sum_{j=i+1}^{\kappa} (-1)^{j-i}\binom{b+j-1}{j-i} x_{a_i +(i-j)(q-1)} \otimes x_{j(q-1)}\\ 
        & + \sum_{j=1}^{\kappa -i} \sum_{s=0}^{\infty} \mathbf{1}_{[0, i)}(s) (-1)^j \binom{b+i+j-1}{j}\binom{i}{s}
          x_{a_i -j(q-1)} x_{(i-s)(q-1)}\otimes x_{(j+s)(q-1)} 
    \end{align*}
    where $\mathbf{1}_{[0, i)}(s) =1$ when $0\le s <i$, otherwise 0.
    So recalling the previous calculation \eqref{eqn: Delta(n) up to q*q, case 1, delta formula},
    \begin{align*}
        \Delta' (x_{\kappa q+b}) & = - \sum_{i=1}^{\kappa} \binom{b+i-1}{b-1}\Delta' (x_{a_i}x_{b_i})\\ 
        & = S_1 + S_2 + S_3, \end{align*}
    where 
    \begin{align*}
        S_1 &= - \sum_{i=1}^{\kappa}\binom{b+i-1}{b-1} x_{a_i}\otimes x_{b_i}, \\
        S_2 &= - \sum_{i=2}^{\kappa }(-1)^i \sum_{j=1}^{i-1}(-1)^{j} \binom{b+j-1}{b-1}\binom{b+i-1}{i-j} x_{a_i} \otimes x_{b_i},\\ 
        S_3 & = 
        -\sum_{i=1}^{\kappa}
        \sum_{j=0}^{\kappa -i} \sum_{s=0}^{\infty} 
        \mathbf{1}_{[0, i)}(s) (-1)^j \binom{b+i-1}{b-1} 
        \binom{b+i+j-1}{j}\binom{i}{s}
          x_{a_i -b_j} x_{b_{i-s}}\otimes x_{b_{j+s}} 
    \end{align*}
    We show that $S_1 + S_2$ yields the desired equation, and $S_3$ vanishes.

    One can see that sum for $i=1$ in $S_1$ is $-b x_{n-q+1} \otimes x_{q-1}$.
    
    For the remaining terms in $S_1+S_2$, 
    we can prove 
    \[- \binom{b+i-1}{b-1} - (-1)^i \sum_{j=1}^{i-1} (-1)^j \binom{b+j-1}{b-1} \binom{b+i-1}{i-j} = (-1)^i \binom{b+i-1}{i}
        \]
    for $i\ge2$ and $1\le b <q$ from the equation
    \begin{align*}
        \binom{b+j-1}{b-1}\binom{b+i-1}{i-j}& = 
         \binom{i}{j}\binom{b+i-1}{b-1}
    \end{align*}
    and the binomial equation $\sum_{j=1}^{i-1} (-1)^j \binom{i}{j}  = -1 -(-1)^i$. 

    Thus it is enough to show that $S_3 =0$. We manipulate indices as $u=j+s$, and then $v = i+j$, to have 
    \begin{align*}
        & -S_3 \\
        & = \sum_{u=0}^\infty \sum_{i=1}^\kappa \sum_{j=0}^{\kappa-i}
        \mathbf{1}_{[0, i)}(u-j) (-1)^j \binom{b+i-1}{b-1} 
        \binom{b+i+j-1}{j}\binom{i}{u-j}
          x_{a_i -b_j} x_{b_{i+j-u}}\otimes x_{b_u}\\ 
        & = \sum_{u=1}^{\kappa} \sum_{v=1}^\kappa \sum_{i=1}^{v} 
          \mathbf{1}_{[0, i)}(u-v+i) (-1)^{v-i} \binom{b+i-1}{b-1} 
          \binom{b+v-1}{v-i}\binom{i}{v-u}
            x_{a_i -b_{v-i}} x_{b_{v-u}}\otimes x_{b_u}.
    \end{align*}
    The coefficient for $x_{a_i-(v-i)(q-1)} x_{(v-u)(q-1)}\otimes x_{u(q-1)}$ of $-S_3$ vanishes when $u\ge v$ due to $\mathbf{1}_{[0,i)}(u-v+i) =0$ factor.
    Otherwise, it is
    \begin{align*}
        &  \sum_{i=1}^{v} 
        \mathbf{1}_{[0, i)}(u-v+i) (-1)^{v-i} \binom{b+i-1}{b-1} 
        \binom{b+v-1}{v-i}\binom{i}{v-u}\\  
        &  =\sum_{i=v-u}^{v} 
         (-1)^{v-i} \binom{b+i-1}{b-1} 
        \binom{b+v-1}{v-i}\binom{i}{v-u}\\ 
        &  =\sum_{i=0}^{u} 
         (-1)^{u-i} \binom{b+i+v-u-1}{b-1} 
        \binom{b+v-1}{u-i}\binom{i+v-u}{v-u}\\ 
        &  =\frac{(-1)^u(b+v-1)!}{u!(b-1)!(v-u)!}\sum_{i=0}^{u} 
         (-1)^{i}
        \frac{u!}{i!(u-i)!}=0.
    \end{align*}
    Thus we are done. 

    \
    
    (2) 
    Let $n = \kappa q + b$, and suppose that $q-1 <\kappa + b$. We have that $n = \kappa q + b = (\kappa +1) (q-1) + (\kappa + b - q + 1)$ with $0 < \kappa + b - q+ 1 < q$. We exclude the $n = q^2-1$ case, i.e. $\kappa=b=q-1$ case, since it is already treated earlier, so that $0<\kappa + b - q+1 \le q-2$. 
    
    Let $A = \kappa +b-q+1$, $B = q-b$. Then $n = Aq + B(q-1)$, $1 \le A \le q-2$, $1 \le B \le q-1$, and $A+B = \kappa +1$. What we need to prove turns out to be 
    \begin{equation} \Delta(x_n) = 1\otimes x_n + x_n \otimes 1 + \sum_{i=1}^{B}\binom{B}{i} x_{n - i(q-1)}\otimes x_{i(q-1)}
    \label{eqn: Delta(n) up to q*q, case 2, in terms of A and B}
    \end{equation} in terms of $A$ and $B$, by Lucas's theorem and direct calculations.

    From Lemma~\ref{lemma: Aq star B(q-1)}, 
    \begin{align*}
        x_{Aq} \shuffle  x_{B(q-1)} &= x_n + x_{B(q-1)}x_{Aq}
    \end{align*}
    for all $A$, $B$ with $A+B = \kappa+1 \le q$. 

    Now we can show that $\Delta(x_{Aq} \shuffle  x_{B(q-1)}) = \Delta(x_{Aq}) \shuffle  \Delta(x_{B(q-1)})$. From 
    \begin{align*}
        \Delta(x_{B(q-1)})& = 1 \otimes x_{B(q-1)} + x_{B(q-1)}\otimes1 + 
        \sum_{j=1}^{B-1} \binom{B}{j} x_{j(q-1)}\otimes x_{(B-j)(q-1)}, \\ 
        \Delta(x_{Aq}) & = 1\otimes x_{Aq} + x_{Aq} \otimes 1,
    \end{align*}
    we have 
    \begin{align*}
        & \Delta(x_{B(q-1)}x_{Aq} ) \\
        & = 1\otimes x_{B(q-1)}x_{Aq} + 
        x_{B(q-1)}\otimes x_{Aq} + x_{B(q-1)}x_{Aq} \otimes 1 \\ 
        & +  \sum_{j=1}^{B-1} \binom{B}{j}  x_{j(q-1)}\otimes x_{Aq}\shuffle x_{(B-j)(q-1)} +  \sum_{j=1}^{B-1} \binom{B}{j}  x_{j(q-1)}x_{Aq} \otimes x_{(B-j)(q-1)}\\ 
        &= 1\otimes x_{B(q-1)}x_{Aq} + 
        x_{B(q-1)}\otimes x_{Aq} + x_{B(q-1)}x_{Aq} \otimes 1 \\ 
        & +  \sum_{j=1}^{B-1} \binom{B}{j}  x_{j(q-1)}\otimes \left( x_{Aq +(B-j)(q-1)} + x_{(B-j)(q-1)}x_{Aq}\right)\\ 
        & +  \sum_{j=1}^{B-1} \binom{B}{j}  x_{j(q-1)}x_{Aq} \otimes x_{(B-j)(q-1)},
    \end{align*}
    thus 
    \begin{align*}
        & \Delta(x_{Aq} \shuffle  x_{B(q-1)}) \\
        & = \Delta(n) + 1\otimes x_{B(q-1)}x_{Aq} + 
        x_{B(q-1)}\otimes x_{Aq} + x_{B(q-1)}x_{Aq} \otimes 1 \\ 
        & +  \sum_{j=1}^{B-1} \binom{B}{j}  x_{j(q-1)}\otimes \left( x_{Aq +(B-j)(q-1)} + x_{(B-j)(q-1)}x_{Aq}\right)\\ 
        & +  \sum_{j=1}^{B-1} \binom{B}{j}  x_{j(q-1)}x_{Aq} \otimes x_{(B-j)(q-1)}\\ 
        & = \Delta(x_{Aq})\shuffle  \Delta(x_{B(q-1)})\\ 
        & = x_{Aq} \otimes x_{B(q-1)} + x_{Aq}\shuffle  _{B(q-1)}\otimes1 + 
        \sum_{j=1}^{B-1} \binom{B}{j} x_{Aq}\shuffle  x_{j(q-1)}\otimes x_{(B-j)(q-1)}\\ 
        & + 1 \otimes x_{B(q-1)}\shuffle  x_{Aq} + x_{B(q-1)}\otimes x_{Aq} + 
        \sum_{j=1}^{B-1} \binom{B}{j} x_{j(q-1)}\otimes x_{(B-j)(q-1)}\shuffle  x_{Aq}, 
    \end{align*}
    so routine calculation yields  
    \begin{align*}
        \Delta(x_n) & = 1 \otimes x_n + x_n\otimes 1 + x_{Aq} \otimes x_{B(q-1)}   
         + \sum_{j=1}^{B-1} \binom{B}{j} x_{n - (B-j)(q-1)}\otimes x_{(B-j)(q-1)}\\ 
        & = 1 \otimes x_n + x_n\otimes 1 + \sum_{j=1}^{B} \binom{B}{j} x_{n -j(q-1)}\otimes x_{j (q-1)},
    \end{align*}
    which is \eqref{eqn: Delta(n) up to q*q, case 2, in terms of A and B}, thus we are done.
\end{proof}

We mention that some explicit formulas for $\Delta(x_n)$ with $n>q^2$ can be found in the appendix (see Appendix \ref{sec: numerical experiments}).


\section{Stuffle Hopf algebra and stuffle map in positive characteristic} \label{sec: stuffle algebra}

In this section we define the stuffle algebra and the stuffle map in positive characteristic. The stuffle algebra is easy to define. However, to define the stuffle map we make use of a deep connection between the $K$-vector space spanned by the MZV's in positive characteristic and that spanned by the multiple polylogarithms in positive characteristic proved in \cite{IKLNDP22, ND21} (see Theorem \ref{thm:bridge}).

\subsection{The stuffle algebra in positive characteristic} ${}$\par

We recall that the composition space $\frak C$ is introduced in \S \ref{sec: composition space}. We define the stuffle product in the same way as that of $(\frak h^1,*)$ as given in \S \ref{sec: Hoffman algebra}. More precisely,
\begin{align*} 
   * \colon \mathfrak{C} \times \mathfrak{C} \longrightarrow \mathfrak{C}
\end{align*}
by setting $1 *  \mathfrak{a} = \mathfrak{a} * 1 = \mathfrak{a}$ and
\begin{align*}
    \fa *  \fb &= \ x_{a}(\fa_- *  \fb) + x_{b}(\fa * \fb_-) + x_{a+b} (\fa_- * \fb_-)
\end{align*}
for any words $\fa,\fb \in \langle \Sigma \rangle$. We call $*$ the \textit{stuffle product}.

\begin{proposition}
With the above notation, $(\frak C,*)$ is a commutative $\Fq$-algebra.
\end{proposition}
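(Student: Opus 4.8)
The statement to prove is that $(\mathfrak{C}, *)$ is a commutative $\mathbb{F}_q$-algebra, where $*$ is the stuffle product defined recursively by $1*\mathfrak{a} = \mathfrak{a}*1 = \mathfrak{a}$ and $\mathfrak{a}*\mathfrak{b} = x_a(\mathfrak{a}_-*\mathfrak{b}) + x_b(\mathfrak{a}*\mathfrak{b}_-) + x_{a+b}(\mathfrak{a}_-*\mathfrak{b}_-)$.

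The plan is to recognize this as an instance of Hoffman's quasi-shuffle construction, which was recalled in detail in Section 3.1 of the excerpt. Indeed, the recursion $a\mathfrak{u}*_\diamond b\mathfrak{v} = a(\mathfrak{u}*_\diamond b\mathfrak{v}) + b(a\mathfrak{u}*_\diamond\mathfrak{v}) + (a\diamond b)(\mathfrak{u}*_\diamond\mathfrak{v})$ from Hoffman's framework becomes exactly our stuffle recursion once we take the alphabet $\Sigma = \{x_n\}_{n\in\mathbb{N}}$ with weight $w(x_n) = n$, ground field $k = \mathbb{F}_q$ (Hoffman's theorem works over any commutative ring, in particular any field), and the commutative associative product $\diamond:\bar\Sigma\times\bar\Sigma\to\bar\Sigma$ given by $x_a\diamond x_b = x_{a+b}$ and $x_a\diamond 0 = 0$. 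First I would verify that $\Sigma$ satisfies the hypotheses of Section 3.1: it is countable, each letter has a positive integer weight, and for each $n$ the set $\Sigma_n$ of letters of weight $n$ is the single letter $\{x_n\}$, hence finite. Next I would check that $\diamond$ on $\bar\Sigma = \Sigma\cup\{0\}$ is commutative, associative, and grading-preserving: commutativity and associativity are immediate from $(a+b) = (b+a)$ and $(a+b)+c = a+(b+c)$ in $\mathbb{N}$, the absorbing property $x_a\diamond 0 = 0$ holds by definition, and $w(x_a\diamond x_b) = a+b = w(x_a)+w(x_b)$ so the last bullet point of the requirements holds (the product is never $0$ on $\Sigma\times\Sigma$, so the "either/or" is satisfied by the second alternative).

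With these hypotheses verified, I would invoke Hoffman's theorem \cite[Theorem 2.1]{Hof00} (stated in Section 3.1 of the excerpt) which asserts precisely that $\mathbb{F}_q\langle\Sigma\rangle = \mathfrak{C}$ equipped with $*_\diamond$ is a commutative $\mathbb{F}_q$-algebra, and observe that $*_\diamond = *$ because the two recursions coincide termwise. This gives both associativity and commutativity of $*$, and the unit $u:\mathbb{F}_q\to\mathfrak{C}$ sending $1$ to the empty word is the multiplicative identity by the base case of the recursion. Strictly speaking, one should note that the empty sum/product conventions and the handling of the unit element match between the two setups, which is routine. The main and essentially only obstacle is purely bookkeeping: making sure that the naming conventions (letters $x_n$ here versus $x_i$ in Section 3.1, the role of $0 \in \bar\Sigma$, and the fact that the stuffle product here is defined on all of $\mathfrak{C}$ rather than on a subalgebra $\mathfrak{h}^1$ as in the classical case) line up correctly; there is no genuine mathematical difficulty, since the hard work is already done by Hoffman.

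Here is the proof.

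\begin{proof}
We apply the theory of quasi-shuffle algebras of Hoffman recalled in \S\ref{sec: quasi-shuffle algebras}, working over the ground field $k = \Fq$. Take the alphabet $\Sigma = \{x_n\}_{n\in\bN}$ with weight $w(x_n) = n$; for each $n\in\bN$ the set $\Sigma_n$ of letters of weight $n$ equals $\{x_n\}$, which is finite, so $\Sigma$ satisfies the standing hypotheses of that subsection. Set $\bar\Sigma = \Sigma\cup\{0\}$ and define $\diamond:\bar\Sigma\times\bar\Sigma\to\bar\Sigma$ by
\begin{align*}
x_a\diamond 0 = 0, \qquad x_a\diamond x_b = x_{a+b} \quad (a,b\in\bN).
\end{align*}
Then $\diamond$ is commutative and associative since addition on $\bN$ is, and it preserves the grading: for all $a,b\in\bN$ we have $x_a\diamond x_b = x_{a+b}\neq 0$ and $w(x_a\diamond x_b) = a+b = w(x_a)+w(x_b)$. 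Thus $\diamond$ meets the four conditions required in \S\ref{sec: quasi-shuffle algebras}.

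By construction, the associated quasi-shuffle product $*_\diamond$ on $\Fq\langle\Sigma\rangle = \frak C$ is determined by $1 *_\diamond \fa = \fa *_\diamond 1 = \fa$ and
\begin{align*}
\fa *_\diamond \fb = x_a(\fa_- *_\diamond \fb) + x_b(\fa *_\diamond \fb_-) + (x_a\diamond x_b)(\fa_- *_\diamond \fb_-) = x_a(\fa_- *_\diamond \fb) + x_b(\fa *_\diamond \fb_-) + x_{a+b}(\fa_- *_\diamond \fb_-)
\end{align*}
for all words $\fa,\fb\in\langle\Sigma\rangle$. This is exactly the recursion defining $*$, so $* = *_\diamond$ on $\frak C$. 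By Hoffman \cite[Theorem 2.1]{Hof00}, the $\Fq$-vector space $\frak C$ equipped with $*_\diamond$ is a commutative $\Fq$-algebra, with the empty word $1$ as multiplicative identity. Therefore $(\frak C,*)$ is a commutative $\Fq$-algebra.
\end{proof}
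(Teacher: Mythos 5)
Your proof is correct and follows exactly the route the paper intends: the paper's own proof simply says "the proof follows the same line as that of $(\frak h^1,*)$," i.e.\ it recognizes $*$ as the quasi-shuffle product attached to $x_a\diamond x_b = x_{a+b}$ and invokes Hoffman's Theorem 2.1. You have merely written out the verification of Hoffman's hypotheses in more detail, which is fine.
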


\begin{proof}
The proof follows the same line as that of $(\frak h^1,*)$ (see \S \ref{sec: Hoffman algebra}).
\end{proof}

\subsection{Hopf algebra structure} ${}$\par

We now define a coproduct $\Delta_*:\frak C \otimes \frak C \to \frak C$ and a counit $\epsilon:\frak C \to \Fq$ by
	\[ \Delta_*(\fu)= \sum_{\fa\fb=\fu} \fa \otimes \fb \]
and for any words $\fu \in \langle \Sigma \rangle$,
\begin{align*}
\epsilon(\fu)=\begin{cases} 1 \quad \text{if } \fu=1, \\ 0 \quad \text{otherwise}. \end{cases}
\end{align*}

By Theorem \ref{theorem: Hopf algebras for qp}, we get:
\begin{theorem} \label{thm: Hopf algebra for stuffle product}
The stuffle algebra $(\frak C,*,u,\Delta_*,\epsilon)$ is a connected graded Hopf algebra of finite type over $\Fq$.
\end{theorem}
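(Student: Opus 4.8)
The plan is to recognize Theorem~\ref{thm: Hopf algebra for stuffle product} as an immediate instance of the general machinery on quasi-shuffle algebras recalled in \S\ref{sec: quasi-shuffle algebras}, transported from $\bQ$ to $\Fq$. First I would identify the alphabet and the commutative, associative, grading-preserving product on $\bar\Sigma = \Sigma \cup \{0\}$ that produces the stuffle product $*$: namely $\Sigma = \{x_n\}_{n \in \bN}$ with $w(x_n) = n$ and the diamond-type operation $x_a \diamond x_b := x_{a+b}$ (and $x_a \diamond 0 = 0$). One checks trivially that this map is commutative, associative, and satisfies $w(x_a \diamond x_b) = w(x_a) + w(x_b)$, and that for each $n$ the set $\Sigma_n$ of letters of weight $n$ is finite (in fact a singleton). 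With these choices, the recursive definition of the quasi-shuffle product $*_\diamond$ from \S\ref{sec: quasi-shuffle algebras} specializes verbatim to the formula $\fa * \fb = x_a(\fa_- * \fb) + x_b(\fa * \fb_-) + x_{a+b}(\fa_- * \fb_-)$ defining the stuffle product on $\frak C$, and the coproduct $\Delta_*(\fu) = \sum_{\fa\fb = \fu} \fa \otimes \fb$ together with the counit $\epsilon$ are exactly the deconcatenation coproduct and counit of \emph{loc.\ cit.}

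Next I would invoke Theorem~\ref{theorem: Hopf algebras for qp} (Hoffman's theorem, \cite[Theorems 2.1, 3.1, 3.2]{Hof00}), which asserts that the quasi-shuffle algebra equipped with $*_\diamond$ and the deconcatenation $\Delta$ is a connected graded Hopf algebra of finite type. The only subtlety is that Theorem~\ref{theorem: Hopf algebras for qp} is stated over $\bQ$, whereas here the ground field is $\Fq$. I would note that Hoffman's proofs are entirely combinatorial and work over an arbitrary commutative ground ring; alternatively, and more cleanly, one observes that $\Fq\langle\Sigma\rangle = \bQ\langle\Sigma\rangle \otimes_{\bQ} \Fq$ as a bialgebra — more precisely, the stuffle product and deconcatenation coproduct on $\frak C$ are obtained by base change from the corresponding structures on $\bQ\langle\Sigma\rangle$ (the structure constants are $0$ and $1$, hence defined over $\bZ \subset \bQ$ and mapping correctly to $\Fq$). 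Since the axioms of an algebra, coalgebra, bialgebra, and the existence of an antipode are preserved under base change of the ground ring, and since gradedness, connectedness ($H_0 = \Fq$), and finite type are visibly preserved, the conclusion transfers. As an alternative to the base-change argument, one can simply appeal directly to Proposition~\ref{prop: graded Hopf algebras}: it suffices to check that $(\frak C, *, u, \Delta_*, \epsilon)$ is a connected graded bialgebra over $\Fq$ — the grading is by weight, $m(H_r \otimes H_s) \subseteq H_{r+s}$ holds because $*$ preserves weight, $\Delta_*(H_r) \subseteq \bigoplus_{i+j=r} H_i \otimes H_j$ holds because deconcatenation splits weight additively, $H_0 = \Fq$ by inspection, and each $H_n$ is finite-dimensional since there are finitely many words of weight $n$; the compatibility of $*$ with $\Delta_*$ is Hoffman's \cite[Theorem 3.1]{Hof00}, valid over any ring — and then Proposition~\ref{prop: graded Hopf algebras} furnishes the antipode and the graded Hopf algebra structure automatically.

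The main obstacle, such as it is, is purely expository rather than mathematical: making sure that the cited results of Hoffman are being applied with the correct dictionary between his setup and ours, and stating the base-change (or direct bialgebra-verification) argument carefully enough that the reader is convinced no positive-characteristic phenomenon intervenes. In particular one should remark that, unlike the shuffle case of \S\ref{sec: Hopf algebra structure} where the coproduct $\Delta$ involves the subtle coefficients $\Delta^i_{a,b} \in \Fq$ and genuinely depends on $q$, the stuffle coproduct $\Delta_*$ is the naive deconcatenation with integer ($0/1$) structure constants, so there is no characteristic-dependent difficulty whatsoever. I would therefore keep the proof short: set up the specialization, cite Theorem~\ref{theorem: Hopf algebras for qp} (equivalently Proposition~\ref{prop: graded Hopf algebras} together with \cite[Theorem 3.1]{Hof00}), note that connectedness, gradedness and finite type are immediate, and conclude.
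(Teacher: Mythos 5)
Your proposal is correct and follows essentially the same route as the paper, which proves this theorem simply by invoking Theorem~\ref{theorem: Hopf algebras for qp} (Hoffman's quasi-shuffle Hopf algebra result) with the product $x_a \diamond x_b = x_{a+b}$, exactly as you do. Your additional care about transporting the result from $\bQ$ to $\Fq$ (via base change or a direct appeal to Proposition~\ref{prop: graded Hopf algebras}) is a point the paper glosses over, but it does not change the argument.
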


\subsection{The stuffle map in positive characteristic} ${}$\par \label{sec: stuffle map}

We put $\ell_0 := 1$ and $\ell_d := \prod^d_{i=1}(\theta - \theta^{q^i})$ for all $d \in \mathbb{N}$. Letting $\mathfrak s = (s_1 , \dots, s_n) \in \mathbb{N}^n$, for $d \in \mathbb{Z}$, we define analogues of power sums by
\begin{equation*}
        \Si_d(\mathfrak s) = \sum\limits_{d=d_1> \dots > d_n\geq 0} \dfrac{1}{\ell_{d_1}^{s_1} \dots \ell_{d_n}^{s_n}} \in K,
\end{equation*}
and 
\begin{equation*}
        \Si_{<d}(\mathfrak s) = \sum\limits_{d>d_1> \dots > d_n\geq 0} \dfrac{1 }{\ell_{d_1}^{s_1} \dots \ell_{d_n}^{s_n}} \in K.
\end{equation*}
Thus
\begin{align*}
\Si_{<d}(\fs) =\sum_{i=0}^{d-1} \Si_i(\fs), \quad \Si_{d}(\fs) =\Si_d(s_1) \Si_{<d}(\fs_-)=\Si_d(s_1) \Si_{<d}(s_2,\dots,s_n).
\end{align*}
Here by convention we define empty sums to be $0$ and empty products to be $1$. In particular, $\Si_{<d}$ of the empty tuple is equal to $1$.

Then we define the Carlitz multiple polygarithm (CMPL for short) as follows
\begin{equation*}
    \Li(\fs)  = \sum \limits_{d \geq 0} \Si_d (\fs)  = \sum\limits_{d_1> \dots > d_n\geq 0} \dfrac{1}{\ell_{d_1}^{s_1} \dots \ell_{d_n}^{s_n}}   \in K_{\infty}.
\end{equation*}
We agree also that $\Li(\emptyset)  = 1$. We call $\depth(\fs) = n$ the depth, $w(\fs) = s_1 + \dots + s_n$ the weight of $\Li(\fs)$.
 
\begin{lemma} 
For all $\fs$ as above such that $s_i \leq q$ for all $i$, we have 
\begin{equation*}
   S_d(\fs)  =  \Si_d(\fs)  \quad \text{for all } d \in \mathbb{Z}.
\end{equation*}
Therefore, 
\begin{equation*}
    \zeta_A (\fs)  = \Li(\fs) .
\end{equation*}
\end{lemma}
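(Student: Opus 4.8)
The plan is to prove the two claims in order: first the identity $S_d(\fs) = \Si_d(\fs)$ for all $d \in \bZ$ under the hypothesis $s_i \leq q$, and then deduce $\zeta_A(\fs) = \Li(\fs)$ by summing over $d$. The second deduction is immediate once the first is known, since $\zeta_A(\fs) = \sum_{d \geq 0} S_d(\fs)$ and $\Li(\fs) = \sum_{d \geq 0} \Si_d(\fs)$ by the very definitions recalled above, so the real content is in the power sum identity.

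For the power sum identity, I would argue by induction on the depth $n = \depth(\fs)$. In depth one, we must show $S_d(s) = \Si_d(s)$ for $1 \leq s \leq q$, i.e.
\[
\sum_{\substack{a \in A_+ \\ \deg a = d}} \frac{1}{a^s} = \frac{1}{\ell_d^s}.
\]
This is a classical fact of Carlitz: the power sum $\sum_{\deg a = d, a \in A_+} a^{-s}$ for $d \geq 1$ equals $\ell_d^{-s}$ precisely in the range $1 \leq s \leq q$ (and both sides vanish or are trivially equal for $d = 0$), and both sides are $0$ for $d < 0$ by the empty-sum convention. This is where the hypothesis $s \leq q$ is essential: for $s > q$ the Carlitz power sum is no longer a pure power of $\ell_d$. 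I would cite this from Carlitz's work (or Thakur's book, or the references \cite{Car35,Tha04} already in the bibliography). For the inductive step, I would use the recursive decompositions that hold for both families in parallel: from the excerpt we have
\[
S_d(\fs) = S_d(s_1)\, S_{<d}(\fs_-), \qquad S_{<d}(\fs) = \sum_{i=0}^{d-1} S_i(\fs),
\]
and the analogous identities
\[
\Si_d(\fs) = \Si_d(s_1)\, \Si_{<d}(\fs_-), \qquad \Si_{<d}(\fs) = \sum_{i=0}^{d-1} \Si_i(\fs)
\]
are recorded just above the lemma. By the depth-one case $S_d(s_1) = \Si_d(s_1)$ (valid since $s_1 \leq q$), and by the induction hypothesis applied to $\fs_-$ (whose entries also satisfy $s_i \leq q$), we get $S_i(\fs_-) = \Si_i(\fs_-)$ for every $i$, hence $S_{<d}(\fs_-) = \Si_{<d}(\fs_-)$ by the finite-sum formula, and therefore $S_d(\fs) = \Si_d(\fs)$.

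With the power sum identity in hand, summing over all $d \geq 0$ gives
\[
\zeta_A(\fs) = \sum_{d \geq 0} S_d(\fs) = \sum_{d \geq 0} \Si_d(\fs) = \Li(\fs),
\]
the interchange being legitimate since $\zeta_A(\fs)$ is known to converge in $K_\infty$ (Thakur) and the series for $\Li(\fs)$ converges for the same reason. The main obstacle is really just locating and citing the correct form of the Carlitz depth-one power sum formula with the sharp range $s \leq q$; everything else is a formal induction using the recursions that the paper has already set up. I would also remark that this lemma is exactly what underlies the bridge between MZV's and CMPL's used in \cite{ND21,IKLNDP22}, so an alternative is simply to quote it from there, but the self-contained inductive proof above is cleaner.
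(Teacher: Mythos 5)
Your proof is correct. Note, however, that the paper itself does not prove this lemma at all: its ``proof'' is the single line ``See \cite[Lemma 1.1]{IKLNDP22}'', so there is nothing internal to compare your argument against step by step. Your self-contained route --- reducing to the depth-one Carlitz identity $S_d(s)=\ell_d^{-s}$ for $1\le s\le q$ (the only place the hypothesis $s_i\le q$ enters), then propagating it through the parallel recursions $S_d(\fs)=S_d(s_1)S_{<d}(\fs_-)$, $S_{<d}(\fs)=\sum_{i<d}S_i(\fs)$ and their $\Si$-analogues by induction on depth, and finally summing over $d\ge 0$ --- is exactly the standard argument and is sound; the cited lemma in \cite{IKLNDP22} is proved the same way. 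Two minor points: for $d=0$ both sides equal $1$ (not $0$), which you cover with ``trivially equal''; and there is no interchange of limits to justify in the last step, since $\zeta_A(\fs)=\sum_{d\ge 0}S_d(\fs)$ and $\Li(\fs)=\sum_{d\ge 0}\Si_d(\fs)$ are termwise identical convergent series in $K_\infty$ once the power-sum identity is established. Your closing remark that one could instead just quote \cite{IKLNDP22} describes precisely what the authors chose to do.
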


\begin{proof}
See \cite[Lemma 1.1]{IKLNDP22}.
\end{proof}

Combining the above lemma and some extensions of results in \cite{ND21} we get a deep connection between the $K$-vector space spanned by the MZV's in positive characteristic and that spanned by the multiple polylogarithms in positive characteristic.
\begin{theorem} \label{thm:bridge}
The $K$-vector space $\mathcal{Z}_w$ of MZV's of weight $w$ and the $K$-vector space $\mathcal{L}_w$ of CMPL's of weight $w$ are the same.
\end{theorem}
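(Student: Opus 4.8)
The plan is to prove Theorem \ref{thm:bridge} by establishing the two inclusions $\mathcal{Z}_w \subseteq \mathcal{L}_w$ and $\mathcal{L}_w \subseteq \mathcal{Z}_w$ separately, using the Hoffman-type basis $\mathcal{T}_w$ as a bridge between the two spaces. First I would recall from \cite{IKLNDP22} (Hoffman's conjecture in positive characteristic) that $\mathcal{Z}_w$ is spanned by $\mathcal{T}_w$, i.e. by the MZV's $\zeta_A(s_1,\dots,s_r)$ of weight $w$ with $s_i \leq q$ for $1 \leq i < r$ and $s_r < q$. For each such admissible tuple $\fs$ all the $s_i$ satisfy $s_i \leq q$, so the Lemma just proved gives $\zeta_A(\fs) = \Li(\fs)$, which lies in $\mathcal{L}_w$ by definition. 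Hence every generator of $\mathcal{Z}_w$ lies in $\mathcal{L}_w$, giving $\mathcal{Z}_w \subseteq \mathcal{L}_w$.

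For the reverse inclusion I would need an analogue of Hoffman's conjecture for the $K$-vector space $\mathcal{L}_w$ spanned by CMPL's of weight $w$: namely that $\mathcal{L}_w$ is also spanned by the CMPL's $\Li(\fs)$ with $\fs \in \mathcal{T}_w$ (same index restrictions $s_i \leq q$, $s_r < q$). This is precisely the content of the ``extensions of results in \cite{ND21}'' alluded to in the paragraph preceding the theorem; the key tools are the operations on CMPL's introduced by Todd and the fourth author, which reduce an arbitrary $\Li(\fs)$ of weight $w$ to a $K$-linear combination of those with indices in the restricted range, exactly as in the MZV case. Granting this, each such generator $\Li(\fs)$ with $\fs \in \mathcal{T}_w$ equals $\zeta_A(\fs) \in \mathcal{Z}_w$ again by the Lemma, so $\mathcal{L}_w \subseteq \mathcal{Z}_w$, and the two spaces coincide.

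The main obstacle is the second inclusion: one must verify that the reduction algorithm of \cite{ND21,Tod18}, originally set up for power sums $S_d$ and MZV's, transfers verbatim to the $\Si_d$ and CMPL's. Concretely, the shuffle-type relations among the $S_d(\fs)$ that drive the reduction (Chen's formula and its consequences, Propositions \ref{prop: chen1} and \ref{prop: chen2}) must have exact counterparts for $\Si_d(\fs)$; this follows because the $\ell_d$ satisfy the same partial-fraction and degree-filtration formalism as monic polynomials of degree $d$, but it requires checking that the relevant identities (the analogues of \eqref{eq: SdSd}, \eqref{eq: SdS<d}, \eqref{eq: S<dS<d}) hold with $S$ replaced by $\Si$. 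Once that bookkeeping is in place, the induction on weight that proves the spanning statement for $\mathcal{L}_w$ is formally identical to the one for $\mathcal{Z}_w$ in \cite{ND21}, and the theorem follows. I would therefore structure the proof as: (1) cite the Hoffman basis for $\mathcal{Z}_w$; (2) apply the Lemma to get $\mathcal{Z}_w \subseteq \mathcal{L}_w$; (3) state and invoke (with reference to \cite{ND21,IKLNDP22}) the Hoffman basis for $\mathcal{L}_w$; (4) apply the Lemma again to get $\mathcal{L}_w \subseteq \mathcal{Z}_w$; (5) conclude equality.
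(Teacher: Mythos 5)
Your strategy is sound and, as far as the logic of this paper is concerned, it is essentially the right reconstruction: the paper itself gives no argument here, its proof being the single line ``See \cite[Theorem 4.3]{IKLNDP22}'', and the proof of that cited theorem does proceed exactly as you describe --- spanning of $\mathcal{Z}_w$ by the restricted set $\mathcal{T}_w$ (the algebraic part, proved directly with Chen's formula and the Todd/Ngo Dac operations on the $S_d$), an analogous spanning statement for $\mathcal{L}_w$ by CMPL's with the same restricted indices, and the identification $\zeta_A(\fs)=\Li(\fs)$ for $s_i\leq q$ from the Lemma to match the two generating sets. Two cautions. First, be careful that you only invoke the \emph{spanning} half of Hoffman's conjecture for $\mathcal{Z}_w$, which is proved independently of the bridge; the full basis statement (linear independence) is obtained in \cite{IKLNDP22} by transporting a transcendence result from the CMPL side \emph{through} this very theorem, so citing it in full would be circular. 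Second, your justification of the $\mathcal{L}_w$ spanning result is off on a technical point: the product relations for the $\Si_d$ are \emph{not} exact counterparts of Chen's formula. Since $\Si_d(a)=1/\ell_d^a$ is a single term rather than a sum over monic polynomials, one has simply $\Si_d(a)\Si_d(b)=\Si_d(a+b)$ with no correction terms $\Delta^i_{a,b}$, so the $\Si_{<d}$ satisfy the plain stuffle relation (this is precisely what makes Proposition \ref{lastProp} and the stuffle map work). The reduction algorithm for $\mathcal{L}_w$ in \cite{ND21,IKLNDP22} is therefore genuinely different from the one for $\mathcal{Z}_w$, not a verbatim transfer; it must be cited (or reproved) in its own right rather than deduced by analogy with the partial-fraction formalism for $S_d$.
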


\begin{proof}
See \cite[Theorem 4.3]{IKLNDP22}.
\end{proof}
For all $d \in \bZ$, we define two $\mathbb{F}_q$-linear maps
\begin{equation*}
    \Si_{<d} \colon \frak C \rightarrow K_{\infty} \quad \text{and} \quad \Li \colon \frak C \rightarrow K_{\infty},
\end{equation*}
which map the empty word to the element $1 \in K_{\infty}$, and map any word $x_{s_1} \dotsc x_{s_n}$ to $\Si_{<d}(s_1, \dotsc ,s_n)$ and $\Li(s_1, \dotsc, s_n)$, respectively. We have the following result:
\begin{proposition}\label{lastProp}
For all words $\fa, \fb \in \frak C$ and for all $d \in \bZ$ we have
\begin{align*}
\Si_{<d}(\fa * \fb) &=\Si_{<d}(\fa) \, \Si_{<d}(\fb), \\
\Li(\fa *\fb) &=\Li(\fa) \, \Li(\fb).
\end{align*}
\end{proposition}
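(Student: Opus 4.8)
The plan is to reduce the stuffle-compatibility of $\Si_{<d}$ and $\Li$ to the recursive definition of the stuffle product $*$, exactly mirroring how Proposition~\ref{prop: chen2} was bootstrapped from Proposition~\ref{prop: chen1}, but now using the ``simple'' product formula for the $\Si_d$ rather than Chen's formula. The key structural facts I would exploit are the decompositions
\[
\Si_{<d}(\fs)=\sum_{i=0}^{d-1}\Si_i(\fs),\qquad \Si_{d}(\fs)=\Si_d(s_1)\,\Si_{<d}(\fs_-),
\]
together with the elementary identity $\Si_d(a)\Si_d(b)=\Si_d(a+b)$, which holds because $\Si_d(a)=\ell_d^{-a}$ is literally a single term. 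This last identity is the crucial simplification: unlike the $S_d(a)S_d(b)$ case, there are no error terms $\Delta^i_{a,b}$, so the combinatorics collapses to precisely the three terms appearing in the definition of $\fa * \fb$.

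First I would prove the $\Si_d$-level statement: for all positive tuples $\fa=(a_1,\dots,a_m)$, $\fb=(b_1,\dots,b_n)$ and all $d$,
\[
\Si_d(\fa)\,\Si_d(\fb)=\Si_d(a_1+b_1)\bigl(\Si_{<d}(\fa_-)\Si_{<d}(\fb_-)\bigr),
\]
which is immediate from $\Si_d(\fa)=\Si_d(a_1)\Si_{<d}(\fa_-)$ and $\Si_d(a_1)\Si_d(b_1)=\Si_d(a_1+b_1)$. Next I would prove, by induction on $d$ and on $\depth(\fa)+\depth(\fb)$ simultaneously, the $\Si_{<d}$-version
\[
\Si_{<d}(\fa)\,\Si_{<d}(\fb)=\Si_{<d}(\fa * \fb),
\]
by splitting $\Si_{<d}(\fa)\Si_{<d}(\fb)=\sum_{k<d}\bigl(\Si_k(\fa)\Si_k(\fb)+\Si_k(\fa)\Si_{<k}(\fb)+\Si_k(\fb)\Si_{<k}(\fa)\bigr)$, rewriting $\Si_k(\fa)\Si_k(\fb)$ via the first step as $\Si_k(a_1+b_1)\bigl(\Si_{<k}(\fa_-)\Si_{<k}(\fb_-)\bigr)=\Si_k\bigl(x_{a_1+b_1}(\fa_- * \fb_-)\bigr)$ using the induction hypothesis on depth, and similarly recognizing $\Si_k(\fa)\Si_{<k}(\fb)=\Si_k\bigl(x_{a_1}(\fa_- * \fb)\bigr)$ and $\Si_k(\fb)\Si_{<k}(\fa)=\Si_k\bigl(x_{b_1}(\fa * \fb_-)\bigr)$. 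Summing over $k<d$ and comparing with $x_{a_1}(\fa_- * \fb)+x_{b_1}(\fa * \fb_-)+x_{a_1+b_1}(\fa_- * \fb_-)=\fa * \fb$ gives the claim. Finally, summing the $\Si_{<d}$ identity over all $d\ge 0$ and passing to the limit in $K_\infty$ yields $\Li(\fa * \fb)=\Li(\fa)\Li(\fb)$; alternatively one observes $\Li(\fs)=\Si_{<\infty}(\fs)$ in the obvious sense and the same induction applies verbatim.

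The main obstacle, such as it is, is purely bookkeeping: setting up the double induction (on the order of summation index $d$ and on total depth) so that every product appearing on the right after one step of the recursion has strictly smaller depth, and handling the base cases $\depth=0$ (empty word) and $d=0$ cleanly with the conventions that empty sums are $0$ and $\Si_{<0}$ of any tuple is $1$. There is no analytic or combinatorial difficulty comparable to the shuffle case, since the absence of the coefficients $\Delta^i_{a,b}$ means the identity is ``formal''; indeed this is exactly why the authors remark that the stuffle algebra is ``easier to define.'' I would also note that Proposition~\ref{lastProp} is really the computational heart of Theorem~\ref{thm: E}: once it is established, the stuffle map $Z_*:\frak C\otimes_{\Fq}K\to\mathcal Z$ is obtained by composing $\Li:\frak C\to\mathcal L_w=\mathcal Z_w$ (using Theorem~\ref{thm:bridge}) with the inclusion, and $\Fq$-linearity together with the identity above makes it a $K$-algebra homomorphism.
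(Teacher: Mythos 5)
Your proof is correct and is precisely the argument the paper omits (its printed proof is literally ``We leave the proof to the reader''): the identity $\Si_d(a)\Si_d(b)=\Si_d(a+b)$, which holds because $\Si_d(a)=\ell_d^{-a}$ is a single term for $d\ge 0$ and both sides vanish for $d<0$, collapses the depth-one combinatorics to exactly the three terms of the stuffle recursion, and the rest is the same bootstrapping the paper carries out for the shuffle product in \S 5--6, minus the $\Delta^i_{a,b}$ corrections. One small simplification: the auxiliary induction on $d$ is unnecessary, since the depth induction hypothesis is the statement for all $d$ simultaneously, so after splitting $\Si_{<d}(\fa)\Si_{<d}(\fb)=\sum_{k<d}\bigl(\Si_k(\fa)\Si_k(\fb)+\Si_k(\fa)\Si_{<k}(\fb)+\Si_k(\fb)\Si_{<k}(\fa)\bigr)$ each summand is handled directly by the depth hypothesis at level $k$, and the passage to $\Li$ is the limit $d\to\infty$, justified by $|\ell_d|_\infty\to\infty$.
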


\begin{proof}
We leave the proof to the reader.
\end{proof}

Combining Theorem \ref{thm:bridge} and Proposition~\ref{lastProp} yields the $K$-linear map \[ Z_*:\frak C \otimes_{\Fq} K \to \mathcal Z, \] which sends a word $\fa \in \frak C$ to $\Li(\fa)$, is a homomorphism of $K$-algebras, and is called the stuffle map in positive characteristic.


\appendix

\section{Numerical experiments} \label{sec: numerical experiments}

In this appendix we write down some explicit formulas for $\Delta(x_n)$ for $q^2<n \le q^3 + q^2$ and $q=3, 5$. 
\subsection{The case $q=3$} 
$\phantom{1}$
{
\footnotesize
\begin{align*}
    \Delta(x_{10})&=1\otimes x_{10}+x_{2}\otimes x_{2}x_{6}+2x_{4}\otimes x_{6}+x_{6}\otimes x_{4}+2x_{8}\otimes x_{2}+x_{10}\otimes 1\\
\Delta(x_{11})&=1\otimes x_{11}+2x_{3}\otimes x_{2}x_{6}+2x_{5}\otimes x_{6}+x_{9}\otimes x_{2}+x_{11}\otimes 1\\
\Delta(x_{12})&=1\otimes x_{12}+2x_{6}\otimes x_{6}+x_{12}\otimes 1\\
\Delta(x_{13})&=1\otimes x_{13}+2x_{3}\otimes x_{2}x_{8}+x_{3}\otimes x_{4}x_{6}+2x_{5}\otimes x_{2}x_{6}+x_{5}\otimes x_{8}+x_{7}\otimes x_{6}\\&+x_{9}\otimes x_{4}+2x_{11}\otimes x_{2}+x_{13}\otimes 1\\
\Delta(x_{14})&=1\otimes x_{14}+x_{6}\otimes x_{2}x_{6}+2x_{6}\otimes x_{8}+x_{8}\otimes x_{6}+x_{12}\otimes x_{2}+x_{14}\otimes 1\\
\Delta(x_{15})&=1\otimes x_{15}+x_{9}\otimes x_{6}+x_{15}\otimes 1\\
\Delta(x_{16})&=1\otimes x_{16}+x_{6}\otimes x_{2}x_{8}+2x_{6}\otimes x_{4}x_{6}+2x_{6}\otimes x_{10}+2x_{8}\otimes x_{8}+x_{12}\otimes x_{4}
+2x_{14}\otimes x_{2}+x_{16}\otimes 1\\
\Delta(x_{17})&=1\otimes x_{17}+x_{9}\otimes x_{8}+x_{15}\otimes x_{2}+x_{17}\otimes 1\\
\Delta(x_{18})&=1\otimes x_{18}+x_{18}\otimes 1\\
\Delta(x_{19})&=1\otimes x_{19}+x_{3}\otimes x_{2}x_{14}+x_{3}\otimes x_{4}x_{12}+2x_{5}\otimes x_{2}x_{12}+x_{5}\otimes x_{6}x_{8}+2x_{5}\otimes x_{8}x_{6}\\&+x_{7}\otimes x_{12}+x_{9}\otimes x_{10}+x_{11}\otimes x_{2}x_{6}+2x_{13}\otimes x_{6}+x_{15}\otimes x_{4}+2x_{17}\otimes x_{2}
+x_{19}\otimes 1\\
\Delta(x_{20})&=1\otimes x_{20}+x_{6}\otimes x_{2}x_{12}+2x_{6}\otimes x_{6}x_{8}+x_{6}\otimes x_{8}x_{6}+x_{8}\otimes x_{12}+2x_{12}\otimes x_{2}x_{6}\\&+2x_{14}\otimes x_{6}+x_{18}\otimes x_{2}+x_{20}\otimes 1\\
\Delta(x_{21})&=1\otimes x_{21}+x_{9}\otimes x_{12}+2x_{15}\otimes x_{6}+x_{21}\otimes 1\\
\Delta(x_{22})&=1\otimes x_{22}+x_{6}\otimes x_{2}x_{14}+2x_{6}\otimes x_{4}x_{12}+x_{6}\otimes x_{8}x_{8}+2x_{6}\otimes x_{10}x_{6}+2x_{8}\otimes x_{14}
+2x_{12}\otimes x_{2}x_{8}
\\&
+x_{12}\otimes x_{4}x_{6}+2x_{14}\otimes x_{2}x_{6}+x_{14}\otimes x_{8}+x_{16}\otimes x_{6}+x_{18}\otimes x_{4}
+2x_{20}\otimes x_{2}+x_{22}\otimes 1\\
\Delta(x_{23})&=1\otimes x_{23}+x_{9}\otimes x_{14}+x_{15}\otimes x_{2}x_{6}+2x_{15}\otimes x_{8}+x_{17}\otimes x_{6}+x_{21}\otimes x_{2}
+x_{23}\otimes 1\\
\Delta(x_{24})&=1\otimes x_{24}+x_{18}\otimes x_{6}+x_{24}\otimes 1\\
\Delta(x_{25})&=1\otimes x_{25}+x_{9}\otimes x_{16}+x_{15}\otimes x_{2}x_{8}+2x_{15}\otimes x_{4}x_{6}+2x_{15}\otimes x_{10}+2x_{17}\otimes x_{8}\\&+x_{21}\otimes x_{4}+2x_{23}\otimes x_{2}+x_{25}\otimes 1\\
\Delta(x_{26})&=1\otimes x_{26}+x_{18}\otimes x_{8}+x_{24}\otimes x_{2}+x_{26}\otimes 1\\
\Delta(x_{27})&=1\otimes x_{27}+x_{27}\otimes 1\\
\Delta(x_{28})&=1\otimes x_{28}+2x_{2}\otimes x_{2}x_{6}x_{18}+x_{4}\otimes x_{6}x_{18}+x_{6}\otimes x_{2}x_{20}+x_{6}\otimes x_{4}x_{18}+2x_{6}\otimes x_{6}x_{16}\\&+2x_{6}\otimes x_{8}x_{14}+2x_{6}\otimes x_{10}x_{12}+x_{8}\otimes x_{2}x_{18}+2x_{8}\otimes x_{6}x_{14}+x_{8}\otimes x_{8}x_{12}+2x_{8}\otimes x_{12}x_{8}\\&+x_{8}\otimes x_{14}x_{6}+2x_{10}\otimes x_{18}+x_{12}\otimes x_{2}x_{14}+x_{12}\otimes x_{4}x_{12}+2x_{14}\otimes x_{2}x_{12}+x_{14}\otimes x_{6}x_{8}\\&+2x_{14}\otimes x_{8}x_{6}+x_{16}\otimes x_{12}+x_{18}\otimes x_{10}+x_{20}\otimes x_{2}x_{6}+2x_{22}\otimes x_{6}+x_{24}\otimes x_{4}\\&+2x_{26}\otimes x_{2}+x_{28}\otimes 1\\
\Delta(x_{29})&=1\otimes x_{29}+x_{3}\otimes x_{2}x_{6}x_{18}+x_{5}\otimes x_{6}x_{18}+2x_{9}\otimes x_{2}x_{18}+x_{9}\otimes x_{6}x_{14}+2x_{9}\otimes x_{8}x_{12}\\&+x_{9}\otimes x_{12}x_{8}+2x_{9}\otimes x_{14}x_{6}+2x_{11}\otimes x_{18}+x_{15}\otimes x_{2}x_{12}+2x_{15}\otimes x_{6}x_{8}+x_{15}\otimes x_{8}x_{6}\\&+x_{17}\otimes x_{12}+2x_{21}\otimes x_{2}x_{6}+2x_{23}\otimes x_{6}+x_{27}\otimes x_{2}+x_{29}\otimes 1\\
\Delta(x_{30})&=1\otimes x_{30}+x_{6}\otimes x_{6}x_{18}+2x_{12}\otimes x_{18}+x_{18}\otimes x_{12}+2x_{24}\otimes x_{6}+x_{30}\otimes 1\\
\Delta(x_{31})&=1\otimes x_{31}+x_{3}\otimes x_{2}x_{8}x_{18}+2x_{3}\otimes x_{4}x_{6}x_{18}+x_{5}\otimes x_{2}x_{6}x_{18}+2x_{5}\otimes x_{8}x_{18}+2x_{7}\otimes x_{6}x_{18}\\&+2x_{9}\otimes x_{4}x_{18}+2x_{9}\otimes x_{6}x_{16}+x_{9}\otimes x_{8}x_{14}+2x_{9}\otimes x_{14}x_{8}+x_{9}\otimes x_{16}x_{6}+x_{11}\otimes x_{2}x_{18}\\&+2x_{11}\otimes x_{6}x_{14}+x_{11}\otimes x_{8}x_{12}+2x_{11}\otimes x_{12}x_{8}+x_{11}\otimes x_{14}x_{6}+x_{11}\otimes x_{20}+2x_{13}\otimes x_{18}\\&+x_{15}\otimes x_{2}x_{14}+2x_{15}\otimes x_{4}x_{12}+x_{15}\otimes x_{8}x_{8}+2x_{15}\otimes x_{10}x_{6}+2x_{17}\otimes x_{14}+2x_{21}\otimes x_{2}x_{8}\\&+x_{21}\otimes x_{4}x_{6}+2x_{23}\otimes x_{2}x_{6}+x_{23}\otimes x_{8}+x_{25}\otimes x_{6}+x_{27}\otimes x_{4}+2x_{29}\otimes x_{2}
+x_{31}\otimes 1\\
\Delta(x_{32})&=1\otimes x_{32}+2x_{6}\otimes x_{2}x_{6}x_{18}+x_{6}\otimes x_{8}x_{18}+2x_{8}\otimes x_{6}x_{18}+2x_{12}\otimes x_{2}x_{18}+x_{12}\otimes x_{6}x_{14}\\&+2x_{12}\otimes x_{8}x_{12}+x_{12}\otimes x_{12}x_{8}+2x_{12}\otimes x_{14}x_{6}+2x_{12}\otimes x_{20}+2x_{14}\otimes x_{18}+x_{18}\otimes x_{14}\\&+x_{24}\otimes x_{2}x_{6}+2x_{24}\otimes x_{8}+x_{26}\otimes x_{6}+x_{30}\otimes x_{2}+x_{32}\otimes 1\\
\Delta(x_{33})&=1\otimes x_{33}+2x_{9}\otimes x_{6}x_{18}+2x_{15}\otimes x_{18}+x_{27}\otimes x_{6}+x_{33}\otimes 1\\
\Delta(x_{34})&=1\otimes x_{34}+2x_{6}\otimes x_{2}x_{8}x_{18}+x_{6}\otimes x_{2}x_{26}+x_{6}\otimes x_{4}x_{6}x_{18}+x_{6}\otimes x_{4}x_{24}+x_{6}\otimes x_{10}x_{18}\\&+x_{8}\otimes x_{8}x_{18}+2x_{12}\otimes x_{4}x_{18}+2x_{12}\otimes x_{6}x_{16}+x_{12}\otimes x_{8}x_{14}+2x_{12}\otimes x_{14}x_{8}+x_{12}\otimes x_{16}x_{6}\\&+2x_{12}\otimes x_{22}+x_{14}\otimes x_{2}x_{18}+2x_{14}\otimes x_{6}x_{14}+x_{14}\otimes x_{8}x_{12}+2x_{14}\otimes x_{12}x_{8}+x_{14}\otimes x_{14}x_{6}\\&+x_{14}\otimes x_{20}+2x_{16}\otimes x_{18}+x_{18}\otimes x_{16}+x_{24}\otimes x_{2}x_{8}+2x_{24}\otimes x_{4}x_{6}+2x_{24}\otimes x_{10}\\&+2x_{26}\otimes x_{8}+x_{30}\otimes x_{4}+2x_{32}\otimes x_{2}+x_{34}\otimes 1\\
\Delta(x_{35})&=1\otimes x_{35}+2x_{9}\otimes x_{8}x_{18}+2x_{15}\otimes x_{2}x_{18}+x_{15}\otimes x_{6}x_{14}+2x_{15}\otimes x_{8}x_{12}+x_{15}\otimes x_{12}x_{8}\\&+2x_{15}\otimes x_{14}x_{6}+2x_{15}\otimes x_{20}+2x_{17}\otimes x_{18}+x_{27}\otimes x_{8}+x_{33}\otimes x_{2}+x_{35}\otimes 1\\
\Delta(x_{36})&=1\otimes x_{36}+2x_{18}\otimes x_{18}+x_{36}\otimes 1
\end{align*}
}
\subsection{The case $q=5$} $\phantom{1}$
{\footnotesize

}


\end{document}